\newtheorem{theorem}{Theorem}[section]
\newtheorem{proposition}[theorem]{Proposition}
\newtheorem{lemma}[theorem]{Lemma}
\newtheorem{corollary}[theorem]{Corollary}
\theoremstyle{definition}
\newtheorem{definition}[theorem]{Definition}
\theoremstyle{remark}
\newtheorem{remark}[theorem]{Remark}
\newcommand{\RNum}[1]{\uppercase\expandafter{\romannumeral #1\relax}}
\newcommand{\ZZ}{\mathbb{Z}}%Integers
\newcommand{\NN}{\mathbb{N}}%Natural numbers
\newcommand{\QQ}{\mathbb{Q}}%Rationals
\newcommand{\RR}{\mathbb{R}}%Real numbers
\newcommand{\CC}{\mathbb{C}}%Complex numbers
\newcommand{\Min}{\textup{Min}}
\newcommand{\disj}{\sqcup}%Integers
\newcommand{\Blue}{\color{blue}}
\title{Low dimensional strongly perfect lattices IV: 
The dual strongly perfect lattices of dimension 16.} 
\author{Sihuang Hu \thanks{husihuang@gmail.com, Humboldt fellow supported by the AvH foundation.}, Gabriele Nebe
\thanks{nebe@math.rwth-aachen.de} }
\begin{document}

\maketitle

{\small 
	{\sc Abstract.} 
	We classify the dual strongly perfect lattices in dimension 16. 
	There are four pairs of such lattices, the famous Barnes-Wall lattice
	$\Lambda _{16}$, the extremal 5-modular lattice $N_{16}$, the odd Barnes-Wall lattice
	$O_{16}$ and its dual, and one pair of new lattices $\Gamma _{16}$ and its dual.
	The latter pair belongs to a new infinite series of dual strongly perfect lattices, 
	the sandwiched Barnes-Wall lattices, described by the authors in a previous paper. 
An updated table of all known strongly perfect lattices up to dimension 26 is available 
in the catalogue of lattices  \cite{Lattices}. 
\\
{\sc Keywords.} strongly perfect lattices; spherical designs; modular forms; locally densest lattices.
}

\tableofcontents

\section{Introduction}

The notion of strongly perfect lattices has been introduced in  the fundamental
work \cite{Venkov} by Boris Venkov based on lecture series 
Venkov gave in Aachen, Bordeaux and Dortmund.  
Strongly perfect lattices are particularly nice examples of locally densest 
lattices, they even realize a local maximum of the sphere packing density
on the space of all periodic packings (see \cite{Schuermann}). 
Together with Boris Venkov the second author started a long term project to 
classify low dimensional strongly perfect lattices. 
The strongly perfect lattices up to dimension 9 and in dimension 11 are already
classified in \cite{Venkov}. These are all root lattices and their duals. 
In dimension 10 there are two strongly perfect lattices,
the lattice $K_{10}'$ and its dual (see \cite{dim10}) and in dimension 
12 the Coxeter-Todd lattice $K_{12}$ is the unique strongly perfect lattice
(\cite{dim12}). For all known strongly perfect lattices, with one exception 
in dimension 21, also the dual lattice is strongly perfect. 
Such lattices are called {\bf dual strongly perfect} (see Section \ref{dsp}).
They are classified in dimensions 13-15 (\cite{dim14}, \cite{dim13_15}). 
The present paper continues the classification of low-dimensional 
(dual) strongly perfect lattices by treating the very interesting 
16-dimensional case. 
In dimension 16 there are (up to similarity) six dual strongly
perfect lattices (see Theorem \ref{main}), the famous Barnes-Wall lattice $\Lambda _{16}$
realizing the maximal
known sphere packing density, the odd Barnes-Wall lattice $O_{16}$ and 
its dual, the unique extremal 5-modular lattice named $N_{16}$ in \cite{Venkov}
and two new lattices, $\Gamma _{16} $ and its dual, first described in 
\cite{HuNebe}. 

The overall strategy for the classification of dual strongly perfect lattices in 
a given dimension is already described in the introduction to \cite{dim14}. 
Let $\Lambda $ be a strongly perfect lattice of dimension $n$ and put 
$s := s(\Lambda ) = \frac{1}{2} |\Min (\Lambda ) | \in \ZZ $ to denote half
of the kissing number of $\Lambda $ and
$$r := r(\Lambda )  =r(\Lambda ^*) = \min (\Lambda ) \min (\Lambda ^*) \in \QQ $$
 the Berg\'e-Martinet invariant of $\Lambda $.
 As $\Lambda $ is perfect, we obtain $s(\Lambda ) \geq \frac{n(n+1)}{2}$ (see \cite[Proposition 3.2.3 (2)]{Martinet}). 
 Upper bounds on the kissing number are 
 given for instance in \cite{MittelmannVallentin} leading to finitely many possibilities of the integer $s$.

 By \cite[Th\'eor\`eme 10.4]{Venkov} (see Lemma \ref{min}) we have $r(\Lambda ) \geq \frac{n+2}{3}$.
 As $r(\Lambda )$ is the product of the Hermite function evaluated at $\Lambda $ and its dual $\Lambda ^*$,
 we obtain $r \leq \gamma_n^2$, where $\gamma _n$ is the Hermite constant (see Section \ref{basic}). 
 The best known upper bounds on $\gamma _n$ are given in \cite{CoEl} so we obtain upper and lower 
 bounds for the rational number $r$. 
 To obtain a finite list of possible pairs $(r,s)$ we apply the equations \eqref{gleichungen} to a minimal
 vector $\alpha \in \Lambda ^*$. For instance $(D2)$ and $(D4) $ yield that 
 $sr/n$ and $3sr/(n(n+2))$ are integers and from $\frac{1}{12}(D4-D2) $ we obtain 
 that $\frac{sr}{12n} (\frac{3r}{n+2}-1)$ is an integer, giving only finitely many possibilities for $r$. 
 Using the general  lemmas from Section \ref{gen} additionally narrows down the possibilities.
 In particular for $n=16$ the possible values are listed in Theorem \ref{TableOfValues}. 
 So far we only used the fact that $\Lambda $ is strongly perfect. 

 The fact that also the dual lattice is strongly perfect is then used to obtain 
 bounds on the level of $\Lambda $:
 For each value of $r=r(\Lambda ) = r(\Lambda ^*)$ we now factor $r=m\cdot d$ 
such that the equations \eqref{gleichungen} allow to show that 
 rescaled to minimum $\min (\Lambda ^*) = m$, the lattice $\Lambda ^*$ is 
 even and in particular contained in its dual lattice $\Lambda $ (which is
 then of minimum $d$).
 For dual strongly perfect lattices we can use a similar argumentation
 to obtain a finite list of possibilities $(s',r )$ for
 $s' = s(\Lambda ^*)$ and in each case a factorization $r = m' \cdot d' $
 such that $\Lambda $ is even if rescaled to $\min (\Lambda ) = m'$.
 But this allows to obtain the exponent (in the latter scaling)
 $$\exp (\Lambda ^*/\Lambda )  \mbox{ divides }  \frac{m}{d'} $$
 which either allows a direct classification of all such lattices
 $\Lambda $ or at least the classification of all genera of such lattices and then the use  of modular forms to exclude
 the existence of a  theta series 
 $\theta _{\Lambda } $ of level $\frac{m}{d'} $ and weight $\frac{n}{2}$
 starting with
 $1+2sq^{m' } + \ldots $, such that its image under the Fricke involution starts
 with $1+2s'q^m + \ldots $ and both $q$-expansions have nonnegative
 integral coefficients.
 This computational technique using modular forms is described in more detail in Section \ref{ModForm}.

 {\bf Acknowledgements} Sihuang Hu is supported by a fellowship of the Humboldt foundation.

 \section{Some basic facts on lattices}\label{basic}

For a good introduction to the theory of lattices in Euclidean spaces 
in our context we refer to the book \cite{Martinet} by Jacques Martinet. 

A \emph{lattice} $\Lambda $ is the integral span of a basis 
$B:=(b_1,\ldots , b_n)$ of Euclidean $n$-space $(\RR^n,(,))$, i.e.
$$\Lambda = \{ \sum _{i=1}^n a_i b_i \mid a_i \in \ZZ \} .$$ 
The \emph{dual} lattice of $\Lambda $ is 
$$\Lambda ^* := \{ v\in \RR ^n \mid (v,\lambda ) \in \ZZ \mbox{ for all } 
\lambda \in \Lambda \},$$ 
the $\ZZ $-span of the dual basis of $B$. 
The two most important invariants of a lattice are its 
\emph{minimum} $$\min(\Lambda ):= \min \{ (\lambda ,\lambda )\mid 0\neq \lambda \in \Lambda \} $$ and its \emph{determinant} 
$$\det (\Lambda ) := \det ((b_i,b_j)_{1\leq i,j \leq n}) .$$
We clearly have $\det(\Lambda ) \det(\Lambda ^*) = 1$ 
and $\det(a\Lambda ) = a^{2n} \det(\Lambda )$ for all $a\in \RR_{>0}$. 

A lattice $\Lambda $ is called \emph{integral}, if $(\lambda,\lambda' )\in \ZZ $
for all $\lambda ,\lambda' \in \Lambda $, i.e. $\Lambda \subseteq \Lambda ^*$.
The lattice $\Lambda $ is called \emph{even}, if 
$(\lambda ,\lambda ) \in 2\ZZ $ for all $\lambda \in \Lambda $. 
Clearly even lattices are integral. For an even lattice $\Lambda $ the minimal natural number $\ell $ such that 
$\sqrt{\ell } \Lambda ^*$ is even is called the \emph{even level} of $\Lambda $.

Two $n$-dimensional lattices $\Lambda $ and $\Gamma $ are called
\emph{similar}, if there is a similarity 
$g\in $GL$_n(\RR )$, $(gx,gy) = a (x,y) $ (some $a\in \RR _{>0} $)
with $g\Lambda = \Gamma $. Similarities of norm $a=1$ are called \emph{isometries}.
For a similarity of norm $a$ we have 
$\det(g\Lambda ) = a^n \det(\Lambda )$ and $\min (g\Lambda ) = a\min(\Lambda )$, so the
 \emph{Hermite function}
\begin{align*}
\gamma:\ \mathcal{L}_n &\rightarrow \RR\\
[\Lambda ]  &\mapsto \gamma (\Lambda ) := \frac{\min(\Lambda)}{\det(\Lambda)^{1/n}}
\end{align*}
is well defined on the set of similarity classes 
$\mathcal{L}_n$  of all $n$-dimensional lattices.
The density of a lattice is a strictly monotonous function of the Hermite
function, so in particular the (local) maxima of $\gamma $ provide
the (locally) densest lattice sphere packings. 
It is well known (\cite[Theorem 3.5.4]{Martinet})  that there are 
only finitely many local maxima of the Hermite function on ${\mathcal L}_n$,
all of them are represented by rational lattices (\cite[Proposition 3.2.11]{Martinet}), i.e. 
$(\lambda, \lambda ') \in \QQ $ for all $\lambda, \lambda ' \in \Lambda $. 
In particular the 
 \emph{Hermite constant}. 
 $\gamma_n=\sup\{\gamma(\Lambda) \mid \Lambda \in \mathcal{L}_n\}  $ 
is  attained at some integral lattice.
The densest lattices (and hence $\gamma _n$) are 
known in dimension $\leq 8$ and in dimension 24 (\cite{CohnKumar}). 
The best known upper bounds on the Hermite constant are given in \cite{CoEl}. 
These also yield the best known upper bounds for the 
\emph{Berg\'e-Martinet invariant} $r(\Lambda )$, where 
$$r(\Lambda):=\gamma (\Lambda ) \gamma (\Lambda ^*) = 
\min(\Lambda)\min(\Lambda^*)$$ as $r(\Lambda)\le \gamma_n^2$. 
By the definition of the Hermite constant, we obtain the following inequalities.

\begin{lemma}\label{DeterminantBound}(\cite[Lemma 2.1]{dim14})
Let $\Lambda$ be an $n$-dimensional lattice. Then
\begin{align*}
\left(\frac{\gamma_n}{\min(\Lambda^*)}\right)^n 
\ge \det(\Lambda) 
\ge \left(\frac{\min(\Lambda)}{\gamma_n}\right)^n.
\end{align*}
\end{lemma}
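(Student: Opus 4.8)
The plan is to read off both inequalities directly from the definition of the Hermite constant as a supremum, so that essentially no hard step is involved. The starting observation is that $\gamma (\Lambda ) \le \gamma _n$ holds for \emph{every} $n$-dimensional lattice $\Lambda $, which is immediate from the definition $\gamma _n = \sup \{ \gamma (\Gamma ) \mid \Gamma \in \mathcal{L}_n \}$.

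For the lower bound on $\det (\Lambda )$, I would substitute the definition $\gamma (\Lambda ) = \min (\Lambda ) / \det (\Lambda )^{1/n}$ into the inequality $\gamma (\Lambda ) \le \gamma _n$ and rearrange. This yields $\det (\Lambda )^{1/n} \ge \min (\Lambda ) / \gamma _n$, and raising both (positive) sides to the $n$-th power gives the claimed bound $\det (\Lambda ) \ge (\min (\Lambda ) / \gamma _n )^n$.

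For the upper bound, the idea is to apply exactly the same inequality to the dual lattice $\Lambda ^*$ in place of $\Lambda $, obtaining $\det (\Lambda ^*) \ge (\min (\Lambda ^*) / \gamma _n )^n$, and then to invoke the identity $\det (\Lambda ) \det (\Lambda ^*) = 1$ recorded at the start of Section \ref{basic}. Taking reciprocals in the bound for $\Lambda ^*$ and using $\det (\Lambda ) = \det (\Lambda ^*)^{-1}$ converts that lower bound into the desired upper bound $\det (\Lambda ) \le (\gamma _n / \min (\Lambda ^*) )^n$.

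Since every step is an elementary algebraic manipulation, there is no genuine obstacle here; the only thing to verify is that all quantities involved ($\min (\Lambda )$, $\min (\Lambda ^*)$, $\det (\Lambda )$, $\gamma _n$) are strictly positive, so that passing to $n$-th powers and reciprocals preserves the direction of the inequalities. This positivity is automatic for an honest $n$-dimensional lattice.
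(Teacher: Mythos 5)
Your proof is correct and is exactly the argument the paper intends: it states this lemma with no proof beyond the remark ``By the definition of the Hermite constant, we obtain the following inequalities,'' which is precisely your two applications of $\gamma(\cdot)\le\gamma_n$ (to $\Lambda$ and to $\Lambda^*$) combined with $\det(\Lambda)\det(\Lambda^*)=1$. Nothing further is needed.
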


\begin{lemma}\label{IsLevelIntegral}(\cite[Lemma 2.1.12]{Nossek})
    Let $\Lambda$ be an integral lattice in dimension $n$. If there exists some rational number $c$
    such that $\sqrt{c}\Lambda^*$ is integral, then $c$ is an integer.
\end{lemma}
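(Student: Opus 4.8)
The plan is to avoid working entrywise with the Gram matrices of $\Lambda^*$ and $\sqrt{c}\,\Lambda^*$, and instead argue purely through determinants, exploiting the scaling rule $\det(a\Lambda)=a^{2n}\det(\Lambda)$ and the duality relation $\det(\Lambda)\det(\Lambda^*)=1$ recorded in Section \ref{basic}. First I would note that the hypothesis implicitly forces $c>0$, since $\sqrt{c}$ must be real and $\sqrt{c}\,\Lambda^*$ is a genuine (nonzero) lattice.

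The key observation is that since $\Lambda$ is integral, its Gram matrix $((b_i,b_j))$ has integer entries, so $\det(\Lambda)$ is a positive integer (positivity being automatic from positive definiteness of the form). Applying the scaling rule with $a=\sqrt{c}$ to the lattice $\Lambda^*$ and using $\det(\Lambda^*)=1/\det(\Lambda)$ then gives
$$\det(\sqrt{c}\,\Lambda^*) = c^n\det(\Lambda^*) = \frac{c^n}{\det(\Lambda)}.$$
Because $\sqrt{c}\,\Lambda^*$ is integral by assumption, the left-hand side is a positive integer, and therefore $c^n = \det(\Lambda)\cdot\det(\sqrt{c}\,\Lambda^*)$ is itself a positive integer.

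It remains to invoke an elementary number-theoretic fact: a rational number whose $n$-th power is an integer is itself an integer. Writing $c=a/b$ with $\gcd(a,b)=1$ and $b>0$, the integrality of $c^n=a^n/b^n$ combined with $\gcd(a^n,b^n)=1$ forces $b^n\mid a^n$, hence $b=1$ and $c=a\in\ZZ$. I expect this last step to be the only point requiring care (it is essentially the rational root theorem, or Gauss's lemma): one must not stop at $c^n\in\ZZ$ but genuinely use the coprimality of numerator and denominator to exclude a nontrivial denominator. Everything preceding it is immediate once the statement is recast in terms of determinants rather than individual inner products.
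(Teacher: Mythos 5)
Your proof is correct and takes essentially the same approach as the paper's: both rest on the identity $\det(\Lambda)\cdot\det(\sqrt{c}\,\Lambda^*)=c^n$, the integrality of the two determinants, and the fact that a rational number whose $n$-th power is an integer is itself an integer. You simply spell out the details (positivity of $c$, the coprimality argument for the last step) that the paper's one-sentence proof leaves implicit.
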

\begin{proof}
As $\det(\Lambda)\cdot\det(\sqrt{c}\Lambda^*)=c^n$ is an integer, the number $c$ is an integer.
\end{proof}

\section{Strongly perfect lattices} \label{gen}

For a lattice $\Lambda $ and some $a\in \RR $ we put 
$$\Lambda _a:=\{ \lambda \in \Lambda \mid (\lambda,\lambda ) = a \}.$$
This is always a finite set  invariant under multiplication by $-1$.
Of particular interest is the set 
$\Lambda _m =: \Min (\Lambda) $  of minimal vectors in $\Lambda $,
where $m = \min (\Lambda ) $.

\begin{definition} \label{stperf}
	A lattice $\Lambda $ is called \emph{strongly perfect},
	if $\Min(\Lambda )$ forms a spherical 4-design.
\end{definition}

It is well known (\cite[Th\'eor\`eme 6.4]{Venkov}, \cite[Theorem 16.2.2]{Martinet}) that strongly perfect lattices 
are extreme, i.e. they realize a local maximum of the Hermite function on the space of similarity classes of 
$n$-dimensional lattices. In particular strongly perfect lattices are always similar to rational lattices. 

We usually write $\Min (\Lambda ) = S(\Lambda) \disj -S(\Lambda) $ as a disjoint union 
and call $s:=s(\Lambda ):= |S(\Lambda)| $ the \emph{half kissing number} of $\Lambda $. 
By \cite[Th\'eor\`eme 3.2, Equation (5.2b)]{Venkov} the lattice $\Lambda $ is strongly perfect, 
if and only if 
\begin{equation}\label{D4}
	(D4)(\alpha ): \ \ \sum _{x\in S(\Lambda)} (x,\alpha )^4 = \frac{3 s(\Lambda )}{n(n+2)} 
\min (\Lambda ) ^2 (\alpha,\alpha )^2 
\end{equation}
for all $\alpha \in \RR ^n$.

From $(D4)(\alpha )$ we obtain the following
equations $(Di) = (Di)(\alpha )$ and $(Dij) = (Dij)(\alpha,\beta) $ for all $\alpha , \beta \in \RR ^n$:
\begin{equation}\label{gleichungen}
\begin{array}{lrr}
	(D2)(\alpha): &   \sum _{x\in S(\Lambda)} (x,\alpha )^2 = &  \frac{sm}{n} (\alpha , \alpha )  \\
	(D11)(\alpha , \beta): &   \sum _{x\in S(\Lambda)} (x,\alpha )(x,\beta ) = &  \frac{sm}{n} (\alpha , \beta )  \\
	(D22)(\alpha , \beta): &  \sum _{x\in S(\Lambda)} (x,\alpha )^2(x,\beta)^2 = & 
	\frac{sm^2}{n(n+2)}
	(2(\alpha , \beta )^2+(\alpha  , \alpha )( \beta , \beta )) \\
	(D13)(\alpha , \beta): &   \sum _{x\in S(\Lambda)} (x,\alpha  ) (x,\beta )^3= &  \frac{3sm^2}{n(n+2)} (\alpha , \beta )(\beta , \beta )\\
	\frac{1}{12}(D4-D2)(\alpha) : &  \frac{1}{12} \sum _{x\in S(\Lambda)} (x,\alpha )^4 - (x,\alpha )^2 = &
	\frac{sm}{12n} (\alpha , \alpha ) (\frac{3m}{n+2} (\alpha,\alpha ) - 1 ) 
\end{array} 
\end{equation}

Note that $(D2)(\alpha )$, $(D22)(\alpha, \beta )$,
$(D4)(\alpha ) $, $\frac{1}{12} (D4-D2)(\alpha)$ % as well as
%$$\frac{1}{6} (D13-D11)(\alpha,\beta ) = \frac{sm}{6n} (\alpha ,\beta ) (\frac{3m}{n+2} (\beta ,\beta ) - 1) $$
are non negative integers for all $\alpha ,\beta \in \Lambda ^*$.
In particular for $\alpha \in \Min (\Lambda ^*)$ we obtain
$$\frac{1}{12} (D4-D2)(\alpha )  = 
\frac{s(\Lambda )}{12 n} r(\Lambda ) (\frac{3}{n+2} r(\Lambda ) - 1 ) \in \ZZ  _{\geq 0 }$$
whence

\begin{lemma}{\label{min}}(\cite[Th\'eor\`eme 10.4]{Venkov})
    Let $\Lambda$  be a strongly perfect lattice of dimension $n$.
    Then the Berg\'e-Martinet invariant
    $$r(\Lambda)\geq \frac{n+2}{3}.$$
\end{lemma}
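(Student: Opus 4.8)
The plan is to read off the inequality from the integrality and nonnegativity of the quantity $\frac{1}{12}(D4-D2)(\alpha)$ that appears as the last line of \eqref{gleichungen}, evaluated at a minimal vector of the dual lattice. First I would fix $\alpha \in \Min(\Lambda^*)$, so that $(\alpha,\alpha) = \min(\Lambda^*)$, and abbreviate $m = \min(\Lambda)$ and $s = s(\Lambda)$. The point of choosing $\alpha$ in the dual lattice is that for every $x \in S(\Lambda) \subseteq \Min(\Lambda) \subseteq \Lambda$ the pairing $(x,\alpha)$ is an \emph{integer}, directly from the definition of $\Lambda^*$.

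The crucial observation is then that the left-hand side of $\frac{1}{12}(D4-D2)(\alpha)$, namely $\frac{1}{12}\sum_{x\in S(\Lambda)}\bigl((x,\alpha)^4 - (x,\alpha)^2\bigr)$, is a nonnegative integer. Writing $k := (x,\alpha) \in \ZZ$, each summand equals $\frac{1}{12}k^2(k^2-1) = \frac{1}{12}k^2(k-1)(k+1)$, and I would check that $k^2(k^2-1)$ is divisible by $12$ for every integer $k$ by treating the prime powers separately: it is divisible by $3$ because one of the three consecutive integers $k-1,k,k+1$ is, and divisible by $4$ because either $k$ is even (so $4 \mid k^2$) or $k$ is odd (so $8 \mid k^2-1$). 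Nonnegativity is immediate, since each summand vanishes when $k=0$ and equals $k^2(k^2-1) \geq 0$ whenever $|k| \geq 1$. Summing over $x \in S(\Lambda)$ then gives $\frac{1}{12}(D4-D2)(\alpha) \in \ZZ_{\geq 0}$.

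It remains to evaluate the right-hand side. Since $\alpha$ is a minimal vector of $\Lambda^*$, we have $m\,(\alpha,\alpha) = \min(\Lambda)\min(\Lambda^*) = r(\Lambda)$, so the right-hand side of $\frac{1}{12}(D4-D2)(\alpha)$ collapses to $\frac{s}{12n}\,r(\Lambda)\bigl(\frac{3}{n+2}r(\Lambda) - 1\bigr)$, exactly the expression recorded just before the statement. Because $s$, $n$ and $r(\Lambda)$ are all strictly positive, the prefactor $\frac{s\,r(\Lambda)}{12n}$ is positive, and the nonnegativity established above forces $\frac{3}{n+2}r(\Lambda) - 1 \geq 0$, which rearranges to $r(\Lambda) \geq \frac{n+2}{3}$.

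There is no genuinely hard step once the strongly perfect design identity \eqref{D4} and its consequences \eqref{gleichungen} are in hand; the only point requiring care is the elementary divisibility-by-$12$ argument. This is precisely what upgrades the left-hand side from a nonnegative \emph{real} number to a nonnegative \emph{integer}, and it is that sign information alone that licenses reading off the inequality.
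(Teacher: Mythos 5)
Your proposal is correct and follows essentially the same route as the paper: the authors likewise evaluate $\frac{1}{12}(D4-D2)(\alpha)$ at $\alpha\in\Min(\Lambda^*)$, note it is a nonnegative integer because all pairings $(x,\alpha)$ are integers, and read off $r(\Lambda)\geq\frac{n+2}{3}$ from the sign of the right-hand side $\frac{s(\Lambda)}{12n}r(\Lambda)\bigl(\frac{3}{n+2}r(\Lambda)-1\bigr)$. Your elaboration of the divisibility-by-$12$ check is a correct (if, as you note, strictly unnecessary for the inequality itself) expansion of what the paper states without proof.
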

A strongly perfect lattice $\Lambda$ is called {\em of minimal type} if the above equality holds,
and \emph{of general type} otherwise.
Let $\Lambda$ be a strongly perfect lattice of dimension $n$. Set $m=\min(\Lambda)$ and
$s=s(\Lambda)=|S(\Lambda )|$.

\begin{lemma}\label{linkombext}
	Let $\alpha \in \RR ^n$ be such that $(x, \alpha) \in \ZZ$ for all $x\in S(\Lambda )$.
	Denote $\ell =\max\{(x,\alpha): x \in \Min(\Lambda )\}$. 
	Let $N_i(\alpha ) = \{ x\in \Min(\Lambda )  \mid (x,\alpha ) = i \}$ for $i=1,\ldots,\ell$,
    and let $$c=\frac{sm}{6n}\left(\frac{3m}{n+2}(\alpha ,\alpha )  -1 \right) .$$
    Then
    \begin{equation} \label{eins}
\sum_{i=2}^{\ell}\,\sum_{x\in N_{i}(\alpha)}\frac{i(i^2-1)}{6}x=c\alpha
\end{equation}
    and
    \begin{equation} \label{zwei}
\sum_{i=2}^{\ell}\frac{i^2(i^2-1)}{6}|N_{i}(\alpha)|=c(\alpha,\alpha).
\end{equation}
    %Then 
    %$$\sum_{x\in N_2(\alpha)}x+4\sum_{y\in N_3(\alpha)}y = c \alpha,$$
    %and
    %$$|N_2(\alpha)|+6|N_3(\alpha)| = \frac{c}{2}(\alpha ,\alpha).$$ 
    %such that $(x, \alpha) \in \{ 0,\pm 1 ,\pm 2,\pm 3 \} $ for all $x\in X$, 
    %let $N_i(\alpha ) = \{ x\in X\cup -X  \mid (x,\alpha ) = i \}$ for $i=0,\ldots,3$,
    %and let $$c=\frac{sm}{6n}\left(\frac{3m}{n+2}(\alpha ,\alpha )  -1 \right) .$$
    %Then 
    %%$$\sum_{x\in N_2(\alpha)}x+4\sum_{y\in N_3(\alpha)}y=\frac{sm}{6n}\left(\frac{3m}{n+2}(\alpha ,\alpha )  -1 \right)\alpha,$$
    %$$\sum_{x\in N_2(\alpha)}x+4\sum_{y\in N_3(\alpha)}y = c \alpha,$$
    %and
    %%$$|N_2(\alpha)|+6|N_3(\alpha)|=\frac{sm}{12n}\left(\frac{3m}{n+2}(\alpha ,\alpha )  -1 \right)(\alpha ,\alpha ).$$ 
    %$$|N_2(\alpha)|+6|N_3(\alpha)| = \frac{c}{2}(\alpha ,\alpha).$$ 
\end{lemma}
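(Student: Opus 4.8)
The plan is to reduce both identities to the design equations in \eqref{gleichungen} by first rewriting the restricted sums over the level sets $N_i(\alpha)$ (with $i\ge 2$) as full sums over $\Min(\Lambda)$, and then using the decomposition $\Min(\Lambda) = S(\Lambda)\disj -S(\Lambda)$ to pass to sums over $S(\Lambda)$. The crucial observation is that the weights occurring are $p(i) := i(i^2-1) = i^3 - i$ and $q(i) := i^2(i^2-1) = i^4 - i^2$, both of which vanish at $i\in\{-1,0,1\}$. Since $(x,\alpha)\in\ZZ$ for every $x\in\Min(\Lambda)$, grouping $\Min(\Lambda)$ by the value $i=(x,\alpha)$ shows that inserting $p$ or $q$ kills precisely the vectors with $(x,\alpha)\in\{-1,0,1\}$, so only the levels $|(x,\alpha)|\ge 2$ contribute (in particular the value of the lower summation index, $1$ or $2$, is immaterial).

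Since \eqref{eins} is a vector identity, I would prove it by pairing both sides with an arbitrary $\beta\in\RR^n$, as two vectors agree \Iff their inner products with every $\beta$ agree. For $x\in N_i(\alpha)$ one has $(x,\alpha)=i$, so the left-hand side paired with $\beta$ equals
\[
\sum_{i=2}^{\ell}\sum_{x\in N_i(\alpha)}\frac{(x,\alpha)^3-(x,\alpha)}{6}(x,\beta).
\]
The summand $\tfrac16\big((x,\alpha)^3-(x,\alpha)\big)(x,\beta)$ is invariant under $x\mapsto -x$, so this positive-level sum equals one half of the full sum over $\Min(\Lambda)$, which by $\Min(\Lambda) = S(\Lambda)\disj -S(\Lambda)$ is in turn $\sum_{x\in S(\Lambda)}\tfrac16\big((x,\alpha)^3-(x,\alpha)\big)(x,\beta)$. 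I would then evaluate the two resulting pieces by $(D13)(\beta,\alpha)$ and $(D11)(\alpha,\beta)$:
\[
\sum_{x\in S(\Lambda)}(x,\alpha)^3(x,\beta)=\frac{3sm^2}{n(n+2)}(\alpha,\beta)(\alpha,\alpha),\qquad
\sum_{x\in S(\Lambda)}(x,\alpha)(x,\beta)=\frac{sm}{n}(\alpha,\beta).
\]
Subtracting and dividing by $6$ gives $\frac{sm}{6n}\big(\frac{3m}{n+2}(\alpha,\alpha)-1\big)(\alpha,\beta)=c\,(\alpha,\beta)=(c\alpha,\beta)$, and as this holds for all $\beta$, the identity \eqref{eins} follows.

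Finally, \eqref{zwei} drops out by specialising \eqref{eins} to $\beta=\alpha$: on the left, $(x,\alpha)=i$ turns $\frac{i(i^2-1)}{6}(x,\alpha)$ into $\frac{i^2(i^2-1)}{6}$ for each of the $|N_i(\alpha)|$ vectors, yielding $\sum_{i=2}^\ell \frac{i^2(i^2-1)}{6}|N_i(\alpha)|$, while the right becomes $(c\alpha,\alpha)=c(\alpha,\alpha)$. (Alternatively, \eqref{zwei} can be obtained directly from $\tfrac1{12}(D4-D2)(\alpha)$ after the same grouping, using the even weight $q(i)=i^4-i^2$ and the symmetry $|N_i(\alpha)|=|N_{-i}(\alpha)|$.) The only delicate point, and the main obstacle to a clean write-up, is the bookkeeping that permits replacing the restricted sum over $i\ge 2$ by a sum over all of $S(\Lambda)$: one must verify that the vanishing of $p$ and $q$ at $i\in\{-1,0,1\}$ together with the $x\mapsto -x$ symmetry makes the factor of two coming from $\Min(\Lambda)=S(\Lambda)\disj -S(\Lambda)$ exactly cancel the restriction to positive levels, so that no spurious terms survive.
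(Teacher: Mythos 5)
Your proof is correct and takes essentially the same route as the paper: both establish \eqref{eins} by testing against an arbitrary vector $\beta$ via the identity $\tfrac{1}{6}(D13-D11)(\beta,\alpha)=c(\alpha,\beta)$ and then obtain \eqref{zwei} by specializing $\beta=\alpha$. The only difference is that you make explicit the bookkeeping (vanishing of the weight $i^3-i$ at $i\in\{-1,0,1\}$ together with the $x\mapsto -x$ symmetry, which identifies the sum over $S(\Lambda)$ with the sum over the levels $i\ge 2$) that the paper's proof compresses into the phrase ``easily seen''.
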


\begin{proof}
	By \eqref{gleichungen} we obtain 
	\begin{equation} \label{D13-D11} 
		\frac{1}{6} (D13-D11)(\beta,\alpha) : \ \ 		\frac{1}{6} 
		\sum_{x\in S(\Lambda )} ((x,\alpha)^3 (x,\beta ) - (x,\alpha)(x,\beta) ) 
		= c (\alpha,\beta )
	\end{equation} 
	where $c$ and $\alpha $ are  as in the lemma and $\beta \in \RR ^n$ 
	is an arbitrary vector. 
	Equation \eqref{D13-D11} is easily seen to be the inner
	product 
 of Equation \eqref{eins} with $\beta $. As $\beta $ is arbitrary, we obtain 
 Equation \eqref{eins}. 
 Equation \eqref{zwei} is obtained by taking the inner product of 
 Equation \eqref{eins} with $\alpha $.
\end{proof}

\begin{corollary}{\label{linkomb}}(\cite[Lemma 2.1]{dim10})
    Let $\alpha \in \RR ^n$ be 
    such that $(x, \alpha) \in \{ 0,\pm 1 ,\pm 2 \} $ for all $x\in \Min (\Lambda )$.
    Let $N_2(\alpha ) = \{ x\in \Min(\Lambda ) \mid (x,\alpha ) = 2 \}$ and
    put $$c=\frac{sm}{6n}\left(\frac{3m}{n+2}(\alpha ,\alpha )  -1 \right) .$$
    Then 
    $|N_2(\alpha ) | = c (\alpha , \alpha ) /2 $ and
    $$\sum _{x\in N_2(\alpha )} x = c \alpha . $$
\end{corollary}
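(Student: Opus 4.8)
The plan is to derive this directly from Lemma~\ref{linkombext}. First I would observe that the hypothesis $(x,\alpha)\in\{0,\pm1,\pm2\}$ for all $x\in\Min(\Lambda)$ in particular forces $(x,\alpha)\in\ZZ$ for all $x\in S(\Lambda)\subseteq\Min(\Lambda)$, so the assumption of Lemma~\ref{linkombext} is satisfied and we may apply it with the same constant $c$. Moreover, the same hypothesis bounds $\ell=\max\{(x,\alpha):x\in\Min(\Lambda)\}\leq 2$, so that the outer summation index $i$ in Equations~\eqref{eins} and \eqref{zwei} ranges over at most the single value $i=2$.

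Next I would carry out the elementary arithmetic that makes the statement collapse: for $i=2$ one has $\frac{i(i^2-1)}{6}=\frac{2\cdot 3}{6}=1$ and $\frac{i^2(i^2-1)}{6}=\frac{4\cdot 3}{6}=2$. Substituting $i=2$ into \eqref{eins} then yields $\sum_{x\in N_2(\alpha)}x=c\alpha$, and substituting into \eqref{zwei} yields $2\,|N_2(\alpha)|=c(\alpha,\alpha)$, i.e.\ $|N_2(\alpha)|=c(\alpha,\alpha)/2$, which are exactly the two assertions of the corollary.

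Finally I would dispose of the degenerate case $\ell\leq 1$, in which the summations in Lemma~\ref{linkombext} are empty. Here $N_2(\alpha)=\emptyset$, so $|N_2(\alpha)|=0$, while \eqref{zwei} reads $c(\alpha,\alpha)=0$; hence $|N_2(\alpha)|=0=c(\alpha,\alpha)/2$ and $\sum_{x\in N_2(\alpha)}x=0=c\alpha$ both hold, and the claim remains valid. Since the entire content of the corollary is already packaged in Lemma~\ref{linkombext}, there is no genuine obstacle beyond recognizing that the restriction $(x,\alpha)\in\{0,\pm1,\pm2\}$ truncates the sums to their $i=2$ term; the only point requiring a moment's care is the bookkeeping of the index range together with the verification that the two coefficients evaluate to $1$ and $2$.
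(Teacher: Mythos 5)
Your proposal is correct and is exactly the argument the paper intends: Corollary~\ref{linkomb} is stated as an immediate consequence of Lemma~\ref{linkombext}, obtained by noting that the hypothesis $(x,\alpha)\in\{0,\pm1,\pm2\}$ forces $\ell\leq 2$, so Equations~\eqref{eins} and \eqref{zwei} reduce to their $i=2$ terms with coefficients $\frac{i(i^2-1)}{6}=1$ and $\frac{i^2(i^2-1)}{6}=2$. Your handling of the degenerate case $\ell\leq 1$ (empty sums, giving $c\alpha=0$ and $c(\alpha,\alpha)=0$) is a careful touch that the paper leaves implicit.
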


\begin{lemma} \label{N_2(alpha)neq1} (\cite[Lemma 2.6]{dim12})
  Let $\Lambda$ be a strongly perfect lattice and choose $\alpha\in\Min(\Lambda^*)$ that satisfies the conditions
  of Corollary~\ref{linkomb}. If $n\ge 11$ then $|N_2(\alpha)|\ne 1$.
\end{lemma}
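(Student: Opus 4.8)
The plan is to argue by contradiction: I would assume $|N_2(\alpha)|=1$ and show that this forces the Berg\'e-Martinet invariant to equal exactly $4$, which then collides with the lower bound of Lemma~\ref{min} precisely in the range $n\geq 11$.

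First I would invoke Corollary~\ref{linkomb}, whose hypotheses hold by assumption on $\alpha$. Writing $m=\min(\Lambda)$ and recording that $(\alpha,\alpha)=\min(\Lambda^*)$ since $\alpha\in\Min(\Lambda^*)$, the corollary supplies two outputs: the counting identity $|N_2(\alpha)|=c(\alpha,\alpha)/2$ and the vector identity $\sum_{x\in N_2(\alpha)}x=c\alpha$, with $c=\frac{sm}{6n}\bigl(\frac{3m}{n+2}(\alpha,\alpha)-1\bigr)$. Under the assumption $|N_2(\alpha)|=1$, say $N_2(\alpha)=\{x_0\}$, these collapse to the two scalar/vector relations $c(\alpha,\alpha)=2$ and $x_0=c\alpha$.

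The crux is then a single computation of $(x_0,x_0)$, using both outputs of the corollary simultaneously rather than the explicit formula for $c$. Since $x_0\in\Min(\Lambda)$ we have $(x_0,x_0)=m$, while $x_0=c\alpha$ gives $(x_0,x_0)=c^2(\alpha,\alpha)=c\cdot[c(\alpha,\alpha)]=2c$. Hence $c=m/2$, and feeding this back into $c(\alpha,\alpha)=2$ yields $m(\alpha,\alpha)=4$, i.e. $r(\Lambda)=\min(\Lambda)\min(\Lambda^*)=4$.

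To finish I would apply Lemma~\ref{min}, which asserts $r(\Lambda)\geq\frac{n+2}{3}$. For $n\geq 11$ this gives $r(\Lambda)\geq\frac{13}{3}>4$, contradicting $r(\Lambda)=4$; therefore $|N_2(\alpha)|\neq 1$. I do not anticipate a genuine obstacle here: the argument is short, and the only points needing care are the observation that the two conclusions of Corollary~\ref{linkomb} together pin down $(x_0,x_0)$, and the remark that $n\geq 11$ is exactly the threshold at which $\frac{n+2}{3}$ strictly exceeds $4$, so the hypothesis on the dimension is sharp for this line of reasoning.
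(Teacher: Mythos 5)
Your proof is correct, and it is essentially the argument behind the cited result \cite[Lemma 2.6]{dim12}, which the paper itself does not reprove: the two conclusions of Corollary~\ref{linkomb} force $x_0=c\alpha$ and $c(\alpha,\alpha)=2$, hence $r(\Lambda)=m(\alpha,\alpha)=4$, contradicting Lemma~\ref{min} exactly when $n\ge 11$. No gaps; your observation that $n\ge 11$ is the precise threshold where $\frac{n+2}{3}>4$ is also the reason for the dimension hypothesis in the original lemma.
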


%\begin{proof}
%Let $m:=\min(\Lambda )$ and assume that $N_2(\alpha ) = \{ x \}$.
%Then $\alpha = c x $ for some constant $c$.
%Taking scalar products with $x$ yields $c=\frac{2}{m}$
%and hence  $(\alpha , \alpha ) = \frac{4}{m}$.
%Therefore
%$$\frac{n+2}{3} \leq \min  (\Lambda ) \min (\Lambda ^*)  \leq 4 $$
%which implies that $n\leq 10$.
%\end{proof}

%{\color{red} Unify the following Lemmas?}

\begin{lemma}{\label{boundn2r<8V1}}(\cite[Lemma 2.4]{dim12},\cite[Lemma 2.7.18]{Nossek})
    Suppose $\alpha\in\Min(\Lambda^*)$. If $r(\Lambda) < 8$, then
    $$|N_2(\alpha )| \leq \min\left\{\frac{r(\Lambda)}{8-r(\Lambda)}, n\right\}.$$
    The equality $|N_2(\alpha)|=\frac{r(\Lambda)}{8-r(\Lambda)}$ 
    holds if and only if $N_2(\alpha)$ spans a rescaled root lattice $A_{|N_2(\alpha)|}$.
\end{lemma}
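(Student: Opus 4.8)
The plan is to extract everything from Corollary~\ref{linkomb}, which gives both $\sum_{x\in N_2(\alpha)}x=c\alpha$ and $|N_2(\alpha)|=c(\alpha,\alpha)/2$, combine these with a pointwise upper bound on the pairwise inner products inside $N_2(\alpha)$, and then read off the inequality by computing one squared norm in two different ways. Writing $N_2:=N_2(\alpha)$ and $m=\min(\Lambda)$, I would first check that Corollary~\ref{linkomb} applies at all: since $\alpha\in\Min(\Lambda^*)$ and every $x\in\Min(\Lambda)$ has $(x,x)=m$, Cauchy--Schwarz gives $(x,\alpha)^2\le m(\alpha,\alpha)=r(\Lambda)<8$, and as $(x,\alpha)\in\ZZ$ this forces $(x,\alpha)\in\{0,\pm1,\pm2\}$ and $(\alpha,\alpha)=r/m$. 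The same estimate shows that $N_2\neq\emptyset$ already forces $r\ge 4$; if $r<4$ then $N_2=\emptyset$ and there is nothing to prove, so I may assume $N_2\neq\emptyset$. The key pointwise fact is that for distinct $x,y\in N_2$ the vector $x-y$ is a nonzero element of $\Lambda$ with $(x-y,\alpha)=0$, hence $(x-y,x-y)\ge m$; expanding yields $(x,y)\le m/2$, with equality exactly when $x-y\in\Min(\Lambda)$.

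Next I would feed the two identities of Corollary~\ref{linkomb} into the norm of $\sum_{x\in N_2}x$. Using $|N_2|=c(\alpha,\alpha)/2$ and $(\alpha,\alpha)=r/m$ I compute
\[
\Big(\sum_{x\in N_2}x,\ \sum_{x\in N_2}x\Big)=c^2(\alpha,\alpha)=\frac{(c(\alpha,\alpha))^2}{(\alpha,\alpha)}=\frac{4|N_2|^2}{(\alpha,\alpha)}=\frac{4m|N_2|^2}{r}.
\]
On the other hand, expanding the same squared norm and applying the pointwise bound $(x,y)\le m/2$ gives
\[
\Big(\sum_{x\in N_2}x,\ \sum_{x\in N_2}x\Big)=|N_2|\,m+\sum_{x\ne y}(x,y)\ \le\ |N_2|\,m+\tfrac{m}{2}\,|N_2|(|N_2|-1).
\]
Equating the two and dividing by $m|N_2|>0$ produces $4|N_2|/r\le 1+(|N_2|-1)/2$, which rearranges to $(8-r)|N_2|\le r$, that is $|N_2|\le r/(8-r)$.

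For the bound $|N_2|\le n$ I would pass to the orthogonal projection onto $\alpha^\perp$: set $x'=x-\tfrac{2}{(\alpha,\alpha)}\alpha$ for $x\in N_2$, so that $(x',x')=m(r-4)/r$ and, for distinct $x,y$, $(x',y')=(x,y)-\tfrac{4}{(\alpha,\alpha)}\le \tfrac{m}{2}-\tfrac{4m}{r}=\tfrac{m(r-8)}{2r}<0$ because $r<8$. When $r>4$ the $x'$ are nonzero vectors in the $(n-1)$-dimensional space $\alpha^\perp$ with pairwise strictly negative inner products, and the classical bound on the number of pairwise obtuse vectors gives at most $(n-1)+1=n$ of them; when $r=4$ the previous inequality already yields $|N_2|\le r/(8-r)=1\le n$. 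Hence $|N_2|\le\min\{r/(8-r),n\}$.

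Finally, the equality characterization is tracked through the only inequality used above, namely $(x,y)\le m/2$. Equality $|N_2|=r/(8-r)$ holds iff $(x,y)=m/2$ for all distinct $x,y\in N_2$, i.e. iff the Gram matrix of the $|N_2|$ vectors in $N_2$ equals $\tfrac{m}{2}(I+J)$ (diagonal $m$, off-diagonal $m/2$). This matrix is nondegenerate, so these vectors are linearly independent and form a basis of the lattice they span; since $\tfrac{m}{2}(I+J)$ is precisely the Gram matrix of the basis $\sqrt{m/2}\,(e_0+e_i)$ of a rescaled root lattice $A_{|N_2|}$, the set $N_2$ spans a rescaled $A_{|N_2|}$, and the converse follows by reversing the computation. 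I expect the equality analysis to be the only delicate point: the inequality itself is a clean two-way norm computation, but identifying the extremal Gram matrix $\tfrac{m}{2}(I+J)$ with that of $A_{|N_2|}$—and checking that the normalization $\sum_{x\in N_2}x=c\alpha$ together with $r<8$ rules out the competing root configurations with off-diagonal entry $-m/2$—requires the care sketched here.
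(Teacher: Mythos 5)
The paper itself offers no proof to compare against: Lemma \ref{boundn2r<8V1} is quoted from \cite{dim12} and \cite{Nossek} as a citation, and the nearest in-paper relative is Lemma \ref{SphBndN_2} (with Corollary \ref{boundr8}), which treats $r(\Lambda)\ge 8$ by projecting $N_2(\alpha)$ into $\alpha^{\perp}$ and invoking a spherical-code bound. Judged on its own, your argument is correct, and it is a hybrid of two routes: you obtain $|N_2(\alpha)|\le r/(8-r)$ from the design identities of Corollary \ref{linkomb}, computing $(\sum_{x\in N_2(\alpha)}x,\ \sum_{x\in N_2(\alpha)}x)$ once as $c^2(\alpha,\alpha)=4m|N_2(\alpha)|^2/r$ and once term by term using $(x,y)\le m/2$, and you obtain $|N_2(\alpha)|\le n$ from the projection plus the classical fact that $\RR^{n-1}$ contains at most $n$ nonzero pairwise obtuse vectors. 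For comparison, the projection alone yields both bounds at once: the normalized projected vectors have pairwise inner products at most $-(8-r)/(2(r-4))=:-a$, and $0\le (\sum_i u_i,\sum_i u_i)$ gives $|N_2(\alpha)|\le 1+1/a=r/(8-r)$; this is the pattern of Lemma \ref{SphBndN_2} and presumably of the cited sources. What each buys: your route makes the equality analysis transparent (equality holds iff every pair satisfies $(x,y)=m/2$, i.e.\ the Gram matrix is $\frac{m}{2}(I+J)$, i.e.\ a rescaled $A_{|N_2(\alpha)|}$), while the code-theoretic route generalizes beyond $r<8$.

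The one step you should actually write out is the converse of the equality statement, which you flag but do not finish. That $N_2(\alpha)$ spans a rescaled $A_k$, $k=|N_2(\alpha)|$, does not by linear algebra alone force the Gram matrix $\frac{m}{2}(I+J)$: a basis of minimal vectors of $A_k$ may well have off-diagonal entries $0$ or $-m/2$. To close it, note first that the span is a sublattice of $\Lambda$ containing vectors of norm $m$, so its minimum is exactly $m$; this pins down the scaling factor, hence all inner products lie in $\frac{m}{2}\ZZ$ and the pairwise ones lie in $\{-m/2,0,m/2\}$. Then Corollary \ref{linkomb} gives the row sums $(x_i,c\alpha)=2c=4mk/r$, which is $>mk/2$ precisely because $r<8$; therefore $\sum_{j\ne i}(x_i,x_j)$ is a multiple of $m/2$ lying in the interval $(\,(k-2)m/2,\ (k-1)m/2\,]$, so it equals $(k-1)m/2$, every off-diagonal entry is $m/2$, and $k=r/(8-r)$ follows. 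These are exactly the two ingredients you named ($\sum_{x\in N_2(\alpha)}x=c\alpha$ and $r<8$), so the defect is one of execution rather than of idea, but as written the "reversing the computation" sentence does not constitute a proof of that direction.
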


%\begin{lemma}\label{boundn2r<8V2}(\cite[Lemma 2.7.18]{Nossek})
%    Let $\Lambda$ be a strongly perfect lattice with $r(\Lambda) < 8$, 
%    and $\alpha\in\Min(\Lambda^*)$. Then $|N_2(\alpha)|\le n$.
%\end{lemma}

\begin{definition}
Let $A$ be a subset of the interval $[-1,1)$. A \emph{spherical $A$-code}
is a non-empty subset $X$ of the unit sphere in $\RR^n$, satisfying that $(x,y)\in A$, 
for all $x\ne y\in X$.
\end{definition}

\begin{lemma}\label{SphCdeUppBnd}(\cite[Example 4.6]{DGS})
For a given number $a$, with $0 \le a < n^{-1/2}$, let $A$ be any subset of $[-1,a]$,
and let $X$ be a spherical $A$-code in $\RR^n$. Then
$$
|X| \le \frac{n(1-a)(2+(n+1)a)}{1-na^2}.
$$
\end{lemma}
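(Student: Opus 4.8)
The plan is to apply Delsarte's linear programming method, the technique underlying the reference \cite{DGS} itself. Let $G_k = G_k^{(n)}$ denote the Gegenbauer polynomials for $S^{n-1}$, normalized by $G_k(1)=1$, so that $G_0=1$, $G_1(t)=t$, $G_2(t)=\frac{nt^2-1}{n-1}$ and $G_3(t)=\frac{(n+2)t^3-3t}{n-1}$. They are positive-definite on the sphere: for every finite $Y\subseteq S^{n-1}$ and every $k$ one has $\sum_{x,y\in Y}G_k((x,y))\ge 0$, the case $k=0$ giving $|Y|^2$. The lemma will then follow from the standard consequence: if $f=\sum_k f_kG_k$ has $f_k\ge0$ for all $k\ge1$, $f_0>0$, and $f(t)\le0$ for all admissible inner products $t\in[-1,a]$, then expanding $\sum_{x,y\in X}f((x,y))$ and isolating the diagonal (which contributes $|X|f(1)$) yields $f_0|X|^2\le \sum_{x,y}f((x,y))\le|X|f(1)$, hence $|X|\le f(1)/f_0$. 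Everything thus reduces to producing one good polynomial $f$.

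I would take the cubic $f(t)=(t-b)^2(t-a)$ with a parameter $b$ to be fixed. Here the non-positivity condition holds automatically: on $[-1,a]$ one has $t-a\le0$ and $(t-b)^2\ge0$, so $f\le0$ throughout. The double root at $b$ should sit at the negative inner product that an extremal configuration would realize, and the free parameter is chosen so that the resulting bound $f(1)/f_0$ is as strong as possible (equivalently, so that $f$ meets the tangency condition of the linear program). Minimizing $f(1)/f_0$ in $b$ produces the single stationary value $b=-\frac{1+a}{1+na}$, which lies in $(-1,0)$, and this is the choice I would use.

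It then remains to expand $f$ in the Gegenbauer basis using $t^2=\frac{n-1}{n}G_2+\frac1n G_0$ and $t^3=\frac{n-1}{n+2}G_3+\frac{3}{n+2}G_1$, and to read off the coefficients. One finds $f_3=\frac{n-1}{n+2}>0$ and $f_2=-(a+2b)\frac{n-1}{n}>0$, since $a+2b=\frac{na^2-a-2}{1+na}<0$ (because $na^2\le1$); the inequality $f_1\ge0$ is a similar elementary check. The decisive coefficient is the constant term $f_0=\frac{(2+(n+1)a)(1-na^2)}{n(1+na)^2}$, and this is exactly where the hypothesis enters: $f_0>0$ holds if and only if $1-na^2>0$, i.e. $a<n^{-1/2}$. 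Finally $f(1)=(1-b)^2(1-a)=\frac{(2+(n+1)a)^2(1-a)}{(1+na)^2}$, and dividing gives $f(1)/f_0=\frac{n(1-a)(2+(n+1)a)}{1-na^2}$, the asserted bound. The only real obstacle is locating the correct ansatz and the value of $b$; once $f=(t-b)^2(t-a)$ and $b=-\frac{1+a}{1+na}$ are in hand, the sign verifications and the final evaluation are routine, and the factor $1-na^2$ appearing in $f_0$ simultaneously explains the form of the denominator and the precise range $a<n^{-1/2}$.
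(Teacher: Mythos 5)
Your proof is correct, and it is in essence the paper's own proof: the paper gives no argument for this lemma but simply cites \cite[Example 4.6]{DGS}, and your linear-programming argument with the cubic $f(t)=(t-b)^2(t-a)$, $b=-\frac{1+a}{1+na}$, is exactly the method of that reference; the coefficient computations check out, with $f_0=\frac{(2+(n+1)a)(1-na^2)}{n(1+na)^2}$, $f(1)=\frac{(1-a)(2+(n+1)a)^2}{(1+na)^2}$, and $f(1)/f_0$ equal to the stated bound. The one step you leave to the reader, $f_1\ge 0$, is indeed elementary: writing $f_1=\tfrac{3}{n+2}+(a+b)^2-a^2\ge \tfrac{3}{n+2}-a^2>\tfrac{3}{n+2}-\tfrac{1}{n}=\tfrac{2(n-1)}{n(n+2)}\ge 0$ settles it for all $n\ge 1$.
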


\begin{lemma}\label{SphBndN_2}
%Let $\Lambda$ be a strongly perfect lattice of dimension $n$ with $r(\Lambda)\ge 8$. Let $\alpha\in\Min(\Lambda^*)$,
%and $N_2(\alpha)=\{x \in X\cup-X \mid (x,\alpha)=2\}.$ Denote $r=r(\Lambda).$
%If $\frac{r-8}{2r-8} < (n-1)^{-1/2}$, then
%$$
%|N_2(\alpha)| \le \frac{(n-1)(1-(r-8)/(2r-8))(2+n(r-8)/(2r-8))}{1-(n-1)(r-8)^2/(2r-8)^2}.
%$$
Let $\Lambda$ be a strongly perfect lattice of dimension $n$ with $r(\Lambda)\ge 8$. Let $\alpha\in\Min(\Lambda^*)$,
and $N_2(\alpha)=\{x \in \Min (\Lambda ) \mid (x,\alpha)=2\}.$ Denote $a=(r(\Lambda)-8)/(2(\Lambda)r-8)$.
If $a< (n-1)^{-1/2}$, then
$$
|N_2(\alpha)| \le \frac{(n-1)(1-a)(2+na)}{1-(n-1)a^2}.
%|N_2(\alpha)| \le \frac{(n-1)(1-(r-8)/(2r-8))(2+n(r-8)/(2r-8))}{1-(n-1)(r-8)^2/(2r-8)^2}.
$$
\end{lemma}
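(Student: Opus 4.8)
The plan is to realize $N_2(\alpha)$, after orthogonal projection away from $\alpha$ and rescaling to the unit sphere, as a spherical $A$-code in the $(n-1)$-dimensional space $\alpha^\perp$ with $A\subseteq[-1,a]$, and then to invoke the Delsarte--Goethals--Seidel bound of Lemma~\ref{SphCdeUppBnd} applied in dimension $n-1$. Write $m=\min(\Lambda)$ and $m^*=\min(\Lambda^*)=(\alpha,\alpha)$, so that $r=r(\Lambda)=mm^*$. For each $x\in N_2(\alpha)$ decompose $x=\frac{2}{m^*}\alpha+x'$ with $x'\in\alpha^\perp$, and set $\hat x:=x'/\|x'\|$.

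First I would record the two norm computations $(x',x')=m-\tfrac{4}{m^*}=\tfrac{r-4}{m^*}$ (which is positive since $r\ge 8>4$, so $\hat x$ is well defined) and, for $x\ne y$ in $N_2(\alpha)$, $(x',y')=(x,y)-\tfrac{4}{m^*}$. Taking the quotient, the factor $m^*$ in the denominators cancels and
\[
(\hat x,\hat y)=\frac{m^*(x,y)-4}{r-4}.
\]
The decisive input is the standard bound $(x,y)\le m/2$ for two distinct minimal vectors: since $x-y$ is a nonzero vector of $\Lambda$ we have $(x-y,x-y)\ge m$, i.e. $2m-2(x,y)\ge m$. Substituting $(x,y)\le m/2=r/(2m^*)$ gives $(\hat x,\hat y)\le \tfrac{r/2-4}{r-4}=\tfrac{r-8}{2r-8}=a$, so all pairwise inner products of the $\hat x$ lie in $[-1,a]$.

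It then remains to check that $x\mapsto\hat x$ is injective on $N_2(\alpha)$, so that the code has exactly $|N_2(\alpha)|$ points: as all the $x'$ have the common length $\sqrt{(r-4)/m^*}$, the equality $\hat x=\hat y$ forces $x'=y'$ and hence $x=y$. Thus $\{\hat x:x\in N_2(\alpha)\}$ is a spherical $A$-code with $A\subseteq[-1,a]$ in $\alpha^\perp\cong\RR^{n-1}$. Since $r\ge 8$ gives $a\ge 0$ and the hypothesis $a<(n-1)^{-1/2}$ is exactly the admissibility condition of Lemma~\ref{SphCdeUppBnd} in dimension $n-1$, applying that lemma with $n$ replaced by $n-1$ yields
\[
|N_2(\alpha)|\le\frac{(n-1)(1-a)(2+na)}{1-(n-1)a^2},
\]
which is the claimed inequality.

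I expect the genuinely delicate points to be bookkeeping rather than conceptual: verifying that the projected vectors remain pairwise distinct, so that no collapse reduces the code size (in particular an antipodal projection $x'=-y'$ still contributes two distinct code points), and confirming that the constant produced by the minimal-vector bound matches precisely the threshold $a$ in the hypothesis. The geometric reduction to a one-lower-dimensional spherical code together with the uniform norm $(x',x')=(r-4)/m^*$ are what make the computation go through cleanly.
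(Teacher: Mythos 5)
Your proof is correct and follows essentially the same route as the paper: project $N_2(\alpha)$ orthogonally away from $\alpha$, normalize to get a spherical $[-1,\tfrac{r-8}{2r-8}]$-code in $\RR^{n-1}$ (using $(x,y)\le \min(\Lambda)/2$ for distinct minimal vectors), and apply Lemma~\ref{SphCdeUppBnd} in dimension $n-1$. The only cosmetic difference is that the paper first rescales so that $\min(\Lambda)=1$ and $\min(\Lambda^*)=r$, whereas you carry $m$ and $m^*$ along and let them cancel; your explicit checks of injectivity and of the sign condition $a\ge 0$ are points the paper leaves implicit.
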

\begin{proof}
	Without loss of generality, we rescale $\Lambda$ such that $\min(\Lambda)=1$, and $\min(\Lambda^*)=r(\Lambda) =:r$.
Define
$$
\overline{N_2}(\alpha)=\{ \sqrt{\frac{r}{r-4}}(x-\frac{2}{r}\alpha) \mid x\in N_2(\alpha)\}.
$$
Then $|N_2(\alpha)|=|\overline{N_2}(\alpha)|$, and 
for any two elements $\bar{x},\bar{y}\in\overline{N_2}(\alpha)$, we have $(\bar{x},\alpha)=0$, and
$$
(\bar{x},\bar{y})=\frac{r}{r-4}(x-\frac{2}{r}\alpha, y-\frac{2}{r}\alpha)
\begin{cases}
= 1 & \text{if } \bar{x}=\bar{y}\\
\le \frac{r-8}{2r-8} & \text{if } \bar{x}\ne\bar{y}.
\end{cases}
$$
Hence $\overline{N_2}(\alpha)$ is a spherical $[-1,\frac{r-8}{2r-8}]$-code in $\RR^{n-1}$,
now the assertion follows from Lemma~\ref{SphCdeUppBnd} directly.
\end{proof}

\begin{corollary}\label{boundr8}(\cite[Lemma 2.8]{dim14})
	If $r(\Lambda)=8$ and $\alpha \in \Min(\Lambda )$ then 
    $$|N_2(\alpha ) |\leq 2(n-1) .$$
    If equality holds then the sublattice of $\Lambda $ generated by 
    $N_2(\alpha )$ is similar to the root lattice $D_n$.
\end{corollary}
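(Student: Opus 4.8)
The plan is to read off the inequality as the $r=8$ specialisation of Lemma~\ref{SphBndN_2} and then to recover the root lattice $D_n$ from the equality case of the spherical‑code estimate hidden in its proof. Following that proof I rescale so that $\min(\Lambda)=1$ and $\min(\Lambda^*)=r=8$, and take $\alpha\in\Min(\Lambda^*)$ with $(\alpha,\alpha)=8$, so that the elements of $N_2(\alpha)$ are the minimal vectors $x$ of norm $1$ with $(x,\alpha)=2$. The parameter of Lemma~\ref{SphBndN_2} is $a=(r-8)/(2r-8)$, which for $r=8$ equals $0$, and substituting $a=0$ into the bound there gives $|N_2(\alpha)|\le \tfrac{(n-1)\cdot 1\cdot 2}{1}=2(n-1)$. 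Thus the inequality is immediate, and the real content is the equality statement.

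The geometric core is the projected set $\overline{N_2}(\alpha)=\{\sqrt2\,(x-\tfrac14\alpha)\mid x\in N_2(\alpha)\}$. With $r=8$ these are unit vectors lying in the hyperplane $\alpha^\perp\cong\RR^{n-1}$, and two distinct ones have inner product $2(x,y)-1\le 0$ because $(x,y)\le\tfrac12$ for distinct minimal vectors of a lattice of minimum $1$. So I must establish the classical extremal fact that a set of $2(n-1)$ unit vectors in $\RR^{n-1}$ with pairwise non‑positive inner products is forced to be an orthonormal frame together with its negatives, i.e. of the form $\{\pm f_1,\dots,\pm f_{n-1}\}$. I would prove both the bound $m\le 2d$ and this rigidity by induction on $d$: projecting all vectors onto $v_1^\perp$ yields at least $m-2$ nonzero vectors in $\RR^{d-1}$ with pairwise non‑positive inner products (at most one vector equals $-v_1$ and projects to $0$), whence $m\le 2d$; and in the equality case exactly one vector equals $-v_1$, after which for every remaining $v_a$ the two inequalities $(v_a,v_1)\le 0$ and $(v_a,-v_1)\le 0$ together force $(v_a,v_1)=0$, so all other vectors lie in $v_1^\perp$ and the induction hypothesis identifies them as such a frame. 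I expect this equality argument, in particular the step that uses the presence of the antipode $-v_1$ to force orthogonality to $v_1$, to be the main obstacle, since the counting bound alone does not pin down the configuration.

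Finally I would transport the frame back to the lattice. Writing $\alpha=2\sqrt2\,u$ with $u$ a unit vector orthogonal to the $f_i$, the relation $\sqrt2\,(x-\tfrac14\alpha)=\pm f_i$ shows that $N_2(\alpha)$ consists exactly of the $2(n-1)$ vectors $x_i^{\pm}=\tfrac1{\sqrt2}(\pm f_i+u)$. In the orthonormal coordinates $(f_1,\dots,f_{n-1},u)$ the rescaled vectors $\sqrt2\,x_i^{\pm}$ become $\pm e_i+e_n$, and a direct computation with their sums and differences gives all $e_a\pm e_b$ together with $2e_n$, hence precisely the root lattice $D_n$. Therefore the sublattice of $\Lambda$ generated by $N_2(\alpha)$ equals $\tfrac1{\sqrt2}D_n$ up to isometry and is similar to $D_n$, which completes the equality case.
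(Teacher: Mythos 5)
Your proposal is correct, and it does more than the paper itself does at this point: the paper states the corollary with a citation to \cite[Lemma 2.8]{dim14} and, implicitly, obtains the inequality by specializing Lemma~\ref{SphBndN_2} to $a=(r-8)/(2r-8)=0$; no proof of the equality case appears in this paper at all. Your treatment of the inequality is exactly that specialization, so there you coincide with the paper. Where you genuinely diverge is the equality case: instead of deferring to the cited lemma, you prove the underlying rigidity statement — that $2(n-1)$ unit vectors in $\RR^{n-1}$ with pairwise non-positive inner products must be an orthonormal frame together with its negatives — by induction, and the key step is sound: in the extremal configuration the counting forces the antipode $-v_1$ to be present, and then $(v_a,\pm v_1)\le 0$ forces $(v_a,v_1)=0$ for all remaining vectors, so the induction hypothesis applies inside $v_1^{\perp}$. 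Your back-transport is also correct: with $r=8$ the map $x\mapsto \sqrt{2}\,(x-\tfrac14\alpha)$ sends $N_2(\alpha)$ bijectively onto unit vectors in $\alpha^{\perp}$ with inner products $2(x,y)-1\le 0$, and in the equality case $N_2(\alpha)=\{\tfrac{1}{\sqrt2}(\pm f_i+u)\}$, whose $\ZZ$-span is $\tfrac{1}{\sqrt2}\langle \pm e_i+e_n\rangle=\tfrac{1}{\sqrt2}D_n$ (one checks $2e_n$, $e_i\pm e_j$ all lie in the span), hence similar to $D_n$. What your route buys is a self-contained proof of the equality characterization that this paper only quotes; what the paper's route buys is brevity and consistency with the general $r\ge 8$ spherical-code bound, which also covers the non-extremal values of $r$ where your frame-rigidity argument would not apply.
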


We now apply the above equations to obtain a finite list of pairs $(r(\Lambda ), s(\Lambda ))$  
for dimension $n=16$.

\begin{theorem}\label{TableOfValues}
Let $\Lambda$ be a strongly perfect lattice of dimension 16. Then for $r(\Lambda)$ and
$s(\Lambda)$ only the values in the following table occur or $\Lambda$ is of minimal type, i.e. $r(\Lambda ) = 6$.
\begin{center}
\begin{tabular}{|>{$}c<{$}|>{$}l<{$}|>{$}l<{$}|>{$}l<{$}|>{$}l<{$}|>{$}l<{$}|>{$}l<{$}|>{$}l<{$}|>{$}l<{$}|>{$}l<{$}|>{$}l<{$}|}
  \cline{1-9}
r & 192/31    & 144/23 & 32/5      & 72/11 & 192/29 & 20/3 & 48/7      & 7   \\ 
s & 961\cdot a& 2116   & 450\cdot a& 968   & 841    & 1296 & 196\cdot a& 1152 \\ 
a & 2,3       & -      & 2,3,4     & -     & -      & -    & 2,\dots,6 & -    \\ 
%Ref. & \text{Lem. 2.1} &\\
\hline
r & 64/9 & 36/5      & 22/3 & 96/13     &15/2      & 144/19 & 192/25     &54/7 \\
s & 729  & 400\cdot a& 1296 & 338\cdot a&512\cdot a& 1444   & 625\cdot a &784  \\ 
a & -    & 1,2,3     & -    & 1,\dots,4 &1,2,3     & -      & 1,2        & -   \\
\hline
r  &8         & 384/47 & 90/11     & 33/4 & 192/23    & 42/5   & 144/17     & 128/15     \\
s  &72\cdot a & 2209   & 968\cdot a& 2048 & 529\cdot a& 400\cdot a & 1156\cdot a& 2025   \\
a  &2,\dots,30& -      & 1,2       & -    & 1,\dots,4 & 1,\dots,5  & 1,2        & -  \\
\hline
r   &60/7      &26/3      &96/11     & 150/17 & 384/43 & 9         & 64/7      \\
s   &784\cdot a&648\cdot a&242\cdot a& 2312   & 1849   & 128\cdot a& 441\cdot a\\
a   &1,2       &1,2,3     &1,\dots,9 & -      & -      & 2,\dots,26& 1,\dots,8 \\
\cline{1-8}
\end{tabular}
\end{center}
\end{theorem}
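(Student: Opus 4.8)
The plan is to convert strong perfection into finitely many numerical constraints on the pair $(r,s)=(r(\Lambda),s(\Lambda))$ and then enumerate. I rescale $\Lambda$ so that for a minimal vector $\alpha\in\Min(\Lambda^*)$ one has $m(\alpha,\alpha)=r$, where $m=\min(\Lambda)$. For the range of $r$ I would first invoke Lemma~\ref{min}: either $r=6=(n+2)/3$, which is the minimal-type alternative allowed in the statement, or $r>6$. The upper bound is $r\le\gamma_{16}^2$ by the definition of the Hermite constant, and I substitute the best known numerical value of $\gamma_{16}$ from \cite{CoEl}, giving an explicit bound slightly above $9$. Thus $6<r\le\gamma_{16}^2$.

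The next step produces the admissible $r$. Evaluating \eqref{gleichungen} at $\alpha\in\Min(\Lambda^*)$, where $(x,\alpha)\in\ZZ$ for all $x\in S(\Lambda)$ and $m(\alpha,\alpha)=r$, the right-hand sides of $(D2)$, $(D4)$ and $\frac{1}{12}(D4-D2)$ are non-negative integers, giving
\[
\frac{sr}{16}\in\ZZ,\qquad \frac{sr^{2}}{96}\in\ZZ,\qquad \frac{sr(r-6)}{1152}\in\ZZ_{\ge 0}.
\]
For each fixed rational $r$ these three conditions are congruences linear in $s$, so their common solution set is precisely the positive multiples of a minimal base value $s_0(r)$; hence every admissible half kissing number is $s=a\,s_0(r)$ with $a\in\ZZ_{\ge 1}$. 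Since $s_0(r)\le s\le S_{\max}$, where $S_{\max}$ is the half kissing-number bound from \cite{MittelmannVallentin}, and since $s_0(r)\to\infty$ as the denominator of $r$ grows, only finitely many rationals $r\in(6,\gamma_{16}^2]$ have $s_0(r)\le S_{\max}$. I would enumerate these by a finite search, recording $s_0(r)$ in each case.

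It remains to pin down the admissible multipliers $a$. Whenever $r<9$, Cauchy--Schwarz gives $(x,\alpha)^2\le m(\alpha,\alpha)=r<9$, so $(x,\alpha)\in\{0,\pm1,\pm2\}$ for all $x\in\Min(\Lambda)$, and Corollary~\ref{linkomb} yields the clean identity $|N_2(\alpha)|=\frac{sr(r-6)}{1152}$, i.e.\ the third integer above is exactly $|N_2(\alpha)|$. The bounds on $N_2(\alpha)$ then cut both ways. From below, Lemma~\ref{N_2(alpha)neq1} (valid since $n=16\ge 11$) forbids $|N_2(\alpha)|=1$, which together with the perfection bound $s\ge n(n+1)/2=136$ eliminates the smallest multipliers. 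From above, I use Lemma~\ref{boundn2r<8V1} when $r<8$, Corollary~\ref{boundr8} when $r=8$ (giving $|N_2(\alpha)|\le 2(n-1)=30$), and the spherical-code estimate of Lemma~\ref{SphBndN_2} when $8\le r<9$. Since $s=a\,s_0(r)$ and $|N_2(\alpha)|=\frac{sr(r-6)}{1152}$, each of these inequalities becomes an inequality on $a$, and reading off the integers $a$ in the allowed window gives the corresponding row of the table.

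The main obstacle I anticipate is the borderline regime $r\ge 9$ --- here $r=9$ and $r=64/7$ in the table --- where Cauchy--Schwarz only forces $(x,\alpha)\in\{0,\pm1,\pm2,\pm3\}$, so Corollary~\ref{linkomb} no longer applies and the convenient identity for $|N_2(\alpha)|$ is lost. For these I would fall back on the general linear relation of Lemma~\ref{linkombext} with $\ell=3$ (which also involves $N_3(\alpha)$), bound $a$ from above through the kissing-number bound together with the spherical-code estimate of Lemma~\ref{SphBndN_2} (applicable since $r\ge 8$), and from below through $s\ge 136$. A secondary delicate point is the numerical cut-off near $\gamma_{16}^2$: one must check carefully that the chosen upper bound on $\gamma_{16}$ genuinely excludes every rational $r$ lying strictly above the largest tabulated value $64/7$, which is a finite but somewhat fiddly verification.
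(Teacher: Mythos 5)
Your proposal is correct and follows essentially the same route as the paper's proof: the identical bounds $6<r\le\gamma_{16}^2\le 9.163$ (Lemma~\ref{min} plus \cite{CoEl}) and $136\le s\le 3677$ (\cite{DGS}, \cite{MittelmannVallentin}), the integrality of $\frac{1}{12}(D4-D2)(\alpha)=\frac{sr(r-6)}{1152}$ at $\alpha\in\Min(\Lambda^*)$ — which, through your Lemma~\ref{linkombext} fallback with $\ell=3$, is exactly the paper's equation $6|N_3(\alpha)|+|N_2(\alpha)|=\frac{sr}{192}\left(\frac{r}{6}-1\right)$ — and the same four pruning lemmas \ref{N_2(alpha)neq1}, \ref{boundn2r<8V1}, \ref{SphBndN_2} and \ref{boundr8}, followed by a finite enumeration. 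The only differences are bookkeeping: you add the harmless extra integrality of $(D2)$ and $(D4)$ and split at $r=9$, where your kissing-number bound yields $a\le 28$ for $r=9$ rather than the table's $26$; this is not a gap on your side, since the paper's own Section~\ref{case9} likewise works with $a\in\{2,\dots,28\}$.
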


\begin{proof}
%Let $\Lambda$ be a strongly perfect lattice of dimension 16.
In~\cite{MittelmannVallentin} Mittelmann and Vallentin computed that the kissing number in dimension $16$
is upper bounded by $7355$, so $s(\Lambda)\le 3677$. On the other hand, by the lower bound on the
cardinality of spherical-$5$ designs~\cite[Theorem 5.12]{DGS}, we have $s(\Lambda) \ge 136$.
The Cohn--Elkies bound (see~\cite[Table 3]{CoEl}) implies that the Hermite constant $\gamma_{16} \le 3.027$,
hence 
$$
6 \le r(\Lambda)=\min(\Lambda)\min(\Lambda^*) \le \gamma_{16}^2 \le 9.162729.
$$
Now we compute all solutions of
$$ 6|N_3(\alpha)|+|N_2(\alpha)| = \frac{s(\Lambda)r(\Lambda)}{12\cdot16}\left(\frac{r(\Lambda)}{6}-1\right) $$
where $6|N_3(\alpha)|+|N_2(\alpha)|$ is integral and $r(\Lambda)$ is rational.
The table lists all solutions that satisfy 
 Lemma~\ref{N_2(alpha)neq1}, Lemma~\ref{boundn2r<8V1},
Lemma~\ref{SphBndN_2} and Lemma~\ref{boundr8}. 
\end{proof}

\section{Maximal even lattices}
\newcommand{\Aut}{\mbox{\rm Aut}}
\newcommand{\mass}{\mbox{\rm mass}}
During the classification of strongly perfect lattices we often know that a 
strongly perfect lattice $\Gamma $ is even of a bounded even level $\ell $, and that
$\min (\Gamma ^*) \geq d$. Then $\Gamma $ is contained in a maximal even lattice
$M$, $$\Gamma \subseteq M \subseteq M^* \subseteq \Gamma ^*$$ 
such that the even level of $M$ divides $\ell $ and $\min (M^*) \geq \min (\Gamma ^*) \geq d$. 
Therefore it is helpfull to know all such maximal even lattices $M$.
Then we may construct the lattice $\Gamma $ as a sublattice of $M$.

The set of all maximal lattices can be partitioned into genera, 
where two lattices belong to the same genus, if they are isometric locally everywhere.
Any genus 
 consists of finitely many isometry classes the number of which is called the 
 class number of the genus.
To find all maximal lattices of a given determinant we first list all possible 
genera and then construct all lattices in the genus 
using the Kneser neighbouring method
\cite{Kneser} (see also \cite{ScharlauHemkemeier}).
To check completeness we additionally compute the 
mass of the genus and use the mass formula.

\begin{proposition}\label{maxeven} 
The following table lists all genera of  maximal even lattices $M$ such that
$\det M=2^a3^b$ for some nonnegative integers $a$ and $b$.
The first column gives the genus symbol as explained in \cite[Chapter 15]{SPLAG}, followed by the class number $h$.
Then we give one representative of the genus which is usually
a root lattice, in which $\perp $ denotes the orthogonal sum.
%$(a)$ a 1-dimensional lattice with Grammatrix $(a)$. 
The last column gives the mass of the genus.
\end{proposition}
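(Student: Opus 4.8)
The plan is to reduce the statement to a finite computation carried out in three stages: first bound the determinant, then enumerate the admissible genera, and finally enumerate the isometry classes inside each genus. The finiteness of the whole problem comes from the maximality hypothesis alone, so no external determinant bound (e.g.\ from Lemma~\ref{DeterminantBound}) is needed here.

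First I would observe that maximality already forces the exponents $a$ and $b$ to be bounded, so that only finitely many determinants $2^a3^b$ occur. Maximality is a local condition: an even lattice $M$ is maximal if and only if each completion $M_p$ is a maximal even $\ZZ_p$-lattice, which (up to the usual correction at $p=2$) is equivalent to the discriminant form on $M_p^*/M_p$ being anisotropic. For the odd prime $p=3$ evenness is automatic, and the anisotropic discriminant form is a nondegenerate quadratic form over $\Fp$; since any quadratic form over $\Fp$ of rank $\ge 3$ is isotropic, the $3$-modular Jordan constituent has rank at most $2$, whence $b\le 2$. For $p=2$ the analogous bound comes from the fact that an anisotropic quadratic space over $\QQ_2$ has dimension at most $4$, which bounds the scale of the non-unimodular $2$-adic Jordan constituents and hence bounds $a$. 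Together with the fixed invariants (positive definite of rank $16$) this leaves only finitely many candidate determinants.

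Next, for each such determinant $2^a3^b$ I would list all genus symbols in the sense of \cite[Chapter 15]{SPLAG} compatible with rank $16$, positive definiteness, evenness, and the local maximality (anisotropy) conditions just described. Not every combination of local symbols glues to a global genus: the $2$-adic oddity must match the product of the odd-prime signs via the oddity formula, equivalently the product formula for the Hasse--Minkowski invariants must hold. Discarding the inconsistent symbols and checking nonemptiness by Hasse--Minkowski leaves exactly the genera in the table; for each I would exhibit the stated representative (typically a root lattice such as $E_8\perp E_8$, or a suitable orthogonal sum of smaller root lattices) and verify membership by computing its local symbols. Finally, within each genus I would determine the class number $h$ and a full set of representatives: I compute the mass from the Minkowski--Siegel formula in the Conway--Sloane form, then generate lattices by the Kneser $p$-neighbour method \cite{Kneser,ScharlauHemkemeier} starting from the given representative, accumulating $\sum_i 1/|\Aut(M_i)|$ over the pairwise non-isometric neighbours found and stopping once this sum equals the computed $\mass$ of the genus; equality certifies both the value of $h$ and that no class is missing.

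The main obstacle is \emph{completeness}, on two levels. At the level of genera, one must be certain that the local anisotropy analysis and the global product formula have been applied correctly, so that no admissible genus is overlooked; this is the conceptually delicate part. At the level of classes, the neighbour enumeration and the automorphism-group orders are heavy but routine machine computations whose completeness is guaranteed precisely by agreement with the mass formula, so the only real risk lies in the correctness of the local bookkeeping of the genus symbols and of the mass computation.
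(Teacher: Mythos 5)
Your proposal is correct and follows essentially the same route as the paper's proof: maximality forces the discriminant form $q(x+M)=\tfrac{1}{2}(x,x)+\ZZ$ to be anisotropic, which bounds the Sylow $3$- and $2$-parts of $M^*/M$ (the paper gets orders at most $9$ and $8$, so $\det M \mid 72$), after which one enumerates the admissible genus symbols, constructs a representative in each, and certifies the complete list of classes by Kneser neighbouring checked against the mass formula. The only cosmetic difference is that you phrase the $2$-adic bound via the dimension of anisotropic $\QQ_2$-spaces and fold maximality into the symbol conditions, whereas the paper cites the explicit list of anisotropic finite quadratic $2$-groups and checks maximality on constructed representatives; these are interchangeable.
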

$$
\begin{array}{|c|c|c|c|c|}
	\hline
	\mbox{ genus } & \mbox{ level } & h & \mbox{ repr. } & \mbox{ mass } \\
	\hline
	\textup{II}_{16}  & 1 & 2  & E_8 \perp E_8      &  691/(2^{30}3^{10}5^{4}7^{2}\cdot 11\cdot 13)\\\hline
	\textup{II}_{16}(2^{-1}_34^1_1)  & 8 & 14 & E_7 \perp D_{9}    &  691\cdot 24611/(2^{27}3^{9}5^{3}7^{2}\cdot 11\cdot 13)\\\hline 
	\textup{II}_{16}(2^{-2}_23^{-1})  & 12& 17 & A_2 \perp D_{14}   &  691\cdot 1801/(2^{27}3^{9}5^{3}7^{2})\\\hline
	\textup{II}_{16}(2^{2}_23^1)  & 12& 19 & E_6 \perp D_{10}   &  691\cdot 1801/(2^{27}3^{9}5^{3}7^{2})\\\hline
	\textup{II}_{16}(2^{-1}_34^1_73^{-1})  & 24& 60 & A_2 \perp E_7 \perp D_7 & 73\cdot 193\cdot 691\cdot 1103/(2^{27}3^{8}5^{3}7^{2}\cdot 11\cdot 13)\\\hline
	\textup{II}_{16}(2^{-1}_34^{-1}_33^1) & 24& 57 & E_6 \perp E_7 \perp D_3 & 73\cdot 193\cdot 691\cdot 1103/(2^{27}3^{8}5^{3}7^{2}\cdot 11\cdot 13)\\\hline
	\textup{II}_{16}(2^{-2}_{II}3^2) & 6 & 45 & A_2 \perp A_2 \perp D_{12} & 17\cdot 41\cdot 127\cdot 691\cdot 1093/(2^{28}3^{10}5^{2}7^{2}\cdot11\cdot13)\\\hline
	\textup{II}_{16}(2^{-1}_34^{-1}_53^2)  & 24& 294& A_2 \perp A_2 \perp E_7 \perp D_5 & 17\cdot 193\cdot 547\cdot691\cdot14611/(2^{27}3^{9}5^{3}7^{2}\cdot11\cdot13)\\\hline
\end{array} 
$$

\begin{proof}
	Let $M$ be a maximal even lattice.
	Then $$q:M^*/M \to \QQ/\ZZ , q(x+M) := \frac{1}{2} (x,x) + \ZZ $$ defines an anisotropic 
	quadratic form on the discriminant group. 
	Clearly $(M^*/M , q)$ is the orthogonal sum of its Sylow $p$-subgroups. 
	For $p>2$ the Sylow $p$-subgroup is elementary abelian of order $1,p,$ or $p^2$ (see \cite[Section 5.1]{Scharlau}).
	For $p=2$ \cite[Lemma 2.5]{dim10} lists the orthogonal summands of anisotropic $2$-groups,
	from which we conclude that the order of the Sylow 2-subgroup of $M^*/M$ is bounded by $8$. 
	So we are left to enumerate all genus symbols of 16-dimensional even lattices of determinant
	dividing $72$, construct one lattice in each genus, check maximality and then 
	compute representatives for all isometry classes in the genus with  the Kneser neighbouring method.
\end{proof}

\begin{lemma}\label{minge3/2}
	  Let $\Lambda$ be a strongly perfect even lattice of dimension $16$.
	    If $\det(\Lambda)=2^a3^b$ for some nonnegative integers $a,b$ and $\min(\Lambda^*)\ge 3/2$,
	      then $\Lambda$ is similar to one of $\Lambda_{16},\Gamma _{16},$ or $O_{16}^*$ 
	      as given in Theorem \ref{main}.
      \end{lemma}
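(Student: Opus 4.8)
The plan is to exploit the maximal even lattice framework of Section 4 to trap $\Lambda$ between two lattices of bounded determinant and then to reduce the classification to a finite search. Since $\Lambda$ is even it is integral, so it is contained in a maximal even lattice $M$ with $\Lambda\subseteq M\subseteq M^*\subseteq\Lambda^*$. From $\Lambda\subseteq M$ we get $\det(M)=\det(\Lambda)/[M:\Lambda]^2=2^{a'}3^{b'}$ for some $a'\le a$ and $b'\le b$, so by Proposition \ref{maxeven} the lattice $M$ lies in one of the eight listed genera. Moreover $M^*\subseteq\Lambda^*$ forces $\min(M^*)\ge\min(\Lambda^*)\ge 3/2$. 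The first step is therefore to list all maximal even lattices $M$ in these genera with $\min(M^*)\ge 3/2$: for each genus I would run through the isometry classes produced by the Kneser neighbouring method (their number is the class number $h$, with completeness certified by the mass recorded in Proposition \ref{maxeven}) and discard every class whose dual has minimum below $3/2$. I expect only a very short list to survive; the two even unimodular lattices $E_8\perp E_8$ and $D_{16}^+$ certainly pass, since their duals have minimum $2$, whereas for the higher-determinant genera the constraint $\det(M^*)=\det(M)^{-1}\le 1/8$ makes $\min(M^*)\ge 3/2$ hard to meet and must be checked class by class.

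For each surviving $M$ I would then recover $\Lambda$ as a sublattice of $M$; note that every sublattice of the even lattice $M$ is automatically even, so evenness imposes no extra condition. The index is bounded: by Lemma \ref{DeterminantBound}, $\det(\Lambda)\le(\gamma_{16}/\min(\Lambda^*))^{16}\le(2\gamma_{16}/3)^{16}$, whence $[M:\Lambda]=(\det(\Lambda)/\det(M))^{1/2}$ is at most a couple of hundred, and $[M:\Lambda]$ is a $\{2,3\}$-number because $\det(\Lambda)=[M:\Lambda]^2\det(M)=2^a3^b$. Thus there are only finitely many candidates, namely the sublattices $\Lambda\le M$ of such index whose dual still satisfies $\min(\Lambda^*)\ge 3/2$, equivalently those for which the overlattice $\Lambda^*\supseteq M^*$ picks up no vector of norm below $3/2$. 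Enumerating these and sorting them into isometry classes is a finite, if sizeable, computation.

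Finally I would impose strong perfection. Theorem \ref{TableOfValues} confines $r(\Lambda)=\min(\Lambda)\min(\Lambda^*)$ to a finite set of values, and since $\Lambda$ is even $\min(\Lambda)$ is an even integer, which prunes the candidate list sharply; the configuration results of Section 3, in particular Corollary \ref{linkomb} and Lemma \ref{boundn2r<8V1}, further restrict the admissible sets $N_2(\alpha)$ for $\alpha\in\Min(\Lambda^*)$ and can be used to eliminate candidates before testing. For each lattice that survives I would verify strong perfection directly through the design identity \eqref{D4}, and identify the three that pass as $\Lambda_{16}$, $\Gamma_{16}$ and $O_{16}^*$ up to similarity, matching them against the data of Theorem \ref{main}. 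The main obstacle is computational rather than conceptual: the genera with large class number, up to $294$, force one to examine several hundred maximal even lattices and, for the survivors, a potentially large family of sublattices of index up to a couple of hundred. The delicate point is certifying completeness at every stage — that the mass formula accounts for all isometry classes in each genus, and that the determinant bound of Lemma \ref{DeterminantBound} leaves no admissible sublattice unexamined — so that no strongly perfect lattice is overlooked.
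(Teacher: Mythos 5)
Your proposal is correct and takes essentially the same route as the paper: its proof starts from the maximal even lattices of Proposition~\ref{maxeven} and successively constructs sublattices of index $2$ and $3$ subject to the pruning condition $\min(L^*)\ge 3/2$ (a tree search whose termination rests on exactly the determinant bound of Lemma~\ref{DeterminantBound} that you invoke), arriving at $63$ isometry classes of which precisely three are strongly perfect. Your one-pass enumeration of sublattices of bounded $\{2,3\}$-index, filtered by $\min(\Lambda^*)\ge 3/2$ and then tested for strong perfection, is the same computation organized slightly differently, since any such sublattice is reached by a chain of index-$2$ and index-$3$ steps all of whose intermediate lattices also satisfy the dual-minimum condition.
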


      \begin{proof}
	     Starting with the lattices $M$ from Proposition \ref{maxeven} we successively construct 
	     sublattices $L$ of index 2 and 3 such that $\min(L^*) \ge 3/2$. 
	     The total number of isometry classes of such lattices is 63, only three of them are strongly perfect. 
      \end{proof}

      \begin{lemma}\label{leveldiv6}
	        Let $\Lambda$ be a strongly perfect even lattice of dimension $16$.
		  If the even level of $\Lambda$ divides $6$ and $\min(\Lambda^*)\ge 1$, then $\Lambda\cong\Lambda_{16}$.
	  \end{lemma}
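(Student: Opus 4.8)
The plan is to convert the hypothesis on the even level into a constraint on $\det(\Lambda)$ that brings Proposition~\ref{maxeven} into play, and then to treat the ranges $\min(\Lambda^*)\ge 3/2$ and $\min(\Lambda^*)<3/2$ separately, the first by citing Lemma~\ref{minge3/2} and the second by a direct analysis.

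First I would extract the arithmetic consequences of the level bound. Writing $\ell$ for the even level of $\Lambda$ we have $\ell\mid 6$ and $\sqrt{\ell}\,\Lambda^*$ even; since $6/\ell$ is a positive integer, scaling an even lattice shows $\sqrt{6}\,\Lambda^*$ is even as well. In particular $6(x,y)\in\ZZ$ for all $x,y\in\Lambda^*$, so $6\Lambda^*\subseteq\Lambda^{**}=\Lambda$ and $\det(\Lambda)=|\Lambda^*/\Lambda|$ divides $6^{16}$; thus $\det(\Lambda)=2^a3^b$. Now $\Lambda$ lies in a maximal even lattice $M$ with $\Lambda\subseteq M\subseteq M^*\subseteq\Lambda^*$, and as explained at the beginning of this section the even level of $M$ divides $\ell$, hence divides $6$, while $\min(M^*)\ge\min(\Lambda^*)\ge 1$. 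Comparing with the level column of Proposition~\ref{maxeven}, the only admissible genera are $\textup{II}_{16}$ and $\textup{II}_{16}(2^{-2}_{II}3^2)$, so $M$ is one of the finitely many lattices in these two genera with $\min(M^*)\ge 1$.

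If $\min(\Lambda^*)\ge 3/2$ I would simply invoke Lemma~\ref{minge3/2}, which under the determinant condition just established forces $\Lambda$ to be similar to $\Lambda_{16}$, $\Gamma_{16}$ or $O_{16}^*$. It then suffices to compute the even levels of these three lattices: the Barnes--Wall lattice $\Lambda_{16}$ is $\sqrt2$-modular of even level $2\mid 6$, whereas a direct computation shows that the even levels of $\Gamma_{16}$ and of $O_{16}^*$ do not divide $6$. Hence only $\Lambda_{16}$ survives, and $\Lambda\cong\Lambda_{16}$ in this range.

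The remaining range $1\le\min(\Lambda^*)<3/2$ is the crux. Because $\sqrt{6}\,\Lambda^*$ is even, $6\min(\Lambda^*)$ is an even positive integer, so $\min(\Lambda^*)\in\frac13\ZZ$, leaving $\min(\Lambda^*)\in\{1,4/3\}$ in this interval; combined with $6\le r(\Lambda)\le\gamma_{16}^2<9.17$ (Lemma~\ref{min} and the Cohn--Elkies bound used in Theorem~\ref{TableOfValues}) and the fact that $\min(\Lambda)$ is an even integer, the only surviving triples $(\min(\Lambda),\min(\Lambda^*),r)$ are $(6,1,6)$, $(8,1,8)$ and $(6,4/3,8)$. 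The first is of minimal type and is excluded by the analysis of minimal type lattices in dimension $16$ (or, like the others, by the enumeration below). For the two cases with $r=8$, Corollary~\ref{linkomb} applied to $\alpha\in\Min(\Lambda^*)$ gives in both cases $|N_2(\alpha)|=s/72$ and $\sum_{x\in N_2(\alpha)}x=c\alpha$, while Lemma~\ref{SphBndN_2} bounds $|N_2(\alpha)|\le 30$; using Lemma~\ref{DeterminantBound} to bound $\det(\Lambda)=2^a3^b$ then restricts $\Lambda$ to finitely many sublattices of the maximal lattices $M$ from the first paragraph, which I would enumerate and test for strong perfection exactly as in Lemma~\ref{minge3/2}. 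The main obstacle is the size of this enumeration: with only $\min(\Lambda^*)\ge 1$ the index $[M:\Lambda]$ is far larger than the indices $2$ and $3$ that sufficed there, so the search must be pruned heavily by the design equations~\eqref{gleichungen} and the admissible kissing numbers of Theorem~\ref{TableOfValues}; alternatively the modular form obstruction described in the introduction can be used to exclude the two candidate theta series of weight $8$ and level dividing $6$ beginning $1+2sq^{\min(\Lambda)}+\cdots$. Either way no strongly perfect lattice arises in this range, so $\Lambda\cong\Lambda_{16}$.
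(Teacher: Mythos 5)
Your preliminary reductions are all correct, and your overall strategy is in fact the paper's own: the paper proves this lemma by starting from the maximal even lattices of Proposition~\ref{maxeven} (effectively only the genera $\textup{II}_{16}$ and $\textup{II}_{16}(2^{-2}_{II}3^2)$, whose levels divide $6$, exactly as you argue) and exhaustively computing all sublattices of even level dividing $6$ with dual minimum at least $1$ --- there are $49552$ isometry classes --- of which exactly one, $\Lambda_{16}$, turns out to be strongly perfect. Your two refinements are sound and would genuinely prune that search: splitting off $\min(\Lambda^*)\ge 3/2$ via Lemma~\ref{minge3/2} (the even levels of $\Gamma_{16}$ and of the even rescaling of $O_{16}^*$ are indeed divisible by $4$, since the exponent of $\Gamma_{16}^*/\Gamma_{16}$ is $4$ and $O_{16}$ contains vectors of odd norm), and narrowing the remaining range to the three triples $(6,1,6)$, $(8,1,8)$, $(6,4/3,8)$.

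The gap is that in the remaining range $1\le\min(\Lambda^*)<3/2$ you never actually establish anything: the assertion ``no strongly perfect lattice arises in this range'' is precisely the content of the lemma, and you state it while only describing the enumeration (or modular-form computation) that would have to verify it. That computation \emph{is} the proof; in the paper it is the $49552$-class search. Moreover, the one non-computational shortcut you offer --- excluding the minimal-type case $(6,1,6)$ by ``the analysis of minimal type lattices in dimension 16'' --- is circular: that analysis (Section~\ref{mintyp}, Theorem~\ref{r6}) invokes Lemma~\ref{leveldiv6} itself to dispose of $23$ of its cases, and in any event it concerns \emph{dual} strongly perfect lattices, whereas here only $\Lambda$ is assumed strongly perfect, so it does not apply. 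Your fallback (``or, like the others, by the enumeration below'') avoids the circularity but returns you to the unperformed computation. Finally, the modular-form alternative is weaker here than in Section~\ref{ModForm}: $s(\Lambda^*)$ is unknown, so half of the linear constraints in \eqref{LP} are unavailable, and since the genus of $\Lambda$ is not determined there are considerably more than ``two candidate theta series'' to exclude; whether this suffices is again something that would have to be checked by machine, not asserted.
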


	  \begin{proof}
		  As in the proof of Lemma \ref{minge3/2} we start with the maximal even lattices 
		  and
		  sucessively compute sublattices $L$ of even level dividing $6$ with $\min (L^*) \ge 1$. 
		    There are in total $49552$ isometry classes of such lattices.
 Among those lattices there is only one strongly perfect lattice $\Lambda_{16}$.
	  \end{proof}

  \section{Dual strongly perfect lattices}\label{dsp}
A lattice $\Lambda\subset\RR^n$ is called \emph{dual strongly perfect} if both $\Lambda$ and its dual $\Lambda^*$
are strongly perfect. As both lattices $\Lambda $ and $\Lambda ^*$ are extreme and the characterization of 
dual extreme lattices in \cite[Section 10.5]{Martinet} allows to deduce that dual strongly perfect lattices
realize a local maximum of the Berg\'e-Martinet invariant  $r(\Lambda ) = \min(\Lambda ) \min (\Lambda ^*)$ on
the space of similarity classes of $n$-dimensional lattices. 

The aim of the rest of this paper is to prove the following main result.
\begin{theorem}\label{main}
	Let $(\Lambda , \Lambda ^*)$ be a pair of dual strongly perfect lattices in dimension 16.
	Then, up to similarity and interchanging $\Lambda $ and $\Lambda ^*$, the lattices are 
	as given in the following table.
\end{theorem}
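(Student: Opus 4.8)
The plan is to follow the strategy laid out in the introduction, turning the numerical bounds already assembled into a finite case distinction indexed by the Berg\'e--Martinet invariant, and then settling each case either by a direct lattice enumeration or by a modular forms computation.

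First I would use Theorem \ref{TableOfValues} together with duality to make the problem finite. Since $\Lambda$ and $\Lambda^*$ are both strongly perfect and share the common value $r := r(\Lambda) = r(\Lambda^*)$, the pairs $(r, s(\Lambda))$ and $(r, s(\Lambda^*))$ must both occur in the table, unless $\Lambda$ is of minimal type with $r = 6$ (Lemma \ref{min}). Fixing one admissible value of $r$, I would exploit the integrality of the expressions in \eqref{gleichungen} evaluated on minimal vectors of $\Lambda^*$ to produce a factorization $r = m \cdot d$ for which, after rescaling so that $\min(\Lambda^*) = m$, the lattice $\Lambda^*$ is even; then $\Lambda^* \subseteq \Lambda$ and $\min(\Lambda) = d$. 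The symmetric argument applied to $\Lambda$ gives $r = m' \cdot d'$ making $\Lambda$ even in its own scaling. Feeding the minima into Lemma \ref{DeterminantBound} bounds $\det(\Lambda)$, the even-level constraint bounds the level, and the two factorizations bound the exponent $\exp(\Lambda^*/\Lambda)$, which divides $m/d'$. Thus each value of $r$ leaves only a bounded determinant and level to inspect.

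Next I would split the surviving values of $r$ according to the primes dividing the determinant of the even rescaling. When only $2$ and $3$ occur, the even lattice is contained in one of the eight genera of maximal even lattices of Proposition \ref{maxeven}, and I would descend through sublattices of index a power of $2$ and $3$ exactly as in the proofs of Lemmas \ref{minge3/2} and \ref{leveldiv6}; this enumeration yields precisely $\Lambda_{16}$, $\Gamma_{16}$ and $O_{16}^*$, accounting together with their duals for all dual strongly perfect lattices in this sub-case. The minimal-type case $r = 6$ and the case of even level dividing $6$ are absorbed here and produce nothing beyond $\Lambda_{16}$. For the remaining values of $r$, whose even rescaling has determinant divisible by a prime $\geq 5$ (the extremal $5$-modular lattice $N_{16}$ occupying the level-$5$ case), a direct enumeration is impractical and I would invoke the modular forms method of Section \ref{ModForm}: the theta series $\theta_\Lambda$ must be a modular form of weight $n/2 = 8$ and level $m/d'$ beginning $1 + 2 s(\Lambda) q^{m'} + \cdots$ whose image under the Fricke involution begins $1 + 2 s(\Lambda^*) q^{m} + \cdots$, and both $q$-expansions must have nonnegative integral coefficients. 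Imposing these conditions either pins down $N_{16}$ or shows that the relevant space of forms is empty, thereby excluding the value of $r$.

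The main obstacle I expect is the sheer volume of the residual analysis rather than a single conceptual gap. On the geometric side the genera of Proposition \ref{maxeven} have class numbers as large as $294$ and the sublattice searches run into the tens of thousands of isometry classes, as the counts $63$ and $49552$ in Lemmas \ref{minge3/2} and \ref{leveldiv6} already show, so the computation must be organised so that strong perfectness can be tested efficiently on every candidate. On the arithmetic side the decisive and most delicate step is the modular forms exclusion: for most of the entries of the table in Theorem \ref{TableOfValues} one must verify, for each admissible level and each admissible pair $(s(\Lambda), s(\Lambda^*))$, that no weight-$8$ form simultaneously satisfies the leading-coefficient, sign and integrality constraints on itself and on its Fricke transform. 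This is where the non-existence of all the spurious candidate invariants is actually established, and hence where the bulk of the difficulty lies.
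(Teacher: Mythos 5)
Your outline reproduces the paper's general roadmap (it is essentially the strategy sketched in the introduction: finiteness via Theorem \ref{TableOfValues} and duality, even rescalings $r=m\cdot d$ and $r=m'\cdot d'$, bounds on determinant, level and $\exp(\Lambda^*/\Lambda)$, then enumeration from Proposition \ref{maxeven} or modular forms). But as a proof it has genuine gaps, because the dichotomy you propose --- ``determinant supported on $\{2,3\}$ $\Rightarrow$ enumerate; otherwise $\Rightarrow$ modular forms'' --- does not actually close all the cases, and the paper needs several further techniques precisely where your two tools fail. First, the modular form method of Section \ref{ModForm} is purely an \emph{exclusion} tool: it tests feasibility of the linear constraints \eqref{LP} on theta coefficients, so it can never ``pin down $N_{16}$'' as you claim. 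In the surviving case $r=36/5$, $s(\Lambda)=s(\Lambda^*)=1200$ the paper must invoke a separate uniqueness argument (the proof of Bachoc--Venkov, \cite[Theorem 8.1]{BachocVenkov}) to conclude $\Lambda\cong N_{16}$; likewise at $r=8$, $a=30$, identifying $\Lambda_{16}$ requires an explicit argument (the sublattice $\langle N_2(\alpha)\rangle$ together with a search of its overlattices), not a theta-series computation. Without such positive classification steps your proof cannot produce the lattices in the table, only fail to exclude them.

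Second, several exclusions resist both of your tools and require methods you never mention: $r=20/3$ and $r=32/5$ are killed by the quadratic-polynomial inequality of Lemma \ref{lemma:polynomial_method}; $r=144/17$ by a $3$-point semidefinite programming bound on spherical codes; $r=7$ and two subcases of $r=36/5$ by brute-force Gram matrix completion; and $r=8$ with $a=b=28$ by the combinatorial decomposition $N_2(\alpha)=E_1\cup\dots\cup E_k$. You also omit Lemma \ref{N_2N_2Dual}, which cuts the $32$ entries of Theorem \ref{TableOfValues} down to the tractable list of Remark \ref{shortTableOfValues}. Finally, your enumeration step is not feasible at the claimed generality: for $r=8$ the even rescaling only has level dividing $8$ or $24$ with dual minimum as low as $1$ (Lemma \ref{lemma:r=8_minimum}), so a descent from the maximal lattices would vastly exceed the $49552$ classes already needed in Lemma \ref{leveldiv6}; the paper instead treats these genera by modular forms plus the combinatorial arguments above. (A smaller point: the minimal-type case $r=6$ is exactly where $\Gamma_{16}$ and $O_{16}$ and their duals arise, via Theorem \ref{r6}; it is not absorbed into a computation producing ``nothing beyond $\Lambda_{16}$'' --- $\Lambda_{16}$ itself has $r=8$.)
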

\begin{center}
\begin{tabular}{|c|c|c|c|c|c|} 
		\hline
	name & m & d & s & t & smith \\ 
		\hline
	$\Lambda _{16}$ & 4 & 2 & 2160 & 2160 & $2^8$ \\ 
	$N _{16}$ & 6 & 6/5 & 1200 & 1200 & $5^8$ \\ 
		$O_{16}$ & 3 & 2 & 256 & 1008 & $2^6$ \\ 
		$\Gamma _{16}$ &  4 & 3/2 & 432 & 768 & $2^84^2$ \\
		\hline
	\end{tabular}
\end{center}
	The first column gives the name of the lattice $\Lambda $, rescaled such that 
	$\Lambda $ is integral and primitive. The lattices in the
	first three rows are already in \cite[Table 19.1]{Venkov}. 
	The lattice $\Gamma _{16}$ is a sublattice of $\Lambda _{16}$ and described as $\Gamma _{\{ 2\}}$ in
	\cite[Section 9]{HuNebe}.
	The other columns give $m= \min (\Lambda )$, $d=\min (\Lambda ^*)$, $s= s(\Lambda )$ and $t=s(\Lambda ^*)$. 
	The last column displays the Smith invariant of the finite abelian group $\Lambda ^*/\Lambda $.

Let $\Lambda $ be a dual strongly perfect lattice. 
Clearly  $r(\Lambda ) = r(\Lambda ^*)$ and for both lattices we are hence in the 
same of the 32 cases listed in Theorem \ref{TableOfValues}. 

A purely computational argument allowing to exclude quite a few cases from 
Theorem \ref{TableOfValues} is provided by the following result proved in the thesis of Elisabeth Nossek.

\begin{lemma}\label{N_2N_2Dual}(\cite[Lemma 2.7.20]{Nossek})
  Let $\Lambda$ be a dual strongly perfect lattice of dimension $n$.
  Put $r=r(\Lambda)=r(\Lambda^*), s=s(\Lambda) $, and $t=s(\Lambda ^*)$.
  Then $$\frac{s\cdot t\cdot r}{(6n)^2}\left(\frac{3r}{n+2}-1\right)^2\in\ZZ.$$
\end{lemma}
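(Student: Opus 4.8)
The plan is to recognise the quantity in the statement as the product $c\cdot c'$ of the two constants produced by Lemma~\ref{linkombext}, one applied to $\Lambda$ and one to $\Lambda^*$, and then to produce a concrete vector of $\Lambda^*$ whose primitivity forces that product to be an integer. Throughout set $m=\min(\Lambda)$ and $d=\min(\Lambda^*)$, so that $r=md$, and let $M\subseteq\Lambda$ denote the sublattice generated by $\Min(\Lambda)$.

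First I would fix a minimal vector $\alpha\in\Min(\Lambda^*)$ and apply Lemma~\ref{linkombext} to $\Lambda$. Since $(x,\alpha)\in\ZZ$ for every $x\in\Min(\Lambda)$ and $(\alpha,\alpha)=d$, the relevant constant is $c=\frac{sm}{6n}\left(\frac{3r}{n+2}-1\right)$, and Equation~\eqref{eins} reads $c\alpha=\sum_{i\ge2}\frac{i(i^2-1)}{6}\sum_{x\in N_i(\alpha)}x$. As each coefficient $\frac{i(i^2-1)}{6}$ is an integer and each $x$ lies in $\Min(\Lambda)$, this exhibits $c\alpha$ as an integral combination of minimal vectors of $\Lambda$; in particular $c\alpha\in M$. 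This is the delicate point: I obtain membership in the minimal-vector sublattice, not merely in $\Lambda$.

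Next I would run the same lemma for the strongly perfect lattice $\Lambda^*$. For each $\beta\in\Min(\Lambda)=\Min((\Lambda^*)^*)$ the associated constant is $c'=\frac{td}{6n}\left(\frac{3r}{n+2}-1\right)$, and Equation~\eqref{eins} now displays $c'\beta$ as an integral combination of minimal vectors of $\Lambda^*$, whence $c'\beta\in\Lambda^*$. Because $\Lambda^*$ is a group and the vectors $\beta\in\Min(\Lambda)$ generate $M$ by definition, I conclude $c'M\subseteq\Lambda^*$. Combining the two steps gives $c'(c\alpha)\in c'M\subseteq\Lambda^*$, i.e.\ $cc'\,\alpha\in\Lambda^*$, and a direct computation using $md=r$ yields $cc'=\frac{str}{(6n)^2}\left(\frac{3r}{n+2}-1\right)^2$. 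Finally, since $\alpha$ is a minimal vector it is primitive in $\Lambda^*$, so $\Lambda^*\cap\RR\alpha=\ZZ\alpha$; hence $cc'\,\alpha\in\Lambda^*$ forces $cc'\in\ZZ$, which is exactly the assertion.

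The hard part is the insistence in the second paragraph that $c\alpha$ lands in the minimal-vector sublattice $M$ rather than only in $\Lambda$. Were $c\alpha$ merely known to lie in $\Lambda$, the concluding multiplication $c'(c\alpha)\in\Lambda^*$ would require the much stronger (and in general false) hypothesis that $\Lambda$ is generated by its minimal vectors; it is precisely the explicit shape of \eqref{eins} as a $\ZZ$-combination of elements of $\Min(\Lambda)$ that makes the argument go through unconditionally. The remaining steps are the routine verification that the two constants multiply to the stated expression and the elementary observation that a minimal vector is primitive.
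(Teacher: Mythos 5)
Your proposal is correct and follows essentially the same route as the paper's proof: both apply Lemma~\ref{linkombext} once to $\Lambda$ (giving $c\alpha$ as a $\ZZ$-combination of minimal vectors of $\Lambda$) and once to $\Lambda^*$ (giving $c'\beta\in\Lambda^*$ for each $\beta\in\Min(\Lambda)$), compose the two to conclude $cc'\alpha\in\Lambda^*$, and then use minimality of $\alpha$ to force $cc'=\frac{str}{(6n)^2}\left(\frac{3r}{n+2}-1\right)^2\in\ZZ$. The only cosmetic differences are your general scaling $r=md$ versus the paper's normalization $m=1$, $d=r$, and your appeal to primitivity of $\alpha$ where the paper argues that a nonzero fractional part $\{cc'\}$ would make $\{cc'\}\alpha$ a too-short nonzero vector of $\Lambda^*$.
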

\begin{proof}
  Rescale $\Lambda$ such that $\min(\Lambda)=1$ and $\min(\Lambda^*)=r$.
  Denote $l=\max\{(x,\alpha): x \in \Min(\Lambda), \alpha\in\Min(\Lambda^*)\}$. 
  Let $x\in\Min(\Lambda)$ and $\alpha\in\Min(\Lambda^*)$. 
  For $i=1,\ldots,l$, 
  set $N_{i,\Lambda}(\alpha ) = \{ y\in \Min(\Lambda) \mid (y,\alpha ) = i \}$,
  and $N_{i,\Lambda^*}(x)=\{\beta \in \Min(\Lambda^*) \ |\ (x,\beta) =i\}$. Let
  \begin{align*}
  c &=\frac{s}{6n}\left(\frac{3r}{n+2}-1 \right),\\
  c' &=\frac{tr}{6n}\left(\frac{3r}{n+2}-1 \right).
  \end{align*}
  By Lemma~\ref{linkombext},
  \begin{align*}
    \sum_{i=2}^{l}\,\sum_{y\in N_{i,\Lambda}(\alpha)}\frac{i(i^2-1)}{6}y&=c\alpha,\\
    \sum_{i=2}^{l}\,\sum_{\beta\in N_{i,\Lambda^*}(x)}\frac{i(i^2-1)}{6}\beta&=c'x.
    %\sum_{y\in N_{2,\Lambda}(\alpha)}y+4\sum_{z\in N_{3,\Lambda}(\alpha)}z &= c \alpha,\\
    %\sum_{\beta\in N_{2,\Lambda^*}(x)}\beta+4\sum_{\gamma\in N_{3,\Lambda^*}(x)}\gamma &= d x.
  \end{align*}
  Hence
  \begin{align*}
    cc'\alpha
    = \sum_{i=2}^{l}\,\sum_{y\in N_{i,\Lambda}(\alpha)}\frac{i(i^2-1)}{6}c'y
    = \sum_{i=2}^{l}\,\sum_{y\in N_{i,\Lambda}(\alpha)}\frac{i(i^2-1)}{6}
    \sum_{j=2}^{l}\,\sum_{\beta\in N_{j,\Lambda^*}(y)}\frac{j(j^2-1)}{6}\beta
  \end{align*}
  Write $cc' =\lfloor cc' \rfloor+ \{ cc' \}$ where $0\le\{ cc' \}<1$ is the fractional part of $cc'$.
  If $cc'$ is not an integer, then $0\ne\{cc'\}\alpha\in\Lambda^*$, which contradicts the minimality of $\alpha$. Therefore
  $$ cc' = \frac{s\cdot t\cdot r}{(6n)^2}\left(\frac{3r}{n+2}-1\right)^2\in\ZZ. $$
\end{proof}

\begin{remark}\label{shortTableOfValues}
Applying Lemma \ref{N_2N_2Dual} to the values provided in Theorem \ref{TableOfValues} 
we obtain that the triple $(r(\Lambda ), s(\Lambda ) , s(\Lambda ^*) ) $ of a dual strongly 
perfect lattice in dimension 16 that is not of minimal type is as listed in the following table.
\end{remark}
\begin{center}
\begin{tabular}{|>{$}c<{$}|>{$}l<{$}|>{$}l<{$}|>{$}l<{$}|>{$}l<{$}|>{$}l<{$}|>{$}l<{$}|>{$}l<{$}|>{$}l<{$}|>{$}l<{$}|}
  \cline{1-8}
  r &    32/5 \ (\ref{32div5})      &   20/3 \ (\ref{20div3}) & 48/7 \ (\ref{lemma:level})     & 7 \ (\ref{case7})    & 36/5 \ (\ref{36div5})      & 22/3 \ (\ref{lemma:2adic_sublattice}) & 96/13 \ (\ref{lemma:level})  \\
s &     900\cdot a&      1296 & 196\cdot a& 1152 &  400\cdot a& 1296 & 676\cdot a \\
a &        1,2     &  -    & 2,3,4,6 & -    &  1,2,3     & -    & 1,2 \\
cond &  2\mid ab     &  -    & 12 \mid ab  & -    &  -     & -    & 2 \mid ab  \\
\hline
r  &15/2   \ (\ref{lemma:level})   & 54/7 \ (\ref{lemma:2adic_sublattice})  & 8 \ (\ref{case8})        &  90/11 \ (\ref{lemma:2adic_sublattice})    & 33/4 \ (\ref{lemma:level})& 192/23  \ (\ref{lemma:level})  & 42/5  \ (\ref{lemma:level})   \\
s  &512\cdot a& 784  & 72\cdot a &  968\cdot a& 2048 & 2116 & 400\cdot a   \\
a  &1,2,3       & -   & 2,\dots,30&  1,2       & -    & - & 1,\dots,5     \\
cond  & 3 \mid ab       & -  & 2 \mid ab  &  -       & -    & - & 3 \mid ab       \\
\hline
r   & 144/17 \ (\ref{144div7})  &60/7   \ (\ref{lemma:level})   &26/3   \ (\ref{lemma:2adic_sublattice})   &96/11 \   (\ref{96div11})  &  9 \ (\ref{r9})         & 64/7  \ (\ref{64div7})     \\
s    & 2312  &784\cdot a&648\cdot a&242\cdot a&  128\cdot a& 882\cdot a\\
a   & - &1,2       &1,2,3     &3,4,6,8,9 &  2,\dots,26& 1,\dots,4 \\
cond   & - &-       &-     & 24 \mid ab &  - & 4 \mid ab \\
\cline{1-7}
\end{tabular}
\end{center}
Here the line $s$ lists the possibilities for $s(\Lambda ) = $number$\cdot a$ and 
$s(\Lambda^*)=$number$\cdot b$, where the possibilities for $a$ and $b$ are given in the line headed by $a$
with respect to certain divisibility conditions deduced from Lemma \ref{N_2N_2Dual} as given in the line headed cond. 
In brackets behind the value of $r(\Lambda )$ we give the reference to where this case is dealt with in
this paper.
Applying the next lemma,  allows to exclude the first two values for $r(\Lambda )$ 
using an easy computation.

\begin{lemma}\cite[Theorem 2.9]{dim13_15}\label{lemma:polynomial_method}
Let $\Lambda$ be a dual strongly perfect lattice of dimension $n$ with $r(\Lambda)=r(\Lambda^*)=r$.
Assume that $(\alpha,x)\in\{0,\pm 1,\pm 2\}$ for all $\alpha\in\Min(\Lambda^*), x\in\Min(\Lambda)$.
Put $n_i=|\{\langle\alpha,x\rangle\in S(\Lambda^*)\times S(\Lambda) \mid (\alpha,x)=\pm i\}|$ for $i=0,1,2$. Then
\begin{align*}
  n_2 &= \frac{tsr}{12n}\left(\frac{3r}{n+2}-1\right),\\
  n_1 &= \frac{tsr}{n}-4n_2,\\
  n_0 &= st-n_1-n_2 
\end{align*}
are non-negative integers satisfying $n_i/s\in \ZZ $ and $n_i/t \in \ZZ $ for $i=0,1,2$.
Moreover the quadratic polynomial,
\begin{align*}
  P(b)=&(s+t)^2\left(\frac{15}{n(n+2)(n+4)}+\frac{24b-3}{4n(n+2)}+\frac{2b^2-b}{2n}-\frac{b^2}{4}\right)\\
       &-2\left(n_1\left(\frac{1}{r}-\frac{1}{4}\right)\left(\frac{1}{r}+b\right)^2+n_2\left(\frac{4}{r}-\frac{1}{4}\right)\left(\frac{4}{r}+b\right)^2-n_0\frac{b^2}{4}\right)\\
       &-\frac{3}{4}(s+t)(1+b)^2 \le 0
\end{align*}
is non positive for all $b\in \RR$.
\end{lemma}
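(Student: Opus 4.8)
The plan is to treat the two assertions separately: first the closed forms and integrality of $n_0,n_1,n_2$, which come directly from the design equations \eqref{gleichungen}, and then the polynomial inequality, which I would obtain from a spherical-design positivity argument for the Gegenbauer polynomials on $S^{n-1}$.

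For the first part, rescale so that $\min(\Lambda)=1$ and $\min(\Lambda^*)=r$ and fix $\alpha\in\Min(\Lambda^*)$. Since $(x,\alpha)\in\{0,\pm1,\pm2\}$, the summand $(x,\alpha)^4-(x,\alpha)^2$ vanishes unless $(x,\alpha)=\pm2$, where it equals $12$; hence $\tfrac1{12}(D4-D2)(\alpha)$ counts exactly the representatives $x\in S(\Lambda)$ with $(x,\alpha)=\pm2$, a number independent of $\alpha$ and equal to $\tfrac{sr}{12n}\bigl(\tfrac{3r}{n+2}-1\bigr)$. Summing over the $t$ vectors $\alpha\in S(\Lambda^*)$ gives the formula for $n_2$; the formula for $n_1$ follows the same way from $(D2)(\alpha)=\tfrac{sr}{n}$ once the contribution $4n_2$ of the $\pm2$-pairs is subtracted, and $n_0=st-n_1-n_2$ because each of the $st$ pairs in $S(\Lambda^*)\times S(\Lambda)$ lies in exactly one class. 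Integrality is the content of the remark following \eqref{gleichungen}; the divisibility $n_i/t\in\ZZ$ holds because $n_i=t\cdot(\text{per-}\alpha\text{ integer})$, and $n_i/s\in\ZZ$ follows symmetrically by counting from the $\Lambda^*$-side (fixing $x\in\Min(\Lambda)$ and applying \eqref{gleichungen} to $\Lambda^*$).

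For the inequality I would introduce the even degree-$6$ polynomial $g_b(u)=(u^2+b)^2(u^2-\tfrac14)$, and observe that $P(b)$ equals $(s+t)^2\langle g_b\rangle-2\bigl(n_0 g_b(0)+n_1 g_b(\tfrac1{\sqrt r})+n_2 g_b(\tfrac2{\sqrt r})\bigr)-(s+t)g_b(1)$, where $\langle g_b\rangle$ is the average of $g_b((p,q))$ over the unit sphere (the combination of even sphere-moments displayed in the first line of $P$) and $\tfrac{i}{\sqrt r}$ are the normalized cross inner products. Consider the $(s+t)$-point configuration $W=S(\Lambda)\cup\{\alpha/\sqrt r:\alpha\in S(\Lambda^*)\}$ on the unit sphere and expand $g_b=g_{b,0}G_0+g_{b,2}G_2+g_{b,4}G_4+g_{b,6}G_6$ in Gegenbauer polynomials, noting $g_{b,6}>0$. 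Because $g_b$ is even and both $\Min(\Lambda)$ and $\Min(\Lambda^*)$ are spherical $4$-designs, for fixed $p$ the sums $\sum_{q\in W}G_2((p,q))$ and $\sum_{q\in W}G_4((p,q))$ vanish, so the degree-$2$ and degree-$4$ contributions to $\sum_{p,q\in W}g_b((p,q))$ disappear, while the degree-$6$ contribution $g_{b,6}\sum_{p,q\in W}G_6((p,q))$ is nonnegative by Schoenberg positive-definiteness; hence $\sum_{p,q\in W}g_b((p,q))\ge(s+t)^2\langle g_b\rangle$.

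Finally I would evaluate $\sum_{p,q\in W}g_b((p,q))$ directly, splitting it into the diagonal $(s+t)g_b(1)$, the cross pairs $2\bigl(n_0 g_b(0)+n_1 g_b(\tfrac1{\sqrt r})+n_2 g_b(\tfrac2{\sqrt r})\bigr)$ (using that the number of representative pairs with $|(\alpha,x)|=i$ is $n_i$ and that $g_b$ is even), and the within-lattice off-diagonal pairs. The first two groups assemble precisely into $(s+t)^2\langle g_b\rangle-P(b)$, so combining with the lower bound of the previous paragraph gives $P(b)\le\sum_{\mathrm{within}}g_b((p,q))$. Here is the decisive point and the origin of the constant $\tfrac14$: for distinct representatives $x\neq\pm y$ of one lattice, both $x+y$ and $x-y$ are nonzero lattice vectors, so $|(x,y)|\le\tfrac12\min$, i.e.\ the normalized inner product $u$ satisfies $u^2\le\tfrac14$ and therefore $g_b(u)\le0$. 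Thus every within-lattice term is nonpositive and $P(b)\le0$. The main obstacle is exactly this bookkeeping: arranging the sum so that only the computable quantities $n_i,s,t$ and the sphere-average survive, and recognizing that the factor $(u^2-\tfrac14)$ is forced by the minimal-vector inner-product bound precisely so that the uncontrolled same-lattice pairs can be discarded with the correct sign.
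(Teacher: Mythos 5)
Your proof is correct and follows essentially the same route as the paper's source for this lemma: the same auxiliary polynomial $(u^2-\tfrac14)(u^2+b)^2$, the same extraction of $n_2$ and $n_1$ from $\tfrac{1}{12}(D4-D2)(\alpha)$ and $(D2)(\alpha)$ summed over $\alpha$ (with integrality and the divisibilities $n_i/t,\,n_i/s\in\ZZ$ obtained by the same per-vector counting on each side), and the same device of bounding the double sum over the combined configuration below by its sphere average (exactness of the degree $2$ and $4$ moments from the $4$-design property plus positive definiteness in degree $6$) while discarding the same-lattice off-diagonal pairs via the sign of $u^2-\tfrac14$. The only cosmetic difference is that you work with the representative sets $S(\Lambda)\cup\tfrac{1}{\sqrt r}S(\Lambda^*)$ and a direct Gegenbauer expansion, whereas the source argument uses the full antipodal configuration and moment (in)equalities; these are the same computation up to factors of $2$.
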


\begin{corollary}\label{20div3}
  There is no dual strongly perfect lattice $\Lambda\in\RR^{16}$ with $r(\Lambda)=20/3$.
\end{corollary}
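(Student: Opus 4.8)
The plan is to rule out this case by a single application of the polynomial method, Lemma~\ref{lemma:polynomial_method}. By Remark~\ref{shortTableOfValues} the value $r=r(\Lambda)=r(\Lambda^*)=20/3$ forces $s(\Lambda)=s(\Lambda^*)=1296$, so throughout I would work with the fixed data $n=16$, $r=20/3$ and $s=t=1296$.

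First I would verify that the hypothesis of Lemma~\ref{lemma:polynomial_method} is met. After rescaling $\Lambda$ so that $\min(\Lambda)=1$ and $\min(\Lambda^*)=r=20/3$, for any $\alpha\in\Min(\Lambda^*)$ and $x\in\Min(\Lambda)$ the inner product $(\alpha,x)$ lies in $\ZZ$ by the very definition of the dual lattice, and by Cauchy--Schwarz it satisfies $(\alpha,x)^2\le(\alpha,\alpha)(x,x)=20/3<9$. Hence $(\alpha,x)\in\{0,\pm1,\pm2\}$, exactly as the lemma requires.

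Next I would compute the counts $n_0,n_1,n_2$ from the formulas in Lemma~\ref{lemma:polynomial_method}, using $\tfrac{3r}{n+2}-1=\tfrac19$. This yields $n_2=6480$, then $n_1=\tfrac{tsr}{n}-4n_2=673920$, and finally $n_0=st-n_1-n_2=999216$. As an internal consistency check each of these is divisible by $s=t=1296$, with quotients $5,520,771$, matching the integrality assertion of the lemma.

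Finally I would substitute these values, together with $\tfrac1r=\tfrac3{20}$, $\tfrac4r=\tfrac35$ and $s+t=2592$, into the quadratic $P(b)$. A direct expansion gives $P(b)=-631800\,b^2-38880\,b-\tfrac{13608}{25}$. Here the step that needs genuine care, and which I expect to be the only real obstacle, is that the leading coefficient is \emph{negative}: unlike the typical situation one cannot simply let $b\to\infty$, so I would instead evaluate $P$ at its vertex $b=-2/65$, where $P(-2/65)=\tfrac{17496}{325}>0$. This strict positivity contradicts the conclusion $P(b)\le 0$ for all $b\in\RR$ of Lemma~\ref{lemma:polynomial_method}, and therefore no dual strongly perfect lattice in dimension $16$ with $r(\Lambda)=20/3$ can exist.
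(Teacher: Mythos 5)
Your proof is correct and follows essentially the same route as the paper: both reduce to $s=t=1296$ and apply Lemma~\ref{lemma:polynomial_method}, obtaining the same quadratic $P(b)=-631800\,b^2-38880\,b-\tfrac{13608}{25}$ (the paper writes it in factored form $-631800(b+7/325)(b+1/25)$) and exhibiting a point where it is positive. The only cosmetic difference is that you evaluate $P$ at its vertex $b=-2/65$ while the paper uses $b=-8/325$; both points lie between the two roots, so either yields the required contradiction.
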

\begin{proof}
	By Theorem~\ref{TableOfValues}  we have $s(\Lambda ) = s(\Lambda ^*) = 1296$. 
The polynomial $P(b)$ 
 from Lemma~\ref{lemma:polynomial_method} with $s=t=1296$ and $n=16$ is 
 $P(b)=-631800(b+7/325)(b+1/25)$ and satisfies $P(-8/325) > 0$, a contradiction.
\end{proof}

\begin{lemma}\label{32div5}
  There is no dual strongly perfect lattice $\Lambda\subset\RR^{16}$ with $r(\Lambda)=32/5$.
\end{lemma}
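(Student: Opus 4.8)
The plan is to exclude this case with the polynomial method of Lemma~\ref{lemma:polynomial_method}, exactly as in the proof of Corollary~\ref{20div3}. First I would check that its hypothesis holds. Rescale $\Lambda$ so that it is integral and primitive; then $(\alpha,x)\in\ZZ$ for all $\alpha\in\Lambda^*$ and $x\in\Lambda$, and for minimal vectors the Cauchy--Schwarz inequality gives $(\alpha,x)^2\le\min(\Lambda)\min(\Lambda^*)=r(\Lambda)=32/5<9$. Hence $(\alpha,x)\in\{0,\pm1,\pm2\}$ for all $\alpha\in\Min(\Lambda^*)$ and $x\in\Min(\Lambda)$, so Lemma~\ref{lemma:polynomial_method} is applicable. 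This is the one place where the specific value $r=32/5<9$ is genuinely used.

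Next I would read off the admissible half kissing numbers from the table in Remark~\ref{shortTableOfValues}: with $r=32/5$ we have $s=s(\Lambda)=900a_1$ and $t=s(\Lambda^*)=900a_2$ for $a_1,a_2\in\{1,2\}$ subject to $2\mid a_1a_2$. Thus, up to interchanging $\Lambda$ and $\Lambda^*$, either $(s,t)=(900,1800)$ or $(s,t)=(1800,1800)$. For each such pair I would compute the counts $n_2,n_1,n_0$ of Lemma~\ref{lemma:polynomial_method}. Since $n=16$ and $\tfrac{3r}{n+2}-1=\tfrac1{15}$, the formulas give $n_2=\tfrac{str}{12n}\cdot\tfrac1{15}=\tfrac{st}{450}$, $n_1=\tfrac{2st}{5}-4n_2$, and $n_0=st-n_1-n_2$; explicitly $(n_2,n_1,n_0)=(3600,633600,982800)$ in the first case and $(7200,1267200,1965600)$ in the second.

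Finally I would substitute these values, together with $1/r=5/32$ and $4/r=5/8$, into the quadratic $P$ of Lemma~\ref{lemma:polynomial_method} and exhibit a real number for which $P$ is positive, contradicting the conclusion that $P$ is non-positive everywhere. In the balanced case $(s,t)=(1800,1800)$ the constant term is already positive, so $P(0)>0$; in the case $(s,t)=(900,1800)$ a small negative value such as $b=-1/50$ makes $P$ positive, and the symmetric case $(1800,900)$ is identical. Alternatively one can factor $P$ as in Corollary~\ref{20div3} and simply read off a rational point in the interval between its two real roots.

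I do not expect any conceptual obstacle: the only work is the bookkeeping of the (few) cases for $(s,t)$ and the elementary arithmetic evaluation of $P$, which is why the text describes this as an easy computation. The mildest subtleties are confirming the hypothesis $(\alpha,x)\in\{0,\pm1,\pm2\}$ and making sure that every admissible $(s,t)$ permitted by Remark~\ref{shortTableOfValues} is treated, including the pair related to another by the $\Lambda\leftrightarrow\Lambda^*$ symmetry.
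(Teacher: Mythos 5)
Your proposal is correct and follows exactly the paper's own route: the paper likewise reads off the admissible pairs $(s,t)\in\{(900,1800),(1800,1800)\}$ (up to swapping $\Lambda$ and $\Lambda^*$) from Remark~\ref{shortTableOfValues} and derives a contradiction from Lemma~\ref{lemma:polynomial_method}; your explicit evaluations (e.g.\ $P(0)>0$ in the balanced case, $P(-1/50)>0$ in the unbalanced case) check out numerically. The only cosmetic slip is that integrality of $(\alpha,x)$ for $\alpha\in\Lambda^*$, $x\in\Lambda$ holds by the very definition of the dual lattice, so no rescaling is needed for the hypothesis of Lemma~\ref{lemma:polynomial_method}.
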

\begin{proof}
  By Remark~\ref{shortTableOfValues} there are $a,b\in\{1,2\}$ such that $s(\Lambda)=900\cdot a$,
  and $s(\Lambda^*)=900\cdot b$ such that $ab$ is even.
  These cases %$(a,b)\in\{(4,2),(2,4),(4,4)\}$ 
  yield a contradiction to Lemma~\ref{lemma:polynomial_method}.
\end{proof}

We now apply Lemma~\ref{IsLevelIntegral} to exclude the following cases.
\begin{lemma}\label{lemma:level}
  There is no dual strongly perfect lattice $\Lambda\subset\RR^{16}$ with 
    $$r(\Lambda) \in \left\{  48/7, 96/13, 15/2, 
                33/4, 192/23, 42/5,  60/7 \right\} $$
    %r(\Lambda) \in \{ &192/31, 144/23, 192/29, 48/7, 64/9, 96/13, 15/2, 144/19, 192/25,\\
               %&384/47, 33/4, 192/23, 42/5, 128/15, 60/7, 150/17, 384/43 \}.
\end{lemma}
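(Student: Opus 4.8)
The plan is to follow the general scheme sketched in the introduction and convert it into an integrality obstruction via Lemma~\ref{IsLevelIntegral}. Fix a value $r=r(\Lambda)=r(\Lambda^*)$ from the list. The first thing to exploit is that all seven values satisfy $r<9$, so for every $x\in\Min(\Lambda)$ and $\alpha\in\Min(\Lambda^*)$ we have $(x,\alpha)^2\le\min(\Lambda)\min(\Lambda^*)=r<9$, whence $(x,\alpha)\in\{0,\pm 1,\pm 2\}$. Using the design equations \eqref{gleichungen} -- concretely that $(D2)(\alpha)$, $(D4)(\alpha)$ and $\frac{1}{12}(D4-D2)(\alpha)$ are non-negative integers for every $\alpha\in\Lambda^*$ -- together with the relations \eqref{eins}--\eqref{zwei} of Lemma~\ref{linkombext} and the admissible data $(r,s(\Lambda),s(\Lambda^*))$ of Remark~\ref{shortTableOfValues}, I would first determine a factorization $r=m\cdot d$ with $m\in\ZZ$ such that, after rescaling so that $\min(\Lambda^*)=m$, the dual lattice $\Lambda^*$ is even (and so $\min(\Lambda)=d=r/m$). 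Interchanging the roles of $\Lambda$ and $\Lambda^*$ and repeating the argument yields a second factorization $r=m'\cdot d'$ with $m'\in\ZZ$ such that $\Lambda$ is even when rescaled to $\min(\Lambda)=m'$, with $\min(\Lambda^*)=d'=r/m'$.

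Next I would combine the two scalings. Fix the scaling in which $\Lambda$ is even, hence integral, with $\min(\Lambda)=m'$ and $\min(\Lambda^*)=d'$. Since $\Lambda^*$ is even once rescaled to $\min(\Lambda^*)=m$, the lattice $\sqrt{m/d'}\,\Lambda^*$ is even and in particular integral. Applying Lemma~\ref{IsLevelIntegral} to the integral lattice $L=\Lambda$ with $L^*=\Lambda^*$ and $c=m/d'$ forces
$$ c=\frac{m}{d'}=\frac{m m'}{r}\in\ZZ. $$
Equivalently $c\Lambda^*\subseteq\Lambda\subseteq\Lambda^*$, so $c$ annihilates the finite abelian group $\Lambda^*/\Lambda$ and must be a positive integer, which is the exponent statement of the introduction.

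It then remains to contradict this for each value in the list. Writing $r=p/q$ in lowest terms, the requirement $c=mm'q/p\in\ZZ$ is equivalent to $p\mid mm'$. For the seven numerators $p\in\{48,96,15,33,192,42,60\}$, the minima $m,m'$ obtained in the first step are small even integers, so $mm'$ involves only the primes $2$ and $3$ and has bounded $2$-adic valuation. But each $p$ either contains a prime $5$, $7$, or $11$ that cannot divide $mm'$ (as in $15,33,42,60$) or a power of $2$ too large to divide $mm'$ (as in $48,96,192$). In every case $c\notin\ZZ$, contradicting the previous step, and so no such $\Lambda$ exists.

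The hard part is the first step, namely proving that $\Lambda^*$ (respectively $\Lambda$) is genuinely \emph{even} at the claimed integer minimum $m$ (respectively $m'$), and that no coarser integral rescaling is possible. Integrality of the pairings $(x,\alpha)$ on \emph{minimal} vectors alone does not suffice; one must push the integrality of $(D2)$, $(D4)$ and $\frac{1}{12}(D4-D2)$, and of the identities \eqref{eins}--\eqref{zwei}, through to general $\alpha\in\Lambda^*$, using the precise admissible values of $s(\Lambda)$ and $s(\Lambda^*)$ from Remark~\ref{shortTableOfValues} to eliminate every scaling except the intended one. Once the even minima $m$ and $m'$ are pinned down exactly, the divisibility contradiction in the last paragraph is immediate.
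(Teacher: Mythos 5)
Your overall scheme is exactly the paper's: use integrality of $(D4)(\alpha)$ and $\frac{1}{12}(D4-D2)(\alpha)$ for \emph{all} $\alpha\in\Lambda^*$ (not merely minimal vectors) to produce an even rescaling of $\Lambda^*$, repeat with the roles of $\Lambda$ and $\Lambda^*$ interchanged, and then get a contradiction from Lemma~\ref{IsLevelIntegral}. The genuine gap sits in your last two paragraphs, precisely at the point you yourself defer as ``the hard part'': you never compute the even minima $m,m'$, and the properties you then ascribe to them are false. Carrying out the divisibility analysis (as the paper does for $r=15/2$, where $q\mid 2$ and $3\mid p$ show that $\sqrt{4/3}\,\Lambda^*$ is even) gives $m=m'=10$ for $r=15/2$, and likewise $m=m'=16,\,32,\,22,\,8,\,14,\,20$ for $r=48/7,\,96/13,\,33/4,\,192/23,\,42/5,\,60/7$ respectively. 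So $mm'$ is \emph{not} supported on the primes $2$ and $3$: it equals $100,\,256,\,1024,\,484,\,64,\,196,\,400$ in the seven cases, and contains $5^2$, $11^2$, $7^2$ exactly in the cases where you claim the primes $5$, $11$, $7$ cannot occur.

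Consequently the contradictions you point to do not exist: for $p=33$ one has $11\mid mm'=484$; for $p=42$, $7\mid mm'=196$; for $p=48$, $2^4\mid mm'=256$; in every one of the seven cases the prime-to-$3$ part of $p$ divides $mm'$. The only obstruction, in all seven cases, is the prime $3$: each numerator $p$ is \emph{exactly} divisible by $3$, each denominator $q$ is prime to $3$, and the even rescaling factors produced by the $(D4)$, $\frac{1}{12}(D4-D2)$ analysis always carry exactly one factor $3$ in their denominator (e.g.\ $\sqrt{4/3}$, $\sqrt{7/3}$, $\sqrt{23/24}$), so that $c=mm'/r$ has $3$-adic valuation $(-2)+1=-1$; for instance $c=40/3$ for $r=15/2$ and $c=112/3$ for $r=48/7$, matching the paper. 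Since your divisibility argument rests on an incorrect description of $m$ and $m'$ and identifies the wrong primes as obstructions, the proof as written fails; it can only be repaired by actually performing, case by case, the integrality analysis of $(D4)$ and $\frac{1}{12}(D4-D2)$ with the admissible $s$-values from Remark~\ref{shortTableOfValues}, which is the entire content of the paper's proof.
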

\begin{proof}
  Here we only present a proof for the case $r(\Lambda)=15/2$, as all the other cases can be excluded similarly.
   By Theorem~\ref{TableOfValues} there is some $a \in \{1,\ldots ,3\}$ such that $s(\Lambda)=512\cdot a$.
       We scale $\Lambda$ such that $\min(\Lambda)=1$.
           Let $\alpha\in\Lambda^*$, and write $(\alpha, \alpha)=\frac{p}{q}$ with coprime integers $p$ and $q$.
	       Then
	           \begin{align*}
			   (D4)(\alpha) &= {\frac{a\cdot 2^{4}\cdot {p}^{2}}{3\cdot {q}^{2}}} \in\ZZ
	       &&\Rightarrow q \mid 2^2, \\
			   \frac{1}{12}(D4-D2)(\alpha)&={\frac {a\cdot 2^{2}\cdot p \left( p-6\,q \right) }{  3^{2}\cdot {q}^{2}}} \in\ZZ
		       &&\Rightarrow q \mid 2, 3 \mid p.
		       \end{align*}
		           Let $\Gamma=\sqrt{\frac{2^2}{3}}\Lambda^*$. Then $\Gamma$ is an even lattice with $\min(\Gamma)=10, \min(\Gamma^*)=\frac{3}{2^2}$.
Similarly $\sqrt{ \frac{10\cdot 2^2}{3}}\Gamma^*$ is also even, which is impossible by Lemma~\ref{IsLevelIntegral}.
\end{proof}

Next, we can exclude the following cases.
\begin{lemma}\label{lemma:2adic_sublattice}
  There is no dual strongly perfect lattice $\Lambda\subset\RR^{16}$ with 
  $r(\Lambda)\in\{22/3, 54/7, 90/11, 26/3\}.$
\end{lemma}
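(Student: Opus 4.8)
The plan is to adapt the rescaling strategy of the proof of Lemma~\ref{lemma:level}: first normalise $\Lambda$ so that $\min(\Lambda)=1$ and $\min(\Lambda^*)=r$, and then use the integrality of the design equations to determine the exact scale at which $\Lambda^*$, and by the dual symmetry also $\Lambda$, becomes even. Concretely, for $\alpha\in\Lambda^*$ I would write $(\alpha,\alpha)=p/q$ with $\gcd(p,q)=1$ and substitute into $(D4)(\alpha)=\frac{s}{96}(\alpha,\alpha)^2\in\ZZ$ and $\frac1{12}(D4-D2)(\alpha)=\frac{s}{1152}(\alpha,\alpha)((\alpha,\alpha)-6)\in\ZZ$, reading off prime-by-prime constraints on $q$ and $p$ exactly as in Lemma~\ref{lemma:level}; applying the analogous equations to $\Lambda$ (with $s,\min(\Lambda)$ replaced by $t,r$, valid since $\Lambda^*$ is strongly perfect) controls the norms of $\Lambda$. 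For the four values this produces an even rescaling $\Gamma=\sqrt{c}\,\Lambda^*$ and an even rescaling $\Delta=\sqrt{c'}\,\Lambda$ with explicit data: for $r=22/3$ one finds norms of $\Lambda^*$ in $\frac23\ZZ$ and of $\Lambda$ in $\frac1{11}\ZZ$, so $\Gamma=\sqrt3\,\Lambda^*$ is even with $\min(\Gamma)=22$ and $\Delta=\sqrt{22}\,\Lambda$ is even; similarly $\min(\Gamma)=18$ ($r=54/7$, extra prime $7$), $\min(\Gamma)=30$ ($r=90/11$, extra primes $5,11$) and $\min(\Gamma)\in\{26,78\}$ ($r=26/3$, extra prime $13$, according to whether denominators $9$ occur).

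The decisive point, and what separates these four cases from those of Lemma~\ref{lemma:level}, is that here the even level $\ell$ of $\Gamma$ comes out to be an \emph{integer} (dividing $66$, $42$, and the analogous products), so Lemma~\ref{IsLevelIntegral} alone yields no contradiction and a finer $2$-adic analysis is forced. What does survive is that $v_p(\det\Gamma)+v_p(\det\Delta)$ equals $16$ for the relevant odd prime $p\in\{5,7,11,13\}$, so $p$ divides $\det(\Gamma)$ or $\det(\Delta)$ to a substantial power, with the corresponding $p$-part of the discriminant group elementary abelian. Since Proposition~\ref{maxeven} only supplies maximal even lattices of determinant $2^a3^b$, I would pass to a maximal even overlattice $M$ and use that the $p$-part of $M^*/M$ is anisotropic of rank $\le 2$ to fix $v_p(\det\Gamma)$ modulo $2$ and to bound $v_p([M:\Gamma])$. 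For $r=22/3$ there is an extra structural input: since $r<8$ and equality holds in Lemma~\ref{boundn2r<8V1} (one computes, via Corollary~\ref{linkomb}, that $c=3$ and $|N_2(\alpha)|=11=r/(8-r)$ for $\alpha\in\Min(\Lambda^*)$), the set $N_2(\alpha)$ spans a rescaled root lattice $A_{11}$, which inside $\Delta$ is a copy of $\sqrt{11}\,A_{11}$; this forces the scale-$11$ Jordan constituent of $\Delta$ to have rank $\ge 11$, hence $11^{11}\mid\det(\Delta)$ and correspondingly $v_{11}(\det(\Gamma))\le 5$.

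I would then combine these local invariants with the determinant window coming from Lemma~\ref{DeterminantBound}, which is driven by the large $\min(\Gamma)$ and the very small $\min(\Gamma^*)$. This leaves only finitely many admissible genera of even lattices; I would enumerate them by the Kneser neighbouring method and the mass formula precisely as in Proposition~\ref{maxeven}, and verify that none contains a lattice whose dual attains the prescribed minimum and is strongly perfect. Equivalently, after splitting off the forced $p$-modular block one reduces to a determinant-$2^a3^b$ problem already settled by Lemma~\ref{minge3/2}.

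The main obstacle is exactly this last step. The multiplicities $a,b$ in $s(\Lambda)=(\mathrm{const})\cdot a$ and $s(\Lambda^*)=(\mathrm{const})\cdot b$ are only pinned down by the divisibility conditions of Remark~\ref{shortTableOfValues}, so the rescaling data of the first paragraph splits into several sub-cases that must be carried in parallel; and establishing that each candidate genus is empty, or contains no strongly perfect member with the correct dual minimum, is a genuine $2$-adic and $p$-adic genus computation (Jordan decompositions, oddity and sign conditions, mass) rather than a single clean deduction.
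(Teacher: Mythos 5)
Your opening paragraph is sound: the rescaling data you compute are correct, and you are right that (e.g.\ for $r=22/3$) both even rescalings live at integer levels dividing $66$, so Lemma~\ref{IsLevelIntegral} alone gives nothing and ``a finer $2$-adic analysis is forced.'' The genuine gap is that you then abandon the prime $2$ and never find that analysis, although it is short and elementary. The paper works at the \emph{odd} scaling, not the even one: scale so that $\min(\Lambda)=2/3$, hence $\Gamma=\Lambda^*$ has minimum $11$, and note that $(D4)(\alpha)=3(\alpha,\alpha)^2\in\ZZ$ forces $(\alpha,\alpha)\in\ZZ$, while $\frac16(D13-D11)(\alpha,\beta)=(\alpha,\beta)((\beta,\beta)-1)\in\ZZ$ shows that every vector of even norm pairs integrally with all of $\Gamma$. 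Thus $\Gamma^{(e)}=\{\alpha\in\Gamma\mid(\alpha,\alpha)\in2\ZZ\}$ is an \emph{even} sublattice of index $2^c$ with $c\in\{1,2\}$ (the index exceeds $1$ because the minimum $11$ is odd, and is at most $4$ by \cite[Lemma 2.8]{dim12}), so $\det\Gamma^{(e)}=2^{2c}\det\Gamma$ is an integer. The identical argument applied to $L=\sqrt{33/2}\,\Gamma^*$, which again has minimum $11$, gives an even sublattice $L^{(e)}$ of index $2^d$, $d\in\{1,2\}$, with integral determinant. Since $\det L=(33/2)^{16}/\det\Gamma$, one obtains
$$\det\Gamma^{(e)}\cdot\det L^{(e)}=2^{2(c+d)-16}\,33^{16}\notin\ZZ ,$$
because $2(c+d)\le 8<16$ and $33^{16}$ is odd --- a contradiction, with no genus enumeration, maximal lattices, mass formulas or modular forms. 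The decisive idea you are missing is precisely this bounded-index even-sublattice trick at the prime $2$; the odd primes $7,11,13$ that your proposal revolves around play no role.

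Beyond missing this idea, your replacement strategy contains steps that fail. The inference ``$N_2(\alpha)$ spans $\sqrt{11}A_{11}\subset\Delta$, hence the scale-$11$ Jordan constituent of $\Delta$ has rank $\ge 11$ and $11^{11}\mid\det\Delta$'' is invalid: a sublattice does not control the Jordan decomposition or determinant of the ambient lattice (compare $\sqrt{11}\,\ZZ\subset\ZZ$); the saturation of $\sqrt{11}A_{11}$ in $\Delta$ can be a much larger overlattice whose glue cancels the $11$-part, and ruling this out would require classifying even overlattices of $\sqrt{11}A_{11}$ of minimum $\ge 22$, which you do not do. Even granting that claim, no contradiction is ever derived from ``$v_{11}(\det\Gamma)\le 5$.'' Finally, the concluding step --- enumerate all admissible genera by Kneser neighbouring and the mass formula, or ``split off the forced $p$-modular block'' to reduce to the determinant-$2^a3^b$ situation of Lemma~\ref{minge3/2} --- is an open-ended computation, not an argument; in particular a lattice with nontrivial $p$-part of the discriminant group need not have a $p$-modular orthogonal summand, so that reduction is unjustified. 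As it stands the proposal correctly sets up the normalizations but does not prove the lemma.
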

\begin{proof}
  Here we give a proof for the case $r(\Lambda)=22/3$, as all other cases can be excluded similarly.
  Let $\Lambda$ be a dual strongly perfect lattice with $r(\Lambda)=22/3$.
  By Theorem~\ref{TableOfValues} we have  $s(\Lambda)=s(\Lambda ^*) = 1296$.
  We scale $\Lambda$ such that $\min(\Lambda)=2/3$, and put $\Gamma=\Lambda^*$.
  Then $\min(\Gamma)=11$, and for all $\alpha,\beta\in\Gamma$ holds
  \begin{align*}
	  (D4)(\alpha)&=3(\alpha,\alpha)^2\in\ZZ,\\
	  \frac{1}{6}(D13-D11)(\alpha,\beta) &= (\alpha,\beta)((\beta,\beta)-1) \in\ZZ.
  \end{align*}
So $(\alpha,\alpha)\in\ZZ$ for all $\alpha\in\Gamma$,
  and if $(\beta,\beta)$ is even, then $(\alpha,\beta)\in\ZZ$. 
  Let $\Gamma^{(e)}=\{\alpha\in\Gamma \mid (\alpha,\alpha)\in2\ZZ\}$.
  By $\frac{1}{6}(D13-D11)(\alpha,\beta)$ we see that $(\alpha ,\beta )\in \ZZ $ for all $\beta \in \Gamma ^{(e)}$, 
  $\alpha \in \Gamma $. In particular $\Gamma^{(e)}$ is an even sublattice of $\Gamma$ with $|\Gamma:\Gamma^{(e)}|=2^c, c \in\{1,2\}$
  (see for instance \cite[Lemma 2.8]{dim12}).
  So $\det(\Gamma)=2^{-2c}\det\Gamma^{(e)}$ and $\det\Gamma^{(e)}$ is an integer. Similarly $L=\sqrt{\frac{33}{2}}\Gamma^*$ has an
  even sublattice $L^{(e)}=\{\alpha\in L \mid (\alpha,\alpha)\in2\ZZ\}$ with $|L:L^{(e)}|=2^d, d \in\{1,2\}$.
  Therefore
  $$
  \det L^{(e)} = 2^{2d}\det L = \frac{2^{2(c+d)}\cdot33^{16}}{2^{16}\cdot\det\Gamma^{(e)}}\notin \ZZ,
  $$
  which is impossible.
\end{proof}

Next we employ the $k$-point semidefinite programming (SDP) bound for  spherical codes provided 
by de Laat et. al~\cite{deLaat_etal2018} to exclude the following case.

\begin{lemma}\label{144div7}
  There is no dual strongly perfect lattice $\Lambda\subset\RR^{16}$ with $r(\Lambda)=144/17$.
\end{lemma}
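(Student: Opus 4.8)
The plan is to eliminate this case with the multi-point semidefinite programming (SDP) bound of de~Laat et al.~\cite{deLaat_etal2018} applied to the spherical code of minimal vectors; the two-point bound encoded in Lemma~\ref{lemma:polynomial_method}, which is what a Delsarte-type linear program gives here, does not suffice.

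First I would assemble the numerical data. By Theorem~\ref{TableOfValues} and Remark~\ref{shortTableOfValues}, $s(\Lambda)=s(\Lambda^*)=2312$, so $|\Min(\Lambda)|=|\Min(\Lambda^*)|=4624$. Since $r(\Lambda)=144/17<9$, the Cauchy--Schwarz inequality gives $(\alpha,x)^2\le r<9$ for all $\alpha\in\Min(\Lambda^*)$ and $x\in\Min(\Lambda)$, and as these inner products are integers we obtain $(\alpha,x)\in\{0,\pm1,\pm2\}$. Hence Lemma~\ref{lemma:polynomial_method} is applicable, but one checks that its polynomial $P(b)$ remains non-positive for $s=t=2312$, which is precisely why a sharper tool is required.

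Next I would set up the spherical code. Rescaling so that $\min(\Lambda)=1$, the set $X:=\Min(\Lambda)$ sits on the unit sphere $S^{15}$, is antipodal, has $|X|=4624$, and is a spherical $4$-design. For distinct non-antipodal $x,y\in X$ the vectors $x\pm y$ are nonzero elements of $\Lambda$, so $(x\pm y,x\pm y)\ge 1$ and therefore $(x,y)\in[-1/2,1/2]$. Evaluating $(D4)$ for the dual lattice at $v\in\Lambda$ yields $\sum_{x\in S(\Lambda^*)}(x,v)^4=1728\,(v,v)^2\in\ZZ$, so every norm $(v,v)$ with $v\in\Lambda$ has denominator dividing $24$; taking $v=x-y$ then forces $(x,y)\in\tfrac{1}{48}\ZZ$. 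Thus the admissible inner products between distinct code points form an explicit finite set $A\subset\{-1\}\cup\big([-1/2,1/2]\cap\tfrac{1}{48}\ZZ\big)$, and the $4$-design property supplies the exact low-degree moments of $X$ as additional linear constraints.

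Finally I would feed $A$, the antipodality, the target cardinality $4624$, and the design moment equations into the $k$-point SDP relaxation of \cite{deLaat_etal2018}, and expect the three-point (and, if necessary, four-point) bound to certify that any such code has strictly fewer than $4624$ points, contradicting $|X|=4624$. The main obstacle is exactly this computation: since the linear-programming bound of Lemma~\ref{lemma:polynomial_method} fails to separate $4624$ from the optimum, one must work at higher order, selecting enough interior sample points and a high enough polynomial degree, encoding the restricted inner-product set and the design constraints faithfully, and then converting the floating-point optimum into a rigorous rational dual certificate that provably undercuts $4624$.
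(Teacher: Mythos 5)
Your case data are correct ($s(\Lambda)=s(\Lambda^*)=2312$, inner products $(\alpha,x)\in\{0,\pm1,\pm2\}$ by Cauchy--Schwarz since $r=144/17<9$, denominators of norms dividing $24$), and you reach for the right tool, the $k$-point SDP bounds of de Laat et al.\ \cite{deLaat_etal2018}. But you apply it to the wrong object, and this is a genuine gap. You propose to bound the \emph{entire} configuration $\Min(\Lambda)$: an antipodal $4$-design of $4624$ points in $S^{15}$ with inner products in $\tfrac{1}{48}\ZZ\cap[-\tfrac12,\tfrac12]$. However, the $4320$ minimal vectors of the Barnes--Wall lattice $\Lambda_{16}$ satisfy every constraint you impose (they are antipodal, form a spherical $4$-design, and their normalized inner products $0,\pm\tfrac14,\pm\tfrac12$ lie in $\tfrac{1}{48}\ZZ$), so any bound valid under your hypotheses is at least $4320$; you would need the SDP to certify a bound inside the razor-thin window $[4320,4623]$. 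The best known bound of this full-configuration type in dimension $16$ --- the $3$-point SDP bound of Mittelmann and Vallentin \cite{MittelmannVallentin}, used in the proof of Theorem~\ref{TableOfValues} --- is $7355$, far above $4624$, and there is no evidence that adding the design and rationality constraints closes a gap of that magnitude. So ``expect the three-point (and, if necessary, four-point) bound to certify fewer than $4624$ points'' is not a proof step but a hope that runs against the current state of the art.

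The paper's proof localizes instead of globalizing. Fix $\alpha\in\Min(\Lambda^*)$; the design identity behind Corollary~\ref{linkomb} pins down the cardinality exactly: $|N_2(\alpha)|=\frac{sr}{12\cdot 16}\left(\frac r6-1\right)=42$. Projecting $N_2(\alpha)$ into $\alpha^{\perp}$ and rescaling as in Lemma~\ref{SphBndN_2} yields a spherical $[-1,\tfrac1{19}]$-code of $42$ points in $S^{14}$. The elementary bound of Lemma~\ref{SphCdeUppBnd} evaluates to about $42.14$ here, so it just barely fails; but the $3$-point SDP bound of \cite{deLaat_etal2018}, applied to this small and tightly constrained code (maximal cosine $\tfrac1{19}$ rather than $\tfrac12$), gives an upper bound of $34$, a clean contradiction with $42$. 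The lesson is that the SDP machinery becomes decisive only after the design equations have been used to extract a subconfiguration whose size is known exactly and whose angular separation is extreme; if you want to salvage your plan, you must add this localization step, since the global computation you describe cannot conclude.
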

\begin{proof}
  By Remark~\ref{shortTableOfValues} 
  we get $s:=s(\Lambda)=s(\Lambda^*)=2312$ and put $r:=r(\Lambda) = 144/17$.
  Now fix some $\alpha\in\Min(\Lambda^*)$ and let $N_2(\alpha)=\{x\in\Min(\Lambda)\mid (x,\alpha)=2\}$.
  Then $$|N_2(\alpha)|=\frac{s r}{12\cdot 16}(\frac{r}{6}-1) =42.$$
  As in Lemma \ref{SphBndN_2} put
  $$
  \overline{N_2}(\alpha)
  =\left\{\sqrt{\frac{r}{r-4}}\left(x-\frac{2}{r}\alpha\right) \mid x\in N_2(\alpha)\right\}.
  $$
  Then $|\overline{N_2}(\alpha)|=|N_2(\alpha)|=42$, and 
  for any two distinct elements $\bar{x},\bar{y}\in\overline{N_2}(\alpha)$, we have $(\bar{x},\alpha)=0$,
  $(\bar{x},\bar{x})=1$, and
  \begin{align*}
  (\bar{x},\bar{y})
  =\frac{r}{r-4}\left(x-\frac{2}{r}\alpha, y-\frac{2}{r}\alpha\right)
  =\frac{r}{r-4}\left((x,y)-\frac{4}{r}\right) \leq 1/19 .
  \end{align*}
  Now using the $3$-point SDP bound for spherical codes ~\cite{deLaat_etal2018},
  we can compute that the cardinality of a spherical 
  $[-1,1/19]$-code in $S^{14}$ is upper bounded by $34$,
  which contradicts the fact that $|N_2(\alpha)|=42$. This concludes our proof.
\end{proof}

Now we use a different method to deal with the following case.

\begin{lemma}\label{case7}
  There is no dual strongly perfect lattice $\Lambda\subset \RR^{16}$ with $r(\Lambda)=7$.
\end{lemma}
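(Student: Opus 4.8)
The plan is to first pin down the arithmetic of the pair and then rule out the resulting candidate by the modular forms technique of Section~\ref{ModForm}, since the relevant even lattice turns out to have level divisible by $7$ and therefore falls outside the reach of Proposition~\ref{maxeven}.

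\emph{Step 1 (integrality from the design equations).} By Remark~\ref{shortTableOfValues} we have $s(\Lambda)=s(\Lambda^*)=1152$. Rescale so that $\min(\Lambda)=1$ and $\min(\Lambda^*)=7$. Evaluating $(D4)$ and $\frac1{12}(D4-D2)$ at $\alpha\in\Lambda^*$ gives $12(\alpha,\alpha)^2\in\ZZ$ and $(\alpha,\alpha)^2-6(\alpha,\alpha)\in\ZZ$; the first forces the denominator of $(\alpha,\alpha)$ to divide $2$ and the second then rules out the denominator $2$, so $(\alpha,\alpha)\in\ZZ$ and hence $2(\alpha,\beta)\in\ZZ$ for all $\alpha,\beta\in\Lambda^*$. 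Thus $E:=\sqrt2\,\Lambda^*$ is an even lattice with $\min(E)=14$. Running the same two equations for the strongly perfect lattice $\Lambda^*$ evaluated at $x\in\Lambda$ yields $588(x,x)^2\in\ZZ$ and $49(x,x)^2-42(x,x)\in\ZZ$, whence $(x,x)\in\frac17\ZZ$ for all $x\in\Lambda$. Symmetrically, $F:=\sqrt{14}\,\Lambda$ is even with $\min(F)=14$, and one checks $F=\sqrt{28}\,E^*$, so $E$ has even level $28$.

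\emph{Step 2 (the level is genuinely divisible by $7$).} I claim $\Lambda$ is not integral, which forces a vector of norm with denominator exactly $7$ and hence makes the even level equal to $28=2^2\cdot7$. Indeed, if all norms of $\Lambda$ were integral, then $2(x,y)\in\ZZ$ for minimal $x,y$, so after rescaling by $\sqrt2$ the $2\cdot1152=2304$ minimal vectors would become vectors of norm $2$ with integral mutual inner products; such vectors lie in the root system of the lattice they generate, and in dimension $16$ a root system has at most $480$ vectors (attained by $D_{16}$ and $E_8\perp E_8$), contradicting $2304$. Consequently $7$ divides the even level, so Proposition~\ref{maxeven} (which only lists genera of determinant $2^a3^b$) does not apply and a genus enumeration in the style of Lemmas~\ref{minge3/2}--\ref{leveldiv6} is unavailable; this is why a different method is needed.

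\emph{Step 3 (modular forms).} Both $E$ and $F=\sqrt{28}\,E^*$ are even $16$-dimensional lattices of level $28$ and minimum $14$, so their theta series lie in the weight-$8$ space $M_8(\Gamma_0(28),\chi)$, where $\chi$ is the quadratic character attached to $\det E=2^a7^b$ modulo squares. Since $\min(E)=\min(F)=14$ we have $\theta_E=1+2304\,q^7+\cdots$ and $\theta_F=1+2304\,q^7+\cdots$ with vanishing coefficients at $q^1,\dots,q^6$, the two series are exchanged by the Fricke involution $W_{28}$, and $\det E\cdot\det F=28^{16}$. I would now run the computation of Section~\ref{ModForm}: impose that an element of $M_8(\Gamma_0(28),\chi)$ with the prescribed six leading zeros and $a_7=2304$ has non-negative integral Fourier coefficients and that its $W_{28}$-image has the same shape, and show that no such form exists. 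The equality case forced above gives an independent handle: as $r=7<8$, Corollary~\ref{linkomb} yields $c=2$ and $|N_2(\alpha)|=7$ for every $\alpha\in\Min(\Lambda^*)$, meeting the bound of Lemma~\ref{boundn2r<8V1}, so each $N_2(\alpha)$ spans a rescaled $A_7$ with $\sum_{x\in N_2(\alpha)}x=2\alpha$; these rigid configurations can be used to constrain further coefficients of $\theta_E$.

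\emph{Main obstacle.} The essential difficulty is the modular forms step at the composite level $28$ with the prime $7$ present: one must fix the nebentypus character among the possibilities coming from $\det E=2^a7^b$ with $\det E\cdot\det F=28^{16}$, compute the several-dimensional space $M_8(\Gamma_0(28),\chi)$ together with the action of $W_{28}$, and verify that the positivity-plus-Fricke constraints are jointly infeasible. Establishing the root-system bound of Step~2 cleanly—so that the factor $7$ in the level is genuinely unavoidable and Proposition~\ref{maxeven} is correctly bypassed—is the other point requiring care.
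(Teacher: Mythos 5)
Your Step 1 is sound and reproduces the paper's opening move: with $s(\Lambda)=s(\Lambda^*)=1152$ the equations $(D4)$ and $\tfrac{1}{12}(D4-D2)$ force $\Lambda^*$ (suitably rescaled) to be even of minimum $14$ with $\sqrt{28}$ times its dual also even; your observation that $|N_2(\alpha)|=7$ meets the equality case of Lemma~\ref{boundn2r<8V1}, giving rescaled $A_7$ configurations with $\sum_{x\in N_2(\alpha)}x=c\alpha$, is also correct and is exactly the structure the paper exploits. The root-system counting in your Step 2 (at most $480$ roots in rank $16$ versus $2304$ minimal vectors) is a valid way to see that the prime $7$ genuinely enters the level, though this only explains why Proposition~\ref{maxeven} is unavailable; it does not advance the exclusion itself.

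The genuine gap is Step 3: the entire nonexistence claim is delegated to a modular-forms feasibility computation that you never perform and whose outcome you cannot assert. You would have to enumerate all admissible determinants $2^a7^b$ of $E$ (hence all genera and nebentypus characters, since the Eisenstein part of the LP in Section~\ref{ModForm} requires a genus symbol), run the positivity-plus-Fricke linear program for each, and find them \emph{all} infeasible --- and there is concrete reason to doubt this succeeds: the paper introduces this case with the words ``Now we use a different method,'' precisely because the modular-form technique it uses elsewhere is not what settles $r=7$. The paper's actual proof is combinatorial: starting from the rigid configurations $N_2(\alpha_1)=\{x_1,\dots,x_7\}$ and $N_2(x_1)=\{\alpha_1,\dots,\alpha_7\}$ you correctly identified, it shows $|N_2(\alpha_1)\cap N_2(\alpha_2)|\le 1$ by exhibiting a $4\times 4$ Gram matrix of negative determinant, pins down all inner products $(x_i,\alpha_j)$, $(x_i,y_j)$, $(\alpha_i,\beta_j)$ up to finitely many unknowns, and then verifies by exhaustive search that the resulting $20\times 20$ Gram matrix admits no completion that is positive semidefinite of rank $\le 16$ with minimum $1/2$. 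So where your proposal says the rigid $A_7$ configurations ``can be used to constrain further coefficients of $\theta_E$,'' the paper instead uses them as the whole proof; your plan, as written, proves nothing until the LP infeasibility is actually established, and if any admissible genus turns out to be LP-feasible the approach collapses.
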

\begin{proof}
  Let $\Lambda$ be a dual strongly perfect lattice in dimension $16$ with $r(\Lambda)=7$.
  By Theorem~\ref{TableOfValues} we have $s(\Lambda)=s(\Lambda^*)=1152$.
  We scale $\Lambda$ such that $\min(\Lambda)=1/2$ and $\min(\Lambda^*)=14$. Put $\Gamma:=\Lambda^*$. 
  Then for all $\alpha\in\Gamma$, 
  \begin{align*}
	  \frac{1}{12}(D4-D2)(\alpha) &= \frac{1}{4}(\alpha,\alpha)((\alpha,\alpha)-12) \in\ZZ.
  \end{align*}
  Thus $(\alpha,\alpha)$ is an even number, and $\Gamma$ is an even lattice;
  similarly $\sqrt{28}\Gamma^*$ is also even. 
  For any $\alpha\in\Min(\Gamma)$ and any $x\in\Min(\Lambda)$, define
  \begin{align*}
    N_2(\alpha)&:=\{x\in\Min(\Lambda)\mid (x,\alpha)=2\}, \text{ and}\\
    N_2(x)&:=\{\alpha\in\Min(\Gamma)\mid (x,\alpha)=2\}
  \end{align*}
  respectively. 
  Now fix $\alpha_1\in\Min(\Gamma)$ and assume that 
  \begin{align*}
  N_2(\alpha_1)&=\{x_1,x_2,x_3,x_4,x_5,x_6,x_7\},\\
  N_2(x_1)&=\{\alpha_1,\alpha_2,\alpha_3,\alpha_4,\alpha_5,\alpha_6,\alpha_7\}.
  \end{align*}
  By Corollary~\ref{linkomb}, we have $\sum_{j=1}^{7}\alpha_j = 28 x_1 $ and $\sum_{j=1}^{7}x_j = \alpha_1$.
  A simple calculation shows that $(x_i,x_j)=1/4$ for $1\leq i,j\leq 7$ and $i\neq j$, and
  $(\alpha_i,\alpha_j)=7$ for $1\leq i,j\leq 7$ and $i\neq j$.

  We claim that $|N_2(\alpha_1)\cap N_2(\alpha_2)|\le 1$. If not then there were two different vectors
  $x$ and $y$ in $N_2(\alpha_1)\cap N_2(\alpha_2)$. The Gram matrix formed by $x,y,\alpha_1,\alpha_2$ is
  $$
  \begin{pmatrix}
     1/2 & 1/4 & 2 & 2\\
     1/4 & 1/2 & 2 & 2\\
     2   &  2  & 14& 7\\
     2   &  2  & 7 &14
  \end{pmatrix},
  $$
  whose determinant is $-7/16$; but this is impossible as the Gram matrix should be positive-semidefinite.
  
  Since $|N_2(\alpha_1)\cap N_2(\alpha_j)|= 1$,
  we have $(x_i, \alpha_j)\in\{-2,-1,0,1\}$ for $2\le i,j\le 7$. 
  So $7=(x_i, 28x_1)= \sum_{j=1}^{7}(x_i, \alpha_j) = 2 + \sum_{j=2}^{7}(x_2, \alpha_j)\le 8$.
  Therefore, without loss of generality, we can assume that $(x_i,\alpha_i)=0$ for $2\le i\le 7$,
  and $(x_i,\alpha_j)=1$ for $2\le i,j \le 7$ and $ i\ne j$. 
  Because $(\alpha_2, x_1)=(\alpha_2, x_1-x_2)=2$, we can assume that
  $N_2(\alpha_2)=\{x_1,x_1-x_2,y_3,y_4,y_5,y_6,y_7\}.$
  Hence $(x_1, y_i)=1/4$ and $(x_2, y_i)=0$ where $3\le i\le 7$. 
  Similarly, assume that $N_2(x_2)=\{\alpha_1,\alpha_1-\alpha_2,\beta_3,\beta_4,\beta_5,\beta_6,\beta_7\}$.
  Hence $(\alpha_1,\beta_i)=7$ and $(\alpha_2, \beta_i)=0$ where $3\le i \le 7$.
  By the above argument used for $(x_i,\alpha_j)$, we can without loss of generality
  assume that $(x_i,\beta_i)=(y_i,\alpha_i)=0$ for $3\le i\le 7$, 
  and $(x_i,\beta_j)=(y_i,\alpha_j)=1$ for $3\le i,j\le 7$ with $i\ne j$.
  For $3\le i,j\le 7$ put $a_{ij}=(x_i,y_j), b_{ij}=(\alpha_i,\beta_j),$ and $c_{ij}=(y_i, \beta_j)$.
  Also we readily check that $a_{ij}\in\{a/28 \mid a \text{ is an integer and }-7 \le a\le 7 \}$,
  $b_{ij}\in\{-7,\dots,7\}$, and $c_{ij}\in\{-2,\dots,2\}.$
  Since every shortest vector $\alpha$ in $\Min(\Gamma)$ is equal to the sum of vectors
  in $N_2(\alpha)$, the lattice generated by vectors $x_1,\dots,x_6,\alpha_1,\dots,\alpha_6,y_3,\dots,y_6,\beta_3,\dots,\beta_6$
  is a sublattice of $\Gamma^*$; obviously it has minimum $1/2$.
  The Gram matrix formed by vectors $x_1,\dots,x_6,\alpha_1,\dots,\alpha_6,y_3,\dots,y_6,\beta_3,\dots,\beta_6$
  can be written as

  \begin{scriptsize}
  $$
    \kbordermatrix{
&x_1&x_2&x_3&x_4&x_5&x_6&\alpha_1&\alpha_2&\alpha_3&\alpha_4&\alpha_5&\alpha_6&y_3&y_4&y_5&y_6&\beta_3&\beta_4&\beta_5&\beta_6 \\
x_1&1/2&1/4&1/4&1/4&1/4&1/4&2&2&2&2&2&2&1/4&1/4&1/4&1/4&1&1&1&1\\
x_2&1/4&1/2&1/4&1/4&1/4&1/4&2&0&1&1&1&1&0&0&0&0&2&2&2&2\\
x_3&1/4&1/4&1/2&1/4&1/4&1/4&2&1&0&1&1&1&a_{33}&a_{34}&a_{35}&a_{36}&0&1&1&1\\
x_4&1/4&1/4&1/4&1/2&1/4&1/4&2&1&1&0&1&1&a_{43}&a_{44}&a_{45}&a_{46}&1&0&1&1\\
x_5&1/4&1/4&1/4&1/4&1/2&1/4&2&1&1&1&0&1&a_{53}&a_{54}&a_{55}&a_{56}&1&1&0&1\\
x_6&1/4&1/4&1/4&1/4&1/4&1/2&2&1&1&1&1&0&a_{63}&a_{64}&a_{65}&a_{66}&1&1&1&0\\
\alpha_1&2&2&2&2&2&2&14&7&7&7&7&7&1&1&1&1&7&7&7&7\\
\alpha_2&2&0&1&1&1&1&7&14&7&7&7&7&2&2&2&2&0&0&0&0\\
\alpha_3&2&1&0&1&1&1&7&7&14&7&7&7&0&1&1&1&b_{33}&b_{34}&b_{35}&b_{36}\\
\alpha_4&2&1&1&0&1&1&7&7&7&14&7&7&1&0&1&1&b_{43}&b_{44}&b_{45}&b_{46}\\
\alpha_5&2&1&1&1&0&1&7&7&7&7&14&7&1&1&0&1&b_{53}&b_{54}&b_{55}&b_{56}\\
\alpha_6&2&1&1&1&1&0&7&7&7&7&7&14&1&1&1&0&b_{63}&b_{64}&b_{65}&b_{66}\\
y_3&1/4&0&a_{33}&a_{43}&a_{53}&a_{63}&1&2&0&1&1&1&1/2&1/4&1/4&1/4&c_{33}&c_{34}&c_{35}&c_{36}\\
y_4&1/4&0&a_{34}&a_{44}&a_{54}&a_{64}&1&2&1&0&1&1&1/4&1/2&1/4&1/4&c_{43}&c_{44}&c_{45}&c_{46}\\
y_5&1/4&0&a_{35}&a_{45}&a_{55}&a_{65}&1&2&1&1&0&1&1/4&1/4&1/2&1/4&c_{53}&c_{54}&c_{55}&c_{56}\\
y_6&1/4&0&a_{36}&a_{46}&a_{56}&a_{66}&1&2&1&1&1&0&1/4&1/4&1/4&1/2&c_{63}&c_{64}&c_{65}&c_{66}\\
\beta_3&1&2&0&1&1&1&7&0&b_{33}&b_{43}&b_{53}&b_{63}&c_{33}&c_{43}&c_{53}&c_{63}&14&7&7&7\\
\beta_4&1&2&1&0&1&1&7&0&b_{34}&b_{44}&b_{54}&b_{64}&c_{34}&c_{44}&c_{54}&c_{64}&7&14&7&7\\
\beta_5&1&2&1&1&0&1&7&0&b_{35}&b_{45}&b_{55}&b_{65}&c_{35}&c_{45}&c_{55}&c_{65}&7&7&14&7\\
\beta_6&1&2&1&1&1&0&7&0&b_{36}&b_{46}&b_{56}&b_{66}&c_{36}&c_{46}&c_{56}&c_{66}&7&7&7&14
}.
$$
\end{scriptsize}

We attempt to complete this Gram matrix by adding the vectors $y_3,\dots,y_6,\beta_3,\dots,\beta_6$ each in turn.
For each vector, we should check that the Gram matrix of the completed vectors
is positive-semidefinite with rank $\le 16$, and the lattice with this Gram matrix has minimum $1/2$.
%we find $a_{ii}\in\{5/28,6/28,7/28\}$ and $a_{ij}\in\{2/28,3/28,4/28,5/28\}$ where $i\ne j$.
%Similarly $b_{ii}\in\{5,6,7\}$ and $b_{ij}\in\{2,3,4,5\}$ where $i\ne j$.
A brute-force search shows that there is no such Gram matrix.
This finishes our proof.
\end{proof}

Combining Theorem~\ref{TableOfValues}, Lemma~\ref{N_2N_2Dual},
 Corollary~\ref{20div3}, and Lemmas~\ref{32div5}-\ref{case7}, we obtain the following.
\begin{theorem}\label{rest}
  Let $\Lambda$ be a dual strongly perfect lattice in dimension $16$. Then 
  $$ r(\Lambda)\in\left\{6, \frac{36}{5},8,\frac{96}{11},9,\frac{64}{7}\right\}.$$
\end{theorem}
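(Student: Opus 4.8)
The plan is to treat Theorem~\ref{rest} as a bookkeeping step that simply collects the exclusions established in the preceding results. First I would recall the starting list: by Theorem~\ref{TableOfValues} the Berg\'e--Martinet invariant $r(\Lambda)$ of a $16$-dimensional strongly perfect lattice is either $6$ (the minimal type) or one of the rational values tabulated there. Since $\Lambda$ is dual strongly perfect, both $\Lambda$ and $\Lambda^*$ are strongly perfect with the common value $r=r(\Lambda)=r(\Lambda^*)$, so the extra integrality constraint of Lemma~\ref{N_2N_2Dual} applies to the pair $(s(\Lambda),s(\Lambda^*))$ simultaneously. This is precisely the reduction already carried out in Remark~\ref{shortTableOfValues}, which narrows the candidates down to the $20$ non-minimal values listed in its table together with $r=6$.

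Next I would cross each surviving candidate off that short list against the exclusion result handling it, following the parenthetical references attached to each entry in the table of Remark~\ref{shortTableOfValues}. Concretely: Corollary~\ref{20div3} removes $r=20/3$ and Lemma~\ref{32div5} removes $r=32/5$, both through the polynomial obstruction of Lemma~\ref{lemma:polynomial_method}; Lemma~\ref{lemma:level} removes the seven values $48/7,96/13,15/2,33/4,192/23,42/5,60/7$ via the integrality of the even level (Lemma~\ref{IsLevelIntegral}); Lemma~\ref{lemma:2adic_sublattice} removes $22/3,54/7,90/11,26/3$ via the determinant of the even $2$-adic sublattice; Lemma~\ref{144div7} removes $144/17$ via the SDP spherical-code bound; and Lemma~\ref{case7} removes $7$ via the exhaustive Gram-matrix completion search. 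After deleting all of these, the only entries that remain are $36/5$, $8$, $96/11$, $9$ and $64/7$, together with the minimal-type value $6$, which is exactly the asserted list.

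The step I expect to require the most care is verifying \emph{completeness} of the cover: that every non-surviving entry of the short table is handled by at least one of the quoted results, and conversely that none of the six surviving values is excluded by any of them. This is a matter of matching each value of $r$ against its annotation in Remark~\ref{shortTableOfValues} and checking that the six remaining values are precisely those whose annotations point to results not listed among the exclusion lemmas. The genuinely hard arguments---the semidefinite-programming bound underlying Lemma~\ref{144div7} and the brute-force positive-semidefiniteness search underlying Lemma~\ref{case7}---have already been discharged in the individual lemmas, so the present theorem itself carries no further analytic difficulty and reduces to this finite cross-checking.
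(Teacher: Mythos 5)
Your proposal is correct and coincides with the paper's own proof: Theorem~\ref{rest} is indeed obtained by combining Theorem~\ref{TableOfValues} and Lemma~\ref{N_2N_2Dual} (i.e.\ the short list of Remark~\ref{shortTableOfValues}) with the exclusions of Corollary~\ref{20div3} and Lemmas~\ref{32div5}--\ref{case7}, exactly as you describe, leaving $\left\{6,\tfrac{36}{5},8,\tfrac{96}{11},9,\tfrac{64}{7}\right\}$. Your matching of each excluded value to its lemma (the polynomial method for $20/3$ and $32/5$, the level argument for the seven values, the even $2$-adic sublattice for four values, the SDP bound for $144/17$, and the Gram-matrix search for $7$) is precisely the cross-check the paper's one-line proof presupposes.
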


\section{Dual strongly perfect lattices of minimal type}\label{mintyp}
Let $\Lambda $ be some dual strongly perfect lattice of 
minimal type in dimension 16, so 
$$\Lambda \subset \RR ^{16},\ \min(\Lambda) \min(\Lambda^*) = 6 .$$
Put $m:=\min (\Lambda )$ and $d:= \min (\Lambda ^*) = 6/m $.
Let $s:= s (\Lambda ) $ and $t:= s(\Lambda^*)$.
The following arguments are only formulated to give
restrictions on $(s,t)$. The same conditions of course  also apply 
if we interchange $s$ and $t$. 

By the bounds on the kissing numbers we get 
 $8 \cdot 17 \leq s \leq 3678 $.
Moreover by equation $(D2)$ we have 
$smd/n = 3s/8 \in \ZZ $ so 

\begin{lemma}\label{lem0}
$8\mid s $. 
\end{lemma}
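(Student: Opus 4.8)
The plan is to evaluate the design equation $(D2)$ on a minimal vector of the dual lattice and to exploit the integrality of the dual pairing. First I would fix some $\alpha\in\Min(\Lambda^*)$, so that $(\alpha,\alpha)=d=\min(\Lambda^*)$. Since $\alpha\in\Lambda^*$ and every $x\in S(\Lambda)\subset\Lambda$, the pairing $(x,\alpha)$ is an integer by the very definition of the dual lattice; hence the left-hand side $\sum_{x\in S(\Lambda)}(x,\alpha)^2$ of $(D2)(\alpha)$ is a non-negative integer, exactly as recorded in the remark following \eqref{gleichungen}.

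Next I would simplify the right-hand side. By $(D2)$ it equals $\frac{sm}{n}(\alpha,\alpha)=\frac{smd}{n}$; using $md=\min(\Lambda)\min(\Lambda^*)=r(\Lambda)=6$ (minimal type) and $n=16$, this becomes $\frac{6s}{16}=\frac{3s}{8}$. Comparing the two expressions for $(D2)(\alpha)$ then gives $\frac{3s}{8}\in\ZZ$, i.e. $8\mid 3s$. Finally, since $\gcd(3,8)=1$, the divisibility $8\mid 3s$ forces $8\mid s$, which is the assertion.

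I do not expect any genuine obstacle here, since the statement is an immediate consequence of the already-established integrality of the design sums; the computation is the one displayed in the sentence preceding the lemma. The only point that requires a word of care is the integrality of $\sum_{x\in S(\Lambda)}(x,\alpha)^2$, which is not an arbitrary real sum but a sum of squares of the integers $(x,\alpha)$ guaranteed by $\alpha\in\Lambda^*$; everything else is the substitution $md=6$ together with the coprimality of $3$ and $8$.
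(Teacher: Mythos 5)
Your proof is correct and is precisely the paper's own argument: the paper disposes of this lemma in the single sentence preceding it, observing that $(D2)$ evaluated at a minimal vector of $\Lambda^*$ gives $smd/n = 3s/8 \in \ZZ$, with the integrality of the design sum coming from the remark after \eqref{gleichungen}, exactly as you say. Your only additions are to spell out the integrality of $\sum_{x\in S(\Lambda)}(x,\alpha)^2$ and the final coprimality step $\gcd(3,8)=1$, which the paper leaves implicit.
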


\begin{lemma}\label{lem1} 
Write  $s=2^aA$ with $A$ odd. 
If $A$ is squarefree then $a\geq 7$. 
\end{lemma}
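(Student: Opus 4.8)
The plan is to squeeze everything out of a single one of the equations in \eqref{gleichungen}, namely $\tfrac1{12}(D4-D2)$, whose denominator in dimension $16$ turns out to be exactly $2^{7}\cdot 3^{2}$. First I would introduce the scale-invariant quantity $P(\alpha):=\min(\Lambda)\,(\alpha,\alpha)=m(\alpha,\alpha)$ for $\alpha\in\Lambda^*$; it is invariant because rescaling $\Lambda$ by $c$ multiplies $m$ by $c^{2}$ and $(\alpha,\alpha)$ by $c^{-2}$, and $P(\alpha)=r=6$ whenever $\alpha\in\Min(\Lambda^*)$. Specialising the last line of \eqref{gleichungen} to $n=16$ and using $md=6$, a direct computation rewrites its integrality as
\[
\tfrac1{12}(D4-D2)(\alpha)=\frac{s}{1152}\,P(\alpha)\bigl(P(\alpha)-6\bigr)\in\ZZ\qquad(\alpha\in\Lambda^*),\qquad 1152=2^{7}\cdot 3^{2}.
\]
It is the appearance of $2^{7}$ here that makes the threshold $a\ge 7$ plausible.

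Next I would exploit the two relevant primes separately. Reading the identity $3$-adically gives $v_3(s)+v_3\!\bigl(P(P-6)\bigr)\ge 2$; since $6$ has $3$-adic valuation $1$, the factor $P(P-6)$ is divisible by $3$ only when $3\mid P$, so if some $\alpha$ had $3\nmid P(\alpha)$ we would get $v_3(s)\ge 2$, i.e. $9\mid s$, contradicting that $A$ is squarefree. Hence the squarefree hypothesis forces $3\mid P(\alpha)$ for every $\alpha$. Reading it $2$-adically gives $a+v_2\!\bigl(P(P-6)\bigr)\ge 7$, and again $P(P-6)$ is even only when $P$ is even; therefore, as soon as one can produce a single $\alpha\in\Lambda^*$ with $P(\alpha)$ \emph{odd}, one gets $v_2(P(P-6))=0$ and hence $a\ge 7$, which is the desired conclusion.

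So the whole problem reduces to exhibiting a dual vector of odd normalized norm, and to interpret this I would rescale $\Lambda$ so that $\Lambda^*$ is integral and primitive (legitimate since strongly perfect lattices are rational, and both $s$ and each $P(\alpha)$ are unchanged). Writing $d:=\min(\Lambda^*)\in\ZZ$ and $m=6/d$, integrality gives $\Lambda^*\subseteq(\Lambda^*)^*=\Lambda$, whence $\min(\Lambda)\le\min(\Lambda^*)$ and $md=6$ force $d\ge 3$; moreover, for a norm $w=(\alpha,\alpha)\in\ZZ$ one has $v_2(P)=1+v_2(w)-v_2(d)$ and $v_3(P)=1+v_3(w)-v_3(d)$. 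The conclusions of the previous paragraph then say that if no odd $P$ exists, every norm $w$ satisfies $v_2(w)\ge v_2(d)$ and $v_3(w)\ge v_3(d)$; feeding this into $2(\alpha,\beta)=(\alpha+\beta,\alpha+\beta)-(\alpha,\alpha)-(\beta,\beta)$ shows that $2^{\,v_2(d)-1}$ and $3^{\,v_3(d)}$ divide every Gram entry of $\Lambda^*$, so primitivity forces $v_2(d)\le 1$ and $v_3(d)=0$.

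The main obstacle is precisely the case in which every $P(\alpha)$ is even: here the identity only yields $a\ge 5$ (both $P$ and $P-6$ are then even), and the algebraic relations above see only the $2$- and $3$-adic parts of $d$, so they reduce the problem to ruling out an $\Lambda^*$ whose minimum $d\ge 3$ is essentially coprime to $6$ (the borderline value being $d=5$, with $\min(\Lambda)=6/5$). The relations in \eqref{gleichungen} cannot manufacture the two further factors of $2$ in this situation, and I do not expect the Hermite/determinant inequalities of Lemma~\ref{DeterminantBound} alone to suffice, since applied to $\Lambda$ and $\Lambda^*$ they only reproduce $r\le\gamma_{16}^2$. Closing this last case—showing that a minimal–type dual strongly perfect lattice in dimension $16$ cannot have such a $\{2,3\}$-rough minimum, presumably by combining the kissing-number range $8\cdot 17\le s\le 3678$ with a finer spherical-design count or with the global classification constraints—is the delicate heart of the argument, while the clean identity with denominator $2^{7}\cdot 3^{2}$ is what drives it.
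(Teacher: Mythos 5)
Your identity $\tfrac{1}{12}(D4-D2)(\alpha)=\tfrac{s}{1152}P(\alpha)\bigl(P(\alpha)-6\bigr)\in\ZZ$ and both $p$-adic consequences you draw from it are correct, but the proof has a genuine gap, which you yourself concede: everything reduces to the case in which every $P(\alpha)$ is even, and that case is left open. Moreover this is not a corner case but the whole problem: under the contradiction hypothesis $a\le 6$, the paper's argument shows that every $P(\alpha)$ is in fact divisible by $6$, so the case you can handle (some odd $P(\alpha)$) is vacuous, and nothing of the conclusion $a\ge 7$ is actually established. Your closing speculation about what is needed is also off target: no kissing-number range, finer design count, or classification input is required --- the same equation, read in the right scaling, finishes the job.

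The missing idea is the paper's choice of scaling combined with a coprimality argument. Rescale so that $\min(\Lambda)=3$ and $\min(\Lambda^*)=2$, and write $(\alpha,\alpha)=p/q$ in lowest terms for $\alpha\in\Lambda^*$. The equation becomes $\tfrac{2^aA\,p(p-2q)}{2^7q^2}\in\ZZ$. Since $\gcd\bigl(q,\,p(p-2q)\bigr)=1$, the odd part of $q^2$ must divide the squarefree odd number $A$, hence the odd part of $q$ is $1$; and a nontrivial $2$-part of $q$ would make $p(p-2q)$ odd and force $a\ge 7+2v_2(q)$. So if $a\le 6$ then $q=1$, and then $2^{7-a}\mid p(p-2)$ forces $p$ even. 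Thus in this scaling all norms of $\Lambda^*$ are even integers, so by polarization $\Lambda^*$ is an even (in particular integral) lattice, whence $\Lambda^*\subseteq(\Lambda^*)^*=\Lambda$ --- absurd, because $\min(\Lambda^*)=2<3=\min(\Lambda)$ and a sublattice cannot have smaller minimum than its overlattice. Note that this closes exactly your stuck case: translated to your primitive scaling, it says $\sqrt{2/d}\,\Lambda^*$ is integral, and primitivity then forces $d\mid 2$, contradicting your own correct bound $d\ge 3$. The subtlety you missed is that the failure of integrality must be made visible as a denominator $q$ --- which requires working in the scaling $\min(\Lambda^*)=2$ rather than the primitive one --- because only there can the squarefreeness of $A$ annihilate it; after that, the contradiction is geometric rather than a count of factors of $2$ in $s$.
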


\begin{proof}
Rescale $\Lambda $ such  that $m=3$ and $d=2$. 
Write $s=2^aA$ and assume that $A$ is odd, squarefree, and $a<7$. 
For $\alpha \in \Lambda ^*$ write $(\alpha , \alpha ) = \frac{p}{q} $ with
$\gcd (p,q ) =1 $. 
Then $\frac{1}{12}(D4-D2)$ implies that 
$$ \frac{s p }{2^7 q^2} (p-2q) = \frac{Ap}{2^{7-a}q^2} (p-2q)  \in \ZZ .$$
As $p$ and $q$ are coprime and $A$ is squarefree 
this implies that $q=1$ and $p$ is even.
So $\Lambda ^*$ is an even lattice with minimum $2$ 
so that its dual lattice $\Lambda $ has minimum 3. As $3>2$ 
and $\Lambda ^* \subseteq \Lambda $ this is a contradiction.
\end{proof}

\begin{lemma} \label{lem2}
	 Assume that $s=2^3b^2A$ with $A$ odd and squarefree, $b$ odd. 
	 The $2^9 $ divides $t$ and $b\geq 7$.
 \end{lemma}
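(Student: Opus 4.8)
The plan is to rescale $\Lambda$ exactly as in the proof of Lemma~\ref{lem1}, so that $m=\min(\Lambda)=3$ and $d=\min(\Lambda^*)=2$, and first to produce an even lattice out of the hypothesis $s=2^3b^2A$. For $\alpha\in\Lambda^*$ write $(\alpha,\alpha)=p/q$ in lowest terms; then $\frac1{12}(D4-D2)(\alpha)$ from~\eqref{gleichungen} becomes
\[
\frac{s\,p(p-2q)}{2^{7}q^{2}}=\frac{b^{2}A\,p(p-2q)}{2^{4}q^{2}}\in\ZZ .
\]
First I would run the prime-by-prime valuation argument of Lemma~\ref{lem1}: since $\gcd\bigl(p(p-2q),q\bigr)=1$ and $A$ is squarefree, every odd prime dividing $q$ must divide $b$ (so $q$ is odd and $q\mid b$), while the prime $2$ forces $2^{4}\mid p(p-2q)$ and hence $p$ even. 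Therefore each norm $b(\alpha,\alpha)=p\,(b/q)$ of $\sqrt b\,\Lambda^*$ is an even integer, so $\sqrt b\,\Lambda^*$ is even. This even sublattice is the substitute for the step ``$\Lambda^*$ is even'' of Lemma~\ref{lem1}, which is unavailable here because the odd part $b^{2}A$ of $s$ need not be squarefree.

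For $2^{9}\mid t$ I would pass to the dual lattice. Evaluating $\frac1{12}(D4-D2)$ for $\Lambda^*$ (which has minimum $2$ and half kissing number $t$) at a vector $\beta\in\Lambda$ with $(\beta,\beta)=P/Q$ in lowest terms gives
\[
\frac{t\,P(P-3Q)}{2^{5}\cdot 3^{2}\cdot Q^{2}}\in\ZZ .
\]
The key observation is that $\sqrt b\,\Lambda^*$ is even while $\Lambda$ still contains the odd norm $3$; since $b$ is a $2$-adic unit, reading this off over $\ZZ_2$ shows that the $2$-adic Jordan decomposition of $\Lambda^*$ has a scaled component whose dual contributes to $\Lambda$ a vector $\beta$ whose norm has denominator divisible by $4$, i.e. $P$ odd and $4\mid Q$. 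For such a $\beta$ the numerator $P(P-3Q)$ is odd while $Q^{2}$ contributes at least $2^{4}$ to the denominator, so combined with the explicit factor $2^{5}$ we obtain $2^{9}\mid t$.

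Finally, for $b\ge 7$ I would exclude the remaining small odd values, noting first that the kissing bound $s\le 3678$ already gives $b^{2}A\le 459$, so $b\le 21$. The value $b=1$ is immediate, since then $s=2^{3}A$ with $A$ odd and squarefree, contradicting Lemma~\ref{lem1}. For $b\in\{3,5\}$ the even lattice $M=\sqrt b\,\Lambda^*$ has minimum $2b$ and $\min(M^*)=3/b$; using the determinant bounds of Lemma~\ref{DeterminantBound} and Lemma~\ref{IsLevelIntegral} to constrain the determinant of $M$ to the form $2^{a}3^{c}$, I would then enumerate the finitely many candidates for $M$ as sublattices of the maximal even lattices of Proposition~\ref{maxeven}, exactly as in Lemmas~\ref{minge3/2} and~\ref{leveldiv6}, and check that none of them is strongly perfect with a dual kissing number compatible with the condition $v_{2}(s)=3$.

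I expect the $2$-adic step to be the main obstacle: one has to establish rigorously that the even structure of $\sqrt b\,\Lambda^*$ together with $\min(\Lambda)=3$ forces a norm of $\Lambda$ with denominator exactly $4$ rather than merely $2$, because a denominator of only $2$ would yield just $2^{7}\mid t$. Controlling the $2$-adic Jordan symbol of $\Lambda^*$ to this precision, and likewise bounding the determinant in the cases $b\in\{3,5\}$ tightly enough to make the finite search feasible, are the delicate points; the surrounding valuation computations are then routine.
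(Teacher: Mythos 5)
Your first step is sound: in the scaling $m=3$, $d=2$ the integrality of $\frac{1}{12}(D4-D2)$ does give $q$ odd, $q\mid b$ and $p$ even, hence $\sqrt{b}\,\Lambda^*$ is even. But your route to $2^9\mid t$ has a genuine gap, which you yourself flag: you need a vector $\beta\in\Lambda$ whose norm $P/Q$ has $P$ odd and $4\mid Q$, and nothing in your argument produces one. The hypotheses ``$\sqrt{b}\,\Lambda^*$ even and $\min(\Lambda)=3$'' do not force this: if the $2$-adic Jordan splitting of $\sqrt{b}\,\Lambda^*$ has only unimodular and $2$-modular components (the unimodular one even), then every norm of $\Lambda$ has denominator dividing $2$, while $\Lambda$ can still contain vectors of odd integral norm such as $3$ (a $\tfrac12$-scaled unimodular block containing a vector of norm $\equiv 2 \pmod 4$ already produces unit norms in the dual). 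In that scenario your own estimate yields only $2^7\mid t$, as you note, so the argument collapses. The paper's mechanism is entirely different and never looks at the $2$-adic structure of $\Lambda$: rescaling so that $\min(\Lambda^*)=b$, the same integrality statement sharpens to $(\alpha,\alpha)\in\ZZ$ with $(\alpha,\alpha)\equiv 0$ or $b\pmod 4$ --- a refinement your own display actually contains (from $2^4\mid p(p-2q)$, writing $p=2p''$, one gets $4\mid p''$ or $p''\equiv q\pmod 4$), but which you discard after extracting ``$p$ even''. From this refinement one deduces that $(\alpha,\beta)$ is an \emph{odd integer} for all $\alpha,\beta\in\Min(\Lambda^*)$, and then evaluates the design equation $(D4)$ of the strongly perfect lattice $\Lambda^*$ at a minimal vector modulo $16$: since odd fourth powers are $\equiv 1\pmod{16}$, one gets $t\equiv\sum_{\beta}(\alpha,\beta)^4=\frac{3tb^4}{16\cdot 18}\pmod{16}$, i.e. $t\,(b^4-2^5\cdot 3)\equiv 0 \pmod{2^9\cdot 3}$, and as $b^4-96$ is odd this forces $2^9\mid t$.

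The same oddness fact makes your third step unnecessary and also shows why your plan for it is problematic. For $b\le 5$ every pair $\alpha\neq\pm\beta\in\Min(\Lambda^*)$ has $(\alpha,\beta)$ odd with $|(\alpha,\beta)|<b$, and $(\alpha,\beta)=\pm 3$ is excluded for $b=5$ because then $\alpha\mp\beta$ would have norm $2b-6=4<b$; so $(\alpha,\beta)=\pm 1$, and $(D2)$ gives $b^2+(t-1)=\frac{b^2t}{16}$, which is absurd for $b=3,5$ (and $b=1$ is Lemma \ref{lem1}, as you say). Your proposed enumeration instead rests on two unjustified constraints: there is no argument that $\det(\sqrt{b}\,\Lambda^*)$ involves only the primes $2$ and $3$ (for $b=5$ the prime $5$ enters the determinant and nothing you cite rules it out), and Lemma \ref{minge3/2} requires dual minimum $\ge 3/2$, whereas your lattice $M$ has $\min(M^*)=3/b\le 1$; so the classification you want to invoke does not apply, and the search you describe would be of a much larger scale than those in Lemmas \ref{minge3/2} and \ref{leveldiv6}, with no guarantee of feasibility.
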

 
 \begin{proof}
 Assume that $s=2^3b^2A$ with $A$ odd and squarefree, $b$ odd.
Rescale $\Lambda $ such  that $m=6/b$ and $d=b$. 
For $\alpha \in \Lambda ^*$ 
equation $\frac{1}{12}(D4-D2)$ implies that 
$$ \frac{A}{4} (\alpha ,\alpha ) ( (\alpha , \alpha )  -b )   \in \ZZ .$$
 As $A$ is odd and squarefree  
 this implies that  $(\alpha ,\alpha ) \in \ZZ $ and 
 $(\alpha ,\alpha ) \equiv 0 $ or $b $ $\pmod{4}$. 
 \\
 If $\alpha , \beta \in \Min (\Lambda ^*) $ then 
 $$(\alpha \pm \beta , \alpha \pm \beta ) =  
 2b \pm 2 (\alpha ,\beta ) $$ 
 are both either $b$ or $0$ modulo 4. 
 If $(\alpha ,\beta ) \in \frac{1}{2} + \ZZ $ then these are both odd and hence 
 $b\pmod{4} $ so their difference $4(\alpha ,\beta ) $ is $0\pmod{4}$ hence 
 $(\alpha ,\beta )\in \ZZ$ and $2b+2(\alpha ,\beta )$ is even, and hence $0\pmod{4}$
 implying that 
 \begin{center}
	 $(\alpha , \beta ) $ is odd for all $\alpha , \beta \in \Min (\Lambda ^* )$.
 \end{center} 
 As $\Lambda ^*$ is also strongly perfect and the fourth power of an odd integer is
 $1\pmod{16} $ we compute , 
 for any fixed $\alpha \in \Min (\Lambda ^*) $ 
 $$t \equiv _{16} \sum _{\beta \in \Min (\Lambda ^*) /{\pm 1 } } (\alpha ,\beta )^4  =  
 \frac{3tb^4}{16\cdot 18} \equiv _{16} \frac{t}{2^53} .$$ 
 So $32^5 t \equiv t \pmod{2^{5+4}} $ which implies that $2^9$ divides $t$. 
 \\
 Moreover if $b\leq 5$ then $(\alpha ,\beta ) = \pm 1$ for all $\alpha \neq \pm \beta \in \Min (\Lambda ^*)$ 
 and D2 gives us 
 $$\sum _{\beta \in \Min (\Lambda ^*) /{\pm 1 } } (\alpha ,\beta )^2 = 
 b^2 + (t-1) = \frac{b^2}{16} t $$ 
 which yields contradiction for $b=3,5$. 
 \end{proof}

 \begin{lemma}
	 If $3^2\not\mid s$ then $3^2 \mid t $. 
 \end{lemma}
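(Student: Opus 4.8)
The plan is to run the $3$-adic analogue of the proof of Lemma~\ref{lem2}, with the prime $2$ and the modulus $16$ replaced by the prime $3$. I would rescale $\Lambda$ so that $m=\min(\Lambda)=2$ and $d=\min(\Lambda^*)=3$; this is the scaling in which the denominator $n(n+2)=288=2^5\cdot 3^2$ of $(D4)$ carries the full factor $3^2$, so that the hypothesis $3^2\nmid s$, i.e.\ $v_3(s)\le 1$ (writing $v_3$ for the $3$-adic valuation), becomes visible. In this scaling the relations in \eqref{gleichungen} read, for every $\alpha\in\Lambda^*$,
$$(D4)(\alpha)=\tfrac{s}{24}(\alpha,\alpha)^2\in\ZZ,\qquad \tfrac1{12}(D4-D2)(\alpha)=\tfrac{s}{288}(\alpha,\alpha)((\alpha,\alpha)-3)\in\ZZ,$$
while the cross pairing $\Lambda\times\Lambda^*\to\ZZ$ of course stays integral.

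The first step is to control the within-$\Lambda^*$ pairings of minimal vectors, which (unlike the cross pairing) are not automatically integral. For $\alpha,\beta\in\Min(\Lambda^*)$ I would apply $(D2)$ and $(D4)$ to the vectors $\alpha\pm\beta\in\Lambda^*$, whose norms are $6\pm 2(\alpha,\beta)$. Subtracting the two $(D2)$ statements isolates $\tfrac{s}{2}(\alpha,\beta)\in\ZZ$, and adding the two $(D4)$ statements isolates $\tfrac{s}{3}(\alpha,\beta)^2\in\ZZ$. Writing $(\alpha,\beta)=p/q$ in lowest terms, the second relation gives $v_3(s)\ge 1+2v_3(q)$, so under $3^2\nmid s$ we get $3\nmid q$: each $(\alpha,\beta)$ is a $3$-adic integer and its residue modulo $3$ is well defined. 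Moreover, in the subcase $v_3(s)=0$ the same relation forces $v_3(p)\ge 1$, i.e.\ $3\mid(\alpha,\beta)$ for \emph{all} $\alpha,\beta\in\Min(\Lambda^*)$; the borderline subcase $v_3(s)=1$ gives no such constraint and must be handled separately.

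Having pinned the residues $(\alpha,\beta)\bmod 3$, I would feed them into the strong perfection of the \emph{dual} lattice, exactly as the oddness of the pairings was fed into $(D4)$ for $\Lambda^*$ in Lemma~\ref{lem2}. Fixing $\alpha\in\Min(\Lambda^*)$, the identities $(D2)$ and $(D4)$ for $\Lambda^*$ become $\sum_{\beta\in S(\Lambda^*)}(\alpha,\beta)^2=\tfrac{9t}{16}$ and $\sum_{\beta\in S(\Lambda^*)}(\alpha,\beta)^4=\tfrac{27t}{32}$. In the subcase $v_3(s)=0$ every summand on the left is divisible by $3$, and writing $(\alpha,\beta)=3w_\beta$ turns these into $\sum w_\beta^2=\tfrac{t}{16}$ and $\sum w_\beta^4=\tfrac{t}{96}$, so that $v_3(t)=v_3(\sum w_\beta^2)$; together with the already known $8\mid t$ from Lemma~\ref{lem0} this should deliver $3^2\mid t$. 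The symmetric statement, with the roles of $\Lambda$ and $\Lambda^*$ interchanged, is obtained by the same reasoning.

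The main obstacle is that fourth powers modulo $9$ do not collapse as cleanly as modulo $16$: a $3$-adic unit has fourth power in $\{1,4,7\}\pmod 9$ rather than a single value, so there is no one-line reduction as in Lemma~\ref{lem2}. I expect to first extract $3\mid t$ by reducing modulo $3$, and then to upgrade to $3^2\mid t$ by combining the two design identities into $\sum_{\beta}(\alpha,\beta)^2((\alpha,\beta)^2-1)=\tfrac{27t}{32}-\tfrac{9t}{16}=\tfrac{9t}{32}$ and analysing its $3$-adic valuation term by term (the diagonal term $\beta=\alpha$ contributing $72$, with $v_3=2$). Reconciling this with the hard subcase $v_3(s)=1$, where the minimal pairings need not be divisible by $3$, is the step I anticipate to be the most delicate, and is where a further equation from \eqref{gleichungen} or the inequality $r(\Lambda)\ge \tfrac{n+2}{3}$ of Lemma~\ref{min} may be needed to close the argument.
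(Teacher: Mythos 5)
The gap is real, and it is the one you flag yourself: the subcase $v_3(s)=1$. Your only arithmetic control on the pairings of minimal vectors of $\Lambda^*$ comes from adding $(D4)(\alpha+\beta)$ and $(D4)(\alpha-\beta)$, which gives $\frac{s}{3}(\alpha,\beta)^2\in\ZZ$; once $3\mid s$ this yields only $3$-integrality of $(\alpha,\beta)$, not divisibility by $3$, so the substitution $(\alpha,\beta)=3w_\beta$ and everything built on it becomes unavailable, and you offer no concrete replacement. The tool that closes this case is already sitting unused in your first display: $\frac{1}{12}(D4-D2)$, whose denominator carries $3^2$ where $(D4)$ carries only $3$. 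Adding $\frac{1}{12}(D4-D2)(\alpha+\beta)$ and $\frac{1}{12}(D4-D2)(\alpha-\beta)$ gives $\frac{s}{8}+\frac{s}{36}(\alpha,\beta)^2\in\ZZ$, hence $\frac{s}{36}(\alpha,\beta)^2\in\ZZ$ by Lemma~\ref{lem0}; writing $(\alpha,\beta)=p/q$ in lowest terms, the hypothesis $3^2\nmid s$ alone then forces $3\nmid q$ and $3\mid p$, with no case distinction at all. This is precisely the mechanism of the paper's own proof: there $\frac{1}{12}(D4-D2)(\alpha)$ is applied to an arbitrary $\alpha\in\Lambda^*$ (in the scaling $m=1$ it reads $\frac{sp(p-6q)}{2^7 3^2 q^2}\in\ZZ$), and since $p(p-6q)\equiv p^2\pmod 3$ one gets $3\mid (\alpha,\alpha)$ for \emph{every} vector of $\Lambda^*$ whenever $v_3(s)\le 1$; divisibility of the pairings would then also follow by polarization, as $2$ is a unit mod $3$.

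There is a second defect, even in your subcase $v_3(s)=0$: the combination $\sum_\beta(\alpha,\beta)^2\bigl((\alpha,\beta)^2-1\bigr)=\frac{9t}{32}$ cannot be analysed term by term. With $(\alpha,\beta)=3w_\beta$ each summand equals $9w_\beta^2(9w_\beta^2-1)$, whose $3$-adic valuation is exactly $2+2v_3(w_\beta)$ because $9w_\beta^2-1$ is always a unit; your diagonal term $72$ has valuation exactly $2$, and the right-hand side has valuation $2+v_3(t)$, so termwise lower bounds of $2$ give nothing. What works is the opposite combination $\sum_\beta(\alpha,\beta)^2\bigl(9-(\alpha,\beta)^2\bigr)=\frac{135t}{32}$, equivalently $\sum_\beta\bigl(w_\beta^2-w_\beta^4\bigr)=\frac{5t}{96}$: each summand $w_\beta^2(1-w_\beta^2)$ has positive $3$-adic valuation (squares of $3$-adic units are $\equiv 1 \pmod 3$), so $v_3(t)-1=v_3\bigl(\frac{5t}{96}\bigr)\ge 1$, i.e.\ $3^2\mid t$. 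With these two repairs your argument is correct and genuinely different from the paper's, which never touches the design identities of $\Lambda^*$ at all: the paper argues by contradiction, assumes $3^2\nmid s$ and $3^2\nmid t$, deduces that some $\sqrt{a/3}\,\Lambda^*$ and some $\sqrt{b}\,\Lambda$ are even with $ab$ prime to $3$, and then contradicts Lemma~\ref{IsLevelIntegral} because $ab/3\notin\ZZ$ — a global determinant argument in place of your local analysis of kissing configurations.
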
 

 \begin{proof}
 Assume that both $s$ and $t$ are not divisible by $3^2$. 
 Rescale $\Lambda $ such that $m=1$. For $\alpha \in \Lambda ^*$ 
 put $(\alpha,\alpha ) = \frac{p}{q}$, $\gcd(p,q) =1 $.
 Then $\frac{1}{12} (D4-D2) (\alpha ) $  yields that 
 $$\frac{s p}{2^73^2q^2} (p-6q ) \in \ZZ $$ 
 implying that $3 \mid p$. 
 So there is some $a\in \NN $ with
 $3\not\mid a  $ such that $\sqrt{\frac{a}{3}} \Lambda ^*$ is even. 
 Interchanging the role of $\Lambda $ and $\Lambda ^*$ we see that there
 is some $b\in \NN $ with $3\not\mid b$ such that $\sqrt{b} \Lambda $ is an 
 even lattice. Put $\Gamma :=\sqrt{b} \Lambda $. 
 Then $\Gamma $ is an even lattice such that 
 $$\sqrt{\frac{ab}{3}} \Gamma ^* = \sqrt{\frac{ab}{3}}  \frac{1}{\sqrt{b}} \Lambda ^*  
 = \sqrt{\frac{a}{3}} \Lambda ^* $$ is again even. 
 This is a contradiction as $ab/3$ is not an integer. 
 \end{proof} 

 Similarly we find

 \begin{lemma}
	 If $2^5\not\mid s$ then $2^5 \mid t $. 
 \end{lemma}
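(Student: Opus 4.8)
The plan is to mirror the proof of the preceding lemma: assume for contradiction that $2^5\nmid s$ and $2^5\nmid t$. Since Lemma~\ref{lem0} and its dual give $8\mid s$ and $8\mid t$, this means $v_2(s),v_2(t)\in\{3,4\}$, where $v_2$ denotes the $2$-adic valuation. I would rescale $\Lambda$ so that $\min(\Lambda)=1$ and $\min(\Lambda^*)=6$. For a minimal vector $\alpha\in\Min(\Lambda^*)$ the identity $\frac1{12}(D4-D2)(\alpha)=0$ from \eqref{gleichungen} forces $\sum_{x\in S(\Lambda)}(x,\alpha)^2\big((x,\alpha)^2-1\big)=0$; as $(x,\alpha)\in\ZZ$ every summand is non-negative, so $(x,\alpha)\in\{0,\pm1\}$ for all $x\in\Min(\Lambda)$, $\alpha\in\Min(\Lambda^*)$. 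First I would extract the usual $2$-adic integrality: writing $(\alpha,\alpha)=p/q$ with $\gcd(p,q)=1$, the equations $\frac1{12}(D4-D2)(\alpha)$ and $(D4)(\alpha)$ together with $v_2(s)\le4$ show that $q$ is odd and $2\mid p$, so $\Lambda^*$ is $2$-adically even; the symmetric computation using $v_2(t)\le4$ controls the norms of $\Lambda$.

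The crucial new input is a dual-pairing argument. Reading $(D11)$ (equivalently $(D2)$) as a vector identity gives $\sum_{x\in S(\Lambda)}(x,\alpha)x=\tfrac{s}{16}\alpha$ and $\sum_{\alpha\in S(\Lambda^*)}(\alpha,x)\alpha=\tfrac{3t}{8}x$, hence $\tfrac{s}{16}\alpha\in\Lambda$ and $\tfrac{3t}{8}x\in\Lambda^*$ for all $\alpha\in\Min(\Lambda^*)$, $x\in\Min(\Lambda)$. Pairing these two lattice vectors is integral, so $\big(\tfrac{s}{16}\alpha,\tfrac{3t}{8}x\big)=\tfrac{3st}{128}(\alpha,x)\in\ZZ$; choosing $\alpha,x$ with $(\alpha,x)=\pm1$ (these exist since $\sum_x(x,\alpha)^2=\tfrac{3s}{8}>0$) yields $2^7\mid st$. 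Moreover, pairing $\tfrac{s}{16}\alpha\in\Lambda$ with $\beta\in\Min(\Lambda^*)\subseteq\Lambda^*$ gives $(\alpha,\beta)\in\tfrac{16}{s}\ZZ$, and a short $2$-adic estimate (using $2(\alpha,\beta)\in\ZZ$ together with $v_2(s)\le4$) then forces $(\alpha,\beta)\in\ZZ$; thus $M:=\langle\Min(\Lambda^*)\rangle$ is an even lattice of minimum $6$ sitting in the chain $\tfrac{s}{16}M\subseteq\Lambda\subseteq M^*$.

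The main obstacle is that $2^7\mid st$ is too weak: it only excludes $v_2(s)+v_2(t)\le6$ and leaves the cases $(v_2(s),v_2(t))\in\{(3,4),(4,3),(4,4)\}$, whereas the asymmetric conclusion demands $v_2(t)\ge5$. The clean level argument of the $3^2$-case genuinely breaks down at $p=2$: at an odd prime ``even'' coincides with ``integral'', so one may divide out one factor of $3$ from $\Lambda^*$, but at $p=2$ the minimal vectors of $\Lambda^*$ have norm $6$ with $v_2=1$, so ``even'' is one valuation stronger than ``integral'' and no factor of $2$ can be divided out; a direct scaling then produces no violation of Lemma~\ref{IsLevelIntegral}. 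To overcome this I would push the $2$-adic bookkeeping through $\tfrac{s}{16}M\subseteq\Lambda\subseteq M^*$, using $[M^*:M]=\det M$, $\det\Lambda\cdot\det\Lambda^*=1$ and the integrality of $[\Lambda:\tfrac{s}{16}M]$ to compute $v_2(\det\Lambda)$ in terms of $v_2(s)$, and then comparing with the dual relation from $\tfrac{3t}{8}\langle\Min(\Lambda)\rangle\subseteq\Lambda^*$; this should force $v_2(s)\ge5$ or $v_2(t)\ge5$, the desired contradiction. An alternative finish, closer to Lemma~\ref{lem2}, is a congruence argument: since $M$ is now integral and even, reducing $(D4)(\alpha)=\tfrac{27t}{2}$ modulo $16$ expresses the number of $\beta\in S(\Lambda^*)$ with $(\alpha,\beta)$ odd in terms of $t$, and combined with the analogous count for $\Lambda$ it should sharpen $2^7\mid st$ to $2^5\mid t$. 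I expect the $2$-adic index computation to be the technical heart where care is needed, since it is precisely the point at which the prime $2$ departs from the odd-prime pattern.
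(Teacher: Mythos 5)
Your setup and your intermediate divisibility are correct: with $\min(\Lambda)=1$, $\min(\Lambda^*)=6$, the minimal-type identity forces $(x,\alpha)\in\{0,\pm1\}$ for minimal pairs, the vector form of $(D11)$ gives $\frac{s}{16}\alpha\in\Lambda$ and $\frac{3t}{8}x\in\Lambda^*$, and pairing these at a pair with $(\alpha,x)=\pm1$ does yield $2^7\mid st$. But, as you yourself concede, this is strictly weaker than the assertion: since $8\mid s$ and $8\mid t$ by Lemma \ref{lem0}, the conclusion $v_2(s)+v_2(t)\geq 7$ still allows $(v_2(s),v_2(t))\in\{(3,4),(4,3),(4,4)\}$, and both of your proposed ways to close the gap are left at the level of ``should force'' and ``I expect''. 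Neither is carried out, and the first one already rests on an unproved claim: from $(\alpha,\beta)\in\frac{16}{s}\ZZ$ and the $2$-adic evenness of norms you only control the $2$-part of the denominator of $(\alpha,\beta)$, not its odd part, so the statement that $M=\langle\Min(\Lambda^*)\rangle$ is an even (in particular integral) lattice does not follow. The proposal is therefore a genuinely incomplete proof.

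The idea you are missing is the paper's determinant argument one step deeper in the $2$-adic filtration of norms. Assume $2^5\nmid s$ and $2^5\nmid t$. As in your first step, all norms of $\Lambda^*$ have odd denominator and even numerator, so $\Gamma:=\sqrt{a}\,\Lambda^*$ is even for some odd $a$. Now the integrality of
$$\frac{1}{6}(D13-D11)(\alpha,\beta)=\frac{s}{2^6 3^2}\,(\alpha,\beta)\bigl((\beta,\beta)-6\bigr),$$
together with $v_2(s)\le 4$, shows that $\Gamma^{(e)}:=\{\gamma\in\Gamma\mid(\gamma,\gamma)\in 4\ZZ\}$ is a sublattice of $\Gamma$ of index $1,2$ or $4$ (see \cite[Lemma 2.8]{dim12}) and that $\sqrt{1/2}\,\Gamma^{(e)}$ is again even. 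Hence $2^{16}$ divides $\det\Gamma^{(e)}=[\Gamma:\Gamma^{(e)}]^2\det\Gamma$, so $2^{12}\mid\det\Gamma$. The symmetric argument, using $v_2(t)\le 4$, produces an odd $b$ with $\sqrt{6b}\,\Lambda$ even and $2^{12}\mid\det(\sqrt{6b}\,\Lambda)$. Multiplying the two determinants and using $\det(\Lambda)\det(\Lambda^*)=1$ gives that $2^{24}$ divides $(6ab)^{16}$, whose $2$-adic valuation is only $16$ because $ab$ is odd --- a contradiction. Note that this is exactly how the paper circumvents the obstruction you correctly identified in comparing with the $3^2$ case: instead of trying to divide a factor of $2$ out of the level (impossible, since the minimum $6$ is only once divisible by $2$), one passes from the condition ``norms in $2\ZZ$'' to the condition ``norms in $4\ZZ$'' and converts the bounded index of that passage into a forced power of $2$ in the determinant.
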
 

 \begin{proof}
 Assume that both $s$ and $t$ are not divisible by $2^5$. 
 Rescale $\Lambda $ such that $m=1$. For $\alpha \in \Lambda ^*$ 
 put $(\alpha,\alpha ) = \frac{p}{q}$, $\gcd(p,q) =1 $.
 Then $\frac{1}{12} (D4-D2) (\alpha ) $  yields that 
 $$\frac{s p}{2^73^2q^2} (p-6q ) \in \ZZ $$ 
 implying that $ p$ is even. 
 So there is some odd $a\in \NN $ 
 such that $\Gamma:=\sqrt{{a}} \Lambda ^*$ is even. 
Moreover 
$\frac{1}{6}(D13-D11) (\alpha, \beta )$ shows that 
$$\frac{s}{2^63^2}(\alpha ,\beta ) ( (\alpha ,\alpha ) - 6) \in \ZZ $$ for all 
$\alpha,\beta \in \Lambda ^*$. 
In particular $$\Gamma ^{(e)}:=\{ \alpha \in \Gamma \mid (\alpha ,\alpha ) \in 4 \ZZ \} $$
is a sublattice of $\Gamma $ of index $1,2,$ or $4$ (see \cite[Lemma 2.8]{dim12}) 
and $\sqrt{1/2} \Gamma ^{(e) } $ is even. 
So $2^{12} $ divides the determinant of the even lattice $\Gamma = \sqrt{a} \Lambda ^*$. 
\\
Interchanging the role of $\Lambda $ and $\Lambda ^*$ 
we find that there is some odd $b\in \NN$ such that $\sqrt{6b} \Lambda $ is 
even and $2^{12} $ divides $\det (\sqrt{6b} \Lambda )$. 
All together 
$$2^{24} \mbox{ divides } \det (\sqrt{a} \Lambda ^* ) \det (\sqrt{6b} \Lambda )
= (6ab)^{16} $$ 
which contradicts the fact that $ab $ is odd. 
\end{proof}

\begin{lemma}
	If $s=2^aA$ with $A$ odd and squarefree and $a\leq 8$, 
	then $\Lambda ^*$ rescaled to minimum 4 is even and $36 $ divides $t$.
\end{lemma}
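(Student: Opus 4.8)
The plan is to first fix a convenient scaling and prove evenness, then read off the divisibility of $t$ from the design equations of $\Lambda^*$. Rescale $\Lambda$ so that $\min(\Lambda^*)=4$, and hence $\min(\Lambda)=6/4=3/2$. For any $\alpha\in\Lambda^*$ the quantity $\frac1{12}(D4-D2)(\alpha)$ from \eqref{gleichungen} is a non-negative integer; substituting $m=3/2$, $n=16$, $s=2^aA$ and writing $(\alpha,\alpha)=p/q$ with $\gcd(p,q)=1$ gives
$$\frac1{12}(D4-D2)(\alpha)=\frac{A\,p(p-4q)}{2^{9-a}q^2}\in\ZZ.$$
Since $A$ is odd and squarefree and $p(p-4q)$ is coprime to $q$, an odd prime dividing $q$ could not be cancelled, so $q$ must be a power of $2$; and since $a\le 8$ an even $q$ would force the odd number $p(p-4q)$ to be divisible by $2^{9-a}\ge 2$, which is impossible. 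Hence $q=1$, i.e. $(\alpha,\alpha)\in\ZZ$, and then $2\mid 2^{9-a}\mid p(p-4)$ forces $p=(\alpha,\alpha)$ to be even. Thus $\Lambda^*$ is even. This is the exact analogue of Lemmas \ref{lem1} and \ref{lem2}, and it is here that both hypotheses ``$A$ squarefree'' and ``$a\le 8$'' enter.

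Because $\Lambda^*$ is even it is integral, so $\Lambda^*\subseteq(\Lambda^*)^*=\Lambda$; in particular every $\alpha\in\Min(\Lambda^*)$ lies in $\Lambda$, as do the minimal vectors of $\Lambda$. This legitimises evaluating the design equations of the strongly perfect lattice $\Lambda^*$ (with $\min(\Lambda^*)=4$ and half kissing number $t$) at these integral test vectors, where all inner products and all $D$-values are integers. Applying $(D2)$ to $x\in\Min(\Lambda)$, where $(x,x)=3/2$, gives $\sum_{\beta\in S(\Lambda^*)}(\beta,x)^2=\tfrac{3t}{8}\in\ZZ$, hence $8\mid t$ and in particular $4\mid t$. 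Applying $(D2)$ and $(D4)$ to $\alpha\in\Min(\Lambda^*)$, where $(\alpha,\alpha)=4$, gives
$$\sum_{\beta\in S(\Lambda^*)}(\beta,\alpha)^2=t,\qquad \sum_{\beta\in S(\Lambda^*)}(\beta,\alpha)^4=\frac{8t}{3},$$
so already $3\mid t$.

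To upgrade $3\mid t$ to $9\mid t$ I would reduce the last two identities modulo $3$. For every integer $k$ one has $k^4\equiv k^2\pmod 3$, so summing over $\beta\in S(\Lambda^*)$ yields $\tfrac{8t}{3}\equiv t\pmod 3$; the difference $\tfrac{8t}{3}-t=\tfrac{5t}{3}$ is then divisible by $3$, giving $9\mid 5t$ and hence $9\mid t$. Combining $4\mid t$ with $9\mid t$ gives $36\mid t$, as claimed. I expect the genuine content to sit in this last congruence rather than in the evenness step: the factors $4$ and $3$ of $36$ drop out immediately from integrality of the $D$-values, and the extra factor $3$ hinges on the elementary but essential observation that fourth and second powers agree modulo $3$, used together with the integrality of all inner products $(\beta,\alpha)$ — which is precisely what the evenness of $\Lambda^*$ guarantees.
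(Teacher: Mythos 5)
Your proof is correct. The first half, evenness of $\Lambda^*$ in the scaling $\min(\Lambda^*)=4$, is essentially the paper's own argument: the paper also rescales to $m=3/2$, $d=4$ and deduces $(\alpha,\alpha)\in 2\ZZ$ from the integrality of $\frac{3s}{2^7}(\alpha,\alpha)^2$ and $\frac{s}{2^9}(\alpha,\alpha)\bigl((\alpha,\alpha)-4\bigr)$, which is exactly your denominator analysis and is where the hypotheses ``$A$ squarefree'' and ``$a\le 8$'' enter.

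The second half is where you genuinely diverge. The paper applies the linear-combination machinery (Lemma \ref{linkombext}, in the spirit of Corollary \ref{linkomb}) to the design $S(\Lambda^*)$ at $\alpha\in\Min(\Lambda^*)$: after excluding the inner product $\pm 3$ (evenness plus minimality) and noting $N_4(\alpha)=\{\alpha\}$, it obtains the shell cardinality $|N_2(\alpha)|=\frac{5t}{36}-20$, whose integrality gives $36\mid 5t$ and hence $36\mid t$ in one stroke; this is equivalent to exploiting the full divisibility $12\mid k^2(k^2-1)$ inside $\frac{1}{12}(D4-D2)(\alpha)$. You instead split $36=4\cdot 9$: the $2$-part comes from $(D2)$ of $S(\Lambda^*)$ evaluated at $x\in\Min(\Lambda)$, giving $\frac{3t}{8}\in\ZZ$ and so $8\mid t$ (this re-derives the paper's Lemma \ref{lem0} with the roles of $\Lambda$ and $\Lambda^*$ interchanged), while the $3$-part comes from comparing $(D4)$ and $(D2)$ at $\alpha\in\Min(\Lambda^*)$ modulo $3$ via the unconditional congruence $k^4\equiv k^2\pmod 3$, giving $9\mid 5t$ and so $9\mid t$. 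Both routes rest on the same integrality principle for design sums of $\Lambda^*$, but yours requires no enumeration of the possible inner products $(\beta,\alpha)$ and no cardinality computation, whereas the paper's yields the explicit value of $|N_2(\alpha)|$ as a by-product. Note also that your argument actually proves the stronger divisibility $72\mid t$, since you obtain $8\mid t$ rather than only $4\mid t$.
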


\begin{proof}
Rescale $\Lambda $ so that $m=3/2$ and $d=4$. 
Then for all $\alpha \in \Lambda ^*$ 
$$\frac{3s}{2^7} (\alpha,\alpha )^2 \in \ZZ \mbox{ and } 
\frac{s}{2^9} ((\alpha ,\alpha ) ((\alpha ,\alpha ) - 4) ) \in \ZZ $$
so $(\alpha ,\alpha ) \in 2\ZZ $. 
Moreover for any $\alpha \in \Min (\Lambda ^*) $ 
the set $N_2(\alpha ):= \{ \beta \in \Min (\Lambda ^*) \mid (\alpha , \beta ) = 2 \} $ 
has cardinality 
$$\frac{5t}{36} - 20  $$ 
which implies that $36 \mid t $.
\end{proof}

\begin{lemma}
	$s\neq 648=2^3\cdot 3^4$.
\end{lemma}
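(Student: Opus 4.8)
The plan is to combine Lemma~\ref{lem2} with the classification of even lattices in Lemma~\ref{minge3/2}. Since $648=2^{3}\cdot 9^{2}$, with $b=9$ odd and $A=1$ odd and squarefree, Lemma~\ref{lem2} applies and yields $2^{9}\mid t$ together with the (harmless) inequality $b=9\ge 7$. Combined with the kissing bound $t\le 3678$ this only leaves $t\in\{512,1024,1536,2048,2560,3072,3584\}$, and none of the remaining elementary divisibility lemmas cuts this list down further, so a genuinely new argument is needed.

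First I would rescale $\Lambda$ so that $\min(\Lambda)=3/2$ and $\min(\Lambda^{*})=4$, in the style of the preceding lemma. Evaluating $(D4)(\alpha)$ and $\tfrac1{12}(D4-D2)(\alpha)$ from~\eqref{gleichungen} on $\alpha\in\Lambda^{*}$ and substituting $s=2^{3}\cdot 3^{4}$ gives
$$\frac{3^{5}}{2^{4}}(\alpha,\alpha)^{2}\in\ZZ \qquad\text{and}\qquad \frac{3^{4}}{2^{6}}(\alpha,\alpha)\bigl((\alpha,\alpha)-4\bigr)\in\ZZ.$$
Writing $(\alpha,\alpha)=p/q$ in lowest terms, the first forces $q\mid 9$ and $4\mid p$, so the norms of $\Lambda^{*}$ are integral and $\equiv 0\pmod 4$ away from the prime $3$, and the only denominators that can occur divide $9$. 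The goal of this step is to upgrade this to the assertion that $\Lambda^{*}$ (at minimum $4$) is an \emph{even} lattice whose determinant has the shape $2^{a}3^{b}$ and whose dual $\Lambda$ satisfies $\min(\Lambda)=3/2\ge 3/2$.

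Granting this, Lemma~\ref{minge3/2} shows that $\Lambda^{*}$ is similar to one of $\Lambda_{16}$, $\Gamma_{16}$, or $O_{16}^{*}$, whose half kissing numbers are $2160$, $432$, and $1008$ respectively. The minimal-type hypothesis $r(\Lambda)=6$ already excludes $\Lambda_{16}$ (which has $r=8$), so we would obtain $t=s(\Lambda^{*})\in\{432,1008\}$; but neither $432=2^{4}\cdot 27$ nor $1008=2^{4}\cdot 63$ is divisible by $2^{9}$, contradicting $2^{9}\mid t$. (If instead the analysis makes $\Lambda$ the even lattice, the same reasoning forces $s(\Lambda)=648\in\{2160,432,1008\}$, which is equally false.) Either way the value $s=648$ is eliminated.

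I expect the $3$-adic step to be the main obstacle. Because $648=2^{3}\cdot 3^{4}$ is not of the squarefree shape used in the preceding lemma, the $\tfrac1{12}(D4-D2)$ equation no longer forces $q=1$: the denominator may a priori be $3$ or $9$, so $\Lambda^{*}$ need not be integral, and rescaling by $\sqrt3$ or $3$ to clear these denominators shrinks the dual minimum below $3/2$ and destroys the hypothesis of Lemma~\ref{minge3/2}. The crux is therefore to control the prime $3$ without rescaling upward: either to rule out $q\in\{3,9\}$ outright, for instance by testing the norms $(\alpha\pm\beta,\alpha\pm\beta)$ for $\beta\in\Min(\Lambda^{*})$ and exploiting the integrality of $\tfrac16(D13-D11)$, or to bound the $3$-part of $\det(\Lambda^{*})$ directly so that it stays of the form $2^{a}3^{b}$ while $\min(\Lambda)\ge 3/2$ is retained, so that Lemma~\ref{minge3/2} can still be invoked.
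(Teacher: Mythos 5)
Your first step is sound: $648=2^3\cdot 9^2$ does satisfy the hypotheses of Lemma~\ref{lem2} with $b=9$, $A=1$, so $2^9\mid t$, and your closing contradiction ($2^9$ divides none of $2160$, $432$, $1008$) would indeed finish the proof \emph{if} your middle step held. But that middle step is a genuine gap, and not one that can be closed along the route you sketch. Because the odd part $3^4$ of $s$ is a perfect square, every identity in \eqref{gleichungen} acquires, in your scaling, a coefficient whose $3$-adic valuation is at least $4$ (namely $\frac{3^5}{4}$ for $(D2)$, $\frac{3^5}{2^4}$ for $(D4)$, $\frac{3^4}{2^4}$ for $(D22)$, $\frac{3^4}{2^6}$ for $\frac{1}{12}(D4-D2)$); since a denominator $q\mid 9$ contributes only $q^2\mid 3^4$, these integrality conditions are \emph{vacuous at the prime} $3$, so no combination of them can rule out $q\in\{3,9\}$. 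Worse, even in the one scaling where integrality of norms \emph{is} forced ($\min(\Lambda)=2/3$, $\min(\Lambda^*)=9$, which is exactly the scaling of Lemma~\ref{lem2} with $b=9$), one only gets $(\alpha,\alpha)\equiv 0$ or $1 \pmod 4$; evenness of the minimum-$4$ rescaling $\sqrt{4/9}\,\Lambda^*$ would require $9\mid(\alpha,\alpha)$ for \emph{every} $\alpha\in\Lambda^*$, which nothing forces. Your fallback of merely bounding the $3$-part of $\det(\Lambda^*)$ does not help either, because Lemma~\ref{minge3/2} classifies \emph{even} lattices --- evenness is precisely the hypothesis you cannot supply.

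The paper's proof avoids the classification entirely; it is in effect the continuation of the proof of Lemma~\ref{lem2} in the case $b=9$, $A=1$. Rescale so that $\min(\Lambda)=2/3$ and $\min(\Gamma)=9$ for $\Gamma=\Lambda^*$. Then $\frac{1}{12}(D4-D2)(\alpha)=\frac14(\alpha,\alpha)\bigl((\alpha,\alpha)-9\bigr)\in\ZZ$ forces all norms of $\Gamma$ to be integers congruent to $0$ or $1$ modulo $4$, whence $(\alpha,\beta)$ is an odd integer for all $\alpha,\beta\in\Min(\Gamma)$; minimality gives $|(\alpha,\beta)|\le 9/2$ for $\alpha\neq\pm\beta$, so $(\alpha,\beta)\in\{\pm1,\pm3\}$. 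Fixing $\alpha\in\Min(\Gamma)$ and letting $n_i$ count the $\beta\in\Min(\Gamma)$ with $(\alpha,\beta)=i$, the cardinality count together with $(D2)$ and $(D4)$ yields three linear equations in $(n_1,n_3,t)$ whose unique solution is $\frac{1}{19}(2187,1890,4096)$, which is not integral --- a contradiction. This argument is self-contained: it needs neither the conclusion $2^9\mid t$ of Lemma~\ref{lem2}, nor Lemma~\ref{minge3/2} (with its underlying computer classification), which is exactly the machinery your proposal cannot reach.
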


\begin{proof}
	Assume that $s=2^33^4$ and rescale $\Lambda $ so that $m=2/3$ and $d=9$. 
	Then for all $\alpha \in \Lambda ^*$ we get 
	$$\frac{1}{12}(D4-D2)(\alpha ) = \frac{1}{4} (\alpha ,\alpha ) ((\alpha,\alpha ) - 9 )  
	\in \ZZ .$$
	In particular
	for all $\alpha , \beta \in \Min (\Gamma )$  
	$$(\alpha \pm \beta , \alpha \pm \beta ) = 18 \pm 2 (\alpha , \beta )  \equiv 0 \mbox{ or } 1 \pmod{4} $$
	which implies that  $(\alpha ,\beta ) $ is an odd integer for all 
	$\alpha ,\beta \in \Min (\Gamma )$. As $|(\alpha ,\beta )| \leq  \frac{9}{2}  = 4.5 $ 
	we find that 
	$$(\alpha , \beta ) \in \{ \pm 3  , \pm 1 \} .$$
	As also $\Min(\Gamma ) $ is a 4-design, for any fixed $\alpha \in \Min (\Gamma ) $
	the integers $t=|\Min(\Gamma) |/2 $, $n_i:= |\{ \beta \in \Min(\Gamma) \mid 
	(\alpha ,\beta ) = i \} |$ satisfy 
	$$\begin{array}{ccccccc} 
		1 &  + &  n_1 &  + &  n_3 &   = & t \\ 
		9^2 & + & n_1 & + & 3^2 n_3 & = & \frac{9^2}{16} t \\
		9^4 & + & n_1 & + & 3^4 n_3 & = & \frac{3\cdot 9^4}{16\cdot 18} t 
	\end{array} $$
	This equation has a unique solution 
	$(n_1,n_3,t) = \frac{1}{19} (2187 , 1890 , 4096 )  $ 
	which is of course absurd.
\end{proof}

Now an application of the above lemmas leads to the following list of 118 possible pair $(s,t)$
of a dual strongly perfect lattice $\Lambda\subset\RR^{16}$ of minimal type. (WLOG we assume
that $s\leq t$.)
\begin{enumerate}[(1)]
  \item $s=144,t=128\cdot i, 2\leq i \leq 26$.
  \item $s=144, t=288\cdot i, 1\leq i\leq 11.$
  \item $s=144,t=800\cdot i,1\leq i\leq 3.$
  \item $s=144,t=1568\cdot i,1\leq i\leq 2$.
  \item $s=256,t=144\cdot i, 2\leq i\leq 16$.
  \item $s=288,t=128\cdot i, 3\leq i\leq 16$.
  \item $s=288,t=144\cdot i, 2\leq i\leq 14$.
  \item $s=288,t=400\cdot i, 1\leq i\leq 5$.
  \item $s=288,t=784\cdot i, 1\leq i\leq 2$.
  \item $s=288,t=1936$.
  \item $s=384,t=144\cdot i, 3\leq i\leq 10$.
  \item $s=400,t=288\cdot i, 2\leq i\leq 4$.
  \item $s=432,t\in\{512,576,640,768,800,864,896,1024,1152\}$.
  \item $s=512,t\in\{576,720,864\}$.
  \item $s=576,t\in\{576,640,720,768,784,800\}$.
  \item $s=640,t=720$.
\end{enumerate}
Among those 118 possible pairs of values,
\begin{enumerate}
  \item there are 54 possible pairs of values with the property that 
	  either $\Lambda$ or $\Lambda^*$ rescaled to minimum 4 is even and the even level of $\Lambda $ (or $\Lambda ^*$) 
	  divides 24. So one of $\Lambda $ or $\Lambda ^*$
 is an even lattice whose dual has minimum $\geq 3/2$. 
    Then by Lemma~\ref{minge3/2} we know that $\Lambda$ or $\Lambda ^*$ 
    are similar to one of $\Gamma _{16}$ and $O_{16}$.
  \item there are 23 possible pairs of values with the following property: if we rescale $\Lambda$
    with minimum $6$ then $\Lambda$ is even and the even level of $\Lambda$ divides $6$. 
    Then by Lemma~\ref{leveldiv6} we know that there is no such $\Lambda$.
  \item for the remaining 41 cases, a direct application of the modular form approach described in the next section shows that
	  there is no such pair $(\Lambda , \Lambda ^*)$.
\end{enumerate}

In summary we have proved the following.
\begin{theorem} \label{r6}
  Let $\Lambda$ be a dual strongly perfect lattice in dimension $16$ and of minimal type.
  Then $\Lambda$ is isomorphic to one of $\Gamma_{16}$, $\Gamma_{16}^{*}$, $O_{16}$ or $O_{16}^*$.
\end{theorem}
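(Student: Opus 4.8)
The plan is to convert the arithmetic constraints on the half kissing numbers $s = s(\Lambda)$ and $t = s(\Lambda^*)$ collected in this section into an explicit finite list of candidate pairs $(s,t)$, and then to eliminate or identify each candidate by one of three methods.

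First I would assemble the restrictions. The kissing-number bounds give $136 \le s, t \le 3678$, Lemma \ref{lem0} gives $8 \mid s$ (and by symmetry $8 \mid t$), and the lemmas controlling the $2$-adic and $3$-adic valuations of $s$ and $t$---namely Lemmas \ref{lem1} and \ref{lem2} together with their analogues for the primes $2$ and $3$, the evenness-after-rescaling statement forcing $36 \mid t$, and the exclusion of $s = 648$---cut the admissible range down sharply. Enumerating all pairs with $s \le t$ that satisfy every congruence produces exactly the $118$ candidates listed above.

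Next I would partition these $118$ pairs according to which structural conclusion the scaling arguments of this section deliver. For the first group of $54$ pairs, the equations $(D4)$ and $\tfrac{1}{12}(D4-D2)$ force that, after rescaling one of $\Lambda$ or $\Lambda^*$ to minimum $4$, that lattice is even with even level dividing $24$ (so its determinant is of the form $2^a 3^b$) and with dual minimum at least $3/2$; Lemma \ref{minge3/2} then identifies it, up to similarity, with $\Gamma_{16}$ or $O_{16}^*$, whence $\Lambda$ is one of the four lattices in the statement. For the second group of $23$ pairs, rescaling $\Lambda$ to minimum $6$ makes it an even lattice of even level dividing $6$ with $\min(\Lambda^*) \ge 1$, so Lemma \ref{leveldiv6} shows that no such lattice exists.

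The main obstacle is the remaining $41$ pairs, for which neither lattice has a level small enough to be reached by the maximal-even-lattice enumeration underlying Lemmas \ref{minge3/2} and \ref{leveldiv6}. Here I would invoke the modular-form method developed in the next section: in the appropriate scaling $\theta_\Lambda$ is a modular form of weight $8$ and of the prescribed level whose $q$-expansion begins $1 + 2s q^{m'} + \cdots$ and whose image under the Fricke involution begins $1 + 2t q^{m} + \cdots$, and both expansions must have nonnegative integral coefficients. For each of the $41$ pairs one works inside the corresponding finite-dimensional space of modular forms and checks that no form meets both positivity requirements simultaneously, thereby ruling the pair out. Combining the three groups shows that $\Lambda$ is similar to one of $\Gamma_{16}$, $\Gamma_{16}^*$, $O_{16}$, or $O_{16}^*$.
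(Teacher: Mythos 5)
Your proposal follows the paper's own argument essentially verbatim: the same arithmetic lemmas (divisibility of $s$ and $t$, the squarefree/valuation constraints, exclusion of $s=648$) yielding the same list of $118$ pairs, the same three-way partition into $54$ cases handled by Lemma~\ref{minge3/2}, $23$ cases handled by Lemma~\ref{leveldiv6}, and $41$ residual cases eliminated by the modular-form method of Section~\ref{ModForm}. This matches the paper's proof, so there is nothing to add.
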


\section{Modular forms and $\vartheta$-series}\label{ModForm}
Let $\Lambda \leq \RR^n$ be an even lattice. Throughout this section we will assume that $n=2k, k\in \ZZ_{>0}$ for simplicity.
We associate to $\Lambda$ a holomorphic function  on the upper half plane
$\mathbb{H} = \{\tau \in \CC \mid \textup{Im}\ \tau > 0\} \subset \CC.$
For $\tau \in \mathbb{H}$ let $q = e^{2\pi i\tau}.$
The \emph{theta series} of $\Lambda$ is the function
$$
\vartheta_{\Lambda}(\tau)=\sum_{x\in\Lambda}q^{\frac{1}{2}(x,x)}\quad \text{ for } \tau\in\mathbb{H}.
$$

A nice introduction to the relevant theory is the book \cite{Ebeling}, from 
which we also borrow the notation.  In particular we need the following theta transformation
formula relating the theta series of a lattice and its dual lattice. 
\begin{lemma}\label{TransFormula}\cite[Proposition 2.1]{Ebeling}
$$
\vartheta_{\Lambda}\left(-\frac{1}{\tau} \right) = \left(\frac{\tau}{i}\right)^{k}\,\sqrt{\det \Lambda^*}\,\vartheta_{\Lambda^*}(\tau).
$$
\end{lemma}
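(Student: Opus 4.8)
The plan is to derive the formula from the Poisson summation formula applied to a Gaussian, reducing the general $\tau\in\mathbb{H}$ to the case of purely imaginary argument and then extending by analytic continuation. First I would recall Poisson summation in the form adapted to a full-rank lattice $\Lambda\subset\RR^n$: for any Schwartz function $f$ on $\RR^n$,
$$
\sum_{x\in\Lambda} f(x) = \frac{1}{\sqrt{\det\Lambda}}\sum_{y\in\Lambda^*}\hat f(y),
$$
where $\hat f(y)=\int_{\RR^n} f(x)e^{-2\pi i (x,y)}\,dx$ is the Fourier transform taken with respect to the given Euclidean inner product $(\,,\,)$, and $\sqrt{\det\Lambda}$ is the covolume of $\Lambda$. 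The key input is the self-duality of the Gaussian: for $s>0$ and $f_s(x)=e^{-\pi s (x,x)}$ one computes $\hat f_s(y)=s^{-n/2}e^{-\pi (y,y)/s}$, a standard calculation that factors over an orthonormal basis.

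Next I would specialise to $\tau=it$ with $t>0$, so that $\vartheta_\Lambda(-1/\tau)=\vartheta_\Lambda(i/t)=\sum_{x\in\Lambda}e^{-\pi(x,x)/t}$. Applying Poisson summation with the choice $s=1/t$ and writing $n=2k$ gives
$$
\sum_{x\in\Lambda}e^{-\pi(x,x)/t} = \frac{t^{k}}{\sqrt{\det\Lambda}}\sum_{y\in\Lambda^*}e^{-\pi t(y,y)} = \frac{t^{k}}{\sqrt{\det\Lambda}}\,\vartheta_{\Lambda^*}(it).
$$
Using $\det\Lambda\cdot\det\Lambda^*=1$ to replace $1/\sqrt{\det\Lambda}$ by $\sqrt{\det\Lambda^*}$, and recording that $t=\tau/i$, this reads exactly $\vartheta_\Lambda(-1/\tau)=(\tau/i)^k\sqrt{\det\Lambda^*}\,\vartheta_{\Lambda^*}(\tau)$ for every $\tau\in i\RR_{>0}$.

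Finally I would upgrade this identity to all of $\mathbb{H}$ by analytic continuation. Both $\vartheta_\Lambda(-1/\tau)$ and $(\tau/i)^k\sqrt{\det\Lambda^*}\,\vartheta_{\Lambda^*}(\tau)$ are holomorphic on $\mathbb{H}$: the defining series converge absolutely and locally uniformly, since $\textup{Im}\,\tau>0$ forces $|q|<1$ while $\Lambda$ has only finitely many vectors of any bounded norm, and $\tau\mapsto-1/\tau$ preserves $\mathbb{H}$; moreover $(\tau/i)^k$ is single-valued because $k=n/2$ is an integer, so no choice of branch is required. Two holomorphic functions that agree on the positive imaginary axis, which has an accumulation point in $\mathbb{H}$, agree on all of $\mathbb{H}$, and this yields the claim. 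The only step requiring genuine care is the Fourier self-duality of the Gaussian together with the bookkeeping of the covolume factor $\sqrt{\det\Lambda}$; once the integrality $k\in\ZZ$ is invoked to rule out branch ambiguities, the convergence and continuation arguments are routine.
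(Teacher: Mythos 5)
Your proof is correct: the Poisson summation formula applied to the Gaussian $e^{-\pi s(x,x)}$, the covolume factor $\sqrt{\det\Lambda}$, the identity $\det\Lambda\cdot\det\Lambda^*=1$, and analytic continuation off the imaginary axis (unambiguous since $k\in\ZZ$) together give exactly the stated identity. The paper itself offers no proof, citing \cite[Proposition 2.1]{Ebeling} instead, and your argument is precisely the standard one given in that reference, so there is nothing to compare beyond noting the agreement.
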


\begin{theorem}\label{EvenLatticeThaSes}(\cite[Theorem 3.2]{Ebeling})
Let $\Lambda$ be an even lattice of even level $\ell $. Then 
the theta series of $\Lambda $ is in the space of modular forms of weight $k$ for 
the subgroup $\Gamma _0(\ell )$ to some character $\chi _{\Lambda }$ only depending 
on $\det(\Lambda )$
$$\vartheta_{\Lambda}(\tau)\in \mathcal{M}_{k}(\Gamma_0(\ell ),\chi_{\Lambda}),\text{ where }
\chi_{\Lambda}(\cdot) = \left( \frac{(-1)^{k}\det(\Lambda)}{\cdot}\right).
$$
\end{theorem}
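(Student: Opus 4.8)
The statement is the classical theorem that the theta series of an even lattice is a modular form for $\Gamma_0(\ell)$, and the plan is the standard three-part verification that $\vartheta_\Lambda$ lies in $\mathcal{M}_k(\Gamma_0(\ell),\chi_\Lambda)$: holomorphy on $\mathbb{H}$, the correct weight-$k$ transformation law under $\Gamma_0(\ell)$ with the asserted Jacobi-symbol character, and holomorphy at every cusp. First I would dispose of holomorphy on $\mathbb{H}$: for $\mathrm{Im}\,\tau\ge\delta>0$ the number of $x\in\Lambda$ with $(x,x)\le R$ grows only polynomially in $R$, so $\sum_x|q|^{(x,x)/2}$ is dominated by a convergent series, giving absolute and locally uniform convergence and hence a holomorphic function. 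Invariance under $T:\tau\mapsto\tau+1$ is then immediate, since each summand is multiplied by $e^{\pi i(x,x)}=1$ because $\Lambda$ is even, whence $\vartheta_\Lambda(\tau+1)=\vartheta_\Lambda(\tau)$.

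The heart of the argument is the transformation law under a general $\gamma=\left(\begin{smallmatrix} a & b \\ c & d\end{smallmatrix}\right)\in\Gamma_0(\ell)$, i.e. $\ell\mid c$. The only transformation I have directly is Lemma~\ref{TransFormula}, which handles $S:\tau\mapsto-1/\tau$ but trades $\vartheta_\Lambda$ for $\vartheta_{\Lambda^*}$. The crucial role of the level hypothesis is that $M:=\sqrt{\ell}\,\Lambda^*$ is again even (this is the definition of even level), so that $\vartheta_{\Lambda^*}(\ell\tau)=\vartheta_M(\tau)$ is itself the theta series of an even lattice; applying Lemma~\ref{TransFormula} both to $M$ and to $\Lambda$ and combining the two relations shows that the Fricke involution $W_\ell:\tau\mapsto-1/(\ell\tau)$ sends $\vartheta_\Lambda$ back into the span of theta series of even lattices of the same level. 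Since $\Gamma_0(\ell)$ together with $W_\ell$ is generated, modulo $T$, by finitely many explicit elements, I would verify the law on each generator, tracking the automorphy factor $(c\tau+d)^k$ produced by the $(\tau/i)^k$ of Lemma~\ref{TransFormula} together with the determinant factors $\sqrt{\det\Lambda^*}$.

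Determining the character is where quadratic reciprocity enters: comparing $\vartheta_\Lambda$ with its images under $S$ and $W_\ell$ produces a Gauss sum over the finite quadratic form $q$ on $\Lambda^*/\Lambda$, and evaluating this sum (equivalently, invoking Milgram's formula for the signature of a discriminant form) yields precisely the factor $\left(\frac{(-1)^k\det(\Lambda)}{d}\right)$, which depends only on $d \bmod \ell$ and is the asserted $\chi_\Lambda(d)$. For holomorphy at the cusps, at $\infty$ the expansion $\vartheta_\Lambda(\tau)=\sum_{a\ge 0}|\Lambda_a|\,q^{a/2}$ has only nonnegative exponents and is visibly holomorphic, while at any other cusp $a/c$ with $c\mid\ell$ I would conjugate by a scaling matrix and use the transformation formula to rewrite $\vartheta_\Lambda$ near the cusp as a constant times the theta series of a rescaled even dual lattice, whose expansion again starts with nonnegative powers of the local parameter.

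I expect the transformation law under the nontrivial generators of $\Gamma_0(\ell)$ to be the main obstacle: Lemma~\ref{TransFormula} only supplies the single involution $S$, and bootstrapping from it to all of $\Gamma_0(\ell)$ requires both a careful choice of generators and the bookkeeping of the metaplectic roots of unity and Gauss-sum factors that ultimately collapse to the clean Jacobi-symbol character. The conceptually cleanest route is to recognise $\vartheta_\Lambda$ as the trivial-coset component of the vector-valued theta series attached to the Weil representation $\rho_\Lambda$ of the discriminant form $(\Lambda^*/\Lambda,q)$, whose $S$- and $T$-actions are known explicitly; the $\Gamma_0(\ell)$-covariance with character then follows by reading off the action of $\rho_\Lambda(\gamma)$ on the component indexed by $0+\Lambda$.
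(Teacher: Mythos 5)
The paper does not prove this theorem at all: it is imported verbatim as \cite[Theorem 3.2]{Ebeling} (the Hecke--Schoeneberg theorem), so your attempt has to be judged against that standard proof. Your main route has a genuine gap, and it sits exactly at what you yourself call ``the heart of the argument''. The only transformation formula at your disposal is Lemma~\ref{TransFormula}, which exchanges $\vartheta_\Lambda$ for $\vartheta_{\Lambda^*}$; since $\Lambda^*$ is in general not even (only $\sqrt{\ell}\,\Lambda^*$ is), this formula can only be iterated in the combination ``rescale, then invert'', i.e.\ through the Fricke matrix $W_\ell$, besides the trivial $T$-invariance. Hence the subgroup of $\Gamma_0(\ell)$ on which you can verify anything is, up to $\pm I$, the group
$\bigl\langle T,\ W_\ell T W_\ell^{-1}\bigr\rangle
=\bigl\langle \left(\begin{smallmatrix}1&1\\0&1\end{smallmatrix}\right),\left(\begin{smallmatrix}1&0\\-\ell&1\end{smallmatrix}\right)\bigr\rangle$,
generated by two parabolic elements. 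For the levels this paper actually needs, that is a subgroup of \emph{infinite index}, not a generating set: for instance for $\ell=22$ (the level in Lemma~\ref{96div11}) the group $\Gamma_0(22)$ modulo $\pm I$ is free of rank $7$ (genus $2$, four cusps, no elliptic elements), and by the Nielsen--Schreier count every finite-index subgroup has rank at least $7$, so a $2$- (or, allowing for $W_\ell$ itself, $3$-) generated subgroup necessarily has infinite index. Consequently ``verifying the law on each generator'' cannot be done with the tools you allow yourself: for a typical $\left(\begin{smallmatrix}a&b\\c&d\end{smallmatrix}\right)\in\Gamma_0(\ell)$ you have no transformation formula at all, and no bookkeeping of the automorphy factors coming from Lemma~\ref{TransFormula} will produce one. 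For the same reason the Gauss sums you invoke for the character never actually appear in your setup: they arise from Poisson summation applied to the theta series with characteristics $\vartheta_{\lambda+\Lambda}$, $\lambda\in\Lambda^*/\Lambda$ (equivalently, to the cosets of $\Lambda$ in $\frac{1}{c}\Lambda$), a computation your proposal never performs.

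Your closing paragraph, by contrast, names the correct argument: work with the vector-valued theta series attached to the Weil representation $\rho_\Lambda$ of the discriminant form $(\Lambda^*/\Lambda,q)$, establish its $S$- and $T$-action by Poisson summation on each coset, evaluate the resulting Gauss sums via Milgram's formula and quadratic reciprocity, and read off the transformation of the component at $0+\Lambda$ under $\Gamma_0(\ell)$; this is essentially the proof in \cite{Ebeling} that the paper cites, and it is where the Jacobi symbol $\left(\frac{(-1)^k\det\Lambda}{d}\right)$ really comes from. But in your write-up this appears only as an aside (``the conceptually cleanest route''), not as the proof; the argument you actually develop would fail for every level for which $\Gamma_0(\ell)$ needs more than two or three generators. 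To repair the proposal, discard the generator bootstrap entirely and carry out the vector-valued (Schoeneberg) computation.
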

The matrix
$$
W_\ell  = \left(\begin{array}{ll}
        0 & 1/\sqrt{\ell }\\
        -\sqrt{\ell } & 0
        \end{array}\right) \in \text{SL}_2(\RR)
$$
is called the $\ell $-th \emph{Atkin-Lehner operator}. 
The well-known action of the Atkin-Lehner operator on 
the theta series of an even lattice $\Lambda$ of even level $\ell $ and dimension $n=2k$
is directly obtained from Lemma \ref{TransFormula}.
\begin{proposition}%(\cite[Remark 2.9.11]{Nossek})
\label{AtkinLehner}
$$
\vartheta_{\Lambda}(\tau)\mid_kW_\ell  = \left(-\frac{\sqrt{\ell }}{i}\right)^k\sqrt{\det(\Lambda^*)}\vartheta_{\sqrt{\ell }\Lambda^*}(\tau).
$$
\end{proposition}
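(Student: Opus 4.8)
The plan is to unravel both sides from the definitions and reduce the claim to the theta transformation formula of Lemma~\ref{TransFormula}; the whole statement is a bookkeeping computation, as the phrase ``directly obtained'' suggests. First I would make the slash operator explicit. Since $\det W_\ell = 1$, the weight-$k$ action of $W_\ell = \left(\begin{smallmatrix} 0 & 1/\sqrt\ell \\ -\sqrt\ell & 0\end{smallmatrix}\right)$ sends $\tau$ to $\frac{a\tau+b}{c\tau+d} = -\frac{1}{\ell\tau}$ and contributes the automorphy factor $(c\tau+d)^{-k} = (-\sqrt\ell\,\tau)^{-k}$, giving
$$ (\vartheta_\Lambda\mid_k W_\ell)(\tau) = (-\sqrt\ell\,\tau)^{-k}\,\vartheta_\Lambda\!\left(-\tfrac{1}{\ell\tau}\right). $$

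Next I would feed this into Lemma~\ref{TransFormula}. Observing that $-\frac{1}{\ell\tau}$ is precisely the image of $\ell\tau$ under $\tau\mapsto -1/\tau$, the transformation formula applied at $\ell\tau$ yields
$$ \vartheta_\Lambda\!\left(-\tfrac{1}{\ell\tau}\right) = \left(\tfrac{\ell\tau}{i}\right)^{k}\sqrt{\det\Lambda^*}\,\vartheta_{\Lambda^*}(\ell\tau). $$
Substituting and collecting scalars, the powers of $\tau$ cancel, and I would check that $(-\sqrt\ell\,\tau)^{-k}\,(\ell\tau/i)^k = (-1)^k\ell^{k/2}i^{-k} = \left(-\frac{\sqrt\ell}{i}\right)^k$, which is exactly the constant appearing in the claim.

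It then remains to identify $\vartheta_{\Lambda^*}(\ell\tau)$ with $\vartheta_{\sqrt\ell\,\Lambda^*}(\tau)$. This is immediate from the definition of the theta series: the map $x\mapsto \sqrt\ell\,x$ is a bijection $\Lambda^*\to\sqrt\ell\,\Lambda^*$ under which $(x,x)$ scales to $\ell(x,x)$, so
$$ \vartheta_{\sqrt\ell\,\Lambda^*}(\tau) = \sum_{x\in\Lambda^*} q^{\frac{\ell}{2}(x,x)} = \sum_{x\in\Lambda^*}\bigl(q^{\ell}\bigr)^{\frac{1}{2}(x,x)} = \vartheta_{\Lambda^*}(\ell\tau). $$
Combining the three displays gives the stated formula. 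There is no conceptual obstacle here; the only point demanding care is the bookkeeping of the integer powers of $\tau$ and of $i$ (equivalently, pinning down the normalization of the weight-$k$ slash operator consistent with $\det W_\ell = 1$), since a stray sign or half-power of $\ell$ is exactly where such a computation tends to go astray.
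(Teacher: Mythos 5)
Your proposal is correct and is exactly the computation the paper has in mind: the paper offers no written proof beyond the remark that the formula ``is directly obtained from Lemma~\ref{TransFormula}'', and your argument is precisely that derivation spelled out (slash by $W_\ell$, apply the transformation formula at $\ell\tau$, collect the constant $\left(-\sqrt{\ell}/i\right)^k$, and identify $\vartheta_{\Lambda^*}(\ell\tau)=\vartheta_{\sqrt{\ell}\Lambda^*}(\tau)$).
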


\begin{theorem}\label{DiffOfThaSes}\cite[p.376]{Si}\cite{Walling}
Let $\Lambda$ be an even lattice of even level $\ell $ and dimension $2k$.
If lattices $\Lambda$ and $\Lambda'$ are in the same genus, then 
$$\vartheta_{\Lambda}(\tau)-\vartheta_{\Lambda'}(\tau)\in\mathcal{S}_{k}(\Gamma_0(\ell ),\chi_{\Lambda})$$
where, as usual, ${\mathcal S}_k$ denotes the cuspidal subspace of the space of modular forms ${\mathcal M}_k$.
\end{theorem}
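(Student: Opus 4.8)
The plan is to combine the modular-form membership supplied by Theorem~\ref{EvenLatticeThaSes} with the principle that the constant term of a theta series at each cusp of $\Gamma_0(\ell)$ is an invariant of the genus. First I would record membership in a common space: since $\Lambda$ and $\Lambda'$ lie in the same genus they have equal determinant, so Theorem~\ref{EvenLatticeThaSes} gives $\chi_\Lambda=\chi_{\Lambda'}=:\chi$, and both theta series lie in $\mathcal{M}_k(\Gamma_0(\ell),\chi)$. Hence $f:=\vartheta_\Lambda-\vartheta_{\Lambda'}\in\mathcal{M}_k(\Gamma_0(\ell),\chi)$, and it remains only to prove that $f$ is cuspidal. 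Since $f$ is already known to be a modular form, this reduces to checking that for a scaling matrix $\sigma\in\mathrm{SL}_2(\ZZ)$ carrying $\infty$ to each cusp, the constant term in the $q$-expansion of $f\mid_k\sigma$ vanishes. Equivalently, $\vartheta_\Lambda$ and $\vartheta_{\Lambda'}$ must have the same image in $\mathcal{M}_k/\mathcal{S}_k$, which is detected precisely by the collection of constant terms at the cusps.

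Next I would reduce these constant terms to local data. The cusps of $\Gamma_0(\ell)$ are represented by fractions $a/c$ with $c\mid\ell$, and the leading behaviour of $\vartheta_\Lambda\mid_k\sigma$ at such a cusp is computed exactly in the spirit of Lemma~\ref{TransFormula} and Proposition~\ref{AtkinLehner}: one splits the sum defining $\vartheta_\Lambda$ over the cosets of $c\Lambda$ in $\Lambda$ and applies Poisson summation to each coset. Because $\Lambda$ is even, the phase $e^{\pi i\frac{a}{c}(x,x)}$ is well defined on $\Lambda/c\Lambda$, and the computation yields the classical formula
$$a_\sigma(0)=C(k,c,\det\Lambda)\sum_{x\in\Lambda/c\Lambda}e^{\pi i\frac{a}{c}(x,x)},$$
where $C(k,c,\det\Lambda)$ is an explicit nonzero constant depending only on $k$, $c$ and $\det\Lambda$, and the remaining factor is a Gauss sum attached to the reduction of the quadratic form modulo $c$. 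By the Chinese Remainder Theorem $\Lambda/c\Lambda\cong\bigoplus_{p\mid c}(\Lambda\otimes\ZZ_p)/c(\Lambda\otimes\ZZ_p)$, so this Gauss sum factors into a product of local Gauss sums, each depending only on the isometry class of $\Lambda\otimes\ZZ_p$.

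The key step is then the genus invariance of all the data entering $a_\sigma(0)$. Indeed $\det\Lambda$ is constant on a genus, and each local Gauss-sum factor depends only on the $\ZZ_p$-isometry class of $\Lambda$; since by definition $\Lambda$ and $\Lambda'$ are isometric over $\ZZ_p$ for every prime $p$ (and trivially over $\RR$, being positive definite of the same dimension), the corresponding factors agree. Hence $a_\sigma(0)$ is the same for $\vartheta_\Lambda$ and for $\vartheta_{\Lambda'}$ at every cusp, so the constant term of $f\mid_k\sigma$ vanishes at each cusp and $f\in\mathcal{S}_k(\Gamma_0(\ell),\chi)$, as asserted. I expect the main obstacle to lie in making the cusp expansion of the second paragraph fully precise: tracking the automorphy factor and the Gauss-sum normalisation through Poisson summation at a general cusp $a/c$ is delicate, and one must verify carefully that the resulting Gauss sum genuinely factors into local contributions attached to $\Lambda\otimes\ZZ_p$ rather than depending on the global lattice, so that the $\ZZ_p$-isometries guaranteed by the genus hypothesis really do force equality. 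Once this local nature of the constant term is secured, the cancellation in $f$ is immediate.
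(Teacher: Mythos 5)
The paper does not actually prove this statement---it is imported from the literature via the citations to Siegel and Walling---and your argument is precisely the classical proof behind those citations: both theta series lie in $\mathcal{M}_{k}(\Gamma_0(\ell),\chi_{\Lambda})$ (level, determinant, hence character being genus invariants), and the constant term at each cusp $a/c$, $c\mid \ell$, is a nonzero multiple (depending only on $k$, $\sigma$, $\det\Lambda$) of the Gauss sum $\sum_{x\in\Lambda/c\Lambda}e^{\pi i a(x,x)/c}$, which by CRT factors into local sums determined by the $\ZZ_p$-isometry classes $\Lambda\otimes\ZZ_p$ and therefore coincides for the two lattices. Your proof is correct, and the details you flag as delicate (Poisson summation and the automorphy factor at a general cusp, well-definedness of the phase on $\Lambda/c\Lambda$ using evenness) are exactly the standard points one must and can check.
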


Now we describe how to employ the theory of modular forms to exclude the existence of
a dual strongly perfect lattice. Let $\Lambda $ be a dual strongly perfect lattice.
Let $s = s(\Lambda) = \frac{1}{2} |\Min (\Lambda) |$ be half of the kissing number of $\Lambda$, 
let $s'=s(\Lambda^*)= \frac{1}{2} |\Min (\Lambda^*) |$ 
and $r(\Lambda ) = \min (\Lambda) \min (\Lambda^*) = r(\Lambda ^*)$ 
be the Berg\'e-Martinet invariant of $\Lambda$. 
We write $r(\Lambda ) =m \cdot d$ such that when rescaled to minimum $\min(\Lambda^*)=m$ 
the lattice $\Lambda^*$ is  even and in particular contained in its dual lattice $\Lambda$ 
(which is then of minimum $d$). 
We then interchange the roles of $\Lambda $ and $\Lambda ^*$ to obtain 
a factorization $r(\Lambda ) = m' \cdot d' $ 
such that $\Lambda$ is even if rescaled to $\min (\Lambda) = m' $. 
In the latter scaling the even level of $\Lambda $ divides $m/d'$ and in particular
$$\text{exp}(\Lambda^*/\Lambda) \text{ divides } \frac{m}{d'}.$$
We also obtain a finite list of possible determinants of $\Lambda$ from the 
upper bound on the Hermite constant $\gamma _n$, 
more precisely a finite list of possible invariants of the finite abelian group $\Lambda^*/\Lambda$.
For each invariant it is easy to read off all possible genera of lattices, 
given by the $p$-adic genus symbols for all primes $p$ dividing 
$2 \det(\Lambda )$ (see \cite[Chapter 15]{SPLAG}).
As each genus only contains finitely many isometry classes of lattices, 
one might in principle enumerate all of them. But usually there are far too many classes.

Here the theory of modular forms comes into play.
Rescale $\Lambda$ with $\min(\Lambda)=m'$ such that $\Lambda$ is even and denote $\ell =m/d'$. 
Then we know that
\begin{align}\label{equ:thetaseriees}
  \begin{split}
    \vartheta_{\Lambda}(\tau) &= 1 + 2sq^{m'}+ \dots,\\
    \vartheta_{\sqrt{\ell }\Lambda^*}(\tau) &= 1 + 2s'q^{m}+ \dots.
  \end{split}
\end{align}
By Theorem~\ref{EvenLatticeThaSes} both $\vartheta_{\Lambda}(\tau)$ and $\vartheta_{\sqrt{\ell }\Lambda^*}(\tau)$ lie in
the finite dimensional vector space $\mathcal{M}_{k}(\Gamma_0(\ell ),\chi_{\Lambda})$, 
of which one can explicitly compute a basis (for instance with {\sc Magma} \cite{Magma}). 
%Then by Theorem~\ref{DiffOfThaSes}, 
%$$\vartheta_{\Lambda}(\tau)-\vartheta_{\Lambda'}(\tau)\in\mathcal{S}_{k}(\Gamma_0(\ell ),\chi_{\Lambda}).$$ 
%Here $\mathcal{S}_{k}(\Gamma_0(\ell ),\chi_{\Lambda})\subset \mathcal{M}_{k}(\Gamma_0(\ell ),%\chi_{\Lambda})$ is the subspace of cusp forms.
One can decompose $\vartheta_{\Lambda}(\tau)$ as 
$$\vartheta_{\Lambda}(\tau)=E(\tau)+C(\tau)$$ 
where $E(\tau)=\sum_{j=0}^{\infty}a_{E}(j)q^j$ is an Eisenstein series, 
and $C(\tau)=\sum_{j=1}^{\infty}a_{C}(j)q^{j}$ is a cusp form.

Now for each genus, we can either find a representative lattice in this genus or 
compute the genus theta series, i.e., the weighted average over all theta series in the genus. 
The genus theta series is an Eisenstein series, and its Fourier coefficients $a_E(j)$ can be 
computed as a product
\begin{align*}
  a_E(j) = \prod_{p\le\infty}\beta_p(j)
\end{align*}
of local densities $\beta_p(j)$. 
We use the formulas of Yang~\cite{Yang1998} to compute these local densities and 
then use the Sage computeralgebrasystem \cite{Sage} to compute the Fourier coefficients $a_{E}(j)$. 

Assume that the cusp forms subspace $\mathcal{S}_{k}(\Gamma_0(\ell ),\chi_{\Lambda})$ is of dimension $h$
and it has a basis $\{B_i(\tau)\}_{i=1}^{h}$, where $B_{i}(\tau)=\sum_{j=0}^{\infty}a_{B_i}(j)q^j$. 
As $C(\tau)\in \mathcal{S}_{k}(\Gamma_0(\ell ),\chi_{\Lambda})$,
we can write that
$$ C(\tau) = \sum_{i=1}^{h} c_i B_{i}(\tau)=\sum_{j=0}^{\infty}\sum_{i=1}^{h}c_ia_{B_i}(j)q^j$$
as a linear combination of the basis $\{B_i(\tau)\}_{i=1}^{h}$. Hence
\begin{align*}
\vartheta_{\Lambda}(\tau) &= E(\tau)+C(\tau)=\sum_{j=0}^{\infty}(a_E(j)+\sum_{i=1}^{h}c_ia_{B_i}(j))q^j.
\end{align*}
We write $E(\tau)\mid_{k}W_{l}=\sum_{j=0}^{\infty} a_{E^W}(j)q^j$ and 
$B_{i}(\tau)\mid_{k}W_{l}=\sum_{j=0}^{\infty}a_{B_i^W}(j)q^j$. Then
\begin{align*}
\vartheta_{\Lambda}(\tau)\mid_{k} W_{l} 
= E(\tau)\mid_{k}W_{l}+\sum_{i=1}^{h}c_iB_{i}(\tau)\mid_{k}W_{l}
=\sum_{j=0}^{\infty}(a_{E^W}(j)+\sum_{i=1}^{h}c_ia_{B_i^W}(j))q^j.
\end{align*}
Note that these coefficients $a_{E^W}(j)$
and $a_{B_i^W}(j)$ can be very easily computed from those coefficients $a_{E}(j)$ and $a_{B_i}(j)$.

Set $\text{const}=\left(-\frac{\sqrt{\ell }}{i}\right)^k\sqrt{\det(\Lambda^*)}$. 
Now by Proposition~\ref{AtkinLehner} and the above discussion,
we get the following linear restrictions on those variables $c_{i}$:
\begin{align}\label{LP}
  \begin{cases}
  a_E(0)+\sum_{i=1}^{h}c_ia_{B_i}(0)&=1,\\
  a_E(j)+\sum_{i=1}^{h}c_ia_{B_i}(j)&=0, \text{   for } 1\leq j\leq m'-1,\\ 
  a_E(m')+\sum_{i=1}^{h}c_ia_{B_i}(m')&=2s,\\ 
  a_E(j)+\sum_{i=1}^{h}c_ia_{B_i}(j)&\geq0, \text{    for } j\geq m'+1,\\ 
  a_{E^W}(0)+\sum_{i=1}^{h}c_ia_{B_i^W}(0)&=1\cdot \text{const},\\
  a_{E^W}(j)+\sum_{i=1}^{h}c_ia_{B_i^W}(j)&=0,\text{    for } 1\leq j\leq m-1\\
  a_{E^W}(m)+\sum_{i=1}^{h}c_ia_{B_i^W}(m)&=2s'\cdot\text{const},\\
  a_{E^W}(j)+\sum_{i=1}^{h}c_ia_{B_i^W}(j)&\geq0, \text{    for } j\geq m+1.
  \end{cases}
\end{align}
Now we employ the lrs Version 7.0~\cite{lrs} to check whether there is any feasible solution for
thoses variables $c_{i}$. (In practice we will only use the coefficients up to degree 100.)
If there is no feasible solution, then we conclude that there is no such lattice $\Lambda$ with
the corresponding genus symbol.

To illustrate the modular forms technique 
we will prove that there is no dual strongly perfect lattice $\Lambda\subset\RR^{16}$
with $r(\Lambda)\in\{96/11,64/7\}$ in the following.

\begin{lemma}\label{96div11}
  There is no dual strongly perfect lattice $\Lambda$ with $r(\Lambda)=96/11$.
\end{lemma}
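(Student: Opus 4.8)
The plan is to apply the modular forms machinery from Section \ref{ModForm} to the case $r(\Lambda)=96/11$. First I would pin down the arithmetic data from Remark~\ref{shortTableOfValues}: we have $r=96/11$, $s(\Lambda)=242\cdot a$ and $s(\Lambda^*)=242\cdot b$ with $a,b\in\{3,4,6,8,9\}$ subject to the divisibility condition $24\mid ab$ coming from Lemma~\ref{N_2N_2Dual}. Next I would produce a factorization $r=m\cdot d$ witnessing that $\Lambda^*$ rescaled to $\min(\Lambda^*)=m$ is even; the natural choice here is to rescale so that one lattice has minimum $\frac{96}{11}$ and the other minimum $1$, or more convenient integer scalings obtained by clearing denominators as in Lemmas~\ref{lemma:level} and \ref{lemma:2adic_sublattice}. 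The key input is to use the integrality of $(D4)(\alpha)$ and $\frac{1}{12}(D4-D2)(\alpha)$ on $\Lambda^*$ to force $(\alpha,\alpha)\in 2\ZZ$ after rescaling, so that $\Lambda^*$ becomes even with a controlled even level $\ell=m/d'$; here the level should come out dividing a small number so that $\mathcal{M}_{k}(\Gamma_0(\ell),\chi_\Lambda)$ with $k=8$ is a manageable finite-dimensional space.

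Once $\Lambda$ is realized (in the scaling where it is even) as a lattice whose theta series lies in $\mathcal{M}_8(\Gamma_0(\ell),\chi_\Lambda)$, I would set up the linear feasibility problem \eqref{LP}. Concretely, for each admissible genus symbol compatible with the allowed determinants $\det(\Lambda)$ (read off from the invariants of $\Lambda^*/\Lambda$ and the bound $\gamma_{16}\le 3.027$), I would compute the Eisenstein part $E(\tau)$ via the local density formulas of Yang~\cite{Yang1998} and a basis $\{B_i(\tau)\}$ of the cusp space $\mathcal{S}_8(\Gamma_0(\ell),\chi_\Lambda)$ in {\sc Magma}. The constraints encode that $\vartheta_\Lambda$ starts $1+2s q^{m'}+\dots$ with vanishing intermediate coefficients and nonnegative tail, and simultaneously that its image under the Atkin--Lehner operator $W_\ell$ starts $1+2s'q^{m}+\dots$ (up to the explicit constant $\bigl(-\sqrt{\ell}/i\bigr)^k\sqrt{\det(\Lambda^*)}$) with the same positivity requirements. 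The assertion is then that \texttt{lrs} reports infeasibility of the resulting linear program for every admissible pair $(s,s')$ and every admissible genus, which rules out the existence of such $\Lambda$.

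The main obstacle I anticipate is twofold. First, there may be several admissible genera (one for each factorization of $\det(\Lambda)$ consistent with the anisotropic discriminant form and the allowed $p$-parts for $p\in\{2,3,11\}$), and the argument must dispose of all of them uniformly; one must be careful that the level $\ell$ and character $\chi_\Lambda$ are correctly matched to each genus before running the linear program. Second, since $r=96/11$ involves the prime $11$, the even level will be divisible by $11$, so the spaces $\mathcal{M}_8(\Gamma_0(\ell),\chi_\Lambda)$ are larger than in the dyadic cases, and the cusp space dimension $h$ may be sizeable; the genuine content is verifying that the Eisenstein coefficients $a_E(j)$ computed from local densities, together with the cuspidal degrees of freedom $c_i$, genuinely cannot be reconciled with both prescribed $q$-expansions and their $W_\ell$-transforms. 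This is ultimately a finite computation, but its correctness hinges on getting the normalizing constant and the character right, and on carrying enough Fourier coefficients (the paper uses up to degree $100$) that the positivity constraints actually bite.
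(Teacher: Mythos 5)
Your overall strategy---pin down $(a,b)$ from Remark~\ref{shortTableOfValues}, use integrality of $(D4)$ and $\frac{1}{12}(D4-D2)$ to produce even rescalings, then run the genus/Eisenstein/cusp-form linear program---is the same machinery the paper uses, and it would succeed; but the paper structures the case analysis differently, and the difference determines how much actually has to be computed. The integrality conditions give three regimes: $a=9$ forces $\sqrt{11/2^4}\,\Lambda^*$ even of minimum $6$; $a\in\{4,8\}$ forces $\sqrt{11/3}\,\Lambda^*$ even of minimum $32$; and $a\in\{3,6\}$ forces $\sqrt{11/(2^3\cdot 3)}\,\Lambda^*$ even of minimum $4$. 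Whenever both $a\ne 9$ and $b\ne 9$, the two forced even scalings (one for $\Lambda^*$, one for $\Lambda$) are incompatible: one obtains an even lattice $\Gamma$ such that $\sqrt{c}\,\Gamma^*$ is also even for a non-integral $c$ (for instance $c=32\cdot 11/3$), contradicting Lemma~\ref{IsLevelIntegral}. Since $24\mid ab$ forces $b=8$ when $a=9$, every admissible pair except $(a,b)=(8,9)$ (up to symmetry) dies by this purely arithmetic argument, with no modular forms at all. Note that for those pairs your plan cannot even be posed: no integral even level $\ell$ exists, and the nonexistence of $\ell$ is itself the proof, rather than infeasibility reported by lrs, so your claim that the LP disposes of ``every admissible pair'' misdescribes where the contradiction comes from. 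Only for $(a,b)=(8,9)$ does the paper run your program: $L=\sqrt{6}\Lambda$ is even of minimum $6$ with $\sqrt{22}L^*=\sqrt{11/3}\,\Lambda^*$ even of minimum $32$; Lemma~\ref{DeterminantBound} plus an elementary unimodularity argument excluding $\det L=2^{16}$ leaves exactly two genera $\textup{II}_{16,0}(2^{+2}11^{+4})$ and $\textup{II}_{16,0}(2^{-2}11^{-4})$ of level $22$, and the LP in the $19$-dimensional space $\mathcal{S}_8(\Gamma_0(22),\chi)$ (trivial character, as you anticipated, since the relevant determinants are squares) is infeasible. So your two anticipated obstacles---many genera to sift and a large cusp space over the prime $11$---are real but far smaller than you fear, precisely because the Lemma~\ref{IsLevelIntegral} reduction does most of the work before any modular forms appear.
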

\begin{proof}
  By Remark~\ref{shortTableOfValues} there are $a,b\in\{3,4,6,8,9\}$ with $24\mid ab$  such that
  $s(\Lambda)=242\cdot a$ and $s(\Lambda^*)=242\cdot b$.
  We scale $\Lambda$ such that $\min(\Lambda)=1$.
  Let $\alpha\in\Lambda^*$, and write $(\alpha, \alpha)=\frac{p}{q}$ with coprime integers $p$ and $q$.
  Then
  \begin{align*}
    (D4)(\alpha) &= {\frac{a\cdot 11^{2}\cdot {p}^{2}}{2^{4}\cdot 3\cdot {q}^{2}}} \in\ZZ, \\
    \frac{1}{12}(D4-D2)(\alpha) &= {\frac{a\cdot 11^{2}\cdot p \left( p-6\,q \right) }{2^{6}\cdot 3^{2}\cdot {q}^{2}}} \in\ZZ.
  \end{align*}
  Hence we have:
  \begin{enumerate}[(i)]
      \item If $a = 9$, then $2^5 \mid p, q \mid 11$, whence $\sqrt{\frac{11}{2^4}}\Lambda^*$ is even with minimum $6$.
      \item If $a\in\{4,8\}$, then $6\mid p, q\mid 11$, whence $\sqrt{\frac{11}{3}}\Lambda^*$ is even with minimum $32$.
      \item If $a\notin\{4,8,9\}$, then $2^{4}3\mid p, q \mid 11$, whence 
            $\sqrt{\frac{11}{2^3\cdot3}}\Lambda^*$ is even with minimum $4$.
  \end{enumerate}
  We first treat the case where $a \ne 9$ and $b\ne 9$. 
  Then $\Gamma=\sqrt{\frac{11}{3}}\Lambda^*$ is even with $\min(\Gamma)=32$.
  Similarly, $\sqrt{32\cdot\frac{11}{3}}\Gamma^*$ is also even, which is impossible by
  Lemma~\ref{IsLevelIntegral}. This leaves us only two cases $a=8,b=9$ or $a=9,b=8$.
  By symmetry we assume that $a=8$ and $b=9$. 
  Then $\Gamma=\sqrt{\frac{11}{3}}\Lambda^*$ is even with $\min(\Gamma)=32$.
  Similarly, $\sqrt{22}\Gamma^*$ is also even with minimum $6$.

  Denote $L=\sqrt{22}\Gamma^*=\sqrt{6}\Lambda$. Then $\det L=2^a11^b,$ where $a,b\in\{0\dots16\}.$
  By Lemma~\ref{DeterminantBound}, we get 
  $\det L\in\{2^2 11^4,2^3 11^4,2^6 11^3,2^9 11^2,2^{13} 11,2^{16}\}$.
  If $\det L= 2^{16}$ then $\frac{1}{\sqrt{2}} L$ is a unimodular lattice, and the minimum of it
  cannot exceed $2$, therefore $\min(L)\le 4$, which contradicts the fact that $\min(L)=6$.
  Now by reading off all possible genera of $L$ with 
  $\det L\in\{2^2 11^4,2^3 11^4,2^6 11^3,2^9 11^2,2^{13} 11\}$,
  we find only two possible genera $g_1=$\,\RNum{2}$_{16,0}(2^{+2}11^{+4})$ and $g_2=$\,\RNum{2}$_{16,0}(2^{-2}11^{-4})$.
  We calculate the genus theta series of $g_i,1\leq i\leq 2$ and get
  \begin{align*}
    E_{g_1}(\tau) &= 
    1 + \frac{14999208}{7591877}q + \frac{1950015144}{7591877}q^2 + \frac{32818267104}{7591877}q^3 +
    \frac{249632054952}{7591877}q^4 + O(q^5),\\\\
    E_{g_2}(\tau) &= 1 + \frac{1248806}{622285}q + \frac{157378598}{622285}q^2 + 
                     \frac{2732387528}{622285}q^3 + \frac{20141991974}{622285}q^4 + O(q^5).
%    \vartheta_{L_1} &=1 + 242q + 2648q^2 + 12986q^3 + 54548q^4 + 191638q^5 + 611304q^6 + 1676484q^7 + O(q^8);\\
    %+ 4208528q^8 + 9439672q^9 + 19874120q^{10} + 38281802q^{11} + O(q^{12}) ;\\
%    \vartheta_{L_2} &=1 + 242q + 2644q^2 + 12030q^3 + 46848q^4 + 168640q^5 + 534962q^6 + 1576740q^7 + O(q^8).
    %3994758q^8 + 9501338q^9 + 19649698q^{10} + 39305280q^{11} + O(q^{12}) .\\
  \end{align*}
  %Because of Theorem~\ref{EvenLatticeThaSes} we see that $\vartheta_{L_i}\in\mathcal{M}_{8}(\Gamma_0(22),\chi)$ 
  Then 
  $C_i(\tau)=\vartheta_{L}(\tau)-E_{g_i}(\tau)\in\mathcal{S}_{8}(\Gamma_0(22),\chi)$ if the genus symbol
  of $L$ is $g_i$, where $\chi$ is the trivial character.
  The subspace $\mathcal{S}_{8}(\Gamma_0(22),\chi)$ is of 19 dimension.
  We also know that
  \begin{align*}
    \vartheta_{L} &= 1+2\cdot 242\cdot8 q^{3}+O(q^4),\\
    \vartheta_{\sqrt{22}L^*} &= 1+2\cdot242\cdot 9\,q^{16}+O(q^{17}).
    %S_i\mid_8 W_{22} &= \vartheta_{L_i}\mid_8 W_{22} - \vartheta_{\Gamma}\mid_8 W_{22}
                   %= 2^3 11^2(\vartheta_{\sqrt{22}L_i^*}-\vartheta_{\sqrt{22}\Gamma^*}).
  \end{align*}
  Now we use lrs to solve the linear restrictions~\eqref{LP}, and find that there does not exist
  cusp forms $C_{i}(\tau)$ which satisfies those restrictions~\eqref{LP}. This concludes our proof.
\end{proof}

\begin{lemma}\label{64div7}
  There is no dual strongly perfect lattice $\Lambda$ with $r(\Lambda)=64/7$.
\end{lemma}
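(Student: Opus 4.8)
The plan is to follow the modular forms approach of Section~\ref{ModForm}, exactly as in Lemma~\ref{96div11}, after first pinning down the possible kissing numbers and the even rescalings. First I would record from Remark~\ref{shortTableOfValues} that $s(\Lambda)=882\,a$ and $s(\Lambda^*)=882\,b$ with $a,b\in\{1,2,3,4\}$ and $4\mid ab$, where $882=2\cdot 3^2\cdot 7^2$. Scaling $\Lambda$ so that $\min(\Lambda)=1$, hence $\min(\Lambda^*)=64/7$, and writing $(\alpha,\alpha)=p/q$ in lowest terms for $\alpha\in\Lambda^*$, the two relevant equations become $(D4)(\alpha)=\frac{147a}{16}(\alpha,\alpha)^2$ and $\frac{1}{12}(D4-D2)(\alpha)=\frac{49a}{64}(\alpha,\alpha)\big((\alpha,\alpha)-6\big)$. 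Since $147$ and $49$ are odd and prime to the denominators, integrality forces $q\mid 7$ together with a $2$-adic divisibility of $p$ depending on $a$; this yields an even rescaling $\sqrt{c_a}\,\Lambda^*$ with an explicit scale factor and minimum for each $a$ (for instance $c_a=7/16$ with minimum $4$ when $a\in\{1,3\}$, and a coarser factor when $a\in\{2,4\}$). By the symmetry $\Lambda\leftrightarrow\Lambda^*$ the analogous statement, governed by $b$, holds for $\Lambda$.

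Next I would try to eliminate pairs $(a,b)$ cheaply: taking $\Gamma:=\sqrt{c_a}\,\Lambda^*$ even, the dual even rescaling of $\Lambda$ forces $\sqrt{c}\,\Gamma^*$ to be even for an explicit $c$, and whenever $c\notin\ZZ$ Lemma~\ref{IsLevelIntegral} gives a contradiction. The condition $4\mid ab$ has already removed the worst offenders, so I expect that the remaining pairs survive this test and must be treated by modular forms.

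For each surviving pair I would fix the scaling in which $\Lambda$ is even with $\min(\Lambda)=m'$ and even level dividing $\ell=m/d'$, and apply the machinery of Section~\ref{ModForm}. Lemma~\ref{DeterminantBound} restricts $\det(\Lambda)=2^{x}7^{y}$ to finitely many values, hence to finitely many genera, read off from the $p$-adic symbols at $p\in\{2,7\}$. For every admissible genus I would compute the Eisenstein part $E$ of $\vartheta_\Lambda\in\mathcal{M}_8(\Gamma_0(\ell),\chi)$ from Yang's local densities, write $\vartheta_\Lambda=E+C$ with $C$ in the cusp space, and impose the linear constraints~\eqref{LP}: the prescribed initial coefficients $1+2s\,q^{m'}+\dots$ of $\vartheta_\Lambda$, the prescribed coefficients $1+2s'\,q^{m}+\dots$ of the Atkin--Lehner image $\vartheta_\Lambda\mid_8 W_\ell$ (Proposition~\ref{AtkinLehner}), and non-negativity of all coefficients. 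Running lrs should certify that the system is infeasible for every genus, which proves the claim.

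The main obstacle is the branch $a=4$ (equivalently $b=4$), where the passage to an even lattice is coarsest: there the even lattice has a large minimum and determinant, producing many candidate genera and high-dimensional cusp spaces $\mathcal{S}_8(\Gamma_0(\ell),\chi)$. Both enumerating these genera and certifying the infeasibility of every linear program~\eqref{LP} are where essentially all of the computational effort will lie.
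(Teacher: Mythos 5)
Your setup (the values $s(\Lambda)=882a$, $s(\Lambda^*)=882b$ with $4\mid ab$, the integrality of $(D4)$ and $\frac{1}{12}(D4-D2)$, and the resulting even rescalings of $\Lambda^*$ for each $a$) matches the paper, and for the pair $a=b=2$ your plan is essentially the paper's: there $\Gamma=\sqrt{7/2^3}\,\Lambda^*$ is even of minimum $8$ with $\sqrt{7}\,\Gamma^*$ even, Lemma~\ref{DeterminantBound} forces $\det\Gamma=7^8$, so $\Gamma$ lies in the genus of $E_8\perp\sqrt{7}E_8$, and a small computation in the $3$-dimensional space $\mathcal{S}_8(\Gamma_0(7),\chi)$ gives the contradiction. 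The genuine gap is everything else: you propose to dispose of all remaining pairs $(a,b)$ -- in particular every pair involving $a=4$ or $b=4$ -- by brute-force modular-form linear programs, and you correctly observe that these are exactly the cases where the even rescaling is coarsest (for $a=4$ one only gets $\sqrt{7}\,\Lambda^*$ even of minimum $64$), so the level is large, the genera are numerous, and the cusp spaces are big. But infeasibility of those linear programs is not something you can assume; the LP is only a relaxation, and its feasibility for some genus would leave the case open. Your proof therefore hinges on an unverified (and very heavy) computation precisely in the branch you identify as the obstacle.

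The paper avoids this branch entirely by a geometric reduction that your proposal is missing. Fix $\alpha\in\Min(\Lambda^*)$ and set $N_i(\alpha)=\{x\in\Min(\Lambda)\mid (x,\alpha)=i\}$; then $6|N_3(\alpha)|+|N_2(\alpha)|=22a$. A positive-semidefiniteness argument on the Gram matrix of $x,y,\alpha$ (using that $2^3\Lambda$ is even, so the possible inner products $(x,y)$ are quantized) shows that two distinct vectors in $N_3(\alpha)$ would force a vector of norm below $\min(\Lambda)$, and similarly that $N_3(\alpha)\neq\emptyset$ forces $N_2(\alpha)=\emptyset$; this makes $6|N_3(\alpha)|+|N_2(\alpha)|=6$, contradicting $22a\geq 22$, so $N_3(\alpha)=\emptyset$. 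Then the spherical-code bound of Lemma~\ref{SphBndN_2} (with $r=64/7$, hence $a'=1/9$) gives $|N_2(\alpha)|\leq 61.9$, so $22a\leq 61.9$ and $a\leq 2$; by symmetry $b\leq 2$, and the condition $4\mid ab$ of Remark~\ref{shortTableOfValues} leaves only $a=b=2$. With this reduction the entire modular-form effort collapses to the single, clean level-$7$ computation described above. Without it, your proposal is not a proof but a proposal for a computation whose outcome is unknown.
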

\begin{proof}
  By Remark~\ref{shortTableOfValues} there is some $a\in\{1,\ldots ,4\}$ such that $s(\Lambda)=882\cdot a$.
  We scale $\Lambda$ such that $\min(\Lambda)=1/7$ and $\min(\Lambda^*)=64.$
  For every $\alpha\in\Gamma=\Lambda^*$, 
  \begin{align*}
	  (D4)(\alpha)=\frac{3a}{2^4}(\alpha,\alpha)^2\in\ZZ\implies (\alpha,\alpha)\in2\ZZ.
  \end{align*}
  Hence $\Gamma$ is even. Now rescale $\Lambda$ such that $\min(\Lambda)=1$ and $\min(\Lambda^*)=64/7.$
  Then $\sqrt{64}\Lambda$ is even, and hence for $x,y\in\Min(\Lambda)$ with $x\ne \pm y$,
  $$(x,y)\in\{a/64: a\in\ZZ, -32\leq a\leq 32\}.$$
  Now fix $\alpha\in\Min(\Lambda^*)$, and let $N_i(\alpha)=\{x\in\Min(\Lambda)\mid (x,\alpha)=i\}$
  for $i\in\{2,3\}$. Note that 
  \begin{align}\label{64div7_eq1}
    6|N_3(\alpha)|+|N_2(\alpha)|=22\cdot a.
  \end{align}

  We first prove that $N_3(\alpha)=\emptyset$.
  We claim that if $N_3(\alpha)\neq\emptyset$ then $|N_3(\alpha)|=1$ and $|N_2(\alpha)|=0$. 
  Assume that there exist 
  two distinct vectors $x,y\in N_3(\alpha)$. Write the Gram matrix formed by those three vectors
  $x,y,\alpha$ as
  \begin{align*}
    G=\begin{pmatrix}
       1/7 & (x,y) & 3 \\
       (x,y) & 1/7 & 3 \\
       3   &  3  & 64
    \end{pmatrix}.
  \end{align*}
  A computer calculation shows that $G$ is positive semidefinite only when $(x,y)=9/64$,
  but then the lattice $L:=\langle x,y,\alpha\rangle \subset \Lambda$ has minimum 
  $1/224<\min(\Lambda)=1/7$, which is impossible. A similar computation shows that
  $N_2(\alpha)=\emptyset$. 
  Therefore $6|N_3(\alpha)|+|N_2(\alpha)|=6$, but this contradicts~\eqref{64div7_eq1}.

  As $N_3(\alpha)=\emptyset$, by Lemma~\ref{SphBndN_2} we have $|N_2(\alpha)|\le 61.9$,
  so $a \le 2$. Similarly $b\le 2$ and by Remark~\ref{shortTableOfValues} we have $a=b=2$.

  Recall that we scaled $\Lambda $ such that $\min (\Lambda ) = 1$. 
For $\alpha \in \Lambda ^*$  write $(\alpha,\alpha ) = \frac{p}{q} \in \QQ $. 
Then equation $(D4)(\alpha ) $ and $\frac{1}{12}(D4-D2)(\alpha )$ yield 
$$\frac{3\cdot 7^2}{2^3} \frac{p^2}{q^2} \in \ZZ \mbox{ and } \frac{7^2}{2^5} \frac{p}{q} (\frac{p}{q}-6) \in \ZZ $$ 
whence 
     $2^4 \mid p$ and $ q\mid 7$. 
     In particular $\Gamma := \sqrt{\frac{7}{2^3}}\Lambda^*$ is an even
          lattice with minimum $2^3$.
          Similarly, the lattice $\sqrt{7}\Gamma^*$ is also even.
          By Lemma~\ref{DeterminantBound}, we have $\det \Gamma = 7^8$. As the even level of $\Gamma $ is
	  $7$ the lattice $\Gamma $ is in the genus of even $7$-modular lattices represented by 
	  $L = E_8 \perp \sqrt{7} E_8$.
          The $\vartheta$-series of $L$ is
          \begin{align*}
            \vartheta_{L} &= 1 + 240q + 2160q^2 + 6720q^3 + 17520q^4 + 30240q^5 + 60480q^6 + 82800q^7+O(q^8).
          \end{align*}
          Because of Theorem~\ref{EvenLatticeThaSes} we see that $\vartheta_{L}\in\mathcal{M}_{8}(\Gamma_0(7),\chi)$ 
          where $\chi$ is trivial. With Theorem~\ref{DiffOfThaSes} it follows that 
          $S=\vartheta_{L}-\vartheta_{\Gamma}\in\mathcal{S}_{8}(\Gamma_0(7),\chi)$.
          The subspace $\mathcal{S}_{8}(\Gamma_0(7),\chi)$ is of dimension $3$.
          We know that
          \begin{align*}
            \vartheta_{\Gamma} &= 1+2\cdot 882\cdot 2 q^{4}+O(q^5),\\
            \vartheta_{\sqrt{7}\Gamma^*} &= 1+2\cdot 882 \cdot 2 q^{4}+O(q^{5}),\\
            S\mid_8 W_{7} &= \vartheta_{L}\mid_8 W_{7} - \vartheta_{\Gamma}\mid_8 W_{7}
                           = (\vartheta_{\sqrt{7}L^*}-\vartheta_{\sqrt{7}\Gamma^*}).
          \end{align*}
          Then we get $8$ relations on the coefficients of $S$. The MAGMA computation shows that there
          is no solution for these $8$ relations.
\end{proof}

\section{$r(\Lambda)=\frac{36}{5}$}\label{36div5}
Let $\Lambda$ be some dual strongly perfect lattice of dimension 16 with $r(\Lambda)=36/5$.
By Theorem~\ref{TableOfValues} $s(\Lambda),s(\Lambda^*)\in\{400,800,1200\}.$
WLOG we assume that $s(\Lambda)\leq s(\Lambda^*)$.
We first apply the modular form approach and obtain the following.

\begin{lemma}
  There is no dual strongly perfect lattice $\Lambda\subset\RR^{16}$ with $r(\Lambda)=36/5$ and
  \begin{enumerate}
   \item $s(\Lambda)=400$, $s(\Lambda^*)\in\{400,800\}$.
   \item $s(\Lambda)=800$, $s(\Lambda^*)=1200$.
  \end{enumerate}
\end{lemma}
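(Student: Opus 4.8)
The plan is to exclude each of the three parameter pairs by the modular form linear programming method of Section~\ref{ModForm}, following the template of Lemma~\ref{96div11} and Lemma~\ref{64div7}. First I would fix a pair $(s,t)$ and pin down the even rescalings. Scaling $\Lambda$ so that $\min(\Lambda)=1$ and writing $(\alpha,\alpha)=p/q$ in lowest terms for $\alpha\in\Lambda^*$, the integrality of $(D4)(\alpha)$ and of $\tfrac1{12}(D4-D2)(\alpha)$ from \eqref{gleichungen} forces $q\mid 5$ together with a congruence on $p$ whose precise shape depends on the $2$- and $3$-adic valuation of $s$. Running this through the three values $s\in\{400,800,1200\}$, one finds in every case that the norms of $\Lambda^*$ lie in $\tfrac65\ZZ$, so that $\sqrt{5/3}\,\Lambda^*$ is an even lattice of minimum $12$. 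Interchanging the roles of $\Lambda$ and $\Lambda^*$ and using $t$ in place of $s$ shows likewise that $\Lambda$ admits an even rescaling of minimum $12$. Hence in the scaling with $\min(\Lambda)=12$ both $\Lambda$ and $\sqrt{20}\,\Lambda^*$ are even, $\min(\Lambda^*)=3/5$, the even level divides $m/d'=20$, and $\exp(\Lambda^*/\Lambda)\mid 20$; in particular $\det(\Lambda)=2^a5^c$ is a $\{2,5\}$-number and $\vartheta_\Lambda\in\mathcal{M}_8(\Gamma_0(20),\chi)$.

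Next I would enumerate the finitely many admissible genera. Lemma~\ref{DeterminantBound} with $\gamma_{16}\le 3.027$ bounds $\det(\Lambda)$ from above and below, leaving only finitely many values $2^a5^c$ in the admissible window (which includes $2^{16}5^8$, the value attained by $\sqrt2\,N_{16}$), and for each such determinant one reads off the possible $p$-adic genus symbols for $p\in\{2,5\}$ as in \cite[Chapter 15]{SPLAG}. For every candidate genus I would compute its genus theta series, whose Eisenstein part $E(\tau)=\sum_j a_E(j)q^j$ is obtained as a product of local densities via Yang's formulas \cite{Yang1998} (evaluated in \cite{Sage}), together with a basis $\{B_i\}_i$ of the cusp space $\mathcal{S}_8(\Gamma_0(20),\chi)$ (computed in \cite{Magma}), so that $\vartheta_\Lambda=E+\sum_i c_iB_i$ with unknown coefficients $c_i$.

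Then I would impose the linear system \eqref{LP}. The two normalisations fix the low-order Fourier coefficients: $\vartheta_\Lambda=1+2s\,q^{6}+\cdots$ and, transporting via the Atkin--Lehner operator $W_{20}$ of Proposition~\ref{AtkinLehner}, $\vartheta_{\sqrt{20}\Lambda^*}=1+2t\,q^{6}+\cdots$, while all remaining Fourier coefficients of both $q$-expansions are required to be non-negative. Feeding the resulting finite system of linear equalities and inequalities in the $c_i$ into lrs \cite{lrs} and finding it infeasible for every admissible genus rules out a lattice with the prescribed $(s,t)$; doing this for $(s,t)\in\{(400,400),(400,800),(800,1200)\}$ yields the claim.

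The main obstacle is not any single step but completeness together with the borderline character of the method. The delicate point in the first paragraph is the $2$- and $3$-adic bookkeeping of the divisibility conditions, which must be carried out separately for each $s$ since the valuations differ and an error there corrupts both the level and the range of admissible determinants. The genus enumeration must then be shown to be exhaustive throughout the fairly large determinant window. Finally, the method is genuinely on the edge: for the partner case $s=t=1200$ the very same linear program is \emph{feasible}, being realised by the $5$-modular lattice $N_{16}$, and the pairs $(400,1200)$ and $(800,800)$ are likewise not decided by modular forms and must be treated separately later in this section. Thus the substance of the lemma is precisely that infeasibility does occur for these three pairs, and its verification rests on the numerical LP computation.
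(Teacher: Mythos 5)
Your proposal takes essentially the same approach as the paper: the paper establishes this lemma purely by the modular form linear-programming technique of Section~\ref{ModForm} (its ``proof'' is the sentence introducing the lemma), and your setup --- deducing from $(D4)$ and $\tfrac{1}{12}(D4-D2)$ that the norms of $\Lambda^*$ lie in $\tfrac{6}{5}\ZZ$ for each $s\in\{400,800,1200\}$, rescaling so that $\Lambda$ is even of minimum $12$ with even level dividing $20$, enumerating the genera of determinant $2^a5^c$ within the bounds of Lemma~\ref{DeterminantBound}, and running the system \eqref{LP} through lrs --- is exactly that computation. Your closing remarks on the borderline cases $(1200,1200)$, $(400,1200)$ and $(800,800)$ are also consistent with how the rest of Section~\ref{36div5} handles them.
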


Next we apply the technique from Lemma~\ref{case7} to exclude two more pairs of values. 

\begin{lemma}
  There is no dual strongly perfect lattice $\Lambda\subset\RR^{16}$ with $r(\Lambda)=36/5$
  and $(s(\Lambda),s(\Lambda^*))\in\{(400,1200),(800,800)\}$.
\end{lemma}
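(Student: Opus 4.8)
The plan is to mirror the configuration-and-Gram-matrix argument of Lemma~\ref{case7}. First I would rescale $\Lambda$ so that $\min(\Lambda)=6$ and $\min(\Lambda^*)=6/5$, giving $r(\Lambda)=36/5$, and then extract the rigidity from the design equations \eqref{gleichungen} applied to both $\Lambda$ and $\Lambda^*$. Feeding a vector into $(D4)$ and $\frac{1}{12}(D4-D2)$ controls the norms, while $\frac{1}{6}(D13-D11)$ forces $(x,y)\in\frac1a\ZZ$ for $x\in\Min(\Lambda),y\in\Lambda$ and $(\alpha,\beta)\in\frac1b\ZZ$ for $\alpha\in\Min(\Lambda^*),\beta\in\Lambda^*$, with small explicit denominators (for instance $a=2,b=10$ when $s(\Lambda)=s(\Lambda^*)=800$). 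The decisive structural input is that $(x,\alpha)\in\ZZ$ for all $x\in\Lambda,\alpha\in\Lambda^*$, so $(x,\alpha)^2\le\min(\Lambda)\min(\Lambda^*)=36/5<9$ forces $(x,\alpha)\in\{0,\pm1,\pm2\}$ and makes Corollary~\ref{linkomb} available throughout.

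Next, for each pair $(s(\Lambda),s(\Lambda^*))$ I would read off the cardinalities and linear relations from Corollary~\ref{linkomb}, writing $N_2(\alpha)=\{x\in\Min(\Lambda)\mid(x,\alpha)=2\}$ and $N_2(x)=\{\alpha\in\Min(\Lambda^*)\mid(x,\alpha)=2\}$. In the case $(800,800)$ one gets $|N_2(\alpha)|=|N_2(x)|=6$ with $\sum_{x\in N_2(\alpha)}x=10\alpha$ and $\sum_{\alpha\in N_2(x)}\alpha=2x$; in the case $(400,1200)$ one gets $|N_2(\alpha)|=3$, $|N_2(x)|=9$ with $\sum_{x\in N_2(\alpha)}x=5\alpha$ and $\sum_{\alpha\in N_2(x)}\alpha=3x$. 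Taking the inner product of each relation with one of its summands yields the common row sum of the Gram matrix of an $N_2$-set, and combining this with the minimum bounds $(u\pm v,u\pm v)\ge\min$ and the integrality of Step~1 pins the pairwise products down. In the $(400,1200)$ case these bounds are tight and force $(x_i,x_j)=2$ inside $N_2(\alpha)$ and $(\alpha_i,\alpha_j)=3/5$ inside $N_2(x)$ (both consistent with $\alpha=\frac15\sum x_i$ and $x=\frac13\sum\alpha_i$); in the $(800,800)$ case the same bounds only confine each pairwise product to a short list (e.g.\ $(x_i,x_j)\in\frac12\ZZ\cap[-3,3]$ with prescribed row sum $14$), which is what makes this case harder.

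I would then rigidify the global configuration exactly as in Lemma~\ref{case7}: a $4\times4$ Gram-determinant computation for a pair of shared vectors together with the two defining dual vectors bounds the overlaps $|N_2(\alpha_i)\cap N_2(\alpha_j)|$ (and the analogous overlaps among the $N_2(x)$), since a negative determinant rules out all but very small overlaps and fixes most of the cross products $(x_i,\alpha_j)$. Assembling $N_2(\alpha_1)$, $N_2(x_1)$ and the further vectors produced by a second centre $\alpha_2$ (resp.\ $x_2$) gives a spanning set whose $\ZZ$-span is a sublattice of $\Lambda$ or of $\Lambda^*$ of the prescribed minimum. The still undetermined entries range over the finite sets permitted by Step~1, and the contradiction is obtained by a finite search verifying that no completion of this Gram matrix is simultaneously positive semidefinite, of rank $\le 16$, and of the correct minimum.

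The hard part will be keeping this final search feasible. Unlike the $r=7$ situation, the inner products inside an $N_2$-set need not be constant in the $(800,800)$ case, so there are more free entries and the positive-semidefiniteness, rank-$\le16$ and minimum constraints must do more pruning; the asymmetric $(400,1200)$ case is lighter on the primal side (only three vectors) but heavier on the dual side (nine vectors summing to $3x$), so there the work is to exploit the forced value $(\alpha_i,\alpha_j)=3/5$ to collapse the dual block before searching. In both cases the decisive, non-routine step is showing by computer that the rigidified configuration admits no valid Gram completion.
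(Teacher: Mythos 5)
Your proposal is correct and takes essentially the same approach as the paper: the paper likewise mirrors Lemma~\ref{case7}, using Corollary~\ref{linkomb} to get the $N_2$-cardinalities and sum relations, integrality of inner products from rescaled even lattices (equivalent, up to your different choice of scaling, to your $(x,y)\in\tfrac12\ZZ$ and $(\alpha,\beta)\in\tfrac1{10}\ZZ$ constraints from $\tfrac16(D13-D11)$), and then an exhaustive computer check that the partial Gram matrix assembled from $N_2(\alpha_1)$, $N_2(x_1)$ and a second centre $N_2(\alpha_2)$ admits no positive-semidefinite completion of rank $\le 16$ with the correct minimum. Your quantitative claims (the cardinalities $6,6$ resp.\ $3,9$, the sum relations, the row-sum $14$ with entries in $\tfrac12\ZZ$ bounded by $3$, and the forced values $(x_i,x_j)=2$, $(\alpha_i,\alpha_j)=3/5$ in the $(400,1200)$ case) all check out against what the paper's scaling gives.
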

\begin{proof}
  Here we only give the proof for the case that $s(\Lambda)=s(\Lambda^*)=800$, as the other case
  can be proved similarly.
  We scale $\Lambda$ such that $\min(\Lambda)=3/5, \min(\Lambda^*)=12$. Then $\Lambda^*$ is an even
  lattice; similarly $\sqrt{20}\Lambda$ is even. Thus for $x,y\in\Lambda$ we have $(x,y)\in\frac{1}{20}\ZZ$.
  Choose $\alpha_1\in\Lambda$ and put $N_2(\alpha_1)=\{x_1,x_2,x_3,x_4,x_5,x_6\}$.
  We know that $\sum_{i=1}^{6}x_i=\alpha_1$, so $3/5+\sum_{i=2}^{6}(x_1,x_i)=2$.
  Combining this with $(x_1,x_i)\in\frac{1}{20}\ZZ$, we readily check
  there are only two possibilities for the multiset $\{(x_1,x_i): i\in\{2\dots6\}\}$:
  $\{1/5, (3/10)^4\}$ and $\{(1/4)^2, (3/10)^3\}$, where the exponents indicate multiplicities.
  Using this observation, we easily find that there are totally four possible Gram matrix
  formed by vectors $x_1,\dots,x_6$ up to the permutation equivalence:
  \begin{scriptsize}
  $$
  \kbordermatrix{
       &x_1&x_2&x_3&x_4&x_5&x_6\\
    x_1&3/5&1/5&3/10&3/10&3/10&3/10\\
    x_2&1/5&3/5&3/10&3/10&3/10&3/10\\
    x_3&3/10&3/10&3/5&1/5&3/10&3/10\\
    x_4&3/10&3/10&1/5&3/5&3/10&3/10\\
    x_5&3/10&3/10&3/10&3/10&3/5&1/5\\
    x_6&3/10&3/10&3/10&3/10&1/5&3/5
  }, \quad
  \kbordermatrix{
       &x_1&x_2&x_3&x_4&x_5&x_6\\
    x_1&3/5&1/5&3/10&3/10&3/10&3/10\\
    x_2&1/5&3/5&3/10&3/10&3/10&3/10\\
    x_3&3/10&3/10&3/5&1/4&1/4&3/10\\
    x_4&3/10&3/10&1/4&3/5&3/10&1/4\\
    x_5&3/10&3/10&1/4&3/10&3/5&1/4\\
    x_6&3/10&3/10&3/10&1/4&1/4&3/5
  },
  $$
  $$
  \kbordermatrix{
       &x_1&x_2&x_3&x_4&x_5&x_6\\
    x_1&3/5&1/4&1/4&3/10&3/10&3/10\\
    x_2&1/4&3/5&1/4&3/10&3/10&3/10\\
    x_3&1/4&1/4&3/5&3/10&3/10&3/10\\
    x_4&3/10&3/10&3/10&3/5&1/4&1/4\\
    x_5&3/10&3/10&3/10&1/4&3/5&1/4\\
    x_6&3/10&3/10&3/10&1/4&1/4&3/5
  }, \quad
  \kbordermatrix{
       &x_1&x_2&x_3&x_4&x_5&x_6\\
    x_1&3/5&1/4&1/4&3/10&3/10&3/10\\
    x_2&1/4&3/5&3/10&1/4&3/10&3/10\\
    x_3&1/4&3/10&3/5&3/10&1/4&3/10\\
    x_4&3/10&1/4&3/10&3/5&3/10&1/4\\
    x_5&3/10&3/10&1/4&3/10&3/5&1/4\\
    x_6&3/10&3/10&3/10&1/4&1/4&3/5
  }.
  $$
  \end{scriptsize}

  Put $N_2(x_1)=\{\alpha_1,\dots,\alpha_6\}.$ We also have four possible Gram matrix
  $20A_1,\dots,20A_4$ up to permutation equivalence. Considering the Gram matrix formed by
  vectors $x_1,\dots,x_6,\alpha_1,\dots,\alpha_6$, we find totally $20$ possible such matrix
  up to relabelling of vectors $x_2,\dots,x_6$ and $\alpha_2,\dots,\alpha_6$, by checking whether
  it is positive-semidefinite and the lattice with this Gram matrix has minimum norm not less
  than $3/5$. Put $N_2(\alpha_2)=\{x_1,y_2,\dots,y_6\}.$ We continue to investigate the Gram matrix formed by vectors 
  $x_1,\dots,x_6,\alpha_1,\dots,\alpha_6,x_1,y_2,\dots,y_6$. Direct computation shows none of
  these 20 matrices can be completed to such a Gram matrix. This finishes our proof.
\end{proof}

The only remaining situation is $s(\Lambda ) = s(\Lambda ^*) =1200$. 
Here the proof of \cite[Theorem 8.1]{BachocVenkov} applies almost literally to obtain: 

\begin{lemma}
  If $\Lambda$ is a dual strongly perfect lattice of dimension $16$ with $r(\Lambda)=36/5$ and
  $s(\Lambda)=s(\Lambda^*)=1200$, then $\Lambda\cong N_{16}$.
\end{lemma}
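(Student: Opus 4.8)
The plan is to reduce $\Lambda$ to the situation treated in \cite[Theorem~8.1]{BachocVenkov}: I would show that, rescaled to $\min(\Lambda)=6$ and $\min(\Lambda^*)=6/5$, the lattice $\Lambda$ is an even $5$-modular lattice of dimension $16$, and then transcribe the Bachoc--Venkov argument, which identifies the extremal even $5$-modular lattice in dimension $16$ as $N_{16}$.

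First I would read off the arithmetic of $\Lambda^*/\Lambda$ from the equations \eqref{gleichungen}. Applying $\frac1{12}(D4-D2)$ for the strongly perfect lattice $\Lambda^*$ to $x\in\Lambda$ forces $(x,x)\in\ZZ$, so $\Lambda$ is integral and $\Lambda\subseteq\Lambda^*$; applying $(D4)$ and $\frac1{12}(D4-D2)$ for $\Lambda$ to $\alpha\in\Lambda^*$ forces $(\alpha,\alpha)\in\frac15\ZZ$, so $5\Lambda^*\subseteq\Lambda$ and $\Lambda^*/\Lambda$ is elementary abelian of exponent $5$. Since $\Lambda\subseteq\Lambda^*$ we have $\det(\Lambda)=|\Lambda^*/\Lambda|=5^{k}$, and Lemma~\ref{DeterminantBound} with $\gamma_{16}\le 3.027$ leaves only $k\in\{7,8,9\}$. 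The decisive step is to force $k=8$ together with evenness, i.e.\ to identify the $5$-adic and $2$-adic genus symbols: a parity analysis of the inner products of minimal vectors (in the spirit of Lemma~\ref{lem2}) should show that $\Lambda$ is even, and the local structure at $5$ combined with $\min(\sqrt5\Lambda^*)=5\cdot\frac65=6=\min(\Lambda)$ should pin $\det(\Lambda)=5^{8}$, so that $\Lambda$ and $\sqrt5\Lambda^*$ lie in the single genus of even $5$-modular lattices, i.e.\ $\Lambda$ is $5$-modular of even level $5$. Where the purely local argument is inconclusive, the modular-form feasibility test of Section~\ref{ModForm}, run for the genera with $k\in\{7,9\}$, eliminates them directly.

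Having placed $\Lambda$ in the even $5$-modular genus, I would finish as in \cite{BachocVenkov}. By Theorem~\ref{EvenLatticeThaSes} the theta series $\vartheta_\Lambda$ lies in $\mathcal{M}_8(\Gamma_0(5),\chi_\Lambda)$ and, by $5$-modularity and Proposition~\ref{AtkinLehner}, is an eigenform of the Fricke involution $W_5$. Since $\min(\Lambda)=6$ its expansion is $\vartheta_\Lambda=1+0\cdot q+0\cdot q^2+2s\,q^3+\cdots$ with $s=1200$; the space of $W_5$-eigenforms in $\mathcal{M}_8(\Gamma_0(5),\chi_\Lambda)$ is small enough that the vanishing of the $q$- and $q^2$-coefficients determines $\vartheta_\Lambda$ uniquely as the extremal $5$-modular theta series, whence $\vartheta_\Lambda=\vartheta_{N_{16}}$ and $\Lambda$ is extremal. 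As $N_{16}$ is the unique extremal even $5$-modular lattice of dimension $16$, we conclude $\Lambda\cong N_{16}$. \textbf{The main obstacle} is precisely the modularity step of the second paragraph: the elementary determinant bound only narrows $\det(\Lambda)$ to $5^{7}$, $5^{8}$ or $5^{9}$, so the real work is to push the parity and local arguments (or the modular-form computation) far enough to force the even $5$-modular genus; once $\Lambda$ is known to be even $5$-modular with $\min=6$, the theta-series identification and the appeal to uniqueness are routine and follow \cite[Theorem~8.1]{BachocVenkov} line by line.
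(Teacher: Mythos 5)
Your proposal is correct in outline and takes essentially the same route as the paper: the paper's entire proof of this lemma is the remark that the proof of \cite[Theorem~8.1]{BachocVenkov} ``applies almost literally'', and that argument is exactly what you reconstruct --- rescale to $\min(\Lambda)=6$, $\min(\Lambda^*)=6/5$, show $\Lambda$ is even of level $5$ with $\det(\Lambda)=5^8$, and identify $\Lambda$ with the unique extremal even $5$-modular lattice $N_{16}$ through the extremality of its theta series. The obstacle you single out (forcing the even $5$-modular genus, with the determinant bound alone leaving $5^7,5^8,5^9$ open) is precisely the content supplied by the Bachoc--Venkov argument, so there is no real divergence between your plan and the paper's.
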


\section{The case $r(\Lambda)=9$}\label{case9}
Let $\Lambda $ be some dual strongly perfect lattice   so that 
$\Lambda \leq \RR ^{16}, \min(\Lambda ) \min (\Lambda ^* ) = 9 .$
Rescale the situation so that  
$$m:=\min (\Lambda )= 3/2 \mbox{ and } r:= \min (\Lambda ^*) = 6$$
and put $\Gamma := \Lambda ^*$. 
Then there are 
$a,b \in \{2,\ldots , 28 \} $ such that 
$$s:= | \Min (\Lambda ) | /2 = 2^7  a \mbox{ and } t:= |\Min(\Gamma  )|  = 2^7  b .$$
Then for all $\gamma , \gamma '  \in \Gamma $ the following numbers are integers: 
\begin{align*}
	(D4)(\gamma ):&\ 3a (\gamma , \gamma )^2,\\
	(D22)(\gamma,\gamma') :&\  a(2(\gamma,\gamma ')^2 + (\gamma,\gamma ) (\gamma' ,\gamma ') , \\
	\frac{1}{12}(D4-D2)(\gamma ):&\ \frac{a}{4} (\gamma ,\gamma ) ((\gamma ,\gamma ) -4 ) ,\\
	\frac{1}{6}(D13-D11)(\gamma',\gamma) :&\ \frac{a}{2} (\gamma ,\gamma ' ) ((\gamma ,\gamma ) -4 ) . 
\end{align*}

\begin{lemma}\label{N3}
If there is $\alpha \in \Min (\Gamma )$ and $x\in \Min (\Lambda )$ such that 
$(\alpha , x ) = 3$, then $\alpha = 2x $, $N_3(\alpha ) = \{ x\} $, 
$N_2(\alpha ) = \emptyset $ and $a=2$.
\end{lemma}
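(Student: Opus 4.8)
The plan is to pin down $\alpha$ via Cauchy--Schwarz, rule out $N_2(\alpha)$ with a short-vector argument, and then read off $a=2$ from the design identity \eqref{zwei}.

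First I would record that $(\alpha,\alpha)=6$ and $(x,x)=3/2$, so $(\alpha,x)^2\le(\alpha,\alpha)(x,x)=9$ with equality exactly when $\alpha$ and $x$ are parallel. The hypothesis $(\alpha,x)=3$ saturates this bound and forces $\alpha=\lambda x$; solving $3=\lambda(x,x)=\tfrac32\lambda$ gives $\lambda=2$, i.e. $\alpha=2x$. Running the same equality case for an arbitrary $y\in N_3(\alpha)$ shows $y$ is a scalar multiple of $x$ with $(y,\alpha)=3$, which pins $y=x$; hence $N_3(\alpha)=\{x\}$. In particular $\ell=\max\{(y,\alpha):y\in\Min(\Lambda)\}=3$, and since $\alpha\in\Lambda^*$ each $(y,\alpha)$ with $y\in\Min(\Lambda)$ is an integer in $\{0,\pm1,\pm2,\pm3\}$, so the hypotheses of Lemma~\ref{linkombext} hold with $\ell=3$.

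The decisive step is to show $N_2(\alpha)=\emptyset$. Suppose $y\in N_2(\alpha)$; then $(y,\alpha)=2$ together with $\alpha=2x$ gives $(y,x)=1$. The vector $y-x$ lies in $\Lambda$ and is nonzero (otherwise $(y,\alpha)=(x,\alpha)=3\ne2$), yet $(y-x,y-x)=\tfrac32-2\cdot1+\tfrac32=1<\tfrac32=\min(\Lambda)$, contradicting minimality. So $N_2(\alpha)=\emptyset$. I expect this to be the actual content of the lemma: everything else is forced by Cauchy--Schwarz and counting, but one has to notice that the difference of the two minimal vectors is too short.

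Finally I would invoke Lemma~\ref{linkombext}. Matching the displayed evaluation $\tfrac16(D13-D11)(\beta,\alpha)=\tfrac a2(\alpha,\beta)((\alpha,\alpha)-4)$ against the relation $\tfrac16(D13-D11)(\beta,\alpha)=c(\alpha,\beta)$ of \eqref{D13-D11}, and using $(\alpha,\alpha)=6$, gives $c=a$. Then \eqref{zwei} reads $2|N_2(\alpha)|+12|N_3(\alpha)|=c(\alpha,\alpha)=6a$; substituting $|N_2(\alpha)|=0$ and $|N_3(\alpha)|=1$ yields $12=6a$, hence $a=2$. (Equivalently \eqref{eins} gives $4x=c\alpha=2cx$, so $c=2=a$.) This establishes all four assertions, and the only genuinely non-mechanical input is the short-vector observation in the middle paragraph.
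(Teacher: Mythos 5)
Your proposal is correct and follows essentially the same route as the paper: Cauchy--Schwarz equality forces $\alpha=2x$ and $N_3(\alpha)=\{x\}$, the short-vector computation $(x-y,x-y)=1<3/2$ kills $N_2(\alpha)$, and a design identity gives $a=2$. Your final step via $c=a$ and Equation \eqref{zwei} of Lemma~\ref{linkombext} is just a repackaging (up to a factor of $2$) of the paper's direct evaluation $\frac{1}{12}(D4-D2)(\alpha)=6|N_3(\alpha)|+|N_2(\alpha)|=6=3a$.
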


\begin{proof}
Clearly $\alpha = 2x$, so $x$ is uniquely determined by $\alpha $.
Assume that there is $y\in \Min (\Lambda )$ with $(y,\alpha ) = 2$. 
Then $(y,x) = \frac{1}{2} (y,\alpha ) = 1$ and $x-y \in \Lambda $ has norm
$(x-y,x-y) = 3-2=1 < 3/2$ a contradiction
to the fact that $\min (\Lambda ) = 3/2$. 
Therefore $N_2(\alpha ) = \emptyset $, $|N_3(\alpha ) | = 1$ and hence 
$$\frac{1}{12}(D4-D2)(\alpha ) = 6 = \frac{12 a}{4}  $$ 
implying $a=2$.
\end{proof}

\begin{lemma}\label{bounda}
Assume that $N_3(\alpha ) = \emptyset $. 
Then $a\leq 19$. 
\end{lemma}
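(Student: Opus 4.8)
The plan is to use the hypothesis $N_3(\alpha) = \emptyset$ to place ourselves in the situation of Corollary~\ref{linkomb}, which determines $|N_2(\alpha)|$ exactly as a function of $a$, and then to bound this cardinality from above with the spherical-code estimate of Lemma~\ref{SphBndN_2}. Comparing the two then yields the desired inequality on $a$.

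First I would fix $\alpha \in \Min(\Gamma) = \Min(\Lambda^*)$. For every $x \in \Min(\Lambda)$ the inner product $(x,\alpha)$ is an integer (as $x\in\Lambda$ and $\alpha\in\Lambda^*$), and Cauchy--Schwarz gives $|(x,\alpha)| \le \sqrt{(x,x)(\alpha,\alpha)} = \sqrt{(3/2)\cdot 6} = 3$. The assumption $N_3(\alpha) = \emptyset$, together with $N_{-3}(\alpha) = -N_3(\alpha) = \emptyset$, therefore forces $(x,\alpha) \in \{0,\pm 1,\pm 2\}$ for all $x \in \Min(\Lambda)$, which is exactly the hypothesis of Corollary~\ref{linkomb}. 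Evaluating the constant
$$c = \frac{sm}{6n}\left(\frac{3m}{n+2}(\alpha,\alpha)-1\right)$$
with $m = 3/2$, $n = 16$, $(\alpha,\alpha) = 6$ and $s = 2^7 a$, I would compute $\frac{3m}{n+2}(\alpha,\alpha) = \frac{3\cdot 3/2}{18}\cdot 6 = 3/2$, so the bracket equals $1/2$, while $\frac{sm}{6n} = \frac{2^7 a \cdot 3/2}{96} = 2a$; hence $c = a$. Corollary~\ref{linkomb} then gives the exact count
$$|N_2(\alpha)| = \frac{c(\alpha,\alpha)}{2} = \frac{a\cdot 6}{2} = 3a.$$

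Finally I would invoke Lemma~\ref{SphBndN_2} with $r = r(\Lambda) = 9 \ge 8$ and $n = 16$. Its spherical-code parameter is $(r-8)/(2r-8) = 1/10$, which satisfies $1/10 < 1/\sqrt{15} = (n-1)^{-1/2}$, so the lemma applies and yields
$$|N_2(\alpha)| \le \frac{(n-1)(1-\tfrac1{10})(2+\tfrac{n}{10})}{1-(n-1)\tfrac1{100}} = \frac{15\cdot (9/10)\cdot (18/5)}{17/20} = \frac{972}{17} < 58.$$
Combining this with the identity $|N_2(\alpha)| = 3a$ forces $3a \le 57$, that is $a \le 19$, as claimed.

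I expect no serious obstacle: once Corollary~\ref{linkomb} and Lemma~\ref{SphBndN_2} are lined up, the argument is a short computation. The only delicate point is that the spherical-code bound is just barely strong enough — it returns $972/17 \approx 57.18$, which lies strictly between $57 = 3\cdot 19$ and $60 = 3\cdot 20$, so it excludes $a = 20$ but permits $a = 19$. I would therefore double-check the arithmetic in the evaluation of the bound of Lemma~\ref{SphBndN_2} carefully to be certain the estimate indeed rules out $a \ge 20$.
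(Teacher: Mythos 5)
Your proof is correct and takes essentially the same route as the paper: both establish the exact count $|N_2(\alpha)|=3a$ and then bound it by the spherical-code estimate $972/17\approx 57.18$ for a $[-1,1/10]$-code in $\RR^{15}$, forcing $3a\le 57$. The only cosmetic difference is that you invoke the packaged Lemma~\ref{SphBndN_2}, whereas the paper redoes the same construction inline (via $\overline{x}=x-\alpha/3$, normalized to $S^{14}$) and cites Lemma~\ref{SphCdeUppBnd} directly.
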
 

\begin{proof}
Then $|N_2(\alpha ) | = 3a$ and the set $\overline{N_2(\alpha )} := \{ \overline{x}:= x-\alpha /3 \mid 
x\in N_2(\alpha ) \} \subseteq \alpha ^{\perp } \cong \RR ^{15}$ 
satisfies 
$$(\overline{x} , \overline{x'} ) = (x-\alpha /3 , x'-\alpha /3) = (x,x' ) -2/3  \left\{
\begin{array}{ll} = 5/6  & x=x'\\ \leq  1/12 & x\neq x' \end{array} \right. $$ 
	so $\sqrt{6/5} \overline{N_2(\alpha )}  $ is a $[-1,1/10]$-spherical code in $S^{14}$.
By Lemma~\ref{SphCdeUppBnd} the cardinality
of such a code is upper bounded by $57=3\cdot 19$. 
\end{proof}

\begin{lemma}\label{9squarefree}
	If $a$ is squarefree then $\Gamma $ is an even lattice of level involving only
	the primes 2 and 3.
\end{lemma}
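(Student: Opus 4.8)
The plan is to prove the two assertions separately: that $\Gamma=\Lambda^{*}$ is even, and that its even level is a product of powers of $2$ and $3$.

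\emph{Evenness.} For $\gamma\in\Gamma$ I would use the integrality of $\tfrac{1}{12}(D4-D2)(\gamma)=\tfrac{a}{4}(\gamma,\gamma)\big((\gamma,\gamma)-4\big)$ recorded at the start of this section (a special case of \eqref{gleichungen}). Writing $(\gamma,\gamma)=p/q$ in lowest terms, the integer $p(p-4q)$ is coprime to $q^{2}$, so $q^{2}\mid 4a$. As $a$ is squarefree, no odd prime divides $q$, and $v_{2}(4a)\le 3$ forces $q\in\{1,2\}$; the case $q=2$ would make $a\,p(p-8)/16$ an integer with $p(p-8)$ odd, hence $16\mid a$, which is impossible. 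Thus $(\gamma,\gamma)\in\ZZ$, and then $\tfrac{a}{4}p(p-4)\in\ZZ$ with $p(p-4)$ odd for odd $p$ again forces $4\mid a$; therefore $p$ is even and $\Gamma$ is even.

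\emph{Level.} Since $\Gamma$ is even, its even level $\ell$ is the least positive integer with $\ell(x,x)\in 2\ZZ$ for all $x\in\Gamma^{*}=\Lambda$, so it suffices to bound the denominators of the norms $(x,x)$, $x\in\Lambda$. Here I would exploit that $\Gamma$ is \emph{also} strongly perfect: applying $\tfrac{1}{12}(D4-D2)$ of \eqref{gleichungen} for $\Gamma$ (which has $\min(\Gamma)=6$) to a vector $x\in\Gamma^{*}$ gives $C\,(x,x)\big((x,x)-1\big)\in\ZZ$, where $C=s(\Gamma)/32\in\{2b,4b\}$ is an integer. Writing $(x,x)=p/q$ with $p(p-q)$ coprime to $q^{2}$ yields $q^{2}\mid C$. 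Because $b\le 28$ we have $C\le 112$, so any prime $\wp\ge 5$ dividing $q$ must satisfy $\wp^{2}\mid C$; this is impossible for $\wp\ge 7$, and $\wp=5$ forces $b=25$. Apart from this single case every denominator $q$ is a $\{2,3\}$-number, so $\ell=2^{i}3^{j}$ and the level involves only $2$ and $3$.

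The main obstacle is exactly this borderline case $b=25$, where the crude estimate $q^{2}\mid C$ still permits a factor $5$ in a norm denominator. I would dispatch it separately: a vector $x\in\Lambda$ whose norm has denominator $5$ contributes a nontrivial $5$-part to the discriminant form of the even lattice $\Gamma$, so $5\mid\det(\Gamma)$; playing this against the two-sided estimate of Lemma~\ref{DeterminantBound} (with $\min(\Gamma)=6$, $\min(\Gamma^{*})=3/2$ and $\gamma_{16}\le 3.027$) together with Lemma~\ref{IsLevelIntegral}, or else a Gram-matrix feasibility analysis of the minimal vectors as in the proof of Lemma~\ref{case7}, should eliminate $b=25$. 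Once this residual case is ruled out, the two parts combine to show that $\Gamma$ is even of level $2^{i}3^{j}$, as claimed.
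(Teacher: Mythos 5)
Your evenness argument is correct and is exactly the paper's: from $\tfrac{a}{4}(\gamma,\gamma)((\gamma,\gamma)-4)\in\ZZ$ one gets $q^{2}\mid 4a$ for the denominator $q$ of $(\gamma,\gamma)$, squarefreeness of $a$ forces $q\in\{1,2\}$, and the residual cases $q=2$ and $q=1$ with odd numerator are killed because they would force $16\mid a$ resp.\ $4\mid a$. Your treatment of the level also starts with the paper's mechanism: since $\Gamma$ is itself strongly perfect with $\min(\Gamma)=6$ and half kissing number $2^{7}b$, applying $\tfrac{1}{12}(D4-D2)$ for $\Gamma$ to $x\in\Gamma^{*}=\Lambda$ gives (up to the harmless factor $2$ ambiguity you note) $4b\,(x,x)\bigl((x,x)-1\bigr)\in\ZZ$, hence $q^{2}\mid 4b$ for every norm denominator $q$ in $\Lambda$.

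The case $b=25$ that you flag is, however, a genuine gap, and none of your proposed escapes closes it. The observation that a norm denominator $5$ forces $5\mid\det(\Gamma)$, combined with Lemma~\ref{DeterminantBound}, only says that $\det(\Gamma)$ is an integer of bounded size divisible by $5$ --- no contradiction; Lemma~\ref{IsLevelIntegral} has nothing to act on, since you have not exhibited two rescalings whose determinants multiply to a non-integer; and a ``Gram-matrix feasibility analysis'' is a hope, not an argument. What actually rules this case out is already available in this very section: the two lemmas immediately preceding this one, applied with the roles of $\Lambda$ and $\Gamma$ interchanged. Either there exist $x\in\Min(\Lambda)$ and $\alpha\in\Min(\Gamma)$ with $(x,\alpha)=3$, in which case the analogue of Lemma~\ref{N3} (computing $\tfrac{1}{12}(D4-D2)(x)=3b=6$) gives $b=2$; or $N_{3}(x)=\emptyset$ for every $x\in\Min(\Lambda)$, in which case the analogue of Lemma~\ref{bounda} gives $|N_{2}(x)|=3b\le 57$, i.e.\ $b\le 19$. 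In all cases $b\le 19<25$, so no prime $p\ge 5$ satisfies $p^{2}\mid 4b$, every denominator is a $\{2,3\}$-number, and the even level of $\Gamma$ involves only $2$ and $3$. This bound on $b$ is what the paper's terse ``go through the possibilities for $b$'' relies on; without it your argument cannot be completed.
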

\begin{proof}
	$\frac{1}{12}(D4-D2)$ shows that $(\gamma ,\gamma ) \in 2\ZZ $ for all $\gamma \in \Gamma $.
	For the level, need to go through the possibilities for $b$.
	But $p^2$ does not divide $b$ for $p\geq 5$ so this is easy.
\end{proof}

\begin{corollary}
  $a$ is not squarefree.
\end{corollary}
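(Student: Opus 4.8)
The plan is to argue by contradiction: assume $a$ is squarefree and show that $\Gamma=\Lambda^*$ would then fall under the classification of Lemma~\ref{minge3/2}, yet none of the three lattices produced there has the Berg\'e-Martinet invariant forced by $r(\Lambda)=9$.

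First I would feed the squarefreeness of $a$ into Lemma~\ref{9squarefree}, which gives at once that $\Gamma$ is an even lattice whose even level is divisible only by the primes $2$ and $3$. Since the primes dividing $\det(\Gamma)=|\Gamma^*/\Gamma|$ are exactly those dividing the level, this forces $\det(\Gamma)=2^c3^d$ for some nonnegative integers $c,d$. Next I would collect the remaining hypotheses needed to invoke Lemma~\ref{minge3/2} with $\Gamma$ in the role of $\Lambda$: the lattice $\Gamma=\Lambda^*$ is strongly perfect because $\Lambda$ is dual strongly perfect, and in the current scaling $\min(\Gamma^*)=\min(\Lambda)=3/2\ge 3/2$. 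All hypotheses of Lemma~\ref{minge3/2} are thus satisfied, and $\Gamma$ must be similar to one of $\Lambda_{16}$, $\Gamma_{16}$, or $O_{16}^*$.

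The contradiction then comes from the Berg\'e-Martinet invariant, which is a similarity invariant. Reading off the table in Theorem~\ref{main}, these three candidates have $r=m\cdot d\in\{8,6,6\}$, whereas $r(\Gamma)=r(\Lambda^*)=r(\Lambda)=9$. Since $9\notin\{6,8\}$, no such similarity can exist, so the assumption that $a$ is squarefree is untenable. I expect essentially no serious obstacle: the argument is a direct chaining of Lemma~\ref{9squarefree} and Lemma~\ref{minge3/2}, and the only points needing (trivial) care are the translation from a level supported on $\{2,3\}$ to $\det(\Gamma)=2^c3^d$, so that the hypothesis of Lemma~\ref{minge3/2} applies verbatim, and the bookkeeping that none of the three classified lattices realizes $r=9$.
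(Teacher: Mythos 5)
Your proposal is correct and is essentially identical to the paper's own proof: both chain Lemma~\ref{9squarefree} (evenness and level supported on $\{2,3\}$, hence $\det(\Gamma)=2^c3^d$) with Lemma~\ref{minge3/2} applied to $\Gamma=\Lambda^*$ (using $\min(\Gamma^*)=3/2$), and then rule out $\Lambda_{16}$, $\Gamma_{16}$, $O_{16}^*$ because none has Berg\'e--Martinet invariant $9$. The only cosmetic difference is that you spell out the passage from the level to the determinant and the values $r\in\{8,6,6\}$, which the paper leaves implicit.
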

\begin{proof}
  By Lemma~\ref{9squarefree} $\det(\Gamma)=2^a3^b$ for some nonnegative integers $a,b$
  and $\min(\Gamma^*)=3/2$. So by Lemma~\ref{minge3/2} $\Gamma$ is isomorphic to one of 
  $\Lambda_{16},\Gamma_{16},$ or $ O^*_{16}$, but none of them has Berge-Martinet invariant equal to 9.
  This concludes our proof.
\end{proof}

So we are left with the cases $a\in \{4,8,9,12,16,18 \} $. By symmetry we also conclude that 
$b\in \{ 4,8,9,12,16,18 \}$. By the modular form approach we can prove that

\begin{lemma}
  There is no dual strongly perfect lattice $\Lambda\subset\RR^{16}$ with $r(\Lambda)=9$ and
  $s(\Lambda)=2^7a$ and $s(\Lambda^*)=2^7b$ for some $a,b\in\{4,8,9,12,16,18 \}.$
\end{lemma}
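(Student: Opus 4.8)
The plan is to run, for each admissible pair $(a,b)$, the modular form feasibility test of Section~\ref{ModForm}, exactly as in the worked cases of Lemmas~\ref{96div11} and \ref{64div7}. By the preceding corollary we may assume $a,b\in\{4,8,9,12,16,18\}$, and since the roles of $\Lambda$ and $\Lambda^*=\Gamma$ are symmetric we may take $a\le b$; this leaves only a short finite list of pairs to treat.

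First I would fix, for each $a$, the scaling in which $\Lambda$ becomes even. Evaluating $(D4)(\alpha)$ and $\frac{1}{12}(D4-D2)(\alpha)$ at $\alpha\in\Lambda^*$ after rescaling to $\min(\Lambda)=1$ gives $\frac{4a}{3}(\alpha,\alpha)^2\in\ZZ$ and $\frac{a}{9}(\alpha,\alpha)\left((\alpha,\alpha)-6\right)\in\ZZ$; writing $(\alpha,\alpha)=p/q$ with $\gcd(p,q)=1$, the possible denominators $q$ and the forced divisibilities of $p$ can be read off for each $a$. This pins down a rational $c$ with $\sqrt{c}\,\Lambda^*$ even, and symmetrically a rescaling making $\Lambda$ itself even; as in the proof of Lemma~\ref{9squarefree}, the resulting even level $\ell$ is a product of powers of $2$ and $3$ and is determined explicitly in each case. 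In this even scaling $\vartheta_\Lambda=1+2sq^{m'}+\dots$ and $\vartheta_{\sqrt{\ell}\Lambda^*}=1+2s'q^{m}+\dots$ as in \eqref{equ:thetaseriees}, with $s=2^7a$ and $s'=2^7b$.

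Next I would enumerate the candidate genera. By Lemma~\ref{DeterminantBound} the determinant $\det(\Lambda)$ (in the even scaling) lies in a short explicit list of integers of the form $2^{\ast}3^{\ast}$; for each such determinant the $p$-adic genus symbols for $p\in\{2,3\}$ give only finitely many even genera of level dividing $\ell$, all computable from \cite[Chapter~15]{SPLAG}. For each candidate genus I would compute the genus theta series $E(\tau)$ as an Eisenstein series via the local density formulas of Yang~\cite{Yang1998}, compute a basis $\{B_i\}$ of the cusp space $\mathcal{S}_{8}(\Gamma_0(\ell),\chi_{\Lambda})$ in {\sc Magma}, and apply the Atkin--Lehner operator $W_\ell$ using Proposition~\ref{AtkinLehner}. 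The prescribed initial coefficients of $\vartheta_\Lambda$ and of its image $\vartheta_\Lambda\mid_8 W_\ell=\text{const}\cdot\vartheta_{\sqrt{\ell}\Lambda^*}$, together with nonnegativity of all remaining Fourier coefficients on both sides, give the linear system \eqref{LP} in the unknowns $c_i$; feasibility is then decided with lrs~\cite{lrs}, using the coefficients up to degree $100$. The claim is that in every case the system is infeasible, whence no such $\Lambda$ exists.

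The main obstacle I anticipate is not a single deep step but the combination of two bookkeeping tasks: (i) determining the even level and the \emph{complete} list of genera correctly for each of the several $(a,b)$ pairs, since a mis-identified denominator $q$ or a missing genus would leave a gap; and (ii) the size of the cusp spaces at level $\ell$ (a $2,3$-number) and weight $8$, which can be moderately large, so that the feasibility problems \eqref{LP} are genuine finite computations rather than one-line contradictions. The most delicate pairs will be those that survive the crude kissing-number and Lemma~\ref{N_2N_2Dual} constraints and are killed only by a coefficient inequality far out in the $q$-expansion; for these it is the nonnegativity of the higher Fourier coefficients, not just the forced leading terms, that produces the contradiction, so I would carry both $q$-expansions to sufficiently high degree before concluding infeasibility.
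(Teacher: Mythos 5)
Your proposal is correct and takes essentially the same route as the paper: for exactly these residual cases $a,b\in\{4,8,9,12,16,18\}$ the paper's entire proof is an appeal to ``the modular form approach'' of Section~\ref{ModForm}, i.e.\ the same pipeline you describe (fix the even rescaling and level from $(D4)$ and $\frac{1}{12}(D4-D2)$, bound the determinant by Lemma~\ref{DeterminantBound}, enumerate the even genera of $2,3$-power determinant, compute the Eisenstein part from local densities and a basis of $\mathcal{S}_{8}(\Gamma_0(\ell),\chi_\Lambda)$, impose the Atkin--Lehner-linked conditions \eqref{LP}, and certify infeasibility with lrs). Your observation that the contradiction may come from nonnegativity of high-order Fourier coefficients, not just the forced leading terms, is consistent with the paper's practice of carrying the $q$-expansions to degree $100$.
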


In summary we have the following.
\begin{theorem} \label{r9}
  There is no dual strongly perfect lattice $\Lambda\subset\RR^{16}$ with $r(\Lambda)=9$.
\end{theorem}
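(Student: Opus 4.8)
The plan is to assemble the lemmas of this section into a finite case analysis on the pair $(a,b)$ and then eliminate every surviving case by the modular form technique of Section~\ref{ModForm}. Throughout I keep the normalization fixed before Lemma~\ref{N3}: $m=\min(\Lambda)=3/2$, $\min(\Lambda^*)=6$, $\Gamma:=\Lambda^*$, and $s=2^7a$, $t=2^7b$ with $a,b\in\{2,\ldots,28\}$. Since $(\alpha,x)\in\ZZ$ for $\alpha\in\Gamma$, $x\in\Lambda$, and $|(\alpha,x)|\le\sqrt{6\cdot 3/2}=3$ by Cauchy--Schwarz, every minimal vector $x$ satisfies $(\alpha,x)\in\{0,\pm1,\pm2,\pm3\}$; thus only the sets $N_2(\alpha)$ and $N_3(\alpha)$ carry geometric information, which is precisely what Lemmas~\ref{N3} and~\ref{bounda} control.

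First I would show that $N_3(\alpha)=\emptyset$ for every $\alpha\in\Min(\Gamma)$. By Lemma~\ref{N3}, the existence of a single pair with $(\alpha,x)=3$ forces $a=2$; but $2$ is squarefree, contradicting the corollary that $a$ is not squarefree. Running the same argument with the roles of $\Lambda$ and $\Gamma$ interchanged rules out the value $3$ on the dual side as well. With $N_3(\alpha)=\emptyset$ in hand, Lemma~\ref{bounda} yields $a\le 19$, and by symmetry $b\le 19$.

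Next I would combine the two restrictions. The corollary gives that neither $a$ nor $b$ is squarefree, and we now know $a,b\le 19$. The non-squarefree integers in $\{2,\ldots,19\}$ are exactly $4,8,9,12,16,18$, so $a,b\in\{4,8,9,12,16,18\}$, which is precisely the list recorded just before the theorem. It then remains to exclude every such pair $(a,b)$, and this is exactly the content of the final (modular form) lemma of this section; the theorem follows at once.

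The main obstacle is that last exclusion step. For each admissible $(a,b)$ one rescales $\Gamma$ to be even of controlled even level dividing a power of $6$, reads off the finitely many possible genera from the determinants permitted by Lemma~\ref{DeterminantBound}, decomposes $\vartheta_{\Lambda}=E+C$ into a genus Eisenstein series $E$ and a cusp form $C$, imposes the prescribed vanishing and positivity of the Fourier coefficients of $\vartheta_{\Lambda}$ together with those of its Atkin--Lehner image via Proposition~\ref{AtkinLehner}, and checks with \emph{lrs} that the linear system~\eqref{LP} is infeasible. The delicate points are pinning the even level and determinant tightly enough that only finitely many genera survive, and then carrying the Eisenstein/cusp split and the feasibility certificate through each case; this is computational rather than conceptual, but it is where essentially all of the work sits.
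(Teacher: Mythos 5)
Your proposal is correct and follows essentially the same route as the paper: the corollary on non-squarefreeness of $a$ rules out $a=2$, hence $N_3(\alpha)=\emptyset$ by Lemma~\ref{N3}, Lemma~\ref{bounda} then gives $a\le 19$ (and $b\le 19$ by symmetry), leaving exactly $a,b\in\{4,8,9,12,16,18\}$, which the section's final modular-form lemma eliminates. This matches the paper's own assembly of the section's lemmas, including the concluding computational step.
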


\section{The case $r(\Lambda)=8$}\label{case8}
Throughout this section we assume that $\Lambda $ is a dual strongly perfect lattice of 
dimension $16$ with $r(\Lambda)= 8.$
Rescale $\Lambda$ so that $\min (\Lambda )= 2 \mbox{ and } \min(\Lambda ^*) = 4$.
Put $\Gamma :=\Lambda ^*$. By Theorem~\ref{TableOfValues} there are 
$a,b \in \{2,\ldots , 30 \} $ such that 
$$s:= | \Min (\Lambda ) | /2 = 2^3 3^2 a \mbox{ and } t:= |\Min(\Gamma  )| /2 = 2^3 3^2 b .$$
Then for all $\gamma , \gamma '  \in \Gamma $ the following numbers are integers: 
\begin{align*}
	(D4)(\gamma ):\ &3a (\gamma , \gamma )^2,\\
	(D22)(\gamma,\gamma') :\  &a(2(\gamma,\gamma ')^2 + (\gamma,\gamma ) (\gamma' ,\gamma ')), \\
	\frac{1}{12}(D4-D2)(\gamma ) :\ &\frac{a}{4} (\gamma ,\gamma ) ((\gamma ,\gamma ) -3 ) ,\\
	\frac{1}{6}(D13-D11)(\gamma',\gamma):\ &\frac{a}{2} (\gamma ,\gamma ' ) ((\gamma ,\gamma ) -3 ). 
\end{align*}

\begin{lemma}\label{squarefree}
If $a$ is squarefree then $(\gamma,\gamma ) \in \ZZ $  for all $\gamma \in \Gamma $ and 
$$\Gamma ^{(e)} : = \{ \gamma \in \Gamma \mid (\gamma , \gamma ) \in 2\ZZ \} \subset \Gamma ^* \cap \Gamma $$
is a sublattice of $\Gamma$ with $|\Gamma:\Gamma^{(e)}|\in\{1,2,4\} $. 
\end{lemma}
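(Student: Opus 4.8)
The plan is to read the statement off the four integrality conditions listed just above the lemma, extracting $2$-adic (and general $p$-adic) information from the hypothesis that $a$ is squarefree, so in particular $4\nmid a$. Since strongly perfect lattices are similar to rational lattices, every inner product on $\Gamma$ is rational, which is what lets me argue via denominators.

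First I would establish integrality of the norms. Fix $\gamma\in\Gamma$ and write $(\gamma,\gamma)=p/q$ with $\gcd(p,q)=1$. Because $\gcd\bigl(p(p-3q),q^2\bigr)=1$, the condition $3a(\gamma,\gamma)^2\in\ZZ$ forces $q^2\mid 3a$, while $\frac{a}{4}(\gamma,\gamma)\bigl((\gamma,\gamma)-3\bigr)\in\ZZ$ forces $q^2\mid 4a$. Hence $q^2\mid\gcd(3a,4a)=a$, and as $a$ is squarefree this gives $q=1$, i.e. $(\gamma,\gamma)\in\ZZ$ for all $\gamma$. Applying this to $\gamma+\gamma'$ and subtracting then yields $(\gamma,\gamma')\in\tfrac12\ZZ$ for all $\gamma,\gamma'\in\Gamma$.

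Next I would use $\frac{1}{6}(D13-D11)$ to pin down the off-diagonal pairings. Suppose $\gamma\in\Gamma^{(e)}$, so $(\gamma,\gamma)$ is even and $(\gamma,\gamma)-3$ is odd. If some $\gamma'\in\Gamma$ had $(\gamma,\gamma')=m/2$ with $m$ odd, then $\frac{a}{2}(\gamma,\gamma')\bigl((\gamma,\gamma)-3\bigr)=\frac{a\,m\,((\gamma,\gamma)-3)}{4}$ could be an integer only if $4\mid a$, contradicting squarefreeness. Hence $(\gamma,\gamma')\in\ZZ$ for every $\gamma'\in\Gamma$, that is $\gamma\in\Gamma^*$; combined with $\gamma\in\Gamma$ this is exactly the inclusion $\Gamma^{(e)}\subseteq\Gamma^*\cap\Gamma$. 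Running the same $2$-adic computation for an arbitrary $\gamma\notin\Gamma^*\cap\Gamma$ shows conversely that $(\gamma,\gamma)$ must then be odd, a fact I would set aside for the index bound.

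With these two facts the remaining assertions are formal. For $\gamma,\gamma'\in\Gamma^{(e)}$ one has $(\gamma-\gamma',\gamma-\gamma')=(\gamma,\gamma)+(\gamma',\gamma')-2(\gamma,\gamma')\in 2\ZZ$, since $(\gamma,\gamma')\in\ZZ$ by the previous step; so $\Gamma^{(e)}$ is a subgroup, and as $2\Gamma\subseteq\Gamma^{(e)}$ it is a sublattice with $\Gamma/\Gamma^{(e)}$ elementary abelian of exponent $2$. To bound its order I would pass to $V:=\Gamma/2\Gamma$ and the $\FF_2$-quadratic form $\bar Q(\gamma)=(\gamma,\gamma)\bmod 2$, whose polar form $b(\gamma,\gamma')=2(\gamma,\gamma')\bmod 2$ is alternating with radical $(\Gamma^*\cap\Gamma)/2\Gamma$. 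The last computation of the previous paragraph says $\bar Q\equiv 1$ off this radical, which first forces $\bar Q$ to vanish on the radical (otherwise one finds $u,v$ outside it with $u+v$ outside it yet $\bar Q(u+v)=0$) and then makes the induced form on $V/\mathrm{rad}(b)$ anisotropic; over $\FF_2$ an anisotropic form with alternating polar form has dimension at most $2$, so $[\Gamma:\Gamma^{(e)}]=|V/\mathrm{rad}(b)|\le 4$, whence the index lies in $\{1,2,4\}$. Alternatively this final step is precisely the general statement \cite[Lemma 2.8]{dim12}. The only genuinely delicate point is the $2$-adic bookkeeping of the third paragraph; everything else follows routinely, and I expect the anisotropy argument to be the part most worth stating carefully, though it is standard.
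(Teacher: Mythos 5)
Your second and third paragraphs are correct and are essentially the paper's own proof: the paper deduces $(\gamma,\gamma)\in\ZZ$ from $\frac{1}{12}(D4-D2)$ alone (your additional use of $(D4)$ is harmless), and then obtains $(\gamma,\gamma')\in\ZZ$ for $\gamma\in\Gamma^{(e)}$ from $\frac{1}{6}(D13-D11)$ by exactly your parity count, squarefreeness of $a$ excluding the needed factor $4$ in the denominator. The paper stops there: the index bound $|\Gamma:\Gamma^{(e)}|\in\{1,2,4\}$ is not argued in its proof but treated as known (elsewhere in the paper this statement is cited as \cite[Lemma 2.8]{dim12}), which is also your declared fallback, so your overall proof does establish the lemma.

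Your explicit argument for the index bound, however, has a gap in one case. The implication ``$\bar Q\equiv 1$ off the radical forces $\bar Q$ to vanish on the radical'' needs an element \emph{outside} the radical to run the contradiction; if $\Gamma$ is integral, then $R:=\mathrm{rad}(b)=(\Gamma^*\cap\Gamma)/2\Gamma$ is all of $V=\Gamma/2\Gamma$, the hypothesis is vacuously true, and $\bar Q$ need not vanish on $R$ (for $\ZZ^{16}$ one has $\bar Q(\bar x)=\sum_i x_i \not\equiv 0$ on $V=R$). In that situation your identity $[\Gamma:\Gamma^{(e)}]=|V/\mathrm{rad}(b)|$ is false: the right-hand side is $1$, while the index equals $2$ whenever the integral lattice $\Gamma$ has vectors of odd norm. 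Nothing at this stage of the section rules out an integral $\Gamma$, so this case must be covered. The repair is one line: $b$ vanishes on $R\times R$, so $\bar Q|_R$ is additive, hence a linear functional $R\to\FF_2$; either it is zero, and your anisotropy argument applies verbatim to give $[\Gamma:\Gamma^{(e)}]=[V:R]\in\{1,4\}$, or it is nonzero, in which case your own contradiction argument shows $V=R$, and then $[\Gamma:\Gamma^{(e)}]=[R:\ker(\bar Q|_R)]=2$. With that case added, your anisotropy proof of the index bound is complete and gives exactly $\{1,2,4\}$.
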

\begin{proof}
	$\frac{1}{12}(D4-D2)(\gamma)$ shows that $(\gamma ,\gamma ) \in \ZZ $ for all $\gamma \in \Gamma $.
If $(\gamma ,\gamma ) \in 2\ZZ $, then $\frac{1}{6}(D13-D11)$ 
implies that $(\gamma, \gamma ' ) \in \ZZ $ for all $\gamma ' \in \Gamma $, so 
$\Gamma ^{(e)} \subset \Gamma ^* \cap \Gamma $ is a sublattice of $\Gamma $.
\end{proof}

\begin{lemma}\label{odd}
If $a$ is odd then $a\in \{ 9,25 \} $. 
\end{lemma}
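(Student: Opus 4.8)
The plan is to split the odd values $a\in\{3,5,\dots,29\}$ according to whether $a$ is squarefree. The squarefree odd values are all of them except $9=3^2$, $25=5^2$ and $27=3^3$, so it suffices to rule out every odd squarefree $a$ and to exclude $a=27$, leaving only the two odd squares $9$ and $25$.

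For odd squarefree $a$ I would first pin down the inner products of the minimal vectors of $\Gamma$. By Lemma~\ref{squarefree} all norms of $\Gamma$ are integers, and for $\alpha,\beta\in\Min(\Gamma)$ (so $(\alpha,\alpha)=(\beta,\beta)=4$) the relation $\frac16(D13-D11)(\alpha,\beta)$ in \eqref{D13-D11} gives $\frac a2(\alpha,\beta)\in\ZZ$, while $(D22)(\alpha,\beta)$ from \eqref{gleichungen} gives $2a(\alpha,\beta)^2\in\ZZ$. Writing $(\alpha,\beta)=2j/a$ with $j\in\ZZ$, the second relation reads $a\mid 8j^2$; since $a$ is odd this forces $a\mid j^2$, and since $a$ is squarefree, $a\mid j$, i.e. $(\alpha,\beta)\in 2\ZZ$. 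By Cauchy--Schwarz $|(\alpha,\beta)|\le 4$ with equality only for $\beta=\pm\alpha$, so for $\beta\in S(\Gamma)\setminus\{\alpha\}$ we get $(\alpha,\beta)\in\{0,\pm2\}$. The key observation is that $v^4-4v^2=0$ for $v\in\{0,\pm2\}$, so for a fixed $\alpha\in\Min(\Gamma)$ the only nonzero term of $\sum_{\beta\in S(\Gamma)}\bigl((\alpha,\beta)^4-4(\alpha,\beta)^2\bigr)$ is the self term $\beta=\alpha$, equal to $4^4-4\cdot 4^2=192$. On the other hand, evaluating $(D4)(\alpha)-4(D2)(\alpha)$ from \eqref{D4} and \eqref{gleichungen} for the strongly perfect lattice $\Gamma$ with $s(\Gamma)=t=2^33^2b$ yields $192b-288b=-96b<0$. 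As $192\neq -96b$ for $b\ge 1$, this is a contradiction, so no odd squarefree $a$ occurs. Note that this argument breaks down exactly when $a\in\{9,25,27\}$: there $a\mid j^2$ no longer implies $a\mid j$, and $(\alpha,\beta)$ may lie in $\frac23\ZZ$ (for $a=9,27$) or $\frac25\ZZ$ (for $a=25$), so the inner products need not be even.

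It then remains to exclude $a=27$, while $9$ and $25$ survive. For $a=27$ equation \eqref{D4} gives $(D4)(\gamma)=81(\gamma,\gamma)^2$ and $\frac1{12}(D4-D2)(\gamma)=\frac{27}{4}(\gamma,\gamma)((\gamma,\gamma)-3)$, forcing $(\gamma,\gamma)\in\frac13\ZZ$ and, by the computation above, $(\alpha,\beta)\in\frac23\ZZ$ on $\Min(\Gamma)$. Fixing $\alpha\in\Min(\Gamma)$ and applying Corollary~\ref{linkomb} to the minimal vectors $x\in\Min(\Lambda)$ (which satisfy $(x,\alpha)\in\{0,\pm1,\pm2\}$ since $|(x,\alpha)|\le\sqrt 8$) gives $c=s/144=27/2$, hence $|N_2(\alpha)|=27$ and $\sum_{x\in N_2(\alpha)}x=\tfrac{27}{2}\alpha$. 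The plan is to rule out this configuration by the method of Lemma~\ref{case7}: the pairwise inner products of these $27$ vectors, together with $\alpha$ and a dual minimal vector, are confined to a small explicit finite set, and one verifies by a Gram-matrix feasibility search that no positive-semidefinite matrix of rank $\le 16$ and minimum $\min(\Lambda)$ realizes them; alternatively one rescales to an even lattice and obtains a non-integral determinant contradicting Lemma~\ref{IsLevelIntegral}.

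The main obstacle is precisely the case $a=27$. Unlike the squarefree values it is not killed by the clean design identity of the second step, and unlike the perfect squares $9$ and $25$ it must be eliminated; so it requires a dedicated finite computation (a Gram-matrix feasibility check as in Lemma~\ref{case7}, or a $3$-adic determinant argument) rather than a one-line divisibility contradiction.
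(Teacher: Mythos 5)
Your argument for the squarefree odd values is correct, and it takes a genuinely different route from the paper's. The paper works with the whole lattice $\Gamma$: for $\alpha\in\Min(\Gamma)$ the relation $\frac16(D13-D11)(\gamma',\alpha)$ gives $\frac a2(\alpha,\gamma')\in\ZZ$ for all $\gamma'\in\Gamma$, and since Lemma~\ref{squarefree} makes all norms integral (so all inner products lie in $\frac12\ZZ$), oddness of $a$ forces $(\alpha,\gamma')\in2\ZZ$; hence $\frac{\alpha}{2}\in\Gamma^*=\Lambda$ is a vector of norm $1<2=\min(\Lambda)$. You instead restrict attention to $\Min(\Gamma)$, pin the mutual inner products to $\{0,\pm2,\pm4\}$ using $\frac16(D13-D11)$ together with $(D22)$, and then contradict the design identity $(D4)(\alpha)-4(D2)(\alpha)=-\tfrac{4t}{3}=-96b$ for $\Gamma$ against the termwise value $192$. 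This is sound, but note it consumes the strong perfection of $\Gamma$ (i.e.\ dual strong perfection), whereas the paper's version uses only the integrality conditions coming from $\Lambda$ and the value of $\min(\Lambda)$, and moreover yields as a by-product the statement recorded in Corollary~\ref{corodd}, which is reused immediately afterwards.

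The genuine gap is $a=27$. You correctly observe that the squarefree argument does not touch it, but you never exclude it: you only sketch an unexecuted Gram-matrix feasibility search ``as in Lemma~\ref{case7}'' or an unspecified determinant argument, and you assert that a dedicated finite computation is unavoidable. That assertion is false, and the configuration you yourself set up ($|N_2(\alpha)|=27$, $\sum_{x\in N_2(\alpha)}x=\frac{27}{2}\alpha$) is eliminated by precisely the argument the paper applies to $a=25$ in Lemma~\ref{25}: decomposing $N_2(\alpha)=E_1\cup\ldots\cup E_k$ as in \cite[Lemma 2.10]{dim12}, one has $\dim\langle N_2(\alpha)\rangle=1+27-k\le16$, so $k\ge12$; if every $|E_i|\ge3$ then $27\ge3k\ge36$, which is absurd, so some $|E_i|=2$ and therefore $\alpha=x_1+x_2\in\Lambda$. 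Combining this with $\frac{27}{2}\alpha\in\Gamma^*=\Lambda$ (Corollary~\ref{corodd}, whose proof requires no squarefreeness) gives $\frac12\alpha\in\Lambda$ of norm $1<2=\min(\Lambda)$, a contradiction with no computation at all. In fairness, the paper's printed proof of this lemma also treats only the squarefree case and is silent about $27$, but the surrounding text (Corollary~\ref{corodd} and Lemma~\ref{25}) supplies the missing step, whereas your proposal replaces it with a computer search that is never carried out; as written, your proof does not establish the lemma.
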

\begin{proof}
If $a$ is odd and squarefree then for $\alpha \in \Min (\Gamma )$ 
equation $\frac{1}{6}(D13-D11) $  shows that 
$\frac{a}{2} (\alpha ,\gamma ') \in \ZZ $ for all  $\gamma ' \in \Gamma $. 
This shows that $\frac{\alpha }{2} \in \Gamma ^* = \Lambda $ contradicting the
fact that $\min (\Lambda ) = 2 > 1 = (\frac{\alpha }{2}, \frac{\alpha }{2} ) $.
\end{proof}

\begin{corollary}\label{corodd}
	The argument above shows that $\frac{a}{2} \alpha \in \Gamma ^*$ 
	for all $\alpha \in \Min(\Gamma )$.
\end{corollary}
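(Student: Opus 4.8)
The plan is to read the conclusion directly off the integrality relation that already surfaces in the proof of Lemma~\ref{odd}, the key observation being that this relation needs neither the oddness nor the squarefreeness of $a$. Recall that for the present normalization ($\min(\Lambda)=2$, $\min(\Lambda^*)=\min(\Gamma)=4$) one of the standard integrality conditions reads
$$\tfrac{1}{6}(D13-D11)(\gamma',\gamma)=\tfrac{a}{2}(\gamma,\gamma')\big((\gamma,\gamma)-3\big)\in\ZZ \quad\text{for all }\gamma,\gamma'\in\Gamma.$$

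First I would specialize this by setting $\gamma=\alpha\in\Min(\Gamma)$. Since $\Gamma$ is scaled so that $\min(\Gamma)=4$, every minimal vector satisfies $(\alpha,\alpha)=4$, so the factor $(\alpha,\alpha)-3$ is exactly $1$. The relation therefore collapses to $\tfrac{a}{2}(\alpha,\gamma')\in\ZZ$ for every $\gamma'\in\Gamma$. This is precisely the identity used inside the proof of Lemma~\ref{odd}; the only difference is that here I retain it in its general form rather than feeding it into a parity argument on $a$.

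Finally I would reinterpret this as a statement about the single vector $\tfrac{a}{2}\alpha$: the condition $\tfrac{a}{2}(\alpha,\gamma')=(\tfrac{a}{2}\alpha,\gamma')\in\ZZ$ holding for all $\gamma'\in\Gamma$ is exactly the defining property for $\tfrac{a}{2}\alpha$ to lie in the dual lattice $\Gamma^*=\Lambda$. Hence $\tfrac{a}{2}\alpha\in\Gamma^*$ for every $\alpha\in\Min(\Gamma)$, as asserted. There is essentially no obstacle in this argument; the only two points requiring care are that we use the correct rescaling (so that $(\alpha,\alpha)-3=1$, which is what eliminates all dependence on the value of $a$) and that the $\tfrac{1}{6}(D13-D11)$ integrality is valid for \emph{all} $\gamma'\in\Gamma$, since it is exactly this uniformity that upgrades the numerical relation into the dual-integrality of $\tfrac{a}{2}\alpha$.
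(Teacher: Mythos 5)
Your proof is correct and is essentially identical to the paper's: the corollary's phrase ``the argument above'' refers precisely to the step in the proof of Lemma~\ref{odd} where the integrality $\frac{1}{6}(D13-D11)(\gamma',\alpha)=\frac{a}{2}(\alpha,\gamma')\bigl((\alpha,\alpha)-3\bigr)\in\ZZ$ is specialized to $(\alpha,\alpha)=4$, giving $\frac{a}{2}(\alpha,\gamma')\in\ZZ$ for all $\gamma'\in\Gamma$, i.e.\ $\frac{a}{2}\alpha\in\Gamma^*$. You also correctly observe the point that makes the corollary valid in general, namely that this step uses neither the oddness nor the squarefreeness of $a$, which were only needed for the subsequent contradiction in Lemma~\ref{odd}.
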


We now fix $\alpha \in \Min (\Gamma ) $ and consider the set 
$$N_2(\alpha ) := \{ x\in \Min (\Lambda ) \mid (\alpha , x ) = 2 \} $$
Then $|N_2(\alpha ) | = a $ and by \cite[Lemma 2.10]{dim12} 
we may write 
$$N_2(\alpha ) = E_1 \cup\ldots \cup E_k $$ where $E_i$ is minimal 
so that $\sum _{x\in E_i } x = \frac{|E_i|}{2} \alpha $ and $k$ is maximal.
Then 
$$\dim \langle N_2(\alpha ) \rangle = 1 + |N_2(\alpha ) | - k \mbox{ and } |E_i| \geq 2 \mbox{ for all } i .$$

\begin{lemma}\label{25} 
$a\neq 25$.
\end{lemma}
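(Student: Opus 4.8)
The plan is to assume $a = 25$ and reach a contradiction by playing a divisibility constraint on the block sizes $|E_i|$ against the dimension formula $\dim\langle N_2(\alpha)\rangle = 1 + |N_2(\alpha)| - k$. Since $|N_2(\alpha)| = a = 25$, the blocks $E_1,\dots,E_k$ of \cite[Lemma 2.10]{dim12} partition a $25$-element set with each $|E_i|\ge 2$, and $\dim\langle N_2(\alpha)\rangle = 26 - k$. Because $N_2(\alpha)\subset\RR^{16}$, this already forces $k\ge 10$; the whole point will be to show that the arithmetic of $25$ cannot support that many blocks.

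First I would extract the crucial lattice membership. By definition of the blocks, $\sum_{x\in E_i} x = \frac{|E_i|}{2}\alpha$, and since each $x\in\Min(\Lambda)\subset\Lambda$, the vector $\frac{|E_i|}{2}\alpha$ lies in $\Lambda$ for every $i$. As $\Lambda$ is a group, every integer combination $\sum_i c_i\frac{|E_i|}{2}\alpha = \frac{1}{2}\big(\sum_i c_i |E_i|\big)\alpha$ lies in $\Lambda$, and by B\'ezout the integers $\sum_i c_i|E_i|$ range exactly over $g\ZZ$, where $g := \gcd(|E_1|,\dots,|E_k|)$. Hence $\frac{g}{2}\alpha\in\Lambda$. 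Now $(\frac{\alpha}{2},\frac{\alpha}{2}) = \frac{1}{4}(\alpha,\alpha) = 1 < 2 = \min(\Lambda)$, so $\frac{\alpha}{2}\notin\Lambda$ and therefore $g\ne 1$. Since $g$ divides $\sum_i|E_i| = 25$, I conclude $5\mid g$, and thus $5\mid|E_i|$ for every $i$.

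The conclusion is then immediate: each $|E_i|\ge 5$, so from $\sum_i|E_i| = 25$ I get $k\le 5$. But then $\dim\langle N_2(\alpha)\rangle = 26 - k \ge 21 > 16$, which is absurd as $N_2(\alpha)\subset\RR^{16}$. This contradiction shows $a\ne 25$.

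The step I expect to be most delicate is precisely the divisibility claim $5\mid|E_i|$: the dimension bound on its own is harmless, since $k\ge 10$ is perfectly compatible with ten blocks of size $2$, so the argument lives or dies on upgrading ``$\frac{|E_i|}{2}\alpha\in\Lambda$'' into a genuine constraint. The observation $\frac{\alpha}{2}\notin\Lambda$ (equivalently $g\ne 1$), which rests only on $(\alpha,\alpha) = 4$ and $\min(\Lambda) = 2$, is what converts the block sizes into multiples of $5$ and collapses the admissible number of blocks from $\ge 10$ down to $\le 5$.
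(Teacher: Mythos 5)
Your proof is correct, and it takes a genuinely different route from the paper's at the decisive step. Both arguments start the same way: the decomposition $N_2(\alpha)=E_1\cup\dots\cup E_k$ of \cite[Lemma 2.10]{dim12} together with $\dim\langle N_2(\alpha)\rangle = 1+25-k\le 16$ forces $k\ge 10$, and both exploit that $\tfrac{\alpha}{2}\notin\Lambda$ because $(\tfrac{\alpha}{2},\tfrac{\alpha}{2})=1<2=\min(\Lambda)$. But the paper then argues via pigeonhole ($k\ge 10 > 25/3$) that some block has $|E_i|=2$, so that $\alpha=\sum_{x\in E_i}x\in\Gamma^*=\Lambda$, and combines this with Corollary \ref{corodd} (the consequence $\tfrac{a}{2}\alpha\in\Gamma^*$ of the design equation $\tfrac16(D13-D11)$) to get $\tfrac{\alpha}{2}=\tfrac{25}{2}\alpha-12\alpha\in\Gamma^*$, a contradiction. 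You instead never touch Corollary \ref{corodd} or the design equations: you observe that $\tfrac{|E_i|}{2}\alpha\in\Lambda$ for every block, apply B\'ezout to conclude $\tfrac{g}{2}\alpha\in\Lambda$ for $g=\gcd(|E_1|,\dots,|E_k|)$, rule out $g=1$ by the minimum, and use $g\mid 25$ to force $5\mid |E_i|$ for all $i$, whence $k\le 5$, contradicting $k\ge 10$. Your version is more self-contained and purely combinatorial, at the price of leaning on the specific arithmetic of $25$ (its smallest prime factor $5$ must satisfy $25/5 < 25-15$); the paper's version reuses machinery it has already built (Corollary \ref{corodd} also underlies Lemma \ref{odd}) and only needs the existence of a single block of size $2$, so it is the more natural argument within the paper's toolkit, but both are valid.
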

\begin{proof}
If $a=25$ then by the above 
$1+25-k \leq 16 $ implies that $k\geq 10 \geq 25/3 $. 
So there is some $i$ such that $|E_i| = 2$ which shows that $\alpha \in \Gamma ^*$. 
By Corollary \ref{corodd} we also have $\frac{25}{2} \alpha \in \Gamma ^*$ 
so in total $\frac{\alpha}{2} \in \Gamma * $ contradicting the fact that 
$\min (\Gamma ^* ) =2 $.
\end{proof}

So now we are left with the following cases:
$$a,b\in \{2,4,6,8,9,10,12,14,16,18,20,22,24,26,28,30\}.$$

\begin{lemma}\label{lemma:r=8_minimum}
  \begin{enumerate}[(i)]
    \item 
  If $a\in \{2, 4, 6, 8, 10, 12, 14, 20, 22, 24, 26, 28, 30 \}$
  then rescaling $\Gamma$ yields an even lattice of minimum $8$ (with dual minimum $1$). 
\item
  If $a\in\{ 9,18\}$ then rescaling $\Gamma $ yields an even lattice of minimum 24 (with dual minimum $1/3$). 
\item
	If $a= 16$ then rescaling $\Gamma $ yields an even lattice of minimum 16 (with dual minimum $1/2$). 
  \end{enumerate}
\end{lemma}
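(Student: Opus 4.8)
The plan is to determine, for every $\gamma\in\Gamma$, the exact denominator of the rational number $(\gamma,\gamma)$ directly from the two integrality constraints recorded above, and then read off the correct rescaling factor in each case. Writing $(\gamma,\gamma)=p/q$ with $\gcd(p,q)=1$, the condition $(D4)(\gamma)=3a(\gamma,\gamma)^2\in\ZZ$ gives $q^2\mid 3a$, while $\frac{1}{12}(D4-D2)(\gamma)=\frac{a}{4}(\gamma,\gamma)((\gamma,\gamma)-3)\in\ZZ$ rewrites as $4q^2\mid a\,p(p-3q)$. Since $\gcd(p,q)=\gcd(p-3q,q)=1$, the factor $p(p-3q)$ is coprime to $q^2$, so this second condition already forces $q^2\mid a$. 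As $a\le 30$, only $q\in\{1,2,3,4\}$ remain (the value $q=5$ would need $25\mid a$, i.e. $a=25$, which is excluded by Lemma~\ref{25}).

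The heart of the argument is a case analysis on $q$ exploiting the denominator $4$ in $\frac{1}{12}(D4-D2)$. If $q=2$ then $p$ is odd, hence $p(p-6)$ is odd and $4q^2=16$ forces $16\mid a$. If $q=3$ then $\gcd(p,3)=1$ gives $3\nmid(p-9)$, so $9\mid a$ is required. If $q=4$ then $p$ is odd, $p(p-12)$ is odd and $4q^2=64$ forces $64\mid a$, impossible for $a\le 30$. Matching these necessary conditions against the admissible values then pins down the denominator: for $a\in\{2,4,6,8,10,12,14,20,22,24,26,28,30\}$ neither $16\mid a$ nor $9\mid a$ holds, so $q=1$ and $(\gamma,\gamma)\in\ZZ$; for $a\in\{9,18\}$ only $q\in\{1,3\}$ survive, so $(\gamma,\gamma)\in\frac13\ZZ$; and for $a=16$ only $q\in\{1,2\}$ survive, so $(\gamma,\gamma)\in\frac12\ZZ$. (The squarefree values, already covered by Lemma~\ref{squarefree}, fall into the first bucket since then $q^2\mid a$ forces $q=1$.)

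With the denominators in hand the rescaling is immediate, since $\min(\Gamma)=4$ and $c\Gamma$ is even precisely when $c^2(\gamma,\gamma)\in2\ZZ$ for all $\gamma$. In case (i) one takes $c=\sqrt2$, so $c^2(\gamma,\gamma)=2(\gamma,\gamma)\in2\ZZ$, yielding an even lattice of minimum $8$ whose dual $\frac{1}{\sqrt2}\Gamma^*$ has minimum $\frac12\min(\Gamma^*)=1$. In case (ii) one takes $c=\sqrt6$, so $6(\gamma,\gamma)=2\cdot 3(\gamma,\gamma)\in2\ZZ$, giving an even lattice of minimum $24$ with dual minimum $\frac16\cdot2=\frac13$. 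In case (iii) one takes $c=2$, so $4(\gamma,\gamma)=2\cdot 2(\gamma,\gamma)\in2\ZZ$, giving an even lattice of minimum $16$ with dual minimum $\frac14\cdot2=\frac12$.

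I expect the only delicate point to be the $2$-adic bookkeeping separating the borderline values: it is exactly the extra power of $2$ in the denominator of $\frac{1}{12}(D4-D2)$ that forces $16\mid a$ when $q=2$, thereby excluding half-integral norms for $a\in\{4,8,12,20,24,28\}$ while still permitting them for $a=16$. Verifying that $p(p-3q)$ is a unit modulo the relevant prime powers, so that all the divisibility lands on $a$, is routine but must be checked for $p$ of each parity and residue class; everything else reduces to direct divisibility tests against the short list $a\le 30$.
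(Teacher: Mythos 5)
Your proof is correct: writing $(\gamma,\gamma)=p/q$ in lowest terms, the integrality of $(D4)(\gamma)=3a(\gamma,\gamma)^2$ and of $\frac{1}{12}(D4-D2)(\gamma)=\frac{a}{4}(\gamma,\gamma)((\gamma,\gamma)-3)$ does force $q^2\mid a$, the case split $q\in\{1,2,3,4\}$ (with $q=2$ forcing $16\mid a$, $q=3$ forcing $9\mid a$, $q=4$ forcing $64\mid a$, and $q=5$ ruled out by Lemma~\ref{25}) is sound, and the rescalings by $\sqrt{2}$, $\sqrt{6}$, $2$ together with the minima and dual minima all check out against the scaling $\min(\Gamma)=4$, $\min(\Gamma^*)=2$. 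The paper states this lemma without proof, and your argument is exactly the standard denominator-and-rescaling technique it uses for the analogous statements (compare the proofs of Lemmas~\ref{lemma:level}, \ref{squarefree} and \ref{odd}), so you have simply supplied the routine verification the authors omitted.
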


\begin{lemma}\label{r8a30}
  If $a=30$ then $\Gamma \cong \Lambda _{16}$.
\end{lemma}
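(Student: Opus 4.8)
The plan is to exploit that $a=30=2(n-1)$ is exactly the equality case of Corollary~\ref{boundr8}, which forces a copy of the root lattice $D_{16}$ inside $\Lambda$, and then to feed the resulting lattice into the already–established structural result Lemma~\ref{minge3/2}.

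First I would fix $\alpha\in\Min(\Gamma)$ and record that $|N_2(\alpha)|=a=30=2(n-1)$. By the equality statement of Corollary~\ref{boundr8} the sublattice $R:=\langle N_2(\alpha)\rangle\subseteq\Lambda$ is then similar to $D_{16}$. Since the vectors of $N_2(\alpha)$ are minimal vectors of $\Lambda$ of norm $2$ (we are in the scaling $\min(\Lambda)=2$, $\min(\Gamma)=4$), $R$ is in fact an isometric copy of the root lattice $D_{16}$, so $D_{16}=R\subseteq\Lambda$. Dualizing this inclusion gives $\Gamma=\Lambda^*\subseteq D_{16}^*$, and in particular every minimal vector of $\Gamma$ is a norm–$4$ vector of $D_{16}^*$.

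Next I would pin down the arithmetic of $\Gamma$. Because $30$ is squarefree, Lemma~\ref{squarefree} gives $(\gamma,\gamma)\in\ZZ$ for all $\gamma\in\Gamma$ and an even sublattice $\Gamma^{(e)}=\{\gamma\in\Gamma:(\gamma,\gamma)\in2\ZZ\}$ of index $1,2$ or $4$; since each minimal vector of $\Gamma$ has norm $4\in 2\ZZ$, we get $\Min(\Gamma)\subseteq\Gamma^{(e)}$, whence $\Gamma^{(e)}$ is even with $\min(\Gamma^{(e)})=4$ and has the same minimal vectors as $\Gamma$, so it is again strongly perfect. Moreover the norm–$4$ vectors of $D_{16}^*$ lie only in the three even cosets of $D_{16}^*/D_{16}\cong(\ZZ/2)^2$ (the odd coset has minimal norm $1$ and no vector of norm $4$), which is the mechanism that should force $\Gamma$ itself to be even. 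For the determinant, $D_{16}\subseteq\Lambda$ yields $\det(\Lambda)=4/[\Lambda:D_{16}]^2$, hence $\det(\Gamma)=[\Lambda:D_{16}]^2/4$; the bounds of Lemma~\ref{DeterminantBound} (with $\gamma_{16}\le 3.027$, $\min(\Lambda)=2$, $\min(\Gamma)=4$) confine $[\Lambda:D_{16}]$ to a short range, and using that $D_{16}^*/D_{16}$ is a $2$-group together with the $2$- and $3$-adic bookkeeping on the integrality equations of the kind carried out for $r=9$ in Lemma~\ref{9squarefree}, I would exclude the primes $\ge 5$ and conclude $\det(\Gamma)=2^a3^b$.

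Finally, I would apply Lemma~\ref{minge3/2} to the even lattice $\Gamma$: it has determinant $2^a3^b$ and $\min(\Gamma^*)=\min(\Lambda)=2\ge 3/2$, so $\Gamma$ is similar to one of $\Lambda_{16}$, $\Gamma_{16}$ or $O_{16}^*$. The Berg\'e-Martinet invariant then finishes the proof: $r(\Gamma)=r(\Lambda)=8$, whereas $r(\Gamma_{16})=r(O_{16}^*)=6$, so the only possibility is $\Gamma\cong\Lambda_{16}$, and since the minima also agree this is an isometry. The main obstacle is precisely the arithmetic control in the third paragraph, namely turning the inclusion $D_{16}\subseteq\Lambda$ into the exact statement that $\Gamma$ is an \emph{even} lattice of determinant $2^a3^b$: one must rule out an odd prime (notably $5$, given $a=30$) from the index $[\Lambda:D_{16}]$, and either prove that $\Gamma$ is already even or, if one argues via $\Gamma^{(e)}$, check that passing to $\Gamma^{(e)}$ does not drop the dual minimum below $3/2$.
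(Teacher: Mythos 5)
Your opening is essentially the paper's own first step in disguise: invoking the equality case of Corollary~\ref{boundr8} to get $\langle N_2(\alpha)\rangle \cong D_{16}$ is what the paper derives by hand via its decomposition $N_2(\alpha)=E_1\cup\dots\cup E_k$ (forcing $k=15$, $|E_i|=2$, $(x_i,x_j)=1$), and the restriction of the index $[\Lambda:D_{16}]$ to the primes $2,3$ is exactly what the paper extracts from Lemma~\ref{lemma:r=8_minimum}. The divergence, and the genuine gap, is your pivot to Lemma~\ref{minge3/2}: that lemma needs $\Gamma$ itself to be \emph{even}, and your proposed mechanism for this fails. The three ``even'' cosets of $D_{16}^*/D_{16}$ do not form a subgroup: with $g=(\tfrac12,\dots,\tfrac12)$ one has $g+(g+e_1)\equiv e_1 \pmod{D_{16}}$, which lies in the odd coset. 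So knowing that $\Min(\Gamma)$ lies in the even cosets (equivalently, that the minimal vectors have norm $4$, which you already knew) says nothing about the rest of $\Gamma$; indeed $D_{16}^*$ itself is generated by its norm-$4$ vectors (e.g.\ $e_1=g+(e_1-g)$, both summands of norm $4$) and still contains vectors of odd norm. What the integrality equations give for $a=30$ (squarefree) is only Lemma~\ref{squarefree}: integer norms and an even sublattice $\Gamma^{(e)}$ of index $1$, $2$ or $4$ --- they do not force $\Gamma=\Gamma^{(e)}$.

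The fallback through $\Gamma^{(e)}$ does not close the hole either. Your worry about the dual minimum is actually the harmless part: $\Gamma^{(e)}$ is strongly perfect of minimum $4$, so Lemma~\ref{min} gives $\min((\Gamma^{(e)})^*)\geq 6/4=3/2$, and Lemma~\ref{minge3/2} applies once $\det(\Gamma^{(e)})=2^x3^y$ is known. But then you only learn that $\Gamma^{(e)}$ is one of $\Lambda_{16}$, $\Gamma_{16}$, $\sqrt{2}\,O_{16}^*$ (the latter two rescaled to minimum $4$), whereas your Berg\'e--Martinet finish is an invariant of $\Gamma$, not of $\Gamma^{(e)}$: if $[\Gamma:\Gamma^{(e)}]\in\{2,4\}$, then $r(\Gamma^{(e)})=6$ is perfectly compatible with $r(\Gamma)=8$, since $(\Gamma^{(e)})^*\supsetneq \Gamma^*$ can have minimum $3/2$. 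So the cases $\Gamma^{(e)}\cong\Gamma_{16}$ and $\Gamma^{(e)}\cong\sqrt{2}\,O_{16}^*$ are not excluded by invariants; you would have to prove that neither lattice admits an odd overlattice of index $2$ or $4$ with minimum $4$ whose dual has minimum $2$, and the proposal never does this. (Your appeal to ``$D_{16}^*/D_{16}$ is a $2$-group'' for the determinant is also off the mark --- $\Lambda\not\subseteq D_{16}^*$, the true index is $32>4$ --- though that part is repairable by citing Lemma~\ref{lemma:r=8_minimum} as the paper does.) The paper avoids the whole issue by staying on the $\Lambda$-side: it enumerates, by computer, all strongly perfect overlattices of $L=\langle N_2(\alpha)\rangle\cong D_{16}$ of minimum $2$ whose index involves only the primes $2$ and $3$, and finds only $\Lambda_{16}^*$; to rescue your shorter route you must either genuinely prove $\Gamma$ even or supply the missing classification of odd overlattices of the three candidate lattices.
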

\begin{proof}
	Assume that $a=30$. 
	Then $16\geq 1+30-k$ and $k\leq 15$ implies that $k=15$ and $|E_i| = 2$ for all $i$.
	So $N_2(\alpha ) = \{ x_1,\ldots , x_{15} \} \cup \{ \alpha -x_1,\ldots , \alpha - x_{15} \}  $
	and  
	$(x_i,x_j) =  1$ for all $i\neq j $. 
  Hence the lattice $L:=\langle N_2(\alpha)\rangle \subseteq \Lambda$.
  On the other hand, from Lemma~\ref{lemma:r=8_minimum} we know that $|\Lambda/L|$ has only the
  prime divisors $2$ and $3$. A complete search of the strongly perfect overlattices of $L$ with 
  minimum $2$ and whose determinant only have the prime divisors $2$ and $3$ shows that $\Lambda \cong \Lambda _{16}^{*}$
  and hence 
  $\Gamma = \Lambda ^* \cong \Lambda _{16}$.
\end{proof}

By the modular form approach, we can prove the following.
\begin{theorem}
  There is no dual strongly perfect lattice with 
  \begin{enumerate}
    \item $a,b\in \{2, 4, 6, 8, 10, 12, 14, 16, 20, 22, 24, 26, 28\}$ except for $a=b=28$;
    \item $a\in\{9,18\}$ and $b\in\{2, 4, 6, 8, 10, 12, 14, 20, 22, 24, 26, 28\}$
  \end{enumerate}
  and vice versa.
\end{theorem}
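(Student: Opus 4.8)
The plan is to run the modular-form machinery of Section~\ref{ModForm} once for each admissible pair $(a,b)$, in the manner already illustrated by Lemmas~\ref{96div11} and~\ref{64div7}. Fix such a pair. By Lemma~\ref{lemma:r=8_minimum} the value of $a$ prescribes a rescaling in which $\Gamma=\Lambda^*$ becomes even, and by symmetry the value of $b$ prescribes one in which $\Lambda$ becomes even; because both rescalings are controlled by $r(\Lambda)=8$, the even level $\ell$ of the resulting even lattice is forced to be small. Concretely, for the pairs of part~(1) with $a,b\neq 16$ we take the even lattice to have minimum $8$, its $\sqrt{\ell}$-rescaled dual is again even of minimum $8$, and $\ell=8$; when $a=16$ (or $b=16$) the even minimum is $16$ and $\ell=16$; and for $a\in\{9,18\}$ in part~(2) the even minimum is $24$ and $\ell=24$. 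In every case Theorem~\ref{EvenLatticeThaSes} places both $\vartheta_{\Lambda}$ and $\vartheta_{\sqrt{\ell}\Lambda^*}$ in the finite-dimensional space $\mathcal{M}_{8}(\Gamma_0(\ell),\chi_{\Lambda})$, and the half-kissing numbers $s=2^{3}3^{2}a$ and $t=2^{3}3^{2}b$ fix the leading nonconstant coefficients on both sides.

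With $\ell$ and the two minima fixed, I would next enumerate the finitely many possible genera. The determinant $\det(\Lambda)$ is a power of $2$ (times a power of $3$ when $\ell=24$), and Lemma~\ref{DeterminantBound} with $\gamma_{16}\le 3.027$ restricts it to a short list; for each admissible discriminant form the corresponding $p$-adic genus symbols are read off as in \cite[Chapter~15]{SPLAG}. For every genus I would compute the Eisenstein (genus) theta series $E(\tau)$ from its local densities via Yang's formulas~\cite{Yang1998} in Sage~\cite{Sage}, and a basis of the cusp space $\mathcal{S}_{8}(\Gamma_0(\ell),\chi_{\Lambda})$ in Magma~\cite{Magma}. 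Writing $\vartheta_{\Lambda}=E+C$ with $C=\sum_i c_iB_i$ a cusp form, the requirements that $\vartheta_{\Lambda}$ begin $1+2s\,q^{m'}+\cdots$ with nonnegative coefficients and that its Atkin--Lehner image $\vartheta_{\sqrt{\ell}\Lambda^*}$, obtained through Proposition~\ref{AtkinLehner}, begin $1+2t\,q^{m}+\cdots$ and stay nonnegative, are precisely the linear system~\eqref{LP} in the unknowns $c_i$. Its infeasibility I would certify with lrs~\cite{lrs}, truncating at Fourier degree $100$. Carrying this out for each pair, each genus, and both orderings of $\Lambda,\Lambda^*$ (the ``and vice versa'') eliminates every listed case.

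The main obstacle is that~\eqref{LP} is not guaranteed to be infeasible: the argument closes only when the nonnegativity and leading-coefficient constraints are rigid enough, whereas at levels $8,16,24$ the cusp spaces are already of substantial dimension (recall the $19$-dimensional space met at level $22$ in Lemma~\ref{96div11}), so a borderline pair may admit a spurious truncated solution. This is exactly why $a=b=28$ is excluded from the statement: for that pair the modular-form system does not close and a separate argument is needed. A secondary, purely bookkeeping difficulty is to be sure the genus list is complete, i.e.\ that every discriminant form compatible with the even level and the determinant bound of Lemma~\ref{DeterminantBound} has been enumerated. If the linear program were to fail for some individual pair, the fallback would be a direct Gram-matrix completion in the spirit of Lemma~\ref{case7} or an overlattice search as in Lemma~\ref{r8a30}; but for the pairs listed here the infeasibility certificates are expected to suffice.
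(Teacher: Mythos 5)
Your proposal is correct and is essentially identical to the paper's own proof: the paper establishes this theorem precisely by the computational modular-form machinery of Section~\ref{ModForm} (rescaling dictated by Lemma~\ref{lemma:r=8_minimum}, genus enumeration bounded via Lemma~\ref{DeterminantBound}, Eisenstein/cusp decomposition, and infeasibility of the linear system~\eqref{LP} certified by lrs), applied to each admissible pair $(a,b)$ and both orderings. You even correctly anticipate the reason $a=b=28$ is carved out of the statement, namely that there the linear program does not close (one determinant survives) and the paper needs the separate lattice-theoretic argument given in the following lemma.
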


\begin{lemma}
  There is no dual strongly perfect lattice with $a=b=28$.
\end{lemma}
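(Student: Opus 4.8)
The plan is to fix $\alpha\in\Min(\Gamma)$ and to study the configuration $N_2(\alpha)=\{x\in\Min(\Lambda)\mid(x,\alpha)=2\}$ together with the dual configuration $N_2(x)=\{\beta\in\Min(\Gamma)\mid(x,\beta)=2\}$ for $x\in\Min(\Lambda)$. For any $x\in\Min(\Lambda)$ and $\alpha\in\Min(\Gamma)$ we have $(x,\alpha)\in\ZZ$ (since $x\in\Lambda$, $\alpha\in\Lambda^{*}$) and $(x,\alpha)^{2}\le(x,x)(\alpha,\alpha)=8$, so $(x,\alpha)\in\{0,\pm1,\pm2\}$. Thus Corollary~\ref{linkomb} applies; with $s=t=2^{3}3^{2}\cdot28$ it gives $|N_2(\alpha)|=28$ and $\sum_{x\in N_2(\alpha)}x=14\alpha$, and dually $|N_2(x)|=28$ and $\sum_{\beta\in N_2(x)}\beta=28x$.

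Next I would determine the shape of $N_2(\alpha)$. Set $u_x:=x-\tfrac12\alpha$; then $(u_x,\alpha)=0$, $(u_x,u_x)=1$ and $\sum_{x\in N_2(\alpha)}u_x=0$, so these $28$ unit vectors lie in the $15$-dimensional space $\alpha^{\perp}$ and sum to $0$. For $x\ne y$ the lattice vector $x-y\in\Lambda$ has norm $4-2(x,y)\ge2$, hence $(x,y)\le1$ and $(u_x,u_y)=(x,y)-1\le0$. Writing the Gram matrix as $I-N$ with $N\ge0$ entrywise and $N\mathbf1=\mathbf1$, positive semidefiniteness together with Perron--Frobenius forces each connected block of $N$ to have kernel exactly $\RR\mathbf1$; equivalently $N_2(\alpha)$ splits into mutually orthogonal ``simplices'' of sizes $d_j+1$ with $\sum_j(d_j+1)=28$ and $\sum_j d_j\le15$, so that $k:=\#\{\text{blocks}\}\in\{13,14\}$. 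If $k=14$ then every $d_j=1$, the blocks are fourteen orthogonal antipodal pairs, and $\langle N_2(\alpha)\rangle\cong D_{15}$ has rank $15$. If $k=13$ then $\sum_j d_j=15$ and $\langle N_2(\alpha)\rangle$ has full rank $16$; using $28(x,y)\in\ZZ$ (from $\tfrac16(D13-D11)$) the admissible blocks are severely restricted (each $d_j\mid28$; a block with $d_j=2$ is a regular triangle), leaving an explicit finite list of possible Gram matrices. The identical analysis, now with $\min(\Gamma)=4$ and $(\beta,\gamma)\in\tfrac12\ZZ$, governs each $N_2(x)\subseteq\Min(\Gamma)$.

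If for some $\alpha$ (or, dually, some $x$) the full-rank alternative $k=13$ occurs, then $L:=\langle N_2(\alpha)\rangle\subseteq\Lambda$ is a full-rank sublattice of minimum $2$ whose Gram matrix lies in the above finite list. By Lemma~\ref{lemma:r=8_minimum} the determinant of $\Lambda$, and hence the index $[\Lambda:L]$, involves only the primes $2$ and $3$. I would then run the search of Lemma~\ref{r8a30}: enumerate all strongly perfect overlattices of $L$ of minimum $2$ with $\{2,3\}$-smooth index and check that none is a dual strongly perfect lattice with $r=8$ and $a=b=28$ (the only lattice surviving such a search is $\Lambda_{16}$, which has $a=30$), a contradiction.

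The main obstacle is the remaining case, in which $\langle N_2(\alpha)\rangle\cong D_{15}$ for every $\alpha\in\Min(\Gamma)$ and, dually, every $N_2(x)$ consists of fourteen pairs. Here each single set spans only a hyperplane, so no full-rank sublattice of $\Lambda$ (or of $\Gamma$) is produced directly. To reach a contradiction I would fix $\alpha$ and $x\in N_2(\alpha)$ (so that also $\alpha\in N_2(x)$) and glue the two determined $D_{15}$-configurations $N_2(\alpha)\subseteq\Lambda$ and $N_2(x)\subseteq\Gamma$: their two diagonal Gram blocks are known, and the cross inner products $(y,\beta)$ with $y\in N_2(\alpha)$, $\beta\in N_2(x)$ satisfy $(y,\beta)\in\ZZ$ and $(y,\beta)^{2}\le8$, hence lie in $\{0,\pm1,\pm2\}$. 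This is a Gram-matrix completion problem exactly as in Lemma~\ref{case7}, and the plan is to show it has no completion that is positive semidefinite of rank $\le16$ and yields the correct minima $\min(\Lambda)=2$, $\min(\Gamma)=4$. The difficulty is purely computational: the two blocks carry $28$ vectors each, far more than the seven-vector configuration of Lemma~\ref{case7}, so the feasibility of the search hinges on first using the rigid pairing of the $D_{15}$-blocks, the relations $\sum_{N_2(\alpha)}x=14\alpha$ and $\sum_{N_2(x)}\beta=28x$, and the symmetries permuting the pairs to fix most entries and to prune the admissible cross inner products to a short finite list.
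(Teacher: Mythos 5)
Your framework is the same as the paper's: your Perron--Frobenius ``blocks'' are exactly the sets $E_i$ in the decomposition $N_2(\alpha)=E_1\cup\dots\cup E_k$ that the paper quotes from \cite[Lemma 2.10]{dim12}, your dichotomy $k\in\{13,14\}$ matches the paper's cases (i)/(ii) versus (iii), and your gluing of $N_2(\alpha)$ with $N_2(x)$ is precisely what the paper does in case (iii). For $k=13$ your plan is workable, and in fact easier than you expect: the paper shows that any such configuration produces a vector of norm $1$ in $\langle N_2(\alpha)\rangle\subseteq\Lambda$ (namely $\tfrac12\alpha$ when a block of size $3$ occurs, and $\alpha-x_{14}-x_{15}$ for the size-$4$ block), so your finite list of full-rank Gram matrices of minimum $2$ is empty and no overlattice search is needed there. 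Two caveats: your claim that each $d_j\mid 28$ presumes the blocks are regular simplices, which is forced only for sizes $\le 3$ --- the paper's (unique) size-$4$ block has inner products $3/4,3/4,1/2$ and is not regular --- and you never pin down $\det(\Lambda)$, which the paper fixes to $2^{-8}$ by the modular-form method as its first step.

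The genuine gap is in the case you yourself flag as hard, $k=14$ for every $\alpha$ and every $x$, where you assert the difficulty is ``purely computational.'' Your stated goal there --- to show that the Gram-matrix completion of the two glued $D_{15}$-configurations admits no positive semidefinite solution of rank $\le 16$ with the correct minima --- is false, so no search will ever establish it. The paper's MAGMA computation shows that a completion exists and is unique up to isomorphism; it spans a full-rank sublattice $L=\langle x_1,\dots,x_{14},\alpha_1,\dots,\alpha_{14}\rangle$ of determinant $4$. This is unavoidable, because such configurations genuinely occur: in $\Lambda_{16}^*$ (where $a=30$ and $N_2(\alpha)$ consists of $15$ pairs) any $14$ of the $15$ pairs, together with the corresponding $14$ pairs on the dual side, satisfy every one of your constraints. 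Consequently the contradiction cannot come from infeasibility of the completion; it requires the further step the paper takes: enumerate all overlattices of $L$ of minimum $2$ and determinant $2^{-8}$ (finiteness of this search is exactly why the modular-form determination of the determinant is needed), find that the only one is $\Lambda_{16}^*$, and conclude that then $s(\Lambda)=2^3 3^2\cdot 30$, i.e.\ $a=30\neq 28$. Without this overlattice argument your treatment of the hard case cannot close.
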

\begin{proof}
  By Lemma~\ref{lemma:r=8_minimum} we see that the level of $\Lambda$ divides $8$. 
  Then by the modular form approach we find that only the case $\det(\Lambda)=2^{-8}$ is possible.
  By the above $1+28-k\leq16$ implies that $k\ge13$. WLOG we have the following three possible cases:
  \begin{enumerate}[(i)]
    \item $k=13$, $|E_i|=2$ for $1\leq i\leq 11$ and $|E_{12}|=|E_{13}|=3$;
    \item $k=13$, $|E_i|=2$ for $1\leq i\leq 12$ and $|E_{13}|=4$;
    \item $k=14$, $|E_i|=2$ for $1\leq i\leq 14$.
  \end{enumerate}
  Case (i) can be easily excluded as the condition $|E_{12}|=3$ implies that $\alpha/2\in\Gamma^*$,
  which contradicts the fact that $\min(\Gamma^*)=2$.\\
  For Case (ii) we assume that $E_{i}=\{x_i,\alpha-x_i\}$ for $1\leq i\leq 12$, and
  $E_{13}=\{x_{13},x_{14},x_{15},x_{16}\}$. 
  So $(x_i,x_i)=2$ for $1\leq i\leq 16$ and $(x_i,x_j)=1$ for $1\leq i\leq 12$, $1\leq j\leq 16$ 
  and $i\neq j$.
  On the other hand, we know that $2\Lambda$ is even and $E_{13}$ is minimal so that 
  $\sum_{x\in E_{13}}=2\alpha$,
  hence $(x_i,x_j)\in\{0,\pm 1/4,\pm 1/2, \pm 3/4, -1\}$ for $13\leq i\ne j\leq 16$.
  A simple calculation shows that there is up to isomorphism only one possibility for
  the Gram matrix formed by vectors $x_{13},x_{14},x_{15},x_{16}$:
  \begin{align*}
    G=\begin{pmatrix}
      2 & 3/4&  3/4&  1/2\\
      3/4 &  2 &1/2 &3/4\\
      3/4& 1/2&   2& 3/4\\
      1/2& 3/4& 3/4&   2
    \end{pmatrix}.
  \end{align*}
  But the the norm of the vector $(\alpha-x_{14}-x_{15})$ is equal to $1$, contradicting the fact
  that $\min(\Gamma^*)=2$. This excludes Case (ii).\\
  For Case (iii) we assume that $E_{i}=\{x_i,\alpha-x_i\}$ for $1\leq i\leq 14$.
  So $(x_i,x_j)=1$ for $1\leq i\neq j\leq 14$. 
  Write $N_2(x_1)=\{\beta\in\Min(\Gamma)\mid(x_1,\beta)=2\}$. Similarly we can prove that
    $$N_2(x_1)=F_1\cup\dots\cup F_{14}$$
  where $F_i$ is minimal so that $\sum_{\beta\in F_i}=2x_1$. 
  Set that $F_i=\{\alpha_i,2x_1-\alpha_i\}$ for $1\leq i\leq 14$ where $\alpha_1=\alpha$.
  A computer search by MAGMA shows that there is up to isomorphism only one possibility
  for the lattice $L:=\langle x_1,\dots,x_{14},\alpha_1,\dots,\alpha_{14}\rangle$, and
  its determinant is equal to $4$.
  Then a complete search of the overlattices of $L$ with minimum $2$ and determinant $2^{-8}$
  shows that up to isomorphism there is only one such lattice and it is isometric to 
  $\Lambda_{16}^*$. This shows that Case (iii) is impossible.
  %Hence $D_{15}\cong \langle N_2(\alpha)\rangle \subset \Lambda$.
  %Now we pick some vector $y\in N_{1}(\alpha)$ and consider the Gram matrix formed by the vectors
  %$\alpha,x_1,x_2,\ldots,x_{14},y$. 
  %As $2\Lambda$ is even, $(x_i,y)\in \{0,\pm 1/4,\pm 1/2, \pm 3/4, \pm 1\}$ for $1\leq i\leq 14$. 
  %A computer search by MAGMA finds that there are, up to isometry, in total 19 possible lattices generated 
  %by the vectors $\alpha,x_1,x_2,\ldots,x_{14},y$ with minimum at least 2.
\end{proof}

\begin{lemma}\label{a18b16}
	  There is no dual strongly perfect lattice with $a\in\{9,18\},b=16$ or $a=16,b\in\{9,18\}$.
  \end{lemma}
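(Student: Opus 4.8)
The plan is to reduce to the modular form / linear programming machinery of Section~\ref{ModForm}, the extra difficulty being that the values $a$ and $b$ force \emph{different} even rescalings of $\Gamma$ and of $\Lambda$. Concretely, by Lemma~\ref{lemma:r=8_minimum}(ii) the hypothesis $a\in\{9,18\}$ means that $M:=\sqrt{6}\,\Gamma$ is an even lattice with $\min(M)=24$ and $\min(M^*)=1/3$, while by the $\Lambda\leftrightarrow\Lambda^*$ version of Lemma~\ref{lemma:r=8_minimum}(iii) the hypothesis $b=16$ means that $N:=\sqrt{8}\,\Lambda$ is an even lattice with $\min(N)=16$ and $\min(N^*)=1/2$. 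Since $N^*=\tfrac{1}{\sqrt{8}}\Gamma$ and $\sqrt{6}\,\Gamma$ is even, the lattice $\sqrt{48}\,N^*=\sqrt{6}\,\Gamma=M$ is even, so the even level of $N$ divides $48=2^4\cdot 3$; in particular $\det(N)=2^{x}3^{y}$ for nonnegative integers $x,y$ and $\exp(N^*/N)\mid 48$. Note that Lemma~\ref{IsLevelIntegral} gives no contradiction here (as $48\in\ZZ$), which is why the two excluded subfamilies of the preceding theorem must be treated separately.

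First I would pin down the determinant. Applying Lemma~\ref{DeterminantBound} to the even lattice $N$ (with $\min(N)=16$, $\min(N^*)=1/2$ and $\gamma_{16}^2\le 9.162729$) confines $\det(N)$ to a bounded interval, and combined with $\det(N)=2^{x}3^{y}$ this leaves only finitely many possible determinants. For each of them I would read off all $2$- and $3$-adic genus symbols compatible with an even lattice of level dividing $48$, exactly as in the proofs of Lemmas~\ref{96div11} and~\ref{64div7}, obtaining a short finite list of candidate genera for $N$.

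The heart of the argument is then the modular form approach of Section~\ref{ModForm} applied in weight $k=8$ and level $48$. In the scaling making $N$ even we have
$$\vartheta_{N}(\tau)=1+2\cdot 72a\,q^{16}+\cdots,\qquad \vartheta_{\sqrt{48}\,N^*}(\tau)=1+2\cdot 72\cdot 16\,q^{24}+\cdots,$$
both lying in $\mathcal{M}_{8}(\Gamma_0(48),\chi_N)$. For each candidate genus I would compute the genus (Eisenstein) theta series via local densities, a basis of the cusp space $\mathcal{S}_8(\Gamma_0(48),\chi_N)$, and the action of the Fricke involution $W_{48}$ by Proposition~\ref{AtkinLehner}, and then feed the resulting linear system~\eqref{LP} to \textsc{lrs}. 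Showing that this system is infeasible for every candidate genus and for both values $a\in\{9,18\}$ rules out $N$, hence rules out $\Lambda$; the symmetric case $a=16,\ b\in\{9,18\}$ then follows by interchanging $\Lambda$ and $\Lambda^*$. The main obstacle is computational rather than conceptual: because the two even scalings differ by the factor $8/6$, the level jumps to $48$ rather than to the smaller levels occurring in the preceding theorem, so the space $\mathcal{M}_8(\Gamma_0(48),\chi_N)$ and the list of candidate genera are considerably larger, and the crux is to verify that the feasibility problem~\eqref{LP} admits no solution in each of these cases.
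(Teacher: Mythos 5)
Your reduction is sound as far as it goes: for $a\in\{9,18\}$ Lemma~\ref{lemma:r=8_minimum}(ii) makes $M=\sqrt{6}\,\Gamma$ even with minimum $24$, for $b=16$ the swapped version of (iii) makes $N=\sqrt{8}\,\Lambda$ even with minimum $16$, and since $\sqrt{48}N^*=M$ is even, the even level of $N$ divides $48$ and $\det N=2^x3^y$. This is the same setup as the paper's (the paper works with $\Gamma=\sqrt{48}N^*$ rather than with $N$, but since the system \eqref{LP} constrains a theta series \emph{together with} its Fricke transform, feasibility for the genus of $N$ and for the genus of $\Gamma$ is essentially the same problem). The gap is your final step: you assert that the linear system \eqref{LP} will turn out infeasible for every candidate genus, and the entire proof rests on that verification. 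By the paper's own computation this is false when $a=18$: only the subcase $a=9$, $b=16$ is disposed of by the direct modular form approach, whereas for $a=18$, $b=16$ one case survives the LP, namely $\det\Gamma=2^{46}3^2$ (with $|\Gamma:\Gamma^{(e)}|=2$). Executed as written, your plan terminates with a feasible LP and no contradiction.

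What is missing are the two ideas the paper uses to kill the surviving case. First, the design equations \eqref{gleichungen} in the scaling $\min(\Lambda)=1/3$ --- specifically the integrality of $\frac{1}{6}(D13-D11)(\alpha,\beta)=\frac{1}{2^3}(\alpha,\beta)((\beta,\beta)-18)$ --- produce an auxiliary sublattice $\Gamma^{(e)}=\{\alpha\in\Gamma\mid(\alpha,\alpha)\in4\ZZ\}$ of index $1$, $2$ or $4$ in $\Gamma$; your proposal never exploits the design conditions beyond the even rescalings. Second, the two lattices are played off against each other by counting vectors of a fixed norm: the restrictions \eqref{LP} for $\Gamma$ bound the number of vectors of norm $36$ in $\Gamma$ above by $2426$, while the restrictions for $\Gamma^{(e)}$ --- fed with the extra facts that $\Min(\Gamma^{(e)})=\Min(\Gamma)$ (so $\Gamma^{(e)}$ is strongly perfect, whence $\min((\Gamma^{(e)})^*)\ge 1/4$) and that $\Lambda\subset(\Gamma^{(e)})^*$ forces at least $2\cdot 72\cdot 18$ vectors of norm $\frac{1}{3}$ in $(\Gamma^{(e)})^*$ --- force at least $6172$ vectors of norm $36$ in $\Gamma^{(e)}\subseteq\Gamma$, a contradiction. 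Without some substitute for this refinement your argument cannot be completed. (A minor separate point: with the paper's convention $\vartheta_L=\sum_x q^{(x,x)/2}$, the leading exponents for $N$ and $\sqrt{48}N^*$ should be $8$ and $12$, not $16$ and $24$; this does not affect the logic.)
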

  \begin{proof}
	    By symmetry we may assume that $a\in\{9,18\}$ and $b=16$.
     If $a=9$ and $b=16$ then by the modular form approach we can prove that there is no
       such dual strongly perfect lattice.
  Now we assume that $a=18$ and $b=16$.
    We rescale $\Lambda$ such that $\min(\Lambda)=\frac{1}{3}$ and $\min(\Lambda^*)=24$.
      Set $\Gamma:=\Lambda^*$. In particular $\Gamma$ is even by Lemma~\ref{lemma:r=8_minimum}.
        Let $\alpha\in \Gamma$, and write $(\alpha, \alpha)=\frac{p}{q}$
	  with coprime integers $p$ and $q$. Then
	    \begin{align*}
		              (D4)(\alpha) &= {\frac{3 {p}^{2}}{2{q}^{2}}} \in\ZZ, \\
	              \frac{1}{6}(D13-D11)(\alpha,\beta) &= \frac{1}{2^3}(\alpha,\beta)((\beta,\beta)-18).
		      \end{align*}
       Hence $\Gamma^{(e)}:=\{\alpha\in\Gamma \mid (\alpha,\alpha) \in 4\ZZ\}$ is a sublattice of $\Gamma$
	  with $|\Gamma:\Gamma^{(e)}|\in\{1,2,4\}$.
    We apply the modular form approach to $\Gamma$ and $\Gamma^{(e)}$, and find that only the case
     $\det\Gamma=2^{46}3^2$ and $|\Gamma:\Gamma^{(e)}|=2$ is possible.
      Now from the linear restrictions~\eqref{LP} we find that $\Gamma$ contains at most 2426 vectors of
 norm $36$. On the other hand, as $\Lambda\subset(\Gamma^{(e)})^*$, we know that $(\Gamma^{(e)})^*$
    contains at least $2\cdot72\cdot18$ vectors of norm $\frac{1}{3}$.
     Also as $\Min(\Gamma)=\Min(\Gamma^{(e)})$, $\Gamma^{(e)}$ is also strongly perfect, so
       $\min((\Gamma^{(e)})^*)\ge 1/4$. Now from the linear restrictions~\eqref{LP} and the condition that
  $(\Gamma^{(e)})^*$ contains at least $2\cdot72\cdot18$ vectors of norm $\frac{1}{3}$, we compute that
   $\Gamma^{(e)}$ contains at least $6172$ vectors of norm $36$, which is a contradiction.
     This concludes our proof.
    \end{proof}

\begin{lemma}
 There is no dual strongly perfect lattice with $a,b\in\{9,18\}$.
  \end{lemma}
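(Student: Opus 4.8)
The plan is to follow the template of Lemma~\ref{a18b16} and the preceding $a=b=28$ case: first reduce the determinant to finitely many values using the even structure, then eliminate the surviving genera with the modular form linear program~\eqref{LP}, and finally rule out any genus that survives by a combinatorial counting argument on minimal vectors. By the symmetry interchanging $\Lambda$ and $\Lambda^*$ we may assume $a\le b$, so that $(a,b)\in\{(9,9),(9,18),(18,18)\}$. In each case Lemma~\ref{lemma:r=8_minimum}(ii) shows that, rescaling $\Gamma=\Lambda^*$ to minimum $24$, the lattice $M:=\sqrt6\,\Gamma$ is even with $\min(M^*)=1/3$; since $b\in\{9,18\}$ as well, the symmetric statement applied to $\Lambda$ shows that $\sqrt{12}\,\Lambda$ is even of minimum $24$. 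From $\frac16(D13-D11)(\gamma',\gamma)$ one obtains, exactly as in Lemma~\ref{a18b16}, that $M^{(e)}:=\{\gamma\in M\mid(\gamma,\gamma)\in4\ZZ\}$ is a sublattice of index dividing $4$, and the analogous statement holds for $\sqrt{12}\,\Lambda$.

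First I would pin down the determinant. Because $M^*=\tfrac1{\sqrt6}\Lambda$ and $\sqrt{12}\,\Lambda$ is even, the lattice $\sqrt{72}\,M^*$ is even, so the even level of $M$ divides $72$ and $\det(M)=2^x3^y$ is a product of powers of $2$ and $3$. Lemma~\ref{DeterminantBound}, applied with $\min(M)=24$, $\min(M^*)=1/3$ and $\gamma_{16}^2\le9.163$, confines $\det(M)$ to a short list of values $2^x3^y$ lying in $[(24/\gamma_{16})^{16},(3\gamma_{16})^{16}]$, and Lemma~\ref{IsLevelIntegral} applied to the even lattice $\sqrt{1/2}\,M^{(e)}$ discards the incompatible ones. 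This leaves only finitely many genus symbols for $M$, and via $M^{(e)}$ for its index-$\le4$ even sublattice.

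Next I would feed each surviving genus into the modular form machinery of Section~\ref{ModForm}. By Theorem~\ref{EvenLatticeThaSes} the theta series of $M$ (and of $M^{(e)}$) lies in a space $\mathcal{M}_8(\Gamma_0(\ell),\chi)$, starts $1+2t\,q^{24}+\cdots$ with $t=2^33^2b$, and its image under the Atkin-Lehner operator $W_\ell$ must start $1+2s\,q^{m}+\cdots$ with $s=2^33^2a$ by Proposition~\ref{AtkinLehner}. Solving~\eqref{LP} with \texttt{lrs} should render all but at most one genus infeasible, reducing to a single candidate pair $(\det M,\,|M:M^{(e)}|)$, exactly as the case $2^{46}3^2$ emerged in Lemma~\ref{a18b16}.

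For any genus that survives I would finish by counting, which I expect to be the main obstacle. Fixing $\alpha\in\Min(M)$, the set $N_2(\alpha)$ decomposes as $E_1\cup\cdots\cup E_k$ with $\sum_{x\in E_i}x=\tfrac{|E_i|}2\alpha$; positive-semidefiniteness of the small Gram matrices of the $E_i$, together with Corollary~\ref{corodd}, constrains the admissible inner products, while the program~\eqref{LP} bounds the number of vectors of the relevant norm in $M$. Comparing this upper bound against the lower bound forced on $M^{(e)}$ by the $2s$ minimal vectors of $M^*\subseteq(M^{(e)})^*$ and strong perfection of $M^{(e)}$ should yield a numerical contradiction. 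The delicate point is that $\det(M)\approx2^{48}$, so no enumeration of lattices in the genus is feasible; everything hinges on the linear program being tight enough that the forced vector counts for $M$ and for $M^{(e)}$ are genuinely incompatible, and the fully symmetric hypothesis $a,b\in\{9,18\}$ forces us to track both rescalings $\sqrt6\,\Gamma$ and $\sqrt{12}\,\Lambda$ at once.
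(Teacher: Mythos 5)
Your proposal follows essentially the same route as the paper's proof: rescale so that $\Gamma=\Lambda^*$ is even of minimum $24$, note that the rescaled $\Lambda$ (your $\sqrt{12}\,\Lambda$, the paper's $\sqrt{72}\,\Lambda$) is even so that the even level of $\Gamma$ divides $72$, pass to the sublattice $\Gamma^{(e)}$ of index dividing $4$, reduce to finitely many genus symbols via the determinant bounds, and kill them with the linear program \eqref{LP}, using strong perfection of $\Gamma^{(e)}$ (from $\Min(\Gamma)=\Min(\Gamma^{(e)})$, hence $\min((\Gamma^{(e)})^*)\ge 1/4$) when the constraints on $\Gamma$ alone do not suffice. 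This is exactly the paper's strategy; its computation finds $1508$ admissible genus symbols and reports that all of them are infeasible, the LP being applied to $\Gamma^{(e)}$ precisely when $\det\Gamma\in\{2^{18}3^{20},\,2^{30}3^{12},\,2^{24}3^{16}\}$ and to $\Gamma$ otherwise. In particular no genus survives, so the fall-back Gram-matrix counting argument you anticipate (modelled on Lemma~\ref{a18b16} and the $a=b=28$ case) never has to be carried out.

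The one concrete oversight is your treatment of $(a,b)=(9,9)$. You keep it as a case to be processed by the heavy machinery, but it is excluded for free: the divisibility condition of Lemma~\ref{N_2N_2Dual}, recorded as the condition $2\mid ab$ for $r=8$ in Remark~\ref{shortTableOfValues}, rules out $ab=81$, and this one-line observation is the first sentence of the paper's proof. As written, your plan leaves $(9,9)$ resting on an unverified computation; since LP infeasibility is sufficient but not necessary for nonexistence, it is not guaranteed that \eqref{LP} (or your counting fall-back) would actually produce a contradiction for those parameters, whereas the arithmetic argument settles it outright. Apart from this, your blueprint matches the paper's proof, with both ultimately depending on the reported outcome of the modular-form computations.
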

  \begin{proof}
   By Remark~\ref{shortTableOfValues} we see that the case $a=b=9$ is impossible.
  By symmetry we may assume that $a=18$ and $b\in\{9,18\}$.
   We rescale $\Lambda$ such that $\min(\Lambda)=\frac{1}{3}$ and $\min(\Lambda^*)=24$.
   Set $\Gamma:=\Lambda^*$. As in Lemma \ref{a18b16} we see that 
     $\Gamma $ is an even lattice and 
     $\Gamma^{(e)}:=\{\alpha\in\Gamma \mid (\alpha,\alpha) \in 4\ZZ \} $ is 
     a sublattice of $\Gamma$ with $|\Gamma:\Gamma^{(e)}|\in\{1,2,4\}$ (see \cite[Lemma 2.8]{dim12}). 
     Similarly we can prove that $L:=\sqrt{72} \Lambda $ is even 
and hence  the even level of $\Gamma$ divides $72$.
  If the even level of $\Gamma$ divides $36$ or $24$, then we apply the modular form technique
 to the lattice $\Gamma$, and the computation shows that
 there does not exist such a lattice. 
 So in the following we assume that the level of $\Gamma$
     is equal to $72$. 
     Then $\Gamma^{(e)}$ is a proper sublattice of $\Gamma $.
 As $\Min(\Gamma)=\Min(\Gamma^{(e)})$, $\Gamma^{(e)}$ is also strongly perfect, so
$\min((\Gamma^{(e)})^*)\ge 1/4$. 
In total we find 1508 possible genus symbols for the lattice $\Gamma$.
   We apply the modular form technique to $\Gamma$ if 
   $\det(\Gamma ) \not\in \{ 2^{18}3^{20},2^{30}3^{12},2^{24}3^{16} \} $ and to its even sublattice $\Gamma^{(e)}$
   otherwise.
It turns out that none of the 1508 genus symbols is possible. This concludes our proof.
   \end{proof}

\bibliography{SPL}
\bibliographystyle{plain}

\end{document}